\let\oldtocsection=\tocsection
\let\oldtocsubsection=\tocsubsection
\let\oldtocsubsubsection=\tocsubsubsection
\renewcommand{\tocsection}[2]{\hspace{0em}\oldtocsection{#1}{#2}}
\renewcommand{\tocsubsection}[2]{\hspace{2em}\oldtocsubsection{#1}{#2}}
\renewcommand{\tocsubsubsection}[2]{\hspace{2em}\oldtocsubsubsection{#1}{#2}}
\def\smallskip{\vskip\smallskipamount}
\def\medskip{\vskip\medskipamount}
\def\bigskip{\vskip\bigskipamount}
\newtheoremstyle{thmstyle}{}{}{\itshape}{}{\bfseries}{ }{5pt}{}
\newtheoremstyle{exstyle}{}{}{}{}{\bfseries}{ }{5pt}{}
\newtheoremstyle{defstyle}{}{}{}{}{\bfseries}{ }{5pt}{}
\newtheoremstyle{remstyle}{}{}{}{}{\bfseries}{ }{5pt}{}
\theoremstyle{thmstyle}
\newtheorem{thm}{Theorem}[section]
\newtheorem{theorem}[thm]{Theorem}
\newtheorem{lemma}[thm]{Lemma}
\newtheorem{proposition}[thm]{Proposition}
\newtheorem{corollary}[thm]{Corollary}
\theoremstyle{exstyle}
\newtheorem{example}[thm]{Example}
\theoremstyle{defstyle}
\newtheorem{definition}[thm]{Definition}
\newtheorem{def-prop}[thm]{Definition-Proposition}
\newtheorem{def-lem}[thm]{Definition-Lemma}
\newtheorem{rem-convention}[thm]{Remark-Convention}
\newtheorem{def-note}[thm]{Definition-Notation}
\theoremstyle{remstyle}
\newtheorem{remark}[thm]{Remark}
\theoremstyle{remstyle}
\newcommand{\Hom}{\operatorname{Hom}}
\newcommand{\Fac}{\operatorname{Gen}}
\newcommand{\Ext}{\operatorname{Ext}}
\DeclareMathOperator*{\rad}{rad}
\DeclareMathOperator*{\modu}{mod}
\DeclareMathOperator*{\add}{add}
\DeclareMathOperator*{\ann}{ann}
\newcommand{\Str}{\operatorname{Str}}
\DeclareMathOperator*{\Brick}{brick}
\DeclareMathOperator*{\ind}{ind}
\DeclareMathOperator*{\tors}{tors}
\DeclareMathOperator*{\ftors}{f-tors}
\DeclareMathOperator*{\End}{End}
\DeclareMathOperator*{\Cogen}{Cogen}
\DeclareMathOperator*{\rigid}{rigid}
\DeclareMathOperator*{\irigid}{\mathtt{i}rigid}
\DeclareMathOperator*{\ttilt}{-tilt}
\DeclareMathOperator*{\tilt}{tilt}
\DeclareMathOperator*{\trig}{-rigid}
\DeclareMathOperator*{\nn}{nn}
\DeclareMathOperator*{\soc}{soc}
\DeclareMathOperator*{\topm}{top}
\DeclareMathOperator*{\node}{node}
\DeclareMathOperator*{\secl}{secl}
\DeclareMathOperator*{\pd}{pd}
\DeclareMathOperator*{\id}{id}
\DeclareMathOperator*{\gl.dim}{gl.dim}
\DeclareMathOperator*{\mND}{mND}
\DeclareMathOperator*{\mSB}{mSB}
\DeclareMathOperator*{\Mri}{Mri}
\DeclareMathOperator*{\Mtaui}{M\text{$\tau$}i}
\DeclareMathOperator*{\repf}{Rf}
\DeclareMathOperator*{\sB}{sB}
\DeclareMathOperator*{\B}{B}
\DeclareMathOperator*{\G}{G}
\DeclareMathOperator*{\St}{S}
\DeclareMathOperator*{\D}{D}
\newcommand{\doublewidetilde}[1]{{%
  \mathpalette\double@widetilde{#1}%
}}
\newcommand{\double@widetilde}[2]{%
  \sbox\z@{$\m@th#1\widetilde{#2}$}%
  \ht\z@=.9\ht\z@
  \widetilde{\box\z@}%
}
\begin{document}

\title{$\tau$-tilting finiteness of Biserial Algebras}
\author[Kaveh Mousavand]{Kaveh Mousavand} 
\address{LaCIM, UQAM, Montréal, Québec, Canada}
\email{mousavand.kaveh@gmail.com }
\thanks{The author is partially supported by ISM Scholarship.}

\subjclass[2010]{05E10,16G20,16G60}

\maketitle

\section*{Abstract}
\vskip 0.3cm

In this paper we treat the $\tau$-tilting finiteness of biserial (respectively special biserial) algebras over algebraically closed (respectively arbitrary) fields. Inside these families, to compare the notions of representation-finiteness and $\tau$-tilting finiteness, we reduce the problem to the $\tau$-tilting finiteness of minimal representation-infinite (special) biserial algebras. Building upon the classification of minimal representation-infinite algebras, we fully determine which minimal representation-infinite (special) biserial algebras are $\tau$-tilting finite and which ones are not. To do so, we use the brick-$\tau$-rigid correspondence of Demonet, Iyama and Jasso, and the classification of minimal representation-infinite special biserial algebras due to Ringel.

Furthermore, we introduce the notion of minimal $\tau$-tilting infinite algebras, analogous to the notion of minimal representation infinite algebras, and prove that a minimal representation-infinite (special) biserial algebra is minimal $\tau$-tilting infinite if and only if it is a gentle algebra. As a consequence, we conclude that a gentle algebra is $\tau$-tilting infinite if and only if it is representation infinite.
We also show that for every minimal representation-infinite (special) biserial algebra, the notions of tilting finiteness and $\tau$-tilting finiteness are equivalent. This implies that a mild (special) biserial algebra is tilting finite if and only if it is brick finite.

\tableofcontents

\section{Introduction}\label{Introduction}

Throughout, unless specified otherwise, $Q$ denotes a finite and connected quiver, $k$ is a field and all algebras are considered to be basic, connected, associative and finite dimensional over $k$. For an algebra $\Lambda$, the category of finitely generated left $\Lambda$-modules is denoted by $\modu \Lambda$ and by a subcategory $\mathcal{C}$ of $\modu \Lambda$ we always mean $\mathcal{C}$ is full and closed under isomorphism classes and direct summands. If $I$ is an admissible ideal in the path algebra $kQ$ and $\Lambda=kQ/I$, then $(Q,I)$ denotes the bound quiver of $\Lambda$. All $\Lambda$-modules are assumed to be finitely generated and basic and we identify them with representations  of the bound quiver $(Q,I)$. 

For the sake of readability, we divide this section into three parts: 
The first subsection explains how this work should be seen as part of a more general project that we have conducted with the same theme. In this section, we thoroughly describe our methodology and elaborate on the significance of the problems treated here.
The second subsection states our main results and suggests how we would like to approach them in more generality. The closing subsection fixes the notations and terminology used throughout the entire text. The terminology which is not defined in this section will be introduced in the following sections, wherever they are used.

\subsection{General problem and methodology}
Recall that $\Lambda$ is \emph{representation-finite} (or rep-finite, for short) if $\modu \Lambda$ contains only finitely many isomorphism classes of indecomposable objects. A $\Lambda$-module $M$ is called a \emph{brick} if 
every nonzero morphism $f$ in $\End_{\Lambda}(M)$ is invertible.
Analogously, we say $\Lambda$ is \emph{brick-finite} if there are only finitely many isomorphism classes of bricks in $\modu \Lambda$. Evidently, every rep-finite algebra is brick-finite, while there are examples showing that the converse does not hold (explicit examples of wild and rep-infinite tame algebras which are brick-finite respectively appear in \cite{Mi} and Sections \ref{section: Nody algebras} and \ref{Section:tau-tilting finite gentle algebras are representation-finite} of this paper). 
Bricks have played significant roles in different aspects of representation theory, among which one can refer to their applications in representation theory of species \cite{R1}, in torsion classes and wide subcategories \cite{MS}, in the study of geometry of the moduli spaces of representations \cite{Sc}, as well as in the stability conditions \cite{T}, to mention a few.

The new investigations in the study of (functorially finite) torsion classes of algebras via the notion of $\tau$-tilting theory, recently introduced by Adachi, Iyama and Reiten \cite{AIR}, has once again draw a lot of attention to bricks and their prominent roles.
In particular, the new result of Demonet, Iyama and Jasso \cite{DIJ} shows that a good knowledge of bricks in the module category of an algebra becomes decisive in many important problems which relate to the $\tau$-tilting theory of algebras. This is the approach we adopt in most parts of the paper, where we are concerned with the family of special biserial algebras. In the last section, we also consider the family of biserial algebras over algebraically closed fields to extend our results in that setting.

In \cite{DIJ}, the authors show that the notion of $\tau$-tilting finiteness, which is a modern generalization of representation finiteness, is in fact equivalent to brick-finiteness. This equivalence provides new insight into the study of $\tau$-tilting objects, because, as we observed above, bricks and brick-finiteness could be defined for any given algebra and without the advanced tools from the Auslander-Reiten theory. 
Thus, the following question, which naturally arises even independent of the concept of $\tau$-tilting theory, could be phrased in terms of this modern language and a complete answer to it sheds light on this new and active area of research:
\vskip 0.2cm
\textbf{Question (1)}: Let $\mathfrak{F}$ be a family of $k$-algebras. For which algebras in $\mathfrak{F}$ are the notions of $\tau$-tilting finiteness and representation-finiteness equivalent? 
\vskip 0.2cm

Let us outline our general methodology and the more specific settings we consider in this paper to approach an explicit answer to the above question. Because every rep-finite algebra is evidently $\tau$-tilting finite, to study nontrivial $\tau$-tilting finite algebras, we necessarily focus on the rep-infinite algebras.
For a family of algebras $\mathfrak{F}$, $\tau$-tilting finiteness implies rep-finiteness if and only if each rep-infinite algebra $\Lambda$ in $\mathfrak{F}$ is $\tau$-tilting infinite. 
Moreover, it is easy to verify that if $\phi: \Lambda_1 \rightarrow \Lambda_2$ is a surjective algebra morphism and $\Lambda_1$ is rep-finite (similarly brick-finite), then so is $\Lambda_2$. Therefore, according to the above-mentioned result of \cite{DIJ}, rep-finiteness and $\tau$-tilting finiteness of algebras are preserved under quotients.
Thus, if $\mathfrak{F}$ is a family of $k$-algebras, to answer the above question for the entire family, we need to check whether every rep-infinite algebra in $\mathfrak{F}$ which is minimal with respect to this property is in fact $\tau$-tilting infinite. Let us remark that various versions of the notion of minimality of algebras in different families are studied in the literature (see for example, \cite{Bo3}, \cite{HV}, \cite{R2}, \cite{Sk}, to just mention a few). Following the terminology widely accepted, we say an algebra $\Lambda$ is \emph{minimal representation-infinite} (or min-rep-infinite, for short) if $\Lambda$ is rep-infinite, but every proper quotient of it is rep-finite.

In dealing with the above question for various families of algebras, we often implement reductions of this form, where, instead of the entire family, we consider a particular subfamily of the algebras. For a given family $\mathfrak{F}$, in order to efficiently employ this reduction, one needs to find a concrete description of the algebras in the subfamily $\Mri(\mathfrak{F})$, which consists of the isomorphism classes of those algebras in the family $\mathfrak{F}$ that are rep-infinite and minimal in $\mathfrak{F}$ with respect to this property (i.e, each $\Lambda$ in $\Mri(\mathfrak{F})$ is rep-infinite and for every proper quotient $\Lambda'$ of $\Lambda$, either $\Lambda'$ is rep-finite, or $\Lambda' \notin \mathfrak{F}$).
As explained in the following, this article is the first part of a more extensive project, where we investigate $\tau$-tilting finiteness of all minimal representation-infinite algebras and introduce a new concept analogous to min-rep-infinite algebras, where minimality is considered with respect to $\tau$-tilting infiniteness.

Here, we are primarily concerned with the comparison of the notions of rep-infiniteness and $\tau$-tilting infiniteness of the family of special biserial algebras, which we denote by $\mathfrak{F}_{\sB}$.
Recall that $\Lambda$ is \emph{special biserial} if it is Morita equivalent to an algebra $kQ/I$ such that $(Q,I)$ satisfies the following conditions:
\begin{enumerate}[(B1)]
\item At every vertex $x$ in $Q$,  there are at most two incoming and at most two outgoing arrows.
\item For each arrow $\alpha$, there is at most one arrow $\beta$ such that $\beta \alpha \notin I$ and at most one arrow $\gamma$ such that $\alpha \gamma \notin I$.
\end{enumerate}

Through a reduction outlined above, we reduce our problem to the subfamily $\Mri(\mathfrak{F}_{\sB})$ and fully determine which algebras in it are $\tau$-tilting finite and which ones are not. 
Along the way, we also derive new results on the $\tau$-tilting finiteness of string and gentle algebras, respectively denoted by $\mathfrak{F}_{\St}$ and $\mathfrak{F}_{\G}$ (for definition, see Sections \ref{Priliminary} and \ref{Section:tau-tilting finite gentle algebras are representation-finite}).
To put the above-mentioned families of algebras in comparison, recall that the chain of containments $\mathfrak{F}_{\G} \subsetneq \mathfrak{F}_{\St} \subsetneq \mathfrak{F}_{\sB}$ is known, where every inclusion is strict. 
We further show some interesting features of the Auslander-Reiten quivers of those rep-infinite algebras which are minimal in each of the above families (see Sections \ref{Section:Reduction to mild special biserial algebras} and \ref{subsection: More on fully reduced gentle algebras}).

Before we restrict our attention to these families, let us remark that for an arbitrary family of algebras $\mathfrak{F}$, there may exist $\Lambda$ in $\Mri(\mathfrak{F})$ with a proper quotient algebra $\Lambda'$ which is rep-infinite (for concrete examples of this phenomenon, see Section \ref{Section:tau-tilting finite gentle algebras are representation-finite}, where we describe the algebras in $\Mri(\mathfrak{F}_{\G})$ in terms of their bound quivers). Namely, these two notions of minimality do not always imply each other.
In contrast, if $\mathfrak{F}$ is a family closed under taking the algebra quotients, every $\Lambda$ in $\Mri(\mathfrak{F})$ is in fact a minimal representation-infinite.
Motivated by this simple observation, we say a family $\mathfrak{F}$ of $k$-algebras is \emph{quotient-closed} if for any surjective algebra map $\phi: \Lambda \rightarrow \Lambda'$, if $\Lambda$ belongs to $\mathfrak{F}$, then so does $\Lambda'$.
In particular, for such a family of algebras, we can repose \text{Question (1)} as follows:
\vskip 0.2cm

\textbf{Question (2)}: Suppose $\mathfrak{F}$ is a quotient-closed family of $k$-algebras. Which minimal representation-infinite algebras in $\mathfrak{F}$ are $\tau$-tilting infinite?
\vskip 0.2cm

Despite what it may look at the first sight, the condition that $\mathfrak{F}$ is a quotient-closed family is not very restrictive and many interesting families of algebras could be investigated from this point of view. In this paper we observe that the family of (special) biserial algebras are quotient-closed and then completely solve the above question.
Furthermore, we employ the same strategy to answer the same question for some families of algebras which are not necessarily quotient-closed, but we view them as a subfamily of a quotient-closed family of algebras. 

To clarify this point, let us consider the family of string algebras and that of gentle algebras, which we have already denoted by $\mathfrak{F}_{\St}$ and $\mathfrak{F}_{\G}$. These families have received a lot of attention in the past few decades, mostly due to their rich combinatorics that make them fruitful in multiple disciplines (for example, see \cite{BR}, \cite{CB1}, \cite{GMM}, \cite{GP}, \cite{HKK}, and \cite{GR}). However, one easily observes that neither of these families is quotient-closed. Thus, asking the above question about them may seem doomed at the first glance.

In contrast to $\mathfrak{F}_{\G}$ and $\mathfrak{F}_{\St}$, the family of special biserial algebras $\mathfrak{F}_{\sB}$, which strictly contains both of them, is quotient-closed.
Hence, it fits into the setting of the second version of our question, and we determine exactly for which algebras in $\Mri(\mathfrak{F}_{\sB})$ $\tau$-tilting infiniteness and  representation infiniteness are equivalent.
Because a min-rep-infinite algebra cannot have a projective-injective module (see Proposition \ref{Monomial ideal}), we get $\Mri(\mathfrak{F}_{\sB})= \Mri(\mathfrak{F}_{\St})$, meaning that for most of the work we only need to deal with certain classes of string algebras which feature a tractable framework.

Before we state our results from the viewpoint of $\tau$-tilting finiteness, let us elaborate on the significance of the classes of string algebras in $\Mri(\mathfrak{F}_{\St})$ that we study in this paper and look at the scope of our work through the lens of the minimal representation-infinite algebras, a topic that has remained an active domain of research in the past few decades and has been vastly studied by different groups of mathematicians.

Although the methodical study of the subject was initiated in 60's, an extensive search for a complete classification of the minimal representation-infinite algebras over algebraically closed fields received a lot of attentions in 70's and 80's. This was mainly due to their crucial roles in the proofs of some fundamental theorems in representation theory, among which perhaps what is nowadays called the Second Brauer-Thrall conjecture is celebrated the most. The first complete proof of the aforementioned conjecture (that every representation infinite algebra is strongly unbounded) appeared in the work of Bautista \cite{Ba}, which relied on his former work and the influential contributions of others, including Bongartz, Gabriel, Jans, Roiter, Ringel, Salmer\'on, Skowro\'nski (for more details, see \cite{Bo1}, \cite{Bo2}, \cite{BG+}, \cite{J}, \cite{R2} and \cite{Sk}).

Recently, and surprisingly around at the same time that the notion of $\tau$-tilting theory was first introduced in \cite{AIR}, a full classification of min-rep-infinite algebras received new attentions, mostly from those who had already significantly contributed to the progress of the subject. Building upon the new results of Bongartz \cite{Bo3}, in \cite{R2}, Ringel shows that over an algebraically closed field, any min-rep-infinite algebra falls into at least one of the following families:
\begin{itemize}
\item[(mND)] Algebras with non-distributive ideal lattice. 
\item[(mUC)] Algebras with a good universal cover $\widetilde{\Lambda}$ such that a convex subcategory of $\modu \widetilde{\Lambda}$ is tame-concealed of type $\widetilde{\mathbb{D}}_n$ or $\widetilde{\mathbb{E}}_{6,7,8}$. 
\item[(mSB)] Special biserial algebras.
\end{itemize}

As mentioned earlier, this paper is the first part of a more extensive research project that we have conducted  to analyze the above families from the perspective of $\tau$-tilting theory. 
Our approach naturally led us to a new direction that we introduce here and will further pursue in another work and in more generality. In particular, we also consider a notion of minimality with respect to $\tau$-tilting infiniteness.
Analogous to the subfamily $\Mri(\mathfrak{F})$ that we already introduced for a given family of algebras $\mathfrak{F}$, we consider the subfamily $\Mtaui(\mathfrak{F})$ which consists of those $\Lambda$ in $\mathfrak{F}$ that are $\tau$-tilting infinite and minimal with respect to this property (i.e, if $\Lambda$ in $\Mtaui(\mathfrak{F})$, then it is $\tau$-tilting infinite and for every proper quotient algebra $\Lambda'$ of $\Lambda$, either $\Lambda'$ is $\tau$-tilting finite or $\Lambda' \notin \mathfrak{F}$). Then, as in the classical case, if $\mathfrak{F}$ is the family of all $k$-algebras, every $\Lambda$ in $\Mtaui(\mathfrak{F})$ is called \emph{minimal $\tau$-tilting infinite}. 
This notion has been independently introduced in the recent work of Wang \cite{W}.

As our first step towards a long-term goal to establish a classification of the minimal $\tau$-tilting infinite algebras similar to those for minimal representation-infinite algebras, we explicitly describe the minimal $\tau$-tilting infinite gentle algebras in terms of their bound quivers, while $\Mtaui(\mathfrak{F}_{\St})$ is treated in a follow-up paper.
By definition, it is evident that for a family $\mathfrak{F}$ of algebras, the intersection of $\Mri(\mathfrak{F})$ and $\Mtaui(\mathfrak{F})$ consists of those rep-infinite algebras in $\mathfrak{F}$ which are minimal with respect to rep-infiniteness and are $\tau$-tilting infinite. It is not hard to see that there are algebra $\Lambda$ in $\Mtaui(\mathfrak{F})$ which are not min-rep-infinite (a concrete family of string algebras in $\Mtaui(\mathfrak{F}_{\St})\setminus \Mri(\mathfrak{F}_{\St})$ is given in the following).

Although we view some of our results from the perspective of minimal $\tau$-tilting infinite algebras, in this paper, we primarily focus on the study of $\tau$-tilting finiteness of the algebras in $\Mri(\mathfrak{F}_{\sB})$, while, for most parts, we do not assume $k$ is algebraically closed. Namely, we study the family $(\mSB)$, but over arbitrary fields.
Note that the subfamilies appearing on the above list are not pairwise disjoint. In particular, in order to extend our results from $\Mri(\mathfrak{F}_{\sB})$ to the family $\Mri(\mathfrak{F}_{\B})$, in Section \ref{section:Minimal representation-infinite algebras and more}, we work on algebraically closed fields and use the new results of Bongartz \cite{Bo3} on $(\mND)$ to characterize the biserial algebras which are non-distributive.

\subsection{Main results}
Now that we have explained the main objectives and methodology, and have also portrayed this work as part of a larger framework, in this subsection we state some of our important results on min-rep-infinite special biserial algebras. At the end of the section we will show that all of the following statements on $\tau$-tilting finiteness of min-rep-infinite special biserial algebras over arbitrary fields extend verbatim to their counterparts in the family of biserial algebras over algebraically closed fields.

To put this work into wider perspective and better fit it into a bigger picture that we aim to complete in our future investigations, some of the statements in this introductory section appear in an order different from the sections where they are shown. However, if a statement appears in wording different from those proved in the text, we give the address to those results in the text from which one can deduce the assertion.

We remark that over any special biserial algebra, a complete classification of all indecomposable $\Lambda$-modules is known and from \cite{BR}, \cite{CB1}, \cite{CB2} and \cite{WW}, we have a good knowledge of the representation theory of such algebras.
In the study of min-rep-infinite algebras and their $\tau$-tilting finiteness, certain vertices of the bound quivers play a pivotal role. In particular, in a bound quiver $(Q,I)$, a vertex $x$ of $Q$ is called a \emph{node} if for any arrow $\alpha$ incoming to $x$ and every arrow $\beta$ outgoing from $x$, we have $\beta \alpha \in I$. We say $\Lambda=kQ/I$ is \emph{node-free} if $(Q,I)$ has no nodes.

In \cite{R2}, where the min-rep-infinite special biserial algebras are studied from a different point of view, Ringel shows that for the objectives of his work it is enough to consider node-free algebras. This is because a process known as resolving the nodes preserves the representation-type of the algebra. 
Among the various fundamental properties of the node-free min-rep-infinite special biserial algebras he studies, an explicit description of them is given in terms of their bound quivers, which are called \emph{cycle, barbell} and \emph{wind wheel} (see Section \ref{Section:Bound Quivers of Mild Special Biserial Algebras} for more details). 
However, as we show in Section \ref{section: Nody algebras}, the process of resolving the nodes does not preserve $\tau$-tilting-type. Therefore, to classify all min-rep-infinite special biserial algebras with respect to $\tau$-tilting finiteness, in addition to the three classes of bound quivers introduced in the Ringel's work, we also need to consider a fourth class where the bound quiver has a node, which we simply call \emph{nody algebras}.

We now briefly describe cycle barbell and nody algebras, leaving the wind wheel algebras, whose definition is somewhat complicated, for later.

It is well-known that for every $n \in \mathbb{Z}_{>0}$ and each acyclic orientation of the affine quiver $\widetilde{\mathbb{A}}_n$, the algebra $k \widetilde{\mathbb{A}}_n$ is min-rep-infinite and $\tau$-tilting infinite. In \cite{R2}, every such algebra is called a cycle algebra and we also use this term to refer to this type of min-rep-infinite special biserial algebras.

Because the nody algebras are defined according to their vertex set (which contains at least one node), after the cycles algebras, they are perhaps the second easiest class of min-rep-infinite biserial algebras one can talk about without specifying the explicit description of their bound quiver. Theorem \ref{node-free-algebra} and Proposition \ref{nody algebras are tau-finite}, imply the following result.

\begin{theorem}
Let $\Lambda=kQ/I$ be a minimal representation-infinite special biserial algebra. If $(Q,I)$ has a vertex $x$ of degree $4$, then $x$ is a node and $\Lambda$ is $\tau$-tilting finite.
\end{theorem}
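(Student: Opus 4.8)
The plan is to prove the statement in two independent halves, matching the two results it is claimed to follow from. Let $\Lambda = kQ/I$ be minimal representation-infinite and special biserial, and suppose $(Q,I)$ has a vertex $x$ of degree $4$. First I would establish that $x$ must be a node. Condition (B1) forces $x$ to have exactly two incoming arrows $\alpha_1,\alpha_2$ and exactly two outgoing arrows $\beta_1,\beta_2$. If $x$ were not a node, some composition $\beta_j\alpha_i$ would not lie in $I$; condition (B2) then pins down which compositions survive, and one checks that in every such configuration the bound quiver contains (after passing to a suitable quotient) a proper subquiver supporting a representation-infinite algebra — typically a subquiver/quotient isomorphic to an orientation of $\widetilde{\mathbb{A}}_n$ together with extra arrows at $x$, violating minimality. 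The cleanest route is to invoke the structural classification of node-free min-rep-infinite special biserial algebras (cycle, barbell, wind wheel) referenced before the statement as Theorem~\ref{node-free-algebra}: none of those bound quivers has a vertex of degree $4$, since a cycle is an orientation of $\widetilde{\mathbb{A}}_n$ (all vertices of degree $\le 2$), and in barbell and wind wheel quivers the branch vertices have degree $3$. Hence a degree-$4$ vertex can occur only in the remaining (nody) case, which forces $x$ to be a node.

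Next I would invoke Proposition~\ref{nody algebras are tau-finite}, which asserts precisely that the nody min-rep-infinite special biserial algebras are $\tau$-tilting finite. Since the previous paragraph places $\Lambda$ in the nody class, this immediately gives the $\tau$-tilting finiteness claim, completing the proof. So at the level of the write-up the argument is short: reduce to the classification, read off that degree $4$ excludes the three node-free families, conclude $x$ is a node and $\Lambda$ is nody, then quote the nody proposition.

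The main obstacle — and the part that genuinely needs care rather than citation — is the first half: showing rigorously that a degree-$4$ vertex cannot be a non-node, i.e.\ that (B1), (B2) together with minimal representation-infiniteness force $\beta_j\alpha_i \in I$ for all $i,j$. If one does not want to lean on the full node-free classification, the direct argument is a short case analysis: by (B2) each $\alpha_i$ is followed (modulo $I$) by at most one arrow and each $\beta_j$ is preceded by at most one arrow, so the surviving compositions through $x$ form a partial matching between $\{\alpha_1,\alpha_2\}$ and $\{\beta_1,\beta_2\}$; if this matching is nonempty one exhibits, inside a proper quotient, a string algebra containing an acyclic subquiver of type $\widetilde{\mathbb{A}}_n$ or a band, contradicting that every proper quotient of $\Lambda$ is representation-finite. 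The bookkeeping — ruling out the one- and two-edge matchings and handling the cases where $\alpha_1=\alpha_2$ or $\beta_1=\beta_2$ as loops — is the only genuinely technical step, and it is exactly the content that Theorem~\ref{node-free-algebra} packages for us, so in the paper I would simply cite it.
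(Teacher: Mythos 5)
Your second half and your ultimate fallback coincide with the paper: the paper's proof of this theorem is literally to cite Theorem \ref{node-free-algebra} for the statement that every $4$-vertex is a node, and Proposition \ref{nody algebras are tau-finite} for the $\tau$-tilting finiteness of algebras with a node. Note, however, that Theorem \ref{node-free-algebra}(2) is not the cycle/barbell/wind wheel classification (that is Theorem \ref{Ringel's Classification Thm}, and it applies only to \emph{node-free} algebras); it is Ringel's statement, verbatim, that in a minimal representation-infinite special biserial algebra every $4$-vertex is a node. So ``simply citing it'' is exactly the paper's route, and with that citation your write-up is correct.

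The genuine gap is in your attempted self-contained justification of the first half via the classification. From ``cycle, barbell and wind wheel quivers have no vertex of degree $4$'' you may conclude only that $\Lambda$ is not node-free, i.e.\ that $\Lambda$ has \emph{some} node; the inference ``which forces $x$ to be a node'' is a non sequitur, since a priori $\Lambda$ could have a node elsewhere while the given degree-$4$ vertex $x$ fails to be one. If you want to avoid quoting Ringel's statement outright, the repair is to pass to the node-resolved algebra: by Theorem \ref{node-free-algebra}(1), $\nn(\Lambda)$ is again minimal representation-infinite (and special biserial, node-free), hence a cycle, barbell or wind wheel algebra by Theorem \ref{Ringel's Classification Thm}; resolving nodes does not change the degree of, or the relations through, a non-node vertex, so a non-node $4$-vertex of $\Lambda$ would survive as a $4$-vertex of $\nn(\Lambda)$, contradicting that all vertices in those three classes have degree at most $3$. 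Your alternative ``direct case analysis'' (partial matching of compositions at $x$, then exhibit a band in a proper quotient) is only a sketch and is precisely the nontrivial content of Ringel's result, so as written it does not carry the argument either; deferring to the citation, as you propose, is the right call.
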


Before we state the next theorem, let us introduce an important type of bound quiver which plays a curical role in the remainder of the paper. 
For the other purposes in this work, it is convenient for us to take a slightly more general definition of the barbell algebras, comparing to those given in \cite{R2}. In particular, we say $(Q,I)$ is a \emph{generalized barbell}, if it is of the following form:

\begin{center}
\begin{tikzpicture}
 \draw [->] (1.25,0.75) --(2,0.1);
    \node at (1.7,0.55) {$\alpha$};
 \draw [<-] (1.25,-0.75) --(2,-0.05);
    \node at (1.7,-0.5) {$\beta$};
  \draw [dashed] (1.25,0.75) to [bend right=100] (1.25,-0.75);
   \node at (1.3,0) {$C_L$};
    \node at (2,0) {$\circ $};
    \node at (2.1,-0.2) {$x$};
    
    \draw [dotted,thick] (1.65,-0.25) to [bend right=50](1.65,0.35);
 \draw [dashed] (2.05,0) --(4.7,0);
 
 \node at (3.4,0.3) {$\overbrace{\qquad \qquad\quad \qquad}^{\mathfrak{b}}$};
 
 \node at (4.75,0.0) {$\circ$};
 \draw [<-] (5.5,0.75) --(4.75,0.05);
    \node at (5,0.45) {$\delta$};
 \draw [->] (5.50,-0.8) --(4.8,-0.05);
    \node at (5,-0.55) {$\gamma$};
  \draw [dashed] (5.55,0.8) to [bend left=100] (5.55,-0.8);
   \node at (5.6,0) {$C_R$};
   \node at (4.65,-0.2) {$y$};
   
\draw [dotted,thick] (5.15,-0.3) to [bend left=50](5.15,0.3);
\end{tikzpicture}
\end{center}
where the bar $\mathfrak{b}$ is a copy of $\mathbb{A}_m$ (possibly of length zero) and $I=\langle \beta \alpha, \delta \gamma \rangle$, while the orientation and length of the dashed segments are arbitrary and we may also have $\alpha=\beta$ and (or) $\delta=\gamma$. The left and right cyclic strings, respectively given by $C_L:=\alpha \cdots \beta$ and $C_R:=\gamma \cdots \delta$, have at most one vertex in common (which occurs exactly when  $l(\mathfrak{b})=0$, implying that $x=y$).
Because $kQ/I$ is a finite dimensional algebra by assumption, if $C_L$ and $C_R$ are both linearly oriented (serial) paths in $Q$, then we must have $l (\mathfrak{b})>0$ (otherwise there exists an oriented cycle in $Q$ which does not belong to $I$).
Following the Ringel's work \cite{R2}, a generalized barbell quiver is called \emph{barbell} if the bar $\mathfrak{b}$ is of positive length.

The next result is essential in this work. The first version of the theorem, which addresses the barbell algebras studied in \cite{R2}, is proved in Sections \ref{Section:tau-Tilting Finite Node-free Special Biserial Algebras}, whereas the proof for the generalized barbell algebras appears in \ref{Section:tau-tilting finite gentle algebras are representation-finite}.

\begin{theorem}
Every generalized barbell algebra is minimal $\tau$-tilting infinite.
\end{theorem}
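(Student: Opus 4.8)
The plan is to establish two things: first, that every generalized barbell algebra $\Lambda = kQ/I$ is $\tau$-tilting infinite, and second, that every proper quotient of such an algebra is $\tau$-tilting finite (indeed, we expect it to be representation-finite, which is the stronger and more convenient statement for a gentle algebra). By the brick-$\tau$-rigid correspondence of Demonet--Iyama--Jasso \cite{DIJ}, $\tau$-tilting infiniteness is equivalent to brick-infiniteness, so for the first part I would exhibit an infinite family of pairwise non-isomorphic bricks. The natural candidates come from string combinatorics: the two cyclic strings $C_L = \alpha\cdots\beta$ and $C_R = \gamma\cdots\delta$ can be concatenated along the bar $\mathfrak{b}$ to produce, for each $n \ge 1$, a string of the form $w_n$ that wraps around $C_L$ some number of times, traverses the bar, wraps around $C_R$, and returns. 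Because the relations are exactly $\beta\alpha$ and $\delta\gamma$, the strings built by iterating the ``$C_L$-loop then bar then $C_R$-loop then bar back'' pattern are legal strings, and one checks via the standard description of $\Hom$ and $\End$ between string modules (from \cite{BR}, \cite{CB1}) that the corresponding string modules $M(w_n)$ are bricks — the key point being that the graph of $w_n$ has no proper ``admissible'' substring giving a nonzero nilpotent endomorphism, which follows from the primitive, non-periodic shape of $w_n$ and the placement of the relations at $x$ and $y$. Since the $w_n$ have strictly increasing length, the $M(w_n)$ are pairwise non-isomorphic, giving infinitely many bricks, hence $\Lambda$ is $\tau$-tilting infinite.

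For the minimality, I would take a proper quotient $\Lambda' = \Lambda/J$ with $J \ne 0$ and show $\Lambda'$ is representation-finite (hence $\tau$-tilting finite, since rep-finite $\Rightarrow$ $\tau$-tilting finite). Since $\Lambda$ is a string algebra, $J$ can be normalized to be generated by paths, and killing any nonzero path either shortens or breaks one of the cyclic strings $C_L$, $C_R$, or the bar. I would argue case by case: if the quotient kills an arrow or relation so that at least one of $C_L$ or $C_R$ is no longer a cycle that can be freely wound, then the remaining bound quiver deforms into (a gentle algebra whose underlying shape is) a disjoint union / iterated one-point extension of Dynkin-type pieces, and one invokes Gabriel's theorem together with the fact that a finite-dimensional string algebra whose strings have bounded length is rep-finite. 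The cleanest formulation: show that after passing to any proper quotient, there is a uniform bound on the length of all strings (equivalently, no ``band'' survives and no infinite family of strings survives), using that every sufficiently long string must traverse one of the two cycles more than once, which is now forbidden. This shows $\Lambda'$ has only finitely many indecomposables.

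The main obstacle I expect is the case analysis for minimality: a generalized barbell allows $\alpha = \beta$ and/or $\delta = \gamma$, allows $l(\mathfrak{b}) = 0$ (so $x = y$), and allows arbitrary orientations and lengths on the dashed arcs, so there are several genuinely different configurations — a ``true'' barbell with a positive-length bar, a figure-eight-type quiver when $x = y$, and degenerate cases where a cycle has length one or two. In each configuration one must identify exactly which minimal relations $J$ can contain and verify that killing any one of them destroys the brick family. A secondary technical point is the brick verification for the $M(w_n)$: one has to be careful that the concatenation at the two ``mouths'' of the barbell genuinely uses the relations $\beta\alpha, \delta\gamma \in I$ to prevent the string from being extendable in a way that would create an endomorphism, and that when $\alpha = \beta$ (a loop-like cycle) the string $w_n$ is still reduced and gives a brick. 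I would handle this by appealing to the well-documented criterion that a string module $M(w)$ is a brick if and only if $w$ admits no nontrivial ``arrow-reversing factorization'' compatible with the relations, and checking this combinatorially for the explicit $w_n$.
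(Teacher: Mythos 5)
There is a genuine gap in the minimality half of your proposal. Your strategy is to show that every proper quotient of a generalized barbell algebra is representation-finite, on the grounds that killing any nonzero path ``shortens or breaks one of $C_L$, $C_R$ or the bar'' so that no band survives. This is false. If the bar $\mathfrak{b}$ is serial (a directed path from $x$ to $y$), kill the path $\delta\,\mathfrak{b}\,\alpha$: this breaks neither cycle nor the bar; it destroys the band $\mathfrak{b}^{-1}C_R\mathfrak{b}C_L$ but the band $\mathfrak{b}^{-1}C_R^{-1}\mathfrak{b}C_L$ is still a string in the quotient, which is therefore representation-infinite. This is exactly how the wind wheel algebras arise, and it is the reason a barbell with serial bar is not minimal representation-infinite (Lemma \ref{min-rep-infiniteinite barbell algebras}); the analogous phenomenon occurs when $l(\mathfrak{b})=0$, where killing $\delta\alpha$ leaves the band $C_R^{-1}C_L$ alive. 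So ``all proper quotients are rep-finite'' is unavailable, and in fact the paper stresses that generalized barbell algebras are minimal $\tau$-tilting infinite while admitting rep-infinite proper quotients. What actually has to be proved is that these rep-infinite quotients are still brick-finite, i.e.\ that once such a relation is imposed, every sufficiently long string (and every band module) acquires a nonzero non-invertible endomorphism; this is the graph-map analysis carried out for the wind wheel and nody algebras (Propositions \ref{Wind wheel tau-finite} and \ref{nody algebras are tau-finite}), and it is where the real content of minimality lies. A smaller but related error: a two-sided ideal of a string algebra need not be generated by paths (e.g.\ $\langle p_1-\lambda p_2\rangle$ for parallel paths), so even the monomial normalization at the start of your case analysis needs justification.

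On the first half, your construction is essentially the paper's (powers of a band that winds once around $C_L$, through the bar, once around $C_R$), but the stated justification --- that $M(w_n)$ is a brick ``because $w_n$ is primitive and non-periodic'' --- is not a valid principle: most primitive bands on a barbell quiver are not bricks, and powers of bricks need not be bricks; see Remark \ref{powers of bricks may not be brick}. The brick property depends on choosing the winding directions of $C_L$ and $C_R$ compatibly with the orientation of the first bar arrow, and when $l(\mathfrak{b})=0$ the barbell-type string must instead be started at the first non-serial turn of $C_L$, since the naive $C_RC_L$ construction produces a substring occurring on both top and bottom. You flag this verification as a point needing care, so I would call this part incomplete rather than wrong, but as written it does not yet produce the infinite brick family, and the minimality argument as proposed cannot be repaired without the brick-finiteness analysis of the rep-infinite quotients described above.
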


Building upon the work of Ringel \cite{R2}, one can explicitly describe the bound quivers of all classes of min-rep-infinite biserial algebras that we should study from the viewpoint of $\tau$-tilting theory. 
The following theorem summarizes our main results in this paper and classifies all min-rep-infinite biserial algebras with respect to $\tau$-tilting finiteness. Different parts of the statement are shown in Sections \ref{Section:Bound Quivers of Mild Special Biserial Algebras}, \ref{Section:tau-Tilting Finite Node-free Special Biserial Algebras} and \ref{section: Nody algebras}. 
To help the reader with the less visible aspects of this result, some parts of the following classification which are less immediate to deduce from the table will follow the next theorem as separate assertions. 

\begin{theorem}
Let $\Lambda=kQ/I$ be a minimal representation-infinite special biserial algebra. Then $(Q,I)$ is a cycle, barbell, wind wheel or nody bound quiver. Moreover, such a $\Lambda$ is $\tau$-tilting infinite if and only if $(Q,I)$ is a cycle or a barbell quiver.
\end{theorem}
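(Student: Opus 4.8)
The plan is to combine the structural classification of node-free min-rep-infinite special biserial algebras (cycle, barbell, wind wheel) due to Ringel with the analysis of nody algebras carried out elsewhere in this paper, and then settle the $\tau$-tilting dichotomy case by case. First I would invoke Proposition \ref{Monomial ideal} to reduce to string algebras (no projective-injectives), and then split into two regimes according to whether $(Q,I)$ has a node. If $(Q,I)$ has a node, then by the first theorem above any degree-$4$ vertex is forced to be a node and, more to the point, Proposition \ref{nody algebras are tau-finite} tells us such a $\Lambda$ is $\tau$-tilting finite; hence nody algebras contribute only to the ``$\tau$-tilting finite'' side, and we must check they are genuinely a fourth class (not already a cycle or barbell), which is where \emph{resolving the nodes} from Section \ref{section: Nody algebras} is used. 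If $(Q,I)$ is node-free, I would quote Ringel's classification \cite{R2}: $(Q,I)$ is a cycle, a barbell, or a wind wheel bound quiver, so together with the nody case the first sentence of the theorem is established.

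For the ``if and only if'' I would argue the three remaining families separately. Cycle algebras are $k\widetilde{\mathbb A}_n$ for some acyclic orientation, hence tame rep-infinite and classically known to be $\tau$-tilting infinite (this is already recalled in the text), so they lie on the $\tau$-tilting infinite side. Barbell algebras: by the second theorem above every generalized barbell algebra — in particular every barbell in Ringel's sense — is minimal $\tau$-tilting infinite, and a fortiori $\tau$-tilting infinite; so barbells lie on the $\tau$-tilting infinite side as well. It remains to show that wind wheel algebras are $\tau$-tilting \emph{finite}; this, combined with the nody case above, shows that everything outside the cycle/barbell families is $\tau$-tilting finite, completing the equivalence.

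The main obstacle is the wind wheel case. The strategy I would adopt is the brick–$\tau$-rigid correspondence of Demonet–Iyama–Jasso \cite{DIJ}: $\Lambda$ is $\tau$-tilting finite if and only if it is brick-finite, so it suffices to prove that a wind wheel string algebra has only finitely many bricks. Since $\Lambda$ is a string algebra, every indecomposable is a string or band module, band modules are never bricks, and a string module $M(w)$ is a brick precisely when the string $w$ admits no nontrivial ``overlap'' arrow-quotient or sub-string maps into itself; so I would translate brick-finiteness into a purely combinatorial statement about the strings of the wind wheel bound quiver. The hard part is that a wind wheel has several cyclic strings glued along a bar, so a priori there are infinitely many strings (powers of the cyclic parts); the key point to establish is that whenever a string is long enough to wind around a cycle the resulting module fails to be a brick — intuitively, a self-overlap produces a nonzero non-invertible endomorphism — so that bricks are confined to strings of bounded length, of which there are finitely many. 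I expect this overlap/endomorphism analysis, carried out uniformly over the somewhat intricate combinatorics of wind wheel quivers, to be the technical heart of the argument; the cycle and barbell halves, by contrast, are essentially immediate from results already stated.
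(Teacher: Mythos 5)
Your overall skeleton coincides with the paper's: Lemma \ref{Monomial ideal} to pass to string algebras, the node/node-free dichotomy via Theorem \ref{node-free-algebra} and Ringel's classification (Theorem \ref{Ringel's Classification Thm}), Proposition \ref{nody algebras are tau-finite} for the nody case, preprojective components for cycle algebras, and the barbell result (Proposition \ref{Barbells tau-infinite}, subsumed in the generalized barbell theorem) for the $\tau$-tilting infinite side. The genuine problem is in the wind wheel half, which you correctly identify as the technical heart but then treat with a false lemma: the claim that ``band modules are never bricks'' is wrong. What is true, and what the paper records, is that band modules are never $\tau$-rigid, since $\tau X \simeq X$ for any band module $X$; but a band module can very well be a brick --- the quasi-simple modules in the homogeneous tubes over a cycle algebra $k\widetilde{\mathbb{A}}_n$ are band modules and are bricks. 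A wind wheel algebra has a (unique) band $v$, hence an infinite family of band modules $M(v,\lambda)$, so brick-finiteness cannot be settled by inspecting string modules alone: one must show these band modules are not bricks. The paper does exactly this in the proof of Proposition \ref{Wind wheel tau-finite}, exhibiting a nonzero non-surjective endomorphism of $M(v,\lambda)$ coming from the two passes of $v$ through a bar; your argument silently discards this infinite family on invalid grounds (and note that switching to $\tau$-rigid-finiteness does not rescue the shortcut, since ``not a brick'' does not imply ``not $\tau$-rigid'').

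Second, the mechanism you propose for string modules --- ``whenever a string is long enough to wind around a cycle the resulting module fails to be a brick'' --- cannot be the operative statement, because it is contradicted by barbell algebras: there the powers $w^m$ of a suitable band wind around both cycles arbitrarily often, yet all $M(w^m)$ are bricks (Proposition \ref{Barbells tau-infinite}), which is precisely the other half of the theorem you are proving. What makes wind wheels brick-finite is the specific structure the paper isolates: every string of length greater than $2l(v)$ must traverse some serial bar $\mathfrak{b}_t$ in both directions, and of the two overlap-free ways of entering and leaving that bar, one is annihilated by the B-relation, so the surviving configuration forces an admissible pair and hence a non-invertible graph map. The serial bars together with the B-relations (and, for the nody case, the nodes) are the indispensable ingredients; a heuristic that ignores them proves too much. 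With the band modules handled and the bar/B-relation analysis supplied, your outline becomes the paper's argument, but as written these two points are real gaps.
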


Let us remark that if a bound quiver $(Q,I)$ contains a sink $a$ and a source $b$, one can simply create a node by changing $(Q,I)$ to $(Q',I')$ via a process known as \emph{gluing} (which identifies the vertices $a$ and $b$ and puts a quadratic relation on the composition of any arrow $\alpha$ incoming to $a$ with any arrow $\beta$ outgoing from $b$).
This, in particular, allows us to create nody algebras from all other types of min-rep-infinite special biserial algebras, provided they have a sink and a source in their bound quiver. Consequently, for almost all integers $d \in \mathbb{Z}_{>0}$, the number of isomorphism classes of $\tau$-tilting finite min-rep-infinite special biserial algebras with $d$ simple modules is significantly larger than their $\tau$-tilting infinite counterparts (see Remark \ref{size of barbell and affine vs nody and wind wheel}).

Now that in the preceding theorem we captured the classification of min-rep-infinite special biserial algebras with respect to $\tau$-tilting finiteness, we state some of the other interesting consequences of our methodology.

\begin{theorem}\label{main-thm}
Let $\Lambda=kQ/I$ be a minimal representation-infinite special biserial algebra. Then, $\Lambda$ is $\tau$-tilting infinite if and only if it is a gentle algebra.
\end{theorem}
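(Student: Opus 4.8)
The plan is to combine the structural classification of min-rep-infinite special biserial algebras (Theorem stating that $(Q,I)$ is a cycle, barbell, wind wheel, or nody bound quiver) with the $\tau$-tilting finiteness verdict (Theorem stating $\Lambda$ is $\tau$-tilting infinite iff $(Q,I)$ is a cycle or barbell) and then identify which of the four types are gentle. So the first step is to recall the definition of a gentle algebra: it is a string algebra $kQ/I$ with $I$ generated by paths of length $2$, satisfying the additional gentleness axiom on the relations (for each arrow $\alpha$, at most one $\beta$ with $\beta\alpha\in I$ and at most one $\gamma$ with $\alpha\gamma\in I$, in addition to the special biserial conditions). I would then check, type by type, that cycle algebras $k\widetilde{\mathbb{A}}_n$ and barbell algebras (with $I=\langle\beta\alpha,\delta\gamma\rangle$, relations of length $2$, no nodes) are gentle; this is essentially immediate since cycles are hereditary of type $\widetilde{\mathbb{A}}_n$ hence trivially gentle, and the barbell presentation has only quadratic relations and inherits the string-algebra axioms together with gentleness of the relations at the kissing vertices $x,y$.

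The converse direction — that wind wheel and nody algebras are \emph{never} gentle — is where the real content lies. For nody algebras the obstruction is clean: a node $x$ forces $\beta\alpha\in I$ for \emph{every} incoming $\alpha$ and \emph{every} outgoing $\beta$, and in a min-rep-infinite special biserial algebra with a node we know from the earlier theorems that $x$ has degree $4$ (or at least has two incoming and two outgoing arrows whose four compositions all lie in $I$); this violates the gentleness axiom that forbids an arrow $\alpha$ from having two distinct $\beta$ with $\beta\alpha\in I$. Hence no nody algebra is gentle. For wind wheel algebras I would appeal to their explicit bound-quiver description (deferred to Section \ref{Section:Bound Quivers of Mild Special Biserial Algebras} in the paper): a wind wheel necessarily contains a relation that is \emph{not} of length two, or a vertex where the gentleness count fails — intuitively, the "wheel" is built from several cyclic strings kissing along a bar in a way that creates either a longer zero-relation or a doubled relation at a hub vertex. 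The precise argument is to read off from the definition of wind wheel that at least one defining relation has length $\geq 3$ (this is exactly what distinguishes wind wheels from barbells, where the bar forces everything to stay quadratic), so wind wheels fail to be gentle.

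Assembling these: $\Lambda$ min-rep-infinite special biserial is $\tau$-tilting infinite $\iff$ $(Q,I)$ is a cycle or barbell $\iff$ $(Q,I)$ is gentle. The left equivalence is the classification theorem already stated; the right equivalence is the case analysis above. I would also note the compatibility with Theorem \ref{main-thm}'s companion statement about $\Mtaui$: every generalized barbell (in particular every barbell) is minimal $\tau$-tilting infinite, and cycles are visibly minimal $\tau$-tilting infinite, which re-confirms that the gentle min-rep-infinite algebras are precisely the $\tau$-tilting infinite ones, giving as a corollary that a gentle algebra is $\tau$-tilting infinite iff it is representation-infinite (pass to a min-rep-infinite quotient, which stays gentle only if it is a cycle or barbell, and use that $\tau$-tilting finiteness is quotient-closed).

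The main obstacle I anticipate is the wind wheel direction: unlike nody algebras, where a single local feature (the node) instantly breaks gentleness, ruling out gentleness for wind wheels requires genuinely invoking their combinatorial definition and exhibiting the offending relation or hub vertex. One must be careful that the "slightly more general" generalized barbells introduced here do not sneak a non-gentle example into the barbell class, and conversely that no degenerate wind wheel accidentally coincides with a gentle presentation after reduction — so the argument should be phrased in terms of the reduced (node-free) bound quivers and should explicitly use that a barbell has bar of positive length while the wind wheel's hub structure is what produces the non-quadratic relation. Everything else is bookkeeping against the classification already established in the earlier theorems.
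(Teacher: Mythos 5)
Your proposal is correct and follows essentially the same route as the paper: invoke the classification of $\Mri(\mathfrak{F}_{\sB})$ together with the verdict that only cycle and barbell algebras are $\tau$-tilting infinite, then check gentleness class by class, with nody algebras failing gentleness at the degree-$4$ node and wind wheels failing because their B-relations have length $l(v_i)+2\geq 3$. The one point you flagged as uncertain (the wind wheel case) is resolved exactly as you guessed — the paper's proof simply observes that every wind wheel has a non-quadratic monomial relation.
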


As we show in Section \ref{Section:tau-tilting finite gentle algebras are representation-finite}, the generalized barbell quivers could be obtained via the study of $\tau$-tilting finiteness of gentle algebras. This results in a complete classification of minimal $\tau$-tilting infinite gentle algebras, as stated in the following result.

\begin{theorem}
A gentle algebra $A=kQ/I$ is minimal $\tau$-tilting infinite if and only if $(Q,I)$ is one of the following:

\begin{enumerate}
    \item $A$ is a cycle algebra;
    \item $(Q,I)$ is a generalized barbell quiver.
\end{enumerate}
\end{theorem}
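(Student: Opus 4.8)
The plan is to prove the two directions of the equivalence separately, leveraging the structural results already assembled in the excerpt. For the ``if'' direction, suppose $(Q,I)$ is a cycle algebra or a generalized barbell quiver. If $(Q,I)$ is a cycle algebra, i.e. $\Lambda = k\widetilde{\mathbb{A}}_n$ with an acyclic orientation, then it is well known (and recalled in the introduction) that $\Lambda$ is $\tau$-tilting infinite; and every proper quotient of $k\widetilde{\mathbb{A}}_n$ is obtained by deleting at least one arrow, which disconnects the cycle and yields a representation-finite (hence $\tau$-tilting finite) algebra of type $\mathbb{A}$. Thus $\Lambda \in \Mtaui(\mathfrak{F}_{\G})$. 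If $(Q,I)$ is a generalized barbell quiver, then by the second theorem of the excerpt (``Every generalized barbell algebra is minimal $\tau$-tilting infinite''), $\Lambda$ is minimal $\tau$-tilting infinite, and since generalized barbell quivers are visibly gentle (the relations are exactly the quadratic monomials $\beta\alpha$ and $\delta\gamma$ and conditions (B1), (B2) plus the gentle condition on relations are checked directly), we get $\Lambda \in \Mtaui(\mathfrak{F}_{\G})$.

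For the ``only if'' direction, suppose $A = kQ/I$ is a gentle algebra that is minimal $\tau$-tilting infinite; I must show $(Q,I)$ is a cycle or generalized barbell quiver. First, since $A$ is $\tau$-tilting infinite it is in particular representation-infinite. Now I would like to replace $A$ by a min-rep-infinite quotient: choose a surjection $A \twoheadrightarrow A'$ with $A'$ min-rep-infinite (such a quotient always exists since one can keep passing to proper rep-infinite quotients, which terminates). A priori $A'$ need not be gentle, but it is still special biserial (the family $\mathfrak{F}_{\sB}$ is quotient-closed, as noted in the excerpt), so by the classification theorem $A'$ is a cycle, barbell, wind wheel, or nody bound quiver, and by Theorem~\ref{main-thm} the $\tau$-tilting infinite ones among these are exactly the cycles and barbells. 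Here is where I must be careful: $A$ being minimal $\tau$-tilting infinite means $A$ has \emph{no} proper $\tau$-tilting infinite quotient, so if $A' \neq A$ then $A'$ would be a proper $\tau$-tilting infinite quotient---contradiction. Hence $A = A'$ is itself min-rep-infinite, and it is a cycle or barbell. But then, as $A$ is \emph{also} gentle, I need to see that a barbell (in Ringel's narrow sense) that happens to be gentle is a generalized barbell---which is immediate since barbells are a special case of generalized barbells---and conversely rule out the wind wheel and nody cases, which is automatic because those are the non-$\tau$-tilting-infinite ones. This completes the argument, \emph{provided} the subtlety about non-gentle min-rep-infinite special biserial algebras is handled: one should observe that the wind wheel and nody bound quivers arising from gentle algebras still fall under the $\tau$-tilting finite verdict of the classification theorem, so no gentle min-rep-infinite algebra outside cycles and generalized barbells can be $\tau$-tilting infinite.

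The main obstacle I anticipate is the bookkeeping in the ``only if'' direction around \emph{which} min-rep-infinite special biserial algebras are actually gentle, and making sure the minimality hypothesis is used correctly. Concretely: a minimal $\tau$-tilting infinite gentle algebra $A$ need not itself be min-rep-infinite a priori (the excerpt explicitly warns that $\Mtaui(\mathfrak{F}) \setminus \Mri(\mathfrak{F})$ can be nonempty), so the step showing $A$ coincides with its min-rep-infinite quotient $A'$ must genuinely invoke that $A'$ is $\tau$-tilting infinite (via Theorem on cycles/barbells) together with the minimality of $A$ in $\Mtaui$. One must also confirm that $A'$ being gentle follows: since $A$ is gentle and $A' = A/J$ for some ideal $J \supseteq I$, one checks $A'$ is again gentle (gentle algebras are not quotient-closed in general, but a quotient by further quadratic monomial relations of a gentle algebra obtained by deleting arrows/adding relations compatible with (B1)--(B2) remains gentle; in fact since $A = A'$ here, this point evaporates once equality is established). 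I would also double-check the edge case $l(\mathfrak{b}) = 0$ in the generalized barbell definition, where $x = y$ and the quiver is two loops-or-cycles meeting at a node, to confirm it is genuinely gentle and genuinely min-$\tau$-tilting-infinite, matching the second theorem of the excerpt.

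Finally, I would phrase the proof to cite precisely: the ``if'' direction uses the cycle-algebra fact from the introduction plus the generalized barbell minimality theorem; the ``only if'' direction uses quotient-closedness of $\mathfrak{F}_{\sB}$, the classification theorem (cycle/barbell/wind wheel/nody), Theorem~\ref{main-thm}, and the definition of $\Mtaui$. The whole argument is then essentially a clean deduction from results already established, with the only real content being the reduction of an arbitrary minimal $\tau$-tilting infinite gentle algebra to a min-rep-infinite one.
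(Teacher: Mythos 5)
Your ``if'' direction is fine in substance: cycle algebras are min-rep-infinite and $\tau$-tilting infinite, and generalized barbells are covered by the theorem you cite (your justification that proper quotients of $k\widetilde{\mathbb{A}}_n$ all arise by ``deleting an arrow'' is imprecise---one can also quotient by longer paths or combinations---but the needed fact, min-rep-infiniteness of acyclic $k\widetilde{\mathbb{A}}_n$, is standard). The genuine gap is in the ``only if'' direction, at the step ``if $A' \neq A$ then $A'$ would be a proper $\tau$-tilting infinite quotient---contradiction.'' Your $A'$ is only known to be a \emph{min-rep-infinite} quotient of $A$; by the classification it may be a wind wheel or a nody algebra, and those are $\tau$-tilting \emph{finite} (Theorem \ref{tau-finiteness of min-rep-inf special biseiral}), in which case minimality of $A$ in $\Mtaui$ gives no contradiction whatsoever and you cannot conclude $A=A'$. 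Indeed the intermediate conclusion you extract---that every minimal $\tau$-tilting infinite gentle algebra is min-rep-infinite---is false: a barbell with serial bar (or a generalized barbell with bar of length zero) is gentle and minimal $\tau$-tilting infinite by the very theorem you use in the ``if'' direction, yet it is not min-rep-infinite by Lemma \ref{min-rep-infiniteinite barbell algebras}. So your two directions contradict each other, which shows the ``only if'' argument cannot be repaired by reducing to min-rep-infinite quotients; you flagged this danger yourself, but the resolution you propose (``genuinely invoke that $A'$ is $\tau$-tilting infinite'') is exactly what is unavailable.

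The paper's route avoids this by reducing along quotients that are guaranteed to stay $\tau$-tilting infinite: starting from a rep-infinite gentle $A$, trimming (Definition-Lemma \ref{trimming}) and band-reduction (Definition \ref{reduced gentle}) produce a quotient of $A$ that is a \emph{fully reduced} gentle algebra; Proposition \ref{quiver of fully-reduced gentle algebras} identifies fully reduced gentle algebras as precisely the cycle algebras and the generalized barbell algebras, and the proof of Theorem \ref{tau-tilting finiteness of gentle algebras} shows each of these carries infinitely many bricks, hence is $\tau$-tilting infinite. Only now does minimality of $A$ in $\Mtaui$ force $A$ to coincide with that fully reduced quotient, giving the ``only if'' direction including the serial-bar and length-zero-bar cases that your argument would miss. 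If you want to salvage your write-up, replace the passage to a min-rep-infinite quotient by this fully-reduced reduction (or give an independent argument that the chosen quotient is brick-infinite before invoking minimality).
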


Because every min-rep-infinite special biserial algebra is a string algebra, from the previous theorem and the classification of min-rep-infinite special biserial algebras, one immediately concludes that the family of gentle algebras $\mathfrak{F}_{\G}$ and its complement $\mathfrak{F}_{\St}\setminus \mathfrak{F}_{\G}$ inside the family of string algebras $\mathfrak{F}_{\St}$ behave significantly different with respect to the notion of $\tau$-tilting finiteness.
This also gives an explicit family of string algebras $\Lambda$ such that $\Lambda$ is minimal $\tau$-tilting infinite which admits proper algebra quotients $\Lambda/J$ that are rep-infinite, but not necessarily min-rep-infinite.

This shows that for a given family of algebras $\mathfrak{F}$, there could be a tower of algebras between $\tau$-tilting finite members of $\Mri(\mathfrak{F})$ and those algebras in $\Mtaui(\mathfrak{F})$ which are not min-rep-infinite. 
This observation highlights the need for a systematic study of minimal $\tau$-tilting infinite algebras and a classification analogous to the one we briefly reviewed above for the min-rep-infinite algebras over an algebraically closed field.
To approach such a classification, in a forthcoming paper, we further give a complete list of minimal $\tau$-tilting infinite string algebras in terms of their bound quivers.

As a by-product of the above results, we also get the following important corollary.

\begin{corollary}
A gentle algebra is $\tau$-tilting finite if and only if it is representation-finite. 
\end{corollary}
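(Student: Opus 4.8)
The corollary asserts that for a gentle algebra $A$, $\tau$-tilting finiteness is equivalent to representation-finiteness. One direction is immediate and requires no work: every representation-finite algebra is $\tau$-tilting finite, since $\add M$ contains only finitely many indecomposables and hence there can be only finitely many $\tau$-rigid (indeed only finitely many indecomposable) modules. So the entire content is the forward implication: if a gentle algebra $A$ is $\tau$-tilting finite, then it is representation-finite. I would prove the contrapositive: assume $A$ is a representation-infinite gentle algebra and deduce that $A$ is $\tau$-tilting infinite.

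The reduction step is the heart of the argument, and it uses the general methodology spelled out in the introduction together with the fact (emphasized in the excerpt) that $\mathfrak{F}_{\G} \subsetneq \mathfrak{F}_{\St} \subsetneq \mathfrak{F}_{\sB}$ and that $\mathfrak{F}_{\sB}$ is quotient-closed. Since $A$ is representation-infinite, among its quotients there is at least one that is minimal representation-infinite; call it $A' = A/J$. Because being a string (indeed gentle) algebra is \emph{not} quotient-closed, I cannot immediately conclude $A'$ is gentle. However — and this is where I would invoke the preceding theorems — Theorem~\ref{main-thm} tells us precisely that a minimal representation-infinite special biserial algebra is $\tau$-tilting infinite if and only if it is gentle, and the classification theorem says such an algebra is a cycle, barbell, wind wheel, or nody bound quiver. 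So I would argue: $A'$ is a quotient of a gentle algebra, hence a quotient of a string algebra; a quotient of a string algebra that is minimal representation-infinite is one of the four types above; and I must rule out wind wheel and nody, since those are the $\tau$-tilting \emph{finite} ones that are not gentle. The cleanest route is: a quotient of a gentle algebra cannot be a wind wheel or a nody algebra of the relevant type — equivalently, if $A$ is gentle then every minimal-representation-infinite quotient $A'$ must already be gentle (a cycle or a generalized barbell), because the relations that make a quotient fail to be gentle (a vertex where more than two walks through it are allowed, or a double arrow with a single relation) cannot be \emph{created} by passing to a quotient when one starts from a gentle algebra. One then concludes $A'$ is a cycle or generalized barbell, hence minimal $\tau$-tilting infinite by the Theorem on generalized barbells, hence $\tau$-tilting infinite; and since $\tau$-tilting finiteness is preserved under quotients (as recorded in the introduction via \cite{DIJ}), $A$ itself is $\tau$-tilting infinite.

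I expect the main obstacle to be exactly the bookkeeping in that reduction: verifying carefully that when $A$ is gentle, a minimal representation-infinite quotient of $A$ stays inside the gentle family and does not land in the wind-wheel or nody cases. This hinges on understanding which defining features of gentleness (the quadratic relations, the bound on arrows at each vertex, the condition on $\beta\alpha \notin I$) survive under algebra quotients — intuitively, quotients only add relations and never add arrows, so the arrow-count conditions are preserved, and one must check that the "at most one continuation" conditions and the quadratic-relations condition are likewise preserved, which identifies gentle-ness as (essentially) quotient-closed among string algebras for the purpose at hand. Once that point is secured, everything else is a direct appeal to the already-established Theorems on generalized barbells and on the classification of $\Mri(\mathfrak{F}_{\sB})$, together with the standard facts about preservation of ($\tau$-tilting) finiteness under quotients. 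An alternative, more self-contained finish would be to quote the immediately preceding theorem classifying minimal $\tau$-tilting infinite gentle algebras (cycles and generalized barbells) and observe that a representation-infinite gentle algebra, having a minimal representation-infinite gentle quotient which is one of these, must be $\tau$-tilting infinite.
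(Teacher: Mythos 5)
The forward direction (rep-finite $\Rightarrow$ $\tau$-tilting finite) is fine, but your reduction for the converse has a genuine gap, and it is precisely the difficulty that Section \ref{Section:tau-tilting finite gentle algebras are representation-finite} of the paper is built to circumvent. Your key claim is that if $A$ is gentle and representation-infinite, then any minimal representation-infinite quotient $A'$ of $A$ is again gentle (hence a cycle or barbell, hence $\tau$-tilting infinite), because ``the relations that make a quotient fail to be gentle cannot be created by passing to a quotient.'' This is false: passing to a quotient adds relations, and added relations can destroy gentleness in both possible ways --- one can add a non-quadratic monomial relation, and one can add a second relation $\beta_2\alpha$ at a vertex where $\beta_1\alpha$ already lies in $I$, violating condition (G). The paper warns about exactly this in Remark \ref{vertex-quotient} and Example \ref{Example of gentle,string,special biserial algebras}. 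A concrete counterexample to your reduction is a barbell algebra whose bar is serial: it is gentle and representation-infinite, but by Lemma \ref{min-rep-infiniteinite barbell algebras} it is \emph{not} minimal representation-infinite, and its minimal representation-infinite quotients are wind wheel algebras (obtained by adding the B-relation, a non-quadratic monomial relation along the serial bar --- compare $E(p,q,r)$ in Proposition \ref{Bongartz classification of min-rep-inf non-distributive}), which by Proposition \ref{Wind wheel tau-finite} are $\tau$-tilting \emph{finite} and not gentle. For such an $A$ every member of $\Mri(A)$ is $\tau$-tilting finite, so the chain ``$A'$ $\tau$-tilting infinite $\Rightarrow$ $A$ $\tau$-tilting infinite'' never gets started; the same problem kills your alternative finish, since such an $A$ has no minimal representation-infinite quotient that is gentle. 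The paper states this phenomenon explicitly: there are rep-infinite gentle algebras, not min-rep-infinite, all of whose proper quotients are rep-finite or non-gentle.

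What the paper does instead is to stay inside the gentle family and use a different notion of minimality: it trims nodes and secluded vertices and reduces along bands (Definition-Lemma \ref{trimming}, Definition \ref{reduced gentle}); these reductions are quotients of $A$, so brick-infiniteness of the reduced algebra still lifts to $A$. The resulting ``fully reduced'' gentle algebras are classified in Proposition \ref{quiver of fully-reduced gentle algebras} as hereditary $k\widetilde{\mathbb{A}}_n$ or generalized barbell algebras --- a class strictly larger than the min-rep-infinite barbells, since it includes serial bars and bars of length zero. The real work in the proof of Theorem \ref{tau-tilting finiteness of gentle algebras} is then to produce infinitely many bricks for these extra cases directly (the explicit band construction for $l(\mathfrak{b})=0$, plus Proposition \ref{Barbells tau-infinite} for positive-length bars); none of this is obtainable from the classification of $\Mri(\mathfrak{F}_{\sB})$ alone, which is where your argument stops.
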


We remark that at the closing stage of this project, we were notified that in \cite{P}, Plamondon also considers the problem of $\tau$-tilting finiteness of gentle algebras and proves the previous assertion independently. In retrospect, our result could be viewed as a generalization of the scope of his work from the subfamily $\Mri(\mathfrak{F}_{\G})$ in the family of gentle algebras to $\Mri(\mathfrak{F}_{\sB})$ in the family of special biserial algebras.

To state the next statement succinctly, let us recall that $\Lambda$ is said to be \emph{mild} if every proper quotient of $\Lambda$ is rep-finite. Obviously, every mild algebra is either rep-finite or min-rep-infinite.
In Sections \ref{Section:Reduction to mild special biserial algebras} and \ref{section:Minimal representation-infinite algebras and more} we show that over every mild special biserial algebra $\Lambda$, as well as any generalized barbell algebra, almost all indecomposable rigid modules are $\tau$-rigid (for definitions and properties, see Sections \ref{Priliminary} and \ref{Section:Reduction to mild special biserial algebras}).
The following theorem states that for all classes of mild special biserial algebras, in fact tilting finiteness and $\tau$-tilting finiteness are equivalent. Hence, viewing this result through the brick-$\tau$-rigid correspondence in \cite{DIJ}, for the class of mild special biserial algebras we can give a new characterization of $\tau$-tilting finiteness, which is not necessarily true for an arbitrary algebra.

\begin{theorem}
For a mild special biserial algebra $\Lambda=kQ/I$, almost every $\tau$-tilting $\Lambda$-module is a tilting module. In particular, the following are equivalent:
    
\begin{enumerate}
    \item $\Lambda$ is $\tau$-tilting finite;
    \item $\Lambda$ is tilting finite;
    \item $\Lambda$ is rigid finite;
    \item $\Lambda$ is $\tau$-rigid finite;
    \item $\Lambda$ is brick finite.
\end{enumerate}
\end{theorem}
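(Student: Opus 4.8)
The plan is to separate the five conditions into those linked by formal arguments and the one genuinely new implication, and to reduce everything to a single structural statement about the projective dimensions of $\tau$-rigid modules.

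First I would record the cheap containments. Every tilting module is in particular a $\tau$-tilting module, and every $\tau$-rigid module is rigid (as $\Hom_\Lambda(M,\tau M)=0$ forces $\Ext^1_\Lambda(M,M)=0$ by the Auslander--Reiten formula); hence $(1)\Rightarrow(2)$ and $(3)\Rightarrow(4)$ hold trivially, while the brick-$\tau$-rigid correspondence of Demonet--Iyama--Jasso \cite{DIJ} gives $(1)\Leftrightarrow(4)\Leftrightarrow(5)$. Combining \cite{DIJ} with the fact, already established in Section \ref{Section:Reduction to mild special biserial algebras}, that over a mild special biserial algebra all but finitely many indecomposable rigid modules are $\tau$-rigid, we see that finitely many indecomposable $\tau$-rigid modules force finitely many indecomposable rigid modules, that is $(4)\Rightarrow(3)$. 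So the content left is the implication $(2)\Rightarrow(1)$ together with the main assertion that all but finitely many $\tau$-tilting $\Lambda$-modules are tilting.

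Both of these I would deduce from the lemma $(\star)$: over a mild special biserial algebra, all but finitely many indecomposable $\tau$-rigid modules have projective dimension at most $1$, equivalently are partial tilting modules. For $(2)\Rightarrow(1)$, argue contrapositively: if $\Lambda$ is $\tau$-tilting infinite then by \cite{DIJ} there are infinitely many indecomposable $\tau$-rigid modules, hence by $(\star)$ infinitely many indecomposable partial tilting modules; each extends to a tilting module by Bongartz completion, and since a basic tilting module is a direct sum of exactly $|Q_0|$ indecomposables, infinitely many tilting modules are needed, so $\Lambda$ is tilting infinite. For the main assertion, note that a $\tau$-tilting module is non-tilting exactly when it has an indecomposable summand of projective dimension $\ge 2$; by $(\star)$ there are only finitely many such summands, and I would then check that each of them is a direct summand of only finitely many $\tau$-tilting modules (vacuous if $\Lambda$ is $\tau$-tilting finite, and otherwise read off from $\tau$-tilting mutation in the relevant algebra), so that only finitely many $\tau$-tilting modules fail to be tilting.

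It remains to prove $(\star)$, which is where the real work sits. By the classification of minimal representation-infinite special biserial algebras above, a mild special biserial algebra is representation-finite, or one of the bound quivers cycle, barbell, wind wheel or nody. If $\Lambda$ is representation-finite, a wind wheel, or a nody algebra, then $\Lambda$ is $\tau$-tilting finite, so it has only finitely many indecomposable $\tau$-rigid modules and $(\star)$ is vacuous; if $\Lambda=k\widetilde{\mathbb A}_n$ is a cycle algebra it is hereditary and every module has projective dimension at most $1$. Hence the substantial case is that of a barbell algebra, where I would use the description of the indecomposable modules as string modules and compute syzygies at the level of strings: a $\tau$-rigid string module whose underlying walk winds far enough into one of the cyclic parts $C_L$ or $C_R$ has all syzygy strings projective (or simple at a sink), hence projective dimension at most $1$, leaving only finitely many short strings to be inspected directly. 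The main obstacle is exactly this syzygy bookkeeping for string modules over a barbell, together with the verification that the finitely many $\tau$-rigid modules of projective dimension $\ge 2$ admit only finitely many $\tau$-tilting completions.
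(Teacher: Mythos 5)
Your formal skeleton is mostly fine --- (1)$\Leftrightarrow$(4)$\Leftrightarrow$(5) from \cite{DIJ}, (1)$\Rightarrow$(2) and (3)$\Rightarrow$(4) trivially, (4)$\Rightarrow$(3) from the finiteness of $\irigid(\Lambda)\setminus \mathtt{i}\tau\trig(\Lambda)$ already available in Section \ref{Section:Reduction to mild special biserial algebras}, and (2)$\Rightarrow$(1) via $(\star)$ plus Bongartz completion --- but there is a genuine gap exactly where the main assertion of the theorem sits. Knowing from $(\star)$ that only finitely many indecomposable $\tau$-rigid modules have projective dimension $\geq 2$ does not bound the number of non-tilting $\tau$-tilting modules: such a module needs only \emph{one} bad summand, while its remaining $n-1$ summands may range over the infinitely many indecomposable $\tau$-rigid modules of projective dimension $\leq 1$ (this is precisely the situation over a barbell algebra, the one $\tau$-tilting infinite case with relations). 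Your proposed repair --- that each bad indecomposable occurs in only finitely many $\tau$-tilting modules, to be ``read off from $\tau$-tilting mutation in the relevant algebra'' --- is exactly what needs proof, not a check: a priori it requires something like Jasso's $\tau$-tilting reduction together with $\tau$-tilting finiteness of the reduced algebra, which you neither identify nor verify, and proving it directly is essentially equivalent to the statement being proved (every $\tau$-tilting module containing a summand of projective dimension $\geq 2$ is non-tilting, so the claim is tantamount to finiteness of the non-tilting $\tau$-tilting modules). In addition, $(\star)$ itself is only a plan in the barbell case: the syzygy bookkeeping for string modules, which is the only case carrying content, is left undone.

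The paper avoids both difficulties by arguing with faithfulness rather than projective dimension. By Lemma \ref{Almost all}, almost every indecomposable module over a mild special biserial algebra is sincere and in fact faithful; by Proposition \ref{faithful tau-rigid}, a faithful $\tau$-rigid module is partial tilting and a faithful $\tau$-tilting module is tilting. Since a direct sum is faithful as soon as a single summand is, a $\tau$-tilting module can fail to be faithful (hence fail to be tilting) only if all $n$ of its indecomposable summands lie in the finite set of non-faithful indecomposables, so only finitely many $\tau$-tilting modules are non-tilting. This gives at once the ``almost every $\tau$-tilting module is tilting'' statement, the equivalence (1)$\Leftrightarrow$(2), and (together with the same faithfulness observation applied to rigid modules) the remaining equivalences. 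If you wish to keep your pd-based route, you must either prove your finiteness-of-completions claim honestly (e.g. via $\tau$-tilting reduction) or replace that step with this faithfulness argument; as written, the proposal does not establish the main assertion.
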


The above results allow us to address other aspects of $\tau$-tilting theory of mild special biserial algebras, such as their mutation graphs and their connections to silting theory, etc.
Note that if $\Lambda$ is a mild biserial algebra, the mutation graph of the tilting and $\tau$-tilting $\Lambda$-modules almost coincide, meaning that they only differ in at most finitely many vertices.
Because the nody and wind wheel algebras are $\tau$-tilting finite, the mutation graph of (support) $\tau$-tilting modules is finite. On the other hand, the cycle and (generalized) barbell algebras fit into the setting of the recent joint work of the author in \cite{BD+}, where the mutation of support $\tau$-tilting modules is given in terms of certain collections of combinatorial objects, known as maximal set of non-kissing strings (for more details, see \cite{BD+}). 
Moreover, for every pair of string modules $X$ and $Y$ over any gentle (thus each generalized barbell or cycle) algebra $A$, concrete basis for the spaces $\Hom_{A}(X,\tau Y)$ and $\Ext^1_A(Y,X)$ appear in the aforementioned work.
Using such results, one can further explore the $\tau$-tilting theory of these $\tau$-tilting infinite algebras. We do not investigate this aspect of the $\tau$-tilting theory in this work.

Let us finish this section by an interesting observation which allows us to view most of our results in the significantly larger family of biserial algebras $\mathfrak{F}_{\B}$, provided we work over algebraically closed fields. We recall that $\Lambda$ is \emph{biserial} if for every left and right indecomposable projective $\Lambda$-module $P$, we have $\rad(P)$ is the sum of at most two uniserial modules $X$ and $Y$ and $X \cap Y$ is either zero or a simple module.

Although the representation theory of special biserial algebras is well-understood, one should note that if $\Lambda$ is an arbitrary biserial algebra, so far there is no full classification of indecomposable modules in $\modu \Lambda$. In fact, it is known that the representation theory of biserial algebras is more complicated than that of the special biserial algebras.
Despite this fundamental difference between the family of biserial algebras $\mathfrak{F}_{\B}$ and its proper subfamily $\mathfrak{F}_{\sB}$, in Section \ref{section:Minimal representation-infinite algebras and more} we show that over algebraically closed fields all of our results on $\tau$-tilting finiteness of $\Mri(\mathfrak{F}_{\sB})$ could be extended to those of biserial algebras. To achieve this generalization, we use the fact that the family of biserial algebras is also quotient-closed. From that, it turns out that we only need to verify that among the minimal non-distributive algebras, studied in \cite{Bo3}, all biserial ones are in fact special biserial.

\begin{theorem}\label{Introduction theorem min-rep-inf biserial}
Over an algebraically closed field, $\Mri(\mathfrak{F}_{\B})=\Mri(\mathfrak{F}_{\sB})$. Hence, a minimal representation-infinite biserial algebra is either a cycle, barbell, wind wheel or nody algebra.
\end{theorem}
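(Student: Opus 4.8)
The plan is to leverage the two facts, both established in this paper, that $\mathfrak{F}_{\sB}$ and $\mathfrak{F}_{\B}$ are quotient-closed, together with Ringel's trichotomy \cite{R2} for minimal representation-infinite algebras over an algebraically closed field and Bongartz's classification \cite{Bo3} of the non-distributive ones. For a quotient-closed family $\mathfrak{F}$, every member of $\Mri(\mathfrak{F})$ is genuinely minimal representation-infinite, since each proper quotient lies again in $\mathfrak{F}$ and is therefore rep-finite. Applying this to $\mathfrak{F}_{\sB}\subseteq\mathfrak{F}_{\B}$ at once yields the inclusion $\Mri(\mathfrak{F}_{\sB})\subseteq\Mri(\mathfrak{F}_{\B})$: a $\Lambda\in\Mri(\mathfrak{F}_{\sB})$ is min-rep-infinite, it is biserial, and all its proper quotients are rep-finite, hence it lies in $\Mri(\mathfrak{F}_{\B})$. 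By the same token, proving the reverse inclusion amounts to showing that every minimal representation-infinite biserial algebra is special biserial; then $\Mri(\mathfrak{F}_{\B})$ and $\Mri(\mathfrak{F}_{\sB})$ coincide (both consist of all min-rep-infinite (special) biserial algebras), and the final clause follows by substituting this into the classification of $\Mri(\mathfrak{F}_{\sB})$ obtained earlier in the paper as cycle, barbell, wind wheel and nody bound quivers.

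So let $\Lambda=kQ/I$ be minimal representation-infinite and biserial. By Ringel's trichotomy it belongs to $(\mND)$, $(\mUC)$ or $(\mSB)$. If $\Lambda$ lies in $(\mSB)$ it is special biserial and we are done. The case $(\mUC)$ is eliminated by the observation that a biserial algebra automatically satisfies condition (B1) at every vertex --- the hypothesis that $\rad P$ is a sum of at most two uniserial modules for each indecomposable projective on either side forces at most two arrows into and at most two arrows out of every vertex --- and that this property is inherited both by good universal covers (locally isomorphic to the algebra) and by convex subcategories (whose arrow set is a subset of the ambient one), whereas no tame concealed algebra of type $\widetilde{\mathbb D}_n$ or $\widetilde{\mathbb E}_{6,7,8}$ satisfies (B1), since each such algebra has a source or a sink meeting three arrows (Happel--Vossieck \cite{HV}). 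Hence no biserial algebra lies in $(\mUC)$ without already lying in $(\mND)$, and we are reduced to the case $\Lambda\in(\mND)$.

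It remains to treat a biserial algebra $\Lambda$ that is minimal representation-infinite with non-distributive ideal lattice, and this is the main obstacle of the proof. By Bongartz's classification \cite{Bo3}, such a $\Lambda$ lies in an explicit finite list of bound quivers. The plan is to run through this list, discard the non-biserial entries, and verify for each remaining one that condition (B2) holds as well: (B1) is automatic by the paragraph above, and a direct inspection of the defining relations shows that, for each biserial algebra on Bongartz's list, every arrow $\alpha$ admits at most one arrow $\beta$ with $\beta\alpha\notin I$ and at most one arrow $\gamma$ with $\alpha\gamma\notin I$. Consequently every biserial $\Lambda$ in $(\mND)$ is special biserial, which completes the proof that $\Mri(\mathfrak{F}_{\B})=\Mri(\mathfrak{F}_{\sB})$; the displayed consequence about cycles, barbells, wind wheels and nody algebras then follows from the classification of minimal representation-infinite special biserial algebras proved earlier. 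I expect the case-by-case check against Bongartz's list to be where essentially all the effort goes, the two points in the $(\mUC)$ step (descent of (B1) along covers and to convex subcategories, and the failure of (B1) for $\widetilde{\mathbb D}$- and $\widetilde{\mathbb E}$-tame concealed algebras) being routine by comparison.
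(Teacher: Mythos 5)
Your reduction via quotient-closedness, and your treatment of the non-distributive case by running through Bongartz's list, are exactly in the spirit of the paper (Section 9 does the same list-check in Proposition 9.3). The genuine gap is in your $(\mUC)$ step. The claim that no tame concealed algebra of type $\widetilde{\mathbb{D}}_n$ or $\widetilde{\mathbb{E}}_{6,7,8}$ satisfies (B1) because it has a source or sink meeting three arrows is false: (B1) is only a degree bound, and for instance the hereditary algebra $k\widetilde{\mathbb{D}}_n$ with an orientation in which each branch vertex has two incoming and one outgoing arrow (or two outgoing and one incoming) has at most two arrows in and out of every vertex, is tame concealed of type $\widetilde{\mathbb{D}}_n$, and is itself minimal representation-infinite of type $(\mUC)$. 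So (B1) cannot exclude $(\mUC)$. What actually fails for such algebras is the full biserial condition (e.g.\ $\rad P_x$ at a vertex feeding the chain is not a sum of two uniserial submodules whose intersection is zero or simple), but you give no argument at that level, and since Ringel's trichotomy is only an ``at least one of'' statement, your proof as written does not show that a biserial algebra in $(\mUC)$ lands back in $(\mSB)$ or in the Bongartz list.

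The paper sidesteps covering theory entirely: it splits on distributivity rather than on Ringel's trichotomy. For a distributive minimal representation-infinite biserial algebra, the Skowro\'nski--Waschb\"usch lemma (any distributive biserial algebra is special biserial) finishes immediately; for a non-distributive one, Bongartz's classification of minimal non-distributive algebras applies, and the biserial entries on that list are precisely the cycle, nody and wind wheel bound quivers $\widetilde{\mathbb{A}}(p,q)$, $\widetilde{\mathbb{A}}_G(p,q)$, $E(p,q,r)$, hence special biserial. If you want to keep your overall structure, replace your $(\mUC)$ exclusion by this distributive/non-distributive dichotomy: the distributive case absorbs both $(\mSB)$ and $(\mUC)$ at once, and your remaining case-check against Bongartz's list is then exactly the paper's argument.
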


Through the notion of $\tau$-tilting finiteness, one can also deduce important facts about the components of the Auslander-Reiten quiver of an algebra.
In Section \ref{subsection:Module category of representation-infinite algebras} we make some more general observations on how $\tau$-tilting finiteness interacts with certain components of the Auslander-Reiten quiver of a rep-infinite algebras as well as with the infinite radical of the module category. Particularly, we note that any rep-infinite algebra with a preprojective or preinjective component is $\tau$-tilting infinite.
As a more specific case, in Sections \ref{Section:Reduction to mild special biserial algebras}, \ref{section: Nody algebras} and \ref{subsection: More on fully reduced gentle algebras}, we consider $\Mri(\mathfrak{F}_{\sB})$ and $\Mri(\mathfrak{F}_{\G})$ to highlight the intuition that $\tau$-tilting finiteness can provide into the components of the Auslander-Reiten quivers of the algebras from these families. In particular, we obtain the following result, which results in some explicit sufficient conditions for $\tau$-tilting infinite of arbitrary algebras in Section \ref{subsection:Explicit criteria for tau-tilting infiniteness}.

\begin{theorem}
Let $\Lambda$ be a minimal representation-infinite special biserial algebra. The Auslander-Reiten quiver of $\Lambda$ has a preprojective component if and only if $\Lambda=k\widetilde{\mathbb{A}}_n$ (for some $n \in \mathbb{Z}_{>0}$).
\end{theorem}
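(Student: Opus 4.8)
The plan is to prove both implications, handling the nontrivial direction through the classification of minimal representation-infinite special biserial algebras established above.

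The implication $\Lambda = k\widetilde{\mathbb{A}}_n \Rightarrow$ (preprojective component exists) is classical and requires nothing new: an acyclic orientation of $\widetilde{\mathbb{A}}_n$ is a connected hereditary algebra of tame (Euclidean) type, and by the representation theory of tame hereditary algebras its Auslander--Reiten quiver decomposes into a preprojective (postprojective) component, a preinjective component, and a $\mathbb{P}^1(k)$-family of tubes; in particular a preprojective component exists. For the converse, suppose $\Lambda = kQ/I$ is minimal representation-infinite special biserial and its Auslander--Reiten quiver has a preprojective component. By the classification proved above, $(Q,I)$ is a cycle, barbell, wind wheel, or nody bound quiver, and it must be shown that it is a cycle. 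The wind wheel and nody cases are immediate: such algebras are $\tau$-tilting finite (shown earlier), whereas $\Lambda$ is representation-infinite and, by the general observation recorded above, a representation-infinite algebra with a preprojective component is $\tau$-tilting infinite---a contradiction.

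It remains to exclude the barbell case, which is the crux: a barbell algebra is $\tau$-tilting infinite (in fact minimal $\tau$-tilting infinite), so it cannot be ruled out on $\tau$-tilting grounds. The route I propose is to show that a barbell algebra has \emph{no} preprojective component whatsoever, using two standard facts: (i) a preprojective component contains, by definition, an indecomposable projective module; and (ii) every indecomposable lying in a preprojective component is a directing module, i.e.\ it does not sit on a cycle $M = M_0 \to M_1 \to \cdots \to M_t = M$ of nonzero non-isomorphisms between indecomposables (preprojective components are standard and directed). It therefore suffices to prove that over a barbell algebra $\Lambda$ no indecomposable projective is directing. This is exactly where the bell structure is used: invoking the Butler--Ringel description of string modules and irreducible maps for string algebras, the two cyclic strings $C_L$ and $C_R$---which, because of the relations $\beta\alpha$ and $\delta\gamma$, are not themselves bands, but whose partial traversals form arbitrarily long string modules---produce, around the attachment vertices $x$ and $y$, cyclic chains of nonzero homomorphisms, and one checks that the indecomposable projectives $P_x$ and $P_y$ lie on such cycles (their radicals already realize almost-complete loops around the respective bells). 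Hence $P_x$ and $P_y$ are non-directing, which together with (i) and (ii) shows a barbell algebra has no preprojective component.

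The main obstacle is precisely this last step: making rigorous the assertion ``the bell forces a cycle of maps through every projective''. This demands careful bookkeeping of strings and of the hook/cohook operations governing irreducible morphisms in string algebras; a possibly cleaner alternative is to describe outright the Auslander--Reiten components of a barbell algebra---band modules lie in tubes, and the string-module components are of type $\mathbb{Z}\mathbb{A}_\infty^\infty$ or $\mathbb{Z}\mathbb{A}_\infty$ and contain oriented cycles running through the projectives---and then observe directly that none of them is preprojective. In either form, once the barbell case is excluded the classification forces $(Q,I)$ to be a cycle, i.e.\ $\Lambda \cong k\widetilde{\mathbb{A}}_n$, which completes the proof.
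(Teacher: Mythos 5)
Your overall architecture matches the paper's: the direction $\Lambda=k\widetilde{\mathbb{A}}_n \Rightarrow$ existence of a preprojective component is classical, and the converse is run through the classification, with the wind wheel and nody cases killed exactly as in the text (they are brick/$\tau$-tilting finite, while a representation-infinite algebra with a preprojective component is $\tau$-tilting infinite by Lemma \ref{Preproj/Postinj brick} and Proposition \ref{tau-finiteness on the components and radical}). Where you diverge is the barbell case: the paper disposes of it by observing that every (generalized) barbell algebra is weakly minimal representation-infinite and then checking its bound quiver against the Happel--Vossieck classification \cite{HV} of weakly minimal representation-infinite algebras admitting a preprojective component, whereas you attempt a self-contained argument via directing modules.

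That attempted argument has a genuine gap. Your reduction is ``it suffices to prove that no indecomposable projective is directing,'' but you only verify non-directingness for $P_x$ and $P_y$; a preprojective component is only guaranteed to contain \emph{some} indecomposable projective, which could perfectly well be $P_z$ for a vertex $z$ in the interior of the bar or on $C_L$, $C_R$ away from the junctions. Worse, the full claim you would need is false in general: if the bar (or one of the cycles) contains a sink $z$, then $S_z=P_z$ is a simple projective, every string or band visits $z$ only through two arrows pointing into $z$, so $S_z$ occurs only in socles and $\Hom_\Lambda(X,S_z)=0$ for every indecomposable $X\not\simeq S_z$; hence $P_z$ is a directing brick and cannot be excluded from a preprojective component on directing grounds. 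So the cycle-of-maps criterion cannot close the barbell case as stated; you would have to fall back on your alternative sketch (an explicit description of all Auslander--Reiten components of a barbell algebra, showing the components containing projectives are not preprojective), which is plausible but is real, uncompleted work --- or simply quote \cite{HV} as the paper does, after noting that barbell algebras are weakly minimal representation-infinite because every band supports all arrows.
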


\subsection{Notations and conventions}
In addition to what has been already introduced above, here we collect some notations and conventions which will be used in the rest of this paper. The standard materials from representation theory of algebras that appear in this work could be found in \cite{ASS}.
For an algebra $\Lambda$, let $\ind(\Lambda)$ denote the set of isomorphism classes of indecomposable modules in $\modu \Lambda$. For every $M$ in $\modu \Lambda$, let $|M|$ be the number of non-isomorphic summands in the decomposition of $M$ into indecomposable modules. We always assume $|A|=n$. By $\Fac(M)$ we denote the subcategory of $\modu \Lambda$ generated by $M$ (i.e, $\Fac(M)$ consists of all quotients of direct sums of $M$), while $\add(M)$ is the subcategory that consists of all finite direct sums of direct summands of $M$. 
The Auslander-Reiten translation of $M$ is denoted by $\tau_{\Lambda} M$. When there is no confusion, we suppress $\Lambda$ appearing in the notations.
$M$ is called \emph{uniserial} if it has a unique
composition series, namely if the lattice of submodules of $M$ is a chain.

A \textit{quiver} $Q$ always means a finite and connected directed graph, formally given by a quadruple $(Q_0,Q_1,s,e)$, where $Q_0$ and $Q_1$, respectively called the sets of \textit{vertices} and \textit{arrows} of $Q$, are finite. Moreover, $s, e:Q_1\to Q_0$ are two functions sending an arrow $\gamma \in Q_1$ to its \textit{start} and \textit{end}, respectively denoted by $s(\gamma)$ and $e(\gamma)$. A vertex $x \in Q_0$ is a \emph{source} (respectively \emph{sink}) if there is no arrow incoming to (respectively outgoing from) $x$. If there are $m$ arrows incoming to $x$ and $n$ arrows outgoing from $x$, then $x$ is of \emph{degree} $k=m+n$ (or $x$ is called a \emph{$k$-vertex}, for short).
Lower case Greek letters $\alpha$, $\beta$, $\gamma$, $\ldots$ are typically used for arrows of $Q$ and $\beta \alpha$ means first $\alpha$ and then $\beta$. Each \emph{path of length} $d\geq 1$ in $Q$ is a finite sequence of arrows $\gamma_{d}\cdots\gamma_{2}\gamma_{1}$ with $s(\gamma_{j+1})=e(\gamma_{j})$, for every $1 \leq j \leq d-1$. The \emph{lazy} path (of length $0$) associated to vertex $x$ is denoted by $e_x$, and $s(e_x) = e(e_x) = x$.
Let $\overline{Q}$ denote the quiver obtained from $Q$ by adding the reverse arrow $\gamma^*$ for every $\gamma \in Q_1$.

For a quiver $Q$, the \emph{arrow ideal} of the path algebra $kQ$ is denoted by $R_Q$. It is the ideal generated by all arrows of $Q$. An ideal $I \subseteq kQ$ is \emph{admissible} if $R_Q ^m \subseteq I \subseteq R_Q^2$, for some $m\geq 2$. An admissible ideal is called \textit{monomial} if there exists a set of paths in $Q$ which generate $I$. By $P_x$, $I_x$ and $S_x$ we respectively denote the indecomposable projective, injective and simple $kQ/I$-module associated to vertex $x$. For any module $M$ in $\modu (\Lambda)$, by $\pd_{\Lambda} (M)$ and $\id_{\Lambda}(M)$, we respectively denote the projective and injective dimension of $M$.

If $\mathcal{A}$ is an infinite family, for brevity, we say that property $\mathfrak{P}$ holds for \emph{almost all of $\mathcal{A}$} if all but finitely many members of $\mathcal{A}$ satisfy property $\mathfrak{P}$.

\section{Preliminaries and Background}\label{Priliminary}
This section consists of some basic materials needed in the rest of the paper. The proofs of well-known assertions are omitted and, if necessary, only reference is provided.

\subsection{String algebras}
$\Lambda = kQ/I$ is a \emph{string algebra} if it is special biserial and $I$ can be generated by a set of paths in $(Q,I)$. A string algebra $\Lambda=kQ/I$ is \emph{gentle} if $I$ is generated by quadratic relations and $(Q,I)$ satisfies the following condition:
\begin{enumerate}
\item[(G)] For each arrow $\alpha \in Q_1$, there is at most one arrow $\beta$ and at most one arrow $\gamma$ such that $0\ne\alpha\beta \in I$ and $0\ne\gamma\alpha \in I$.
\end{enumerate}

\begin{remark}\label{vertex-quotient}
We freely use the following simple observation: if $\Lambda=kQ/I$ is special biserial (respectively a string, or gentle), for each $x \in Q_0$ and any $\gamma \in Q_1$ the quotient algebras $\Lambda/\langle e_x \rangle$ and $\Lambda/\langle \gamma \rangle$ are again special biserial (respectively string, or gentle). 
Although an arbitrary quotient of a special biserial algebra is again special biserial, this does not necessarily hold for gentle and string algebras (for an explicit non-example, see Example \ref{Example of gentle,string,special biserial algebras}).
\end{remark}

Let $\Lambda = kQ/I$ be special biserial and $Q_1^{-1}:=\{\gamma^{-1} \,| \,\gamma \in Q_1\}$ be the set of formal inverses of arrows of $Q$ (i.e, $s(\gamma^{-1})=e(\gamma)$ and $e(\gamma^{-1})=s(\gamma)$).
A \emph{string} in $\Lambda$ is a word $w =\gamma_k^{\epsilon_k}\cdots\gamma_1^{\epsilon_1}$ with letters in $Q_1$ and $\epsilon_i \in \{\pm 1\}$, for all $1 \leq i \leq k$, such that $w$ satisfies the following properties:

\begin{enumerate}
\item[(S1)] $s(\gamma_{i+1}^{\epsilon_{i+1}})=e(\gamma_i^{\epsilon_i})$ and $ \gamma_{i+1}^{\epsilon_{i+1}} \neq \gamma_i^{-\epsilon_i}$, for all $1 \leq i \leq k-1$;
\item[(S2)] Neither $w$, nor $w^{-1} := \gamma_1^{-\epsilon_1}\cdots\gamma_k^{-\epsilon_k}$, contain a subpath in $I$.
\end{enumerate}

A string $v$ in $(Q,I)$ is called \emph{serial} if either $v$ or $v^{-1}$ is a direct path in $Q$ (i.e, $v=\gamma_k \cdots \gamma_2 \gamma_1$ or $v^{-1}=\gamma_k \cdots \gamma_2 \gamma_1$, for some arrows $\gamma_i$ in $Q_1$).
For a string $w=\gamma_k^{\epsilon_k}\cdots\gamma_1^{\epsilon_1}$, we say it starts at $s(w)=s(\gamma_1^{\epsilon_1})$, ends at $e(w)=e(\gamma_k^{\epsilon_k})$, and is of \emph{length} $l(w):=k$. Moreover, a zero-length string is associated to every vertex and by abuse of notation we denote it by $e_x$, for $x \in Q_0$. 
Suppose $\Str(\Lambda)$ is the set of all equivalence classes of strings in $\Lambda$, where for each string $w$ in $\Lambda$ the equivalence class consists of $w$ and $w^{-1}$ (i.e, set $w \sim w^{-1}$).
A string $w$ is called a \emph{band} if $l(w)>0$ and $w^m$ is a string for each $m \in \mathbb{Z}_{\geq 1}$, but $w$ itself is not a power of a string of strictly smaller length.
For a vertex $x$ of $\Lambda$, we say $w$ in $\Str(\Lambda)$ \emph{visits $x$} if it is supported by $x$. Moreover, $w$ \emph{passes through $x$} provided that there exists a nontrivial factorization of $w$ at $x$ (i.e, there exist $w_1, w_2 \in \Str(\Lambda)$ with $s(w_2)=x=e(w_1)$, such that $l(w_1), l(w_2) >0$ and $w=w_2w_1$).

If $G_Q$ denotes the underlying graph of $Q$, each string $w = \gamma_d^{\epsilon_d}\cdots \gamma_1^{\epsilon_1}$ induces a walk \xymatrix{x_{d+1} \ar@{-}^{\gamma_d}[r] & x_d \ar@{-}^{\gamma_{d-1}}[r] & \cdots & x_1 \ar@{-}_{\gamma_1}[l]} in $G_Q$. Note that a vertex or an edge may occur multiple times in a segment.
The \textit{string module} $M(w)$, determined by $w$, is the representation $ M(w) := ((V_x)_{x \in Q_0}, (\varphi_\alpha)_{\alpha\in Q_1})$ given as follows: Put a copy of $k$ at each vertex $x_i$ of the segment induced by $w$. This step gives the vector spaces $\{V_x\}_{x \in Q_0}$, where $V_{x} \simeq k^{n_x}$ and $n_x$ is the number of times $w$ visits $x$.
For the linear maps of the representation $M(w)$, if $1 \leq i \leq d$,  put the identity map in the direction of $\gamma_i$, between the two copies of $k$ associated to $s(\gamma_i^{\epsilon_i})$ and $e(\gamma_i^{\epsilon_i})$. Namely, this identity map is from $s(\gamma_i)$ to $e(\gamma_i)$ if $\epsilon_i=1$, and it goes from $e(\gamma_i)$ to $s(\gamma_i)$, if $\epsilon_i=-1$.
These two steps generate the indecomposable $\Lambda$-module $M(w)$. Note that, for every string $w$ in $\Lambda$, we have the isomorphism of $\Lambda$-modules $M(w) \simeq M(w^{-1})$. This explains why we put $w \sim  w^{-1}$, meaning that $\{w, w^{-1}\}$ is considered as an equivalence class in $\Str(\Lambda)$.

In addition to the string modules described above, provided that a special biserial algebra has a band, there exists another type of indecomposable modules over it, known as \emph{band modules}, which also have an explicit construction.
The combinatorial structure of the string and band modules will be extensively used in this paper to study brick-finiteness of such algebras. Every band in $\Str(\Lambda)$, under identification with its inverse and its cyclic permutations, gives rise to a family of indecomposable $\Lambda$-modules indexed by a parameter in $k \setminus \{0\}$ and a positive integer.
If $k$ is algebraically closed, every band gives rise to a family of pairwise non-isomorphic indecomposable modules. Over more general fields $k$, each band in $\Str(\Lambda)$ gives rise to additional band modules associated to field extensions of $k$ as well.
Recall that a $\Lambda$-module is \emph{projective-injective} if its both projective and injective over $\Lambda$.
It is well-known that every indecomposable module over special biserial algebras is either a string or a band module, or a non-serial projective-injective module (see \cite{WW} and \cite{BR}). Hence, the knowledge of string and band modules, as well as the homomorphism between them, give a complete understanding of the module category of special biserial algebras.
Since the technical details of the isomorphism classes of band modules over different fields do not play a role in this paper, we only give an example of such representations and refer to \cite{BR} for details on their properties and construction.

Since the following well-known statement is handy at various points of this paper, we state it as a lemma for the future reference.

\begin{lemma}\label{band, rep-infinite}
A string algebra $\Lambda=kQ/I$ is rep-finite if and only if it has no band.
\end{lemma}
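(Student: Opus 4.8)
The plan is to prove both directions of the equivalence ``$\Lambda=kQ/I$ is rep-finite $\iff$ it has no band'' by leveraging the complete classification of indecomposables over special biserial algebras: every indecomposable is a string module, a band module, or a non-serial projective-injective module (cited above from \cite{BR}, \cite{WW}). The key structural input is that there are only finitely many projective-injective indecomposables (at most one per vertex), so they never affect finiteness questions, and the dichotomy really comes down to counting string and band modules.

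\textbf{Direction 1: if $\Lambda$ has no band, then it is rep-finite.} First I would argue that a string algebra without a band has only finitely many strings up to the equivalence $w\sim w^{-1}$. The idea is a pigeonhole/König's-lemma argument: suppose there were infinitely many strings; since $Q$ is finite, arbitrarily long strings exist. A sufficiently long string must revisit a vertex with the same ``entering configuration'' (there are only finitely many letters in $Q_1 \cup Q_1^{-1}$, hence finitely many possible states recording the last arrow traversed and its direction), and the portion of the walk between two such repeated states is then checked to be a string $v$ all of whose powers $v^m$ remain strings and which is primitive --- i.e.\ a band --- contradicting the hypothesis. One must be slightly careful to extract a \emph{primitive} cyclic string and to verify conditions (S1) and (S2) persist under concatenation, but this is the standard finiteness-of-strings argument. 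Given finitely many strings, there are finitely many string modules; together with the finitely many band modules (there are none) and the finitely many projective-injective indecomposables, $\ind(\Lambda)$ is finite, so $\Lambda$ is rep-finite.

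\textbf{Direction 2: if $\Lambda$ has a band, then it is rep-infinite.} This is the easier direction: a band $w$ gives rise, as recalled in the text, to a family of indecomposable band modules indexed (at minimum) by a positive integer $m$ (the ``Jordan block size''), and these are pairwise non-isomorphic --- the module built from $w^{[m]}$ has a strictly larger dimension vector than the one from $w^{[m-1]}$, so they cannot be isomorphic. Hence $\ind(\Lambda)$ is infinite. Note this direction does not even need $k$ algebraically closed; varying the integer parameter alone already yields infinitely many indecomposables.

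\textbf{Main obstacle.} The routine but technically delicate point is Direction 1: making the ``long string forces a band'' extraction fully rigorous, i.e.\ showing that the cyclic segment one extracts can be taken primitive and genuinely satisfies the band conditions (in particular that no relation in $I$ obstructs forming $w^2, w^3, \dots$, which uses condition (B2)/the special biserial axioms to control which compositions avoid $I$). I expect the cleanest route is to phrase it as: the strings form the vertex set of a finite graph of ``states'' with transitions, and an infinite string corresponds to an infinite walk, which in a finite graph must traverse a cycle; primitivity is then arranged by taking a shortest such cycle. Everything else --- finiteness of projective-injectives, the dimension-vector argument for band modules --- is immediate.
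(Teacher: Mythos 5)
The paper never proves this lemma: it is stated as a well-known fact (with the Butler--Ringel framework as background), and the paper explicitly omits proofs of such statements, so there is no in-paper argument to compare yours with. Your Direction 2 is correct (and could be made even lighter: since $w^m$ is a string for every $m$ by the definition of a band, the string modules $M(w^m)$ already give infinitely many indecomposables of unbounded dimension over any field, with no need for band modules at all). The problem is Direction 1, where the specific mechanism you propose has a genuine gap: pigeonholing on the state ``last arrow traversed and its direction'' does not produce a band, and condition (B2) does not rescue the splice, because $I$ in a string algebra may contain monomial generators of length $\geq 3$. Concretely, take $Q$ with vertices $1,2$, arrows $\beta\colon 1\to 2$ and $g,h\colon 2\to 1$, and $I=\langle h\beta,\ \beta h,\ g\beta g\rangle$; this is a string algebra, and $w=\beta\,g\,h^{-1}g\,\beta$ is a string in which the signed letter $\beta$ occurs twice, yet the cyclic segment between the two occurrences, $v=\beta\,g\,h^{-1}g$, is not a band: $v^2$ contains the forbidden path $g\beta g$ across the junction, even though every length-two composition involved avoids $I$. (The lemma itself is safe here -- $gh^{-1}$ is a band -- but your extraction applied to this repetition fails, so the argument as written does not close.)

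The standard repair is to pigeonhole on richer, but still finite, data. Fix $N$ at least the maximal length of a monomial generator of $I$ and at least the admissibility exponent $m$ with $R_Q^m\subseteq I$, and record at each position the window of the last $N$ signed letters; moreover, take the repeated positions at direction changes of the walk (such positions occur with bounded gaps, since direct and inverse runs in any string have length $<m$), so that the extracted cyclic word $v$ contains both a direct and an inverse letter -- otherwise $v$ is a directed cycle whose high powers lie in $I$ and it is not a band. With matching windows, any direct or inverse run of $v^k$ crossing a junction has length $<N$ and coincides with a subword of $w$ near one of the two matched positions, so it avoids $I$; hence all powers $v^k$ are strings, and the primitive root of $v$ is a band, giving the contradiction you want. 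With this strengthening (and the finitely many projective-injectives, as you note), your outline becomes a correct proof; as stated, however, the key extraction step is not.
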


For a string algebra $\Lambda$ and a pair of strings $v$ and $w$ in $\Str(\Lambda)$, there is a concrete basis for $\Hom_{\Lambda}(M(w),M(v))$ in terms of \emph{graph maps}, as shown in \cite{CB2}.
Furthermore, if $\Lambda$ is gentle, a combinatorial basis for $\Ext^1_{\Lambda}(M(w),M(v))$ has been recently given in \cite{BD+}.

In the following example we illustrate some of the above-mentioned facts.

\begin{example}\label{Example of gentle,string,special biserial algebras}
For $1 \leq i \leq 4$, consider $\Lambda_i=kQ/I_i$, where $Q$ is the quiver appearing in Figure $1$ and the ideals $I_i$ are as follows:
$$I_1=\langle\beta\alpha,\theta\epsilon,\delta\theta,\beta\epsilon-\gamma\delta\rangle,\,\,I_2=\langle\beta\alpha,\theta\alpha,\theta\epsilon,\delta\theta\rangle,\,\,I_3=\langle\beta\alpha,\theta\epsilon,\delta\theta\rangle\,\,\text{and}\,\,I_4=\langle\beta\alpha,\theta\epsilon,\epsilon\theta\rangle.$$

\begin{figure}
\begin{center}
\begin{tikzpicture}[scale=0.6]
\draw [->] (-2.2,0) --(-0.4,0);
\node at (-1.3,0.3) {$\alpha$};
\node at (-2.5,0) {$^1\bullet$};
\draw [->] (0.2,0) --(1.8,0);
\node at (1,0.3) {$\beta$};
\node at (-0.1,0.1) {$\bullet^2$};
\node at (-0.3,-2) {$_5\bullet$};
\draw [->] (2,-1.8) --(2,-0.2) ;
\node at (2.1,-2) {$\bullet_4$};
\node at (2.3,-1) {$\gamma$};
\draw [<-] (1.8,-2) --(0,-2);
\node at (1,-2.3) {$\delta$};
\node at (2.1,0) {$\bullet^3$};

\draw [->] (-0.25,-0.15) to [bend right=20] (-0.25,-1.8);
\node at (-0.6,-1) {$\theta$};
\draw [<-] (-0.1,-0.1) to [bend left=20] (-0.1,-1.8);
\node at (0.3,-1) {$\epsilon$};
\end{tikzpicture}
\end{center}
\caption{Consider the bound quiver $(Q,I_i)$, for $1 \leq i \leq 4$.}
\label{fig:example}
\end{figure}
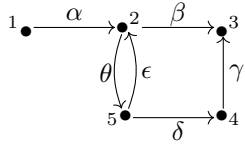

By definition, $\Lambda_1=kQ/I_1$ is special biserial, however it is not a string algebra. In particular, note that $P_5 \simeq I_3$ is a biserial module with a simple socle, which cannot occur over a string algebra.
It is easy to check $\Lambda_2=kQ/I_2$ is a string algebra, but not gentle. Moreover, $\Lambda_3=kQ/I_3$ and $\Lambda_4=kQ/I_4$ are gentle algebras and $\Str(\Lambda_3)$ and $\Str(\Lambda_4)$ respectively contain one band and two bands.

Notice that for every $i \in \{1,2,3,4\}$, the algebra $\Lambda_i$ is rep-infinite type but it is not minimal. However, both $\Lambda_4 /\langle e_1, \theta \rangle $ and $\Lambda_4 /\langle e_1, \epsilon \rangle$ are min-rep-infinite algebras.
We further remark that although all algebra quotients of the form $\Lambda_4/\langle e_x \rangle$ and $\Lambda_4/\langle \zeta \rangle$ are again gentle (for each $x\in Q_0$ and any $\zeta \in Q_1$), if we consider the ideal $J=\langle \beta\epsilon-\gamma\delta \rangle$, then $\Lambda_4/J$ is a biserial algebra, where the indecomposable projective module associated to vertex $5$ has a simple socle. Therefore, $\Lambda_4/J$ is not a string algebra (therefore it is not gentle). This clarifies the point mentioned earlier, showing that, unlike special biserial algebras, string and gentle algebras can admit quotient algebras which are not of the same type.

For $w=\alpha^{-1} \epsilon \delta^{-1} \gamma^{-1} \beta \epsilon$ in $\Str(\Lambda_2)$, the string module $M(w)$ can be depicted by the following diagram (on the left), with a copy of the field $k$ at each vertex and the identity map in the direction of arrows. Moreover, $u=\delta^{-1} \gamma^{-1} \beta \epsilon$ in $\Str(\Lambda_2)$ is a substring of $w$ and the diagram of $M(u)$ appears on the right. 
Although $u$ is a substring of $w$, the module $M(u)$ is not a submodule of $M(w)$. In fact, in this case, $M(u)$ is a quotient module of $M(w)$ and $\Hom_{\Lambda_2}(M(w),M(u))$ is one dimensional, whereas $\Hom_{\Lambda_2}(M(u),M(w))=0$. Further details on a basis of such $\Hom$-spaces of string modules is reviewed in Section \ref{Section:tau-Tilting Finite Node-free Special Biserial Algebras}.

\begin{center}
\begin{tikzpicture}[scale=0.67]


-------------------------
\node at (10.1,1.1) {$\bullet^5$};
\draw [->] (10,1) -- (9.15,0.15);
\node at (9.75,0.5) {$\epsilon$};
--
\node at (9.2,0) {$\bullet_2$};
\draw [->] (9,0) -- (8.1,-0.9);
\node at (8.80,-0.5) {$\beta$};
\node at (8.1,-1.1) {$\bullet_3$};
--
\node at (8.1,-2) {$M(u)$};
--
\draw [->] (7,0) -- (7.85,-0.9);
\node at (7.5,-0.3) {$\gamma$};
\node at (6.9,-0.1) {$_4\bullet$};
--
\draw [->] (6,1.1) -- (6.9,0.1);
\node at (6.7,0.7) {$\delta$};
\node at (6.1,1.1) {$\bullet^5$};

-------------------------
\node at (1.1,1.1) {$\bullet^5$};
\draw [->] (1,1) -- (0.15,0.15);
\node at (0.75,0.5) {$\epsilon$};
--
\node at (0.2,0) {$\bullet_2$};
\draw [->] (0,0) -- (-0.9,-0.9);
\node at (-0.20,-0.5) {$\beta$};
\node at (-0.9,-1.1) {$\bullet_3$};
--
\draw [->] (-2,0) -- (-1.15,-0.9);
\node at (-1.5,-0.3) {$\gamma$};
\node at (-2.1,-0.1) {$_4\bullet$};
--
\node at (-1.75,-2) {$M(w)$};
--
\draw [->] (-3,1.1) -- (-2.1,0.1);
\node at (-2.3,0.7) {$\delta$};
\node at (-2.9,1.1) {$\bullet^5$};
--
\draw [->] (-3.1,1) -- (-3.9,0.1);
\node at (-3.3,0.5) {$\epsilon$};
\node at (-3.9,-0.1) {$\bullet_2$};
--
\draw [->] (-5,1) -- (-4.1,0.1);
\node at (-4.4,0.7) {$\alpha$};
\node at (-5.1,1.1) {$^1\bullet$};

\end{tikzpicture}
\end{center}

Note that $u=\delta^{-1} \gamma^{-1} \beta \epsilon$ is also a band in $\Lambda_2$ and $\Lambda_3$. For a $k$-vector space $V$, let $\phi: V \rightarrow V$ be an invertible $k$-map which is indecomposable (i.e, $\phi=\phi_1 \oplus \phi_2$ implies $\phi_1=0$ or $\phi_2=0$). If $k$ is algebraically closed, this holds if and only if $\phi$ is similar to an indecomposable Jordan block $J_n(\lambda)$ with eigenvalue $\lambda \in k^*$, where $n=\dim (V)$. Then, $M_u(V,\phi)$, shown in the following, is a band module over $\Lambda_2$ and $\Lambda_3$. Note that for every cyclic permutation $v$ of $u=\delta^{-1} \gamma^{-1} \beta \epsilon$ we get $M_u(V,\phi)\simeq M_{v}(V,\phi)$. 
Therefore, as in \cite{BR}, the module associated to every band $u$ in $(Q,I)$ is isomorphic to those associated to the cyclic permutations of $u$ and all of their inverses.

\begin{center}
\begin{tikzpicture}[scale=0.6]
\draw [->] (-2.2,0) --(-0.4,0);
\node at (-2.4,0) {$\bullet$};
\node at (-2.4,0.4) {$0$};
\draw [->] (0.1,0) --(1.8,0);
\node at (1,0.3) {$1$};
\node at (-0.2,0.1) {$\bullet$};
\node at (-0.2,0.5) {$V$};
\node at (-0.2,-2) {$\bullet$};
\node at (-0.2,-2.5) {$V$};
\draw [->] (2,-1.8) --(2,-0.2);
\node at (2,-2) {$\bullet$};
\node at (2,-2.5) {$V$};
\node at (2.3,-1) {$1$};
\draw [<-] (1.8,-2) --(0,-2);
\node at (1,-2.3) {$\phi$};
\node at (2,0) {$\bullet$};
\node at (2,0.5) {$V$};
\draw [->] (-0.25,-0.15) to [bend right=20] (-0.25,-1.8);
\node at (-0.6,-1) {$0$};
\draw [<-] (-0.1,0) to [bend left=20] (-0.1,-1.8);
\node at (0.3,-1) {$1$};

\end{tikzpicture}
\end{center}

Finally, one should note that although the cyclic permutations of two bands are identified, their associated string modules are not necessarily isomorphic. 
For instance, in the preceding example, for the band $u=\delta^{-1} \gamma^{-1} \beta \epsilon$ and its cyclic permutation $v=\epsilon\delta^{-1} \gamma^{-1} \beta $, we observe that $\Hom_{\Lambda_2}(M(v),M(w))\neq 0$, whereas, as remarked before, $\Hom_{\Lambda_2}(M(u),M(w))=0$.
\end{example}

\subsection{$\tau$-Tilting theory}
Classical tilting theory has been a fundamental concept in representation theory and an active area of research with numerous contributions to other domains. For a nice collection of investigations related to this subject, we refer to \cite{AHK}. In tilting theory and its more recent generalizations, the notion of mutation plays a prominent role. To explain this, we first need to recall some standard terminology in this subject.

A $\Lambda$-module $T$ is said to be \textit{partial tilting} if $\Ext^1_{\Lambda}(M,M)=0$ and $\pd_{\Lambda}T \leq 1$. A partial tilting module $T$ is \textit{complete tilting} (or tilting, for short) if additionally there exists a short exact sequence $$0 \rightarrow \Lambda \rightarrow T_1 \rightarrow T_2 \rightarrow 0,$$ with $T_1, T_2 \in \add(T)$. A partial tilting module $M$ with $|M|=|\Lambda|-1$ is called \emph{almost complete tilting}. This is because of a well-known fact, which asserts that a partial tilting module $T$ is tilting if and only if $|T|=|\Lambda|$. 
By Bongartz's completion theorem, for every partial tilting $\Lambda$-module $M$, there exists a $\Lambda$-module $N$ such that $M \oplus N$ is tilting. For the rudiments of tilting theory, see \cite[Chapter VI]{ASS}.

As a fundamental fact in this subject, from \cite{Un} it is known that if $M$ is a basic almost complete tilting module, it appears as a direct summand of at most two complete tilting modules $T_1=M \oplus X_1$ and $T_2= M \oplus X_2$. Moreover, by \cite{HU}, the completion occurs in exactly two ways if and only if $M$ is \emph{faithful}, meaning that $\ann_{\Lambda}(M):= \{a \in \Lambda \,| \, a.M=0 \} =0$. 

The new concept of $\tau$-tilting theory, introduced by Adachi, Iyama and Reiten \cite{AIR}, is primarily aimed at resolving the deficiency of tilting theory with respect to mutation. 
Although there have been other approaches to address the aforementioned deficiency of tilting theory in different settings, in this subsection we only collect some basic materials we need in the $\tau$-tilting theory. For the proofs and further details, we refer to \cite{AIR}, \cite{DIJ} and \cite{DI+}. 

Recall that a $\Lambda$-module $M$ is called \emph{rigid} if $\operatorname{Ext}^1_{\Lambda}(M,M)=0$. By $\rigid (\Lambda)$ we denote the set of all basic rigid $\Lambda$-modules and $\irigid (\Lambda)$ is used to specify the subset of $\rigid (\Lambda)$ consisting of indecomposable ones.
Analogously, $M$ is called \emph{$\tau$-rigid} if $\Hom_{\Lambda}(M,\tau M)=0$. Similarly, let $\tau \trig(\Lambda)$ and $\mathtt{i}\tau \trig(\Lambda)$) respectively denote the set of basic $\tau$-rigid modules and the indecomposable $\tau$-rigid modules. 
A $\tau$-rigid module $M$ is \emph{$\tau$-tilting} if $|M|=|\Lambda|$. Furthermore, $M$ is \emph{support $\tau$-tilting} if $M$ is $\tau$-tilting over $A/\langle e \rangle$, for an idempotent $e$ in $A$. 
By $\tau \ttilt(\Lambda)$ and $s\tau \ttilt(\Lambda)$ we respectively denote the set of all basic $\tau$-tilting modules and that of all basic support $\tau$-tilting modules in $\modu \Lambda$. Consequently, $\Lambda$ is called $\tau$-rigid finite (respectively $\tau$-tilting finite) if $|\tau \trig(\Lambda)| < \infty$ (respectively $|\tau \ttilt(\Lambda)| < \infty$). 
By the celebrated \emph{Auslander-Reiten duality}, namely the functorial isomorphism
$\overline{\Hom}_{\Lambda}(X,\tau_{\Lambda} Y) \simeq D \Ext^1_{\Lambda}(Y,X)$ for each pair of $\Lambda$-modules $X$ and $Y$, it is immediate that every $\tau$-rigid module is rigid. In particular, $\tau \trig(\Lambda)= \rigid(\Lambda)$, provided that $\Lambda$ is hereditary. 

Recall that $M$ is a brick if $\End_{\Lambda}(M)$ is a division algebra. It is known that over algebraically closed fields, $M$ is a brick if and only if $\End_{\Lambda}(M)\simeq k$. In general, for an arbitrary field $k$, by the combinatorial description of basis for the morphisms between string modules, which will be revisited in Section \ref{Section:tau-Tilting Finite Node-free Special Biserial Algebras}, it is easy to show that a string module $X$ is a brick if and only if $\End_{\Lambda}(X)$ is one-dimensional.
By $\Brick(\Lambda)$ we denote the set of bricks in $\modu \Lambda$.

A subcategory of $\modu \Lambda$ is a \emph{torsion class} if it is closed under quotient and extension. A torsion class $\mathcal{T}$ is {\it functorially finite}, in the sense of \cite{AR1}, if $\mathcal{T}=\Fac (M)$, for some $\Lambda$-module $M$. By $\tors(\Lambda)$ we denote the set of all torsion classes in $\modu \Lambda$, while $\ftors (\Lambda)$ is the set of all functorially finite torsion classes. Recall that in a subcategory $\mathcal{C}$ of $\modu \Lambda$, a module $X \in \mathcal{C}$ is \textit{$\Ext$-projective} if $\Ext^1_A(X,-){|_\mathcal{C}}=0$. 

The following theorem establishes a connection between the above notions.
\begin{theorem}[{\cite[Theorem 2.7]{AIR}}]\label{st-tor}
For an algebra $\Lambda$, there is a bijection between $s\tau \ttilt (\Lambda)$ and $\ftors(\Lambda)$. In particular, in one direction, each basic support $\tau$-tilting module $X$ is sent to $\Fac (X)$. In the other direction, every $\mathcal{T} \in \ftors(\Lambda)$ is sent to $X_{\mathcal{T}}=\bigoplus X_i$, where the direct sum runs over the isomorphism classes of all $\Ext$-projective indecomposable modules $X_i$'s in $\mathcal{T}$.  
\end{theorem}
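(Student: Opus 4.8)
The plan is to check that the two assignments in the statement are well defined and mutually inverse. Fix $\Lambda$ with $|\Lambda|=n$. I would establish: (i) if $X$ is a basic support $\tau$-tilting module, then $\Fac(X)$ is a functorially finite torsion class whose indecomposable $\Ext$-projectives are precisely the indecomposable summands of $X$; and (ii) if $\mathcal{T}\in\ftors(\Lambda)$, then $\mathcal{T}$ has only finitely many indecomposable $\Ext$-projectives, their direct sum $X_{\mathcal{T}}$ is a basic support $\tau$-tilting module, and $\Fac(X_{\mathcal{T}})=\mathcal{T}$. Granting (i) and (ii), the round trips $\mathcal{T}\mapsto X_{\mathcal{T}}\mapsto\Fac(X_{\mathcal{T}})=\mathcal{T}$ and $X\mapsto\Fac(X)\mapsto X_{\Fac(X)}=X$ are immediate, so the two maps are mutually inverse bijections.

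The engine behind (i) is the Auslander-Reiten duality recalled above: if $M$ is $\tau$-rigid, then $\Ext^1_{\Lambda}(M,N)=0$ for every $N\in\Fac(M)$, because a surjection $M^{a}\twoheadrightarrow N$ induces an injection $\Hom_{\Lambda}(N,\tau M)\hookrightarrow\Hom_{\Lambda}(M^{a},\tau M)\cong\Hom_{\Lambda}(M,\tau M)^{a}=0$, so also $\overline{\Hom}_{\Lambda}(N,\tau M)=0$. From this, $\Fac(M)$ is closed under extensions: given $0\to A\to B\to C\to 0$ with $A,C\in\Fac(M)$, the vanishing $\Ext^1_{\Lambda}(M,A)=0$ lifts a surjection $M^{a}\twoheadrightarrow C$ to a map $M^{a}\to B$, which together with a surjection $M^{b}\twoheadrightarrow A\hookrightarrow B$ yields a surjection $M^{a+b}\twoheadrightarrow B$; closure under quotients being automatic, $\Fac(M)$ is a torsion class in which $M$ is $\Ext$-projective. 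It is contravariantly finite since it consists of quotients of a single module (if $M^{a}\to N$ is a right $\add(M)$-approximation, the inclusion of its image into $N$ is a right $\Fac(M)$-approximation), and covariant finiteness then follows from the standard torsion-pair duality, so $\Fac(M)$ is functorially finite. Applied to a support $\tau$-tilting $X$ this gives the first half of (i); moreover $\Fac(X)$ admits no indecomposable $\Ext$-projective $Y$ beyond the summands of $X$, since such a $Y$ is a quotient of a power of $X$ and one checks $X\oplus Y$ is again $\tau$-rigid, hence --- being a module over the support algebra of $X$, which has $|X|$ simple modules --- cannot have $|X|+1$ indecomposable summands.

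For (ii), functorial finiteness of $\mathcal{T}$ forces it to have only finitely many indecomposable $\Ext$-projectives and to be generated by them, i.e. $\mathcal{T}=\Fac(X_{\mathcal{T}})$; concretely one reads off $X_{\mathcal{T}}$, up to additive equivalence, from the minimal left $\mathcal{T}$-approximations $P_{i}\to T^{i}$ of the $n$ indecomposable projectives together with the torsion radical. As in the previous paragraph, $\Ext^1_{\Lambda}(X_{\mathcal{T}},\Fac(X_{\mathcal{T}}))=0$, hence $\Hom_{\Lambda}(X_{\mathcal{T}},\tau X_{\mathcal{T}})=0$ by Auslander-Reiten duality, so $X_{\mathcal{T}}$ is $\tau$-rigid. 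The substantive point is that $X_{\mathcal{T}}$ is not merely $\tau$-rigid but genuinely \emph{support} $\tau$-tilting, i.e. $\tau$-tilting over the quotient of $\Lambda$ by the idempotent $e$ annihilating $X_{\mathcal{T}}$; equivalently, that $X_{\mathcal{T}}$ carries the maximal possible number of indecomposable summands there. I would extract this from the $n$ left $\mathcal{T}$-approximations above, which collectively exhibit $X_{\mathcal{T}}$ with the correct count; were $X_{\mathcal{T}}$ too small, a Bongartz-type completion of it inside $\mathcal{T}$ would produce a further indecomposable $\Ext$-projective, contradicting maximality. Once $\mathcal{T}=\Fac(X_{\mathcal{T}})$ and this count are in hand, the two round trips and the injectivity of both maps are routine.

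The step I expect to be the genuine obstacle is precisely this count: showing that a functorially finite torsion class carries \emph{exactly} the number of indecomposable $\Ext$-projectives needed for $X_{\mathcal{T}}$ to be support $\tau$-tilting. Unlike the formal torsion-theoretic bookkeeping elsewhere in (i), this truly uses the $\tau$-tilting machinery --- the analysis of left $\mathcal{T}$-approximations of the projectives, Bongartz-type completion for $\tau$-rigid modules, and mutation of support $\tau$-tilting modules --- and it is here that the finiteness built into functorial finiteness is essential. By comparison, the covariant-finiteness half of (i) is a minor subtlety, dispatched by the torsion-pair duality rather than by arguing from the module $M$ directly.
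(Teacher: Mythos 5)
The paper does not prove this statement at all: it is quoted verbatim from Adachi--Iyama--Reiten and used as background, so there is no in-paper argument to compare against. What you have written is essentially a reconstruction of the original AIR proof, and its architecture is right: the Auslander--Reiten duality argument that a $\tau$-rigid $M$ is $\Ext$-projective in $\Fac(M)$ and that $\Fac(M)$ is closed under extensions is correct, the argument that an extra indecomposable $\Ext$-projective $Y$ of $\Fac(X)$ would make $X\oplus Y$ $\tau$-rigid over the support algebra and violate the summand bound is exactly the AIR mechanism, and you correctly identify the genuine content as the count of $\Ext$-projectives of a functorially finite $\mathcal{T}$, handled via left $\mathcal{T}$-approximations of the projectives and a Bongartz-type completion.

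Two places are justified more loosely than the surrounding text suggests. First, "covariant finiteness then follows from the standard torsion-pair duality" is not a valid inference from what you established: for any torsion pair $(\mathcal{T},\mathcal{F})$, contravariant finiteness of $\mathcal{T}$ (via the torsion submodule) and covariant finiteness of $\mathcal{F}$ (via the torsion quotient) are automatic, and Smal{\o}'s duality exchanges the two \emph{non}-automatic conditions ($\mathcal{T}$ covariantly finite $\Leftrightarrow$ $\mathcal{F}$ contravariantly finite), so your automatic half feeds nothing into it. The fact you need --- that a torsion class of the form $\Fac(M)$ is covariantly finite --- is precisely Smal{\o}'s theorem and requires an actual construction of left $\Fac(M)$-approximations; it should be invoked as such (it is also what AIR cite). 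Second, the counting step silently uses that a $\Lambda$-$\tau$-rigid module annihilated by $\langle e\rangle$ is $\tau$-rigid over $\Lambda/\langle e\rangle$ (since $\tau_{\Lambda/\langle e\rangle}$ of such a module embeds into its $\tau_\Lambda$), and the bound "a $\tau$-rigid module over an algebra with $m$ simples has at most $m$ indecomposable summands" is itself a nontrivial AIR proposition, not a formality. Neither point breaks the proof, but both should be flagged as imported results rather than routine checks; with them made explicit, your outline matches the standard proof of the cited theorem.
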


Moreover, we collect some important equivalent conditions for $\tau$-tilting finiteness of algebras in the following theorem.

\begin{theorem}[\cite{DIJ, DI+}] \label{tau-finiteness}
For an algebra $\Lambda$, the following are equivalent.
\begin{enumerate}
\item $\Lambda$ is $\tau$-tilting finite;
\item $\ftors(\Lambda)= \tors(\Lambda)$;
\item $\tors(\Lambda)$ is finite;
\item $\mathtt{i} \tau \trig(\Lambda)$ is finite;
\item $\Brick(\Lambda)$ is finite.
\end{enumerate}
\end{theorem}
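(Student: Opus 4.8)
Since the statement bundles several equivalences from \cite{DIJ} and \cite{DI+}, my plan is to isolate a handful of structural inputs and thread the five conditions together rather than attack it head-on. The pivotal input is Theorem~\ref{st-tor}, the bijection $s\tau\ttilt(\Lambda)\leftrightarrow\ftors(\Lambda)$. I would first record the elementary reduction that $\Lambda$ is $\tau$-tilting finite (that is, $|\tau\ttilt(\Lambda)|<\infty$) if and only if $|s\tau\ttilt(\Lambda)|<\infty$: a support $\tau$-tilting $\Lambda$-module is a $\tau$-tilting module over some idempotent quotient $\Lambda/\langle e\rangle$, there are only finitely many such quotients up to isomorphism, and $\tau$-tilting finiteness is inherited by quotients \cite{DIJ}. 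Through Theorem~\ref{st-tor} this turns condition $(1)$ into ``$\ftors(\Lambda)$ is a finite set'', and since $\ftors(\Lambda)\subseteq\tors(\Lambda)$ it already gives $(3)\Rightarrow(1)$.

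The equivalence $(1)\Leftrightarrow(4)$ I would then settle entirely on the module side. One direction is formal: a basic $\tau$-tilting module has at most $|\Lambda|$ indecomposable summands, every one of them $\tau$-rigid, so $|\mathtt{i}\tau\trig(\Lambda)|<\infty$ forces $|\tau\ttilt(\Lambda)|<\infty$. For the converse I would invoke the $\tau$-rigid analogue of Bongartz completion from \cite{AIR}: every $\tau$-rigid module, in particular every indecomposable one, is a direct summand of some support $\tau$-tilting module. Hence $\mathtt{i}\tau\trig(\Lambda)$ is contained in the set of indecomposable summands of support $\tau$-tilting modules, which is finite as soon as $\Lambda$ is $\tau$-tilting finite, by the reduction of the previous paragraph.

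The remaining equivalences $(4)\Leftrightarrow(5)$ and $(2)\Leftrightarrow(3)$ are the substantive part, and for both I would lean on the brick-labelled Hasse quiver of $\tors(\Lambda)$ from \cite{DIJ}. Each cover relation $\mathcal{T}'\lessdot\mathcal{T}$ in $\tors(\Lambda)$ carries a canonical brick lying in $\mathcal{T}\setminus\mathcal{T}'$; on the subposet $\ftors(\Lambda)$ the Hasse quiver is the support $\tau$-tilting mutation graph, which is connected \cite{AIR} and locally finite, the arrows out of $\Fac M$ matching the mutable indecomposable summands of $M$, and the labels at any fixed vertex being pairwise distinct bricks. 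Running these identifications both ways makes ``$\mathtt{i}\tau\trig(\Lambda)$ finite'', ``the mutation graph finite'', and ``$\Brick(\Lambda)$ finite'' equivalent, which is $(4)\Leftrightarrow(5)$; and since a torsion class is recovered from the set of bricks it contains, $(5)\Rightarrow(3)$ follows, closing the cycle $(1)\Leftrightarrow(4)\Leftrightarrow(5)\Rightarrow(3)\Rightarrow(1)$. For $(2)\Leftrightarrow(3)$: the implication $(3)\Rightarrow(2)$ holds because when $\tors(\Lambda)$ is finite every torsion class has only finitely many minimal extending modules and is therefore the $\Fac$ of a single module; and $(2)\Rightarrow(3)$ by contraposition, using that a functorially finite torsion class is never a proper directed union of torsion classes, so that if $\tors(\Lambda)$ were infinite one could locate an ascending chain of functorially finite torsion classes whose union is a torsion class that is not functorially finite, contradicting $(2)$.

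The steps I confidently control are the bookkeeping around Theorem~\ref{st-tor} and the module-theoretic argument for $(1)\Leftrightarrow(4)$. The main obstacle is the torsion-theoretic machinery feeding $(4)\Leftrightarrow(5)$ and $(2)\Leftrightarrow(3)$: establishing that every brick genuinely occurs as a Hasse label and controlling how much the labelling can collapse, that a torsion class is determined by its bricks, and the non-functorial-finiteness of the ascending-chain limit. These are exactly the results of \cite{DIJ} and \cite{DI+}, which I would cite rather than reprove.
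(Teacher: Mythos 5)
The paper does not prove this statement at all: Theorem \ref{tau-finiteness} is quoted as background from \cite{DIJ} and \cite{DI+}, so there is no internal argument to compare yours against; the only fair comparison is with those sources, and your outline is faithful to their architecture. Your bookkeeping steps are correct and standard: the reduction of $\tau$-tilting finiteness to finiteness of $s\tau\ttilt(\Lambda)$ via idempotent quotients, the translation through Theorem \ref{st-tor}, and the equivalence $(1)\Leftrightarrow(4)$ using Bongartz-type completion of $\tau$-rigid modules from \cite{AIR} are all sound, and you are right that the genuinely substantive inputs --- the brick labelling of the Hasse quiver, the fact that a torsion class is determined by the bricks it contains, and the construction of a non-functorially-finite torsion class when $\tors(\Lambda)$ is infinite --- are exactly the theorems of \cite{DIJ} and \cite{DI+} that one cites rather than reproves, which is what the paper itself does. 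One local imprecision worth fixing if you ever write this out: your justification of $(3)\Rightarrow(2)$, namely that a torsion class with finitely many minimal extending modules ``is therefore the $\Fac$ of a single module,'' is not a valid inference --- minimal extending modules govern the covers \emph{above} a torsion class in the lattice, not a generator of the class itself; the argument in \cite{DIJ} instead passes through brick finiteness (finitely many bricks force every torsion class to be generated, in the functorially finite sense, by a finite collection of modules). Since you explicitly defer this step to the citation, it is a blemish in the sketch rather than a gap in the logic.
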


Let us finish this subsection by recalling the following important results in the $\tau$-tilting theory from the lattice theoretical point of view. In particular, we will extensively use a more elementary version of it to reduce our main problems to the min-rep-infinite algebras, which are significantly more tractable.

\begin{theorem} [\cite{DI+}]\label{quotient-lattice}
Every surjective morphism $\phi: \Lambda_1 \rightarrow \Lambda_2$ of algebras induces a surjective lattice map $\tilde{\phi}: \tors(\Lambda_1) \rightarrow \tors(\Lambda_2)$, defined by $\tilde{\phi}(\mathcal{T}):=\mathcal{T}\cap \modu \Lambda_2$, for each $\mathcal{T} \in \tors(\Lambda_1)$. In particular, if $\Lambda_1$ is $\tau$-tilting finite, so is $\Lambda_2$.
\end{theorem}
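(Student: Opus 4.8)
The plan is to realize $\modu\Lambda_2$ as the full subcategory of $\modu\Lambda_1$ consisting of the modules annihilated by $\Ker\phi$, and to use only the two elementary facts that this subcategory is closed under submodules and quotients in $\modu\Lambda_1$, and that a short exact sequence of $\Lambda_1$-modules all of whose three terms lie in $\modu\Lambda_2$ is the same datum as a short exact sequence of $\Lambda_2$-modules. With this in hand I would first check that $\tilde\phi$ is well defined: for $\mathcal{T}\in\tors(\Lambda_1)$ the intersection $\mathcal{T}\cap\modu\Lambda_2$ is closed under $\Lambda_2$-quotients and $\Lambda_2$-extensions, because on objects of $\modu\Lambda_2$ these operations coincide with the corresponding ones in $\modu\Lambda_1$, under which $\mathcal{T}$ is closed. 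Order-preservation is immediate, and meets are preserved since in both $\tors(\Lambda_1)$ and $\tors(\Lambda_2)$ the meet is intersection, so $\tilde\phi(\mathcal{T}_1\wedge\mathcal{T}_2)=(\mathcal{T}_1\cap\mathcal{T}_2)\cap\modu\Lambda_2=\tilde\phi(\mathcal{T}_1)\wedge\tilde\phi(\mathcal{T}_2)$.

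The substance is in surjectivity and join-preservation, and both rest on the elementary description of the torsion closure: if $\mathcal{C}\subseteq\modu\Lambda$ is closed under quotients, then the class $\operatorname{Filt}(\mathcal{C})$ of modules admitting a finite filtration with subquotients in $\mathcal{C}$ is the smallest torsion class containing $\mathcal{C}$; closure of $\operatorname{Filt}(\mathcal{C})$ under quotients follows by induction on the filtration length, pushing a surjection through a two-step filtration, while closure under extensions and finite direct sums is formal. For surjectivity, given $\mathcal{S}\in\tors(\Lambda_2)$ — in particular a quotient-closed subcategory of $\modu\Lambda_1$ — I would take $\mathcal{T}_{\mathcal{S}}:=\operatorname{Filt}_{\Lambda_1}(\mathcal{S})$ and prove $\tilde\phi(\mathcal{T}_{\mathcal{S}})=\mathcal{S}$. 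The inclusion $\mathcal{S}\subseteq\tilde\phi(\mathcal{T}_{\mathcal{S}})$ is clear. Conversely, if $M\in\operatorname{Filt}_{\Lambda_1}(\mathcal{S})$ lies in $\modu\Lambda_2$, then every term of an $\mathcal{S}$-filtration of $M$, being a submodule or a subquotient of $M$, again lies in $\modu\Lambda_2$; hence that filtration is a filtration of $\Lambda_2$-modules with subquotients in $\mathcal{S}$, and since $\mathcal{S}$ is extension-closed inside $\modu\Lambda_2$ we get $M\in\mathcal{S}$. Thus $\mathcal{S}\mapsto\mathcal{T}_{\mathcal{S}}$ is a right inverse of $\tilde\phi$, so $\tilde\phi$ is surjective.

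Join-preservation is the same bookkeeping: write $\mathcal{T}_1\vee\mathcal{T}_2=\operatorname{Filt}_{\Lambda_1}(\mathcal{T}_1\cup\mathcal{T}_2)$, legitimate since $\mathcal{T}_1\cup\mathcal{T}_2$ is quotient-closed; then any $M\in(\mathcal{T}_1\vee\mathcal{T}_2)\cap\modu\Lambda_2$ admits a filtration whose subquotients lie in $(\mathcal{T}_1\cup\mathcal{T}_2)\cap\modu\Lambda_2=\tilde\phi(\mathcal{T}_1)\cup\tilde\phi(\mathcal{T}_2)$ and all of whose terms lie in $\modu\Lambda_2$, whence $M\in\operatorname{Filt}_{\Lambda_2}(\tilde\phi(\mathcal{T}_1)\cup\tilde\phi(\mathcal{T}_2))=\tilde\phi(\mathcal{T}_1)\vee\tilde\phi(\mathcal{T}_2)$; the reverse inclusion is monotonicity. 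So $\tilde\phi$ is a surjective lattice map. Finally, for the last assertion: if $\Lambda_1$ is $\tau$-tilting finite, then $\tors(\Lambda_1)$ is finite by Theorem~\ref{tau-finiteness}, so $\tors(\Lambda_2)=\tilde\phi(\tors(\Lambda_1))$ is finite, and $\Lambda_2$ is $\tau$-tilting finite by the same theorem.

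I expect the crux to be the surjectivity step — precisely, checking that the smallest torsion class of $\modu\Lambda_1$ containing $\mathcal{S}$ meets $\modu\Lambda_2$ in exactly $\mathcal{S}$. The delicate point is that $\mathcal{S}$ need not be extension-closed in $\modu\Lambda_1$ (indeed $\modu\Lambda_2$ itself is generally not extension-closed there), so one must not argue inside $\modu\Lambda_1$; the argument works only because, once a module is known to lie in $\modu\Lambda_2$, its entire filtration is confined to $\modu\Lambda_2$, where $\mathcal{S}$ \emph{is} extension-closed. Setting up the $\operatorname{Filt}$ formalism cleanly, and in particular its closure under quotients, is the main technical investment; everything else is routine.
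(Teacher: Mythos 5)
Your proof is correct. Note that the paper itself gives no argument for this statement at all: it is imported verbatim from [DI+] (Demonet--Iyama--Reading--Reiten--Thomas), and only its final consequence (preservation of $\tau$-tilting finiteness under surjections, via the finiteness of $\tors$) is ever used. So what you have produced is a self-contained elementary substitute for the citation, and it holds up: the well-definedness and meet-preservation checks are routine as you say, and you correctly isolate the one genuinely delicate point, namely that $\mathcal{S}\in\tors(\Lambda_2)$ need not be extension-closed in $\modu\Lambda_1$, so surjectivity must be argued via $\operatorname{Filt}_{\Lambda_1}(\mathcal{S})\cap\modu\Lambda_2=\mathcal{S}$, using that $\modu\Lambda_2$ is closed under subquotients in $\modu\Lambda_1$ so that any filtration of a $\Lambda_1$-module annihilated by $\Ker\phi$ lives entirely in $\modu\Lambda_2$, where $\mathcal{S}$ is extension-closed. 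Two small housekeeping remarks: when you push an $\mathcal{S}$-filtration through a quotient to prove $\operatorname{Filt}$ is quotient-closed, some subquotients may become zero, so either allow zero subquotients or discard the redundant terms; and your use of $\operatorname{Filt}_{\Lambda_1}(\mathcal{S})$ as the smallest torsion class containing $\mathcal{S}$ needs $\mathcal{S}$ to be quotient-closed as a subcategory of $\modu\Lambda_1$, which is true but worth stating explicitly (a $\Lambda_1$-quotient of a module killed by $\Ker\phi$ is again killed by $\Ker\phi$). The same filtration bookkeeping gives join-preservation, so you in fact prove the stronger assertion implicit in [DI+] that $\tilde\phi$ is a lattice quotient map, and the final step via Theorem \ref{tau-finiteness} matches how the paper uses the result.
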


\subsection{Representation-infinite algebras}\label{subsection:Module category of representation-infinite algebras}

Representation-finite algebras and local algebras form well-known families of $\tau$-rigid finite algebras. 
Thus, in our comparison between the notions of $\tau$-tilting finiteness and rep-infiniteness, we will be primarily interested in rep-infinite algebras. It is known that an algebra $\Lambda$ is rep-infinite if and only if every component of $\Gamma (\modu \Lambda)$ is infinite, where $\Gamma (\modu \Lambda)$ denotes the Auslander-Reiten quiver of $\modu \Lambda$ (for a proof, see \cite[IV.5.4]{ASS}). This itself is equivalent to $\rad^{\infty}(\modu \Lambda) \neq 0$, where $\rad(\modu \Lambda)$ denotes the Jacobson radical of the category $\modu \Lambda$. Here we collect some fundamental facts about the module category of rep-infinite string algebras, because they play a crucial role in this paper.

From \cite{BR} we know that for a rep-infinite string algebra $\Lambda$, every component of $\Gamma(\modu \Lambda)$ falls into exactly one of the following three families.
\begin{enumerate}[(I)]
\item Finitely many non-periodic components whose boundaries are given by projective or injective modules.
\item Infinitely many periodic components of the form $\mathbb{ZA}^{\infty}/\langle \tau^{k} \rangle$, known as a \emph{tube of rank $k$}, for some $k \in \mathbb{Z}_{>0}$.
\item Infinitely many non-periodic components without boundary which are of the form $\mathbb{ZA}^{\infty}_{\infty}$.
\end{enumerate}

A tube of rank $1$ is often called \emph{homogeneous}. Among the components listed above, the second and third families are \emph{stable}, meaning that for every $X$ in such components, $\tau(X)$ and $\tau^{-1}(X)$ are nonzero. Moreover, any component $\mathcal{C}$ that contains a band module is of the second type and $\mathcal{C}$ consists of only band modules. In this case, $\mathcal{C}$ is a homogeneous tube and is called a \emph{band tube}. In particular, for each band module $X$, we have $\tau_\Lambda(X)=X$, implying that band modules are never $\tau$-rigid. Therefore, as long as our aim is to study $\tau$-tilting modules, we need not to consider band modules.
Note that among all tubes, only finitely many of them contain (therefore consist of) string modules, which we call \emph{string tubes}.

Before showing that $\tau$-tilting finiteness of $\Lambda$ implies strong constraints on components of $\Gamma (\modu \Lambda)$ and the Jacobson radical of $\modu \Lambda$, let us recall some definitions.

\begin{definition}
For an arbitrary algebra $\Lambda$, a (connected) component $\mathcal{P}$ of $\Gamma(\modu \Lambda)$ is called \emph{preprojective} if $\mathcal{P}$ is acyclic and for any $M$ in $\mathcal{P}$, there exists a vertex $x\in Q_0$ and some $m\in \mathbb{Z}_{>0}$ such that $M\simeq \tau^{-m}(P_x)$. An indecomposable module $X$ in $\modu \Lambda$ is called \emph{preprojective} if it belongs to a preprojective component of $\Gamma(\modu \Lambda)$. \emph{Preinjective} components of $\Gamma(\modu \Lambda)$ as well as \emph{preinjective} indecomposable modules are defined dually.
\end{definition}

The following lemma is a useful result that we need in the following sections.

\begin{lemma}\cite[VIII.2.7]{ASS}\label{Preproj/Postinj brick}
For an arbitrary algebra $\Lambda$, every indecomposable preprojective or preinjective module is a $\tau$-rigid brick. 
\end{lemma}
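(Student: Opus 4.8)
The statement to prove is the final lemma quoted from \cite[VIII.2.7]{ASS}: every indecomposable preprojective or preinjective module over an arbitrary algebra $\Lambda$ is a $\tau$-rigid brick. Since this is cited as a known result, the "proof" is really a recollection of the standard argument; I will present it as such.

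\medskip

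\textbf{Plan of proof.} By the duality $\modu \Lambda \leftrightarrow \modu \Lambda^{\mathrm{op}}$ it suffices to treat the preprojective case, so let $X$ be an indecomposable preprojective module, say $X \simeq \tau^{-m}(P_x)$ for some projective $P_x$ and $m \in \mathbb{Z}_{\geq 0}$, lying in a preprojective component $\mathcal P$. The first step is to establish $\tau$-rigidity, i.e. $\Hom_{\Lambda}(X, \tau X) = 0$. I would argue this by induction on $m$. For $m = 0$ we have $X = P_x$ projective, and $\Hom_{\Lambda}(P_x, \tau P_x) = 0$ is immediate since any morphism from a projective into $\tau(-)$ factors through an injective summand... more cleanly: $\Hom_\Lambda(P, \tau N) = 0$ for every projective $P$ and every $N$, because by Auslander--Reiten duality $\overline{\Hom}_\Lambda(P,\tau N)\simeq D\,\overline{\Ext}^1_\Lambda(N,P)$ and $\Ext^1_\Lambda(N,P)$... actually the clean statement is $\Hom_\Lambda(P,\tau N)=0$ whenever $P$ is projective. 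For the inductive step, write $X = \tau^{-1}(X')$ with $X' = \tau^{-m+1}(P_x)$ preprojective and non-injective; using that $\tau^{-1}\tau X' \simeq X'$ (as $X'$ has no injective summand, being indecomposable non-injective), one translates $\Hom_\Lambda(\tau^{-1}X', \tau\tau^{-1}X') $ back along $\tau$. The key tool is that on a preprojective component $\tau$ and $\tau^{-1}$ are mutually inverse bijections away from the projectives, and that $\Hom_\Lambda(\tau^{-1}U, V) \cong \Hom_\Lambda(U, \tau V)$ modulo morphisms factoring through injectives, which vanish here because a preprojective component contains no injective modules on the relevant part.

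\medskip

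\textbf{The brick property.} Once $\tau$-rigidity is known, the brick property is automatic: a $\tau$-rigid indecomposable module $X$ satisfies $\End_\Lambda(X) = \operatorname{Aut}_\Lambda(X) \cup \{0\}$, i.e. every nonzero endomorphism is an isomorphism, hence $\End_\Lambda(X)$ is a division ring. The standard argument: if $f \in \End_\Lambda(X)$ is not an isomorphism, then since $X$ is indecomposable and finite-dimensional, $\End_\Lambda(X)$ is local, so $f$ is nilpotent, and then one shows $\operatorname{Im} f$ and $\operatorname{Ker} f$ produce a nonzero element of $\Hom_\Lambda(X,\tau X)$ — concretely, by a lemma of Auslander--Reiten--Smalø, if $X$ is $\tau$-rigid then any $g\colon X \to X$ that factors through $\operatorname{Im} g \subsetneq X$ must be zero; iterating, $f = 0$. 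This contradicts $f \neq 0$. So every nonzero endomorphism is invertible and $X$ is a brick.

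\medskip

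\textbf{Main obstacle.} The genuinely delicate point is the $\tau$-rigidity statement $\Hom_\Lambda(X,\tau X)=0$ for preprojective $X$; the brick property then follows formally. The cleanest route is probably not the induction sketched above but rather to invoke directly that in a preprojective component all morphisms are "directed" — more precisely, $\mathcal P$ being preprojective forces $\Hom_\Lambda(M,N)=0$ whenever $N$ precedes $M$ in the natural partial order on $\mathcal P$, and $\tau X$ strictly precedes $X$ in that order. Combined with $\tau X$ lying in the same component $\mathcal P$ (which holds precisely because $X$ is not projective, or $\tau X = 0$ if it is), this gives $\Hom_\Lambda(X, \tau X) = 0$ at once. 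Making the "directedness" precise is exactly the content of \cite[VIII.2.7]{ASS} and its surrounding lemmas, so in the paper I would simply cite it; the sketch above records why the statement is true. Thus the lemma holds, and we record it for use in the sequel, in particular to produce bricks and $\tau$-rigid modules from preprojective/preinjective components of rep-infinite algebras.
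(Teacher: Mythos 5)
The paper offers no argument for this lemma at all: it is quoted verbatim from \cite[VIII.2.7]{ASS}, so your closing decision to ``simply cite it'' is exactly what the paper does, and the directedness argument you sketch at the end (all morphisms in a preprojective component point forward, $\tau X$ is a proper predecessor of $X$ in the same component when $X$ is non-projective, hence $\Hom_\Lambda(X,\tau X)=0$, and the brick property follows) is indeed the content of the cited result, so in substance your recollection is on target.

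Two of your intermediate claims, however, are not correct as stated, and you should not let them stand even in a sketch. First, ``$\Hom_\Lambda(P,\tau N)=0$ whenever $P$ is projective'' is false for a general algebra: whenever $\tau N\neq 0$ its projective cover gives a nonzero map $P\to\tau N$ (e.g.\ $\Lambda=k[x]/(x^2)$, $N$ simple, $\tau N\simeq N$, $\Hom_\Lambda(\Lambda,\tau N)\neq 0$). What the base case of your induction actually needs is only the trivial fact $\tau P_x=0$. Second, the inductive step leans on ``a preprojective component contains no injective modules on the relevant part,'' but preprojective components can perfectly well contain injectives (for a representation-finite hereditary algebra the unique component is preprojective and contains all injectives), and the passage from $\underline{\Hom}(X',\tau X')=0$ to honest vanishing of $\Hom(\tau^{-1}X',X')$ requires controlling maps factoring through injectives, which your sketch does not do. Neither defect is fatal, because you abandon the induction in favour of the directedness argument, which is the correct and standard route; but if you keep the sketch, replace the false auxiliary claims by $\tau P_x=0$ and by the precise directedness statement (no nonzero morphism from a module in a preprojective component to a proper predecessor), and for the brick step either quote the fact that indecomposable $\tau$-rigid modules are bricks (Auslander--Smal\o{}, or \cite{DIJ}) or note that over a preprojective component the brick property also follows directly from directedness, which is how \cite{ASS} obtains it.
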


\begin{remark}
The above lemma often gives a handy criterion to verify $\tau$-tilting infiniteness of a large family of algebras: Due to Theorem \ref{tau-finiteness} and the fact that brick-finiteness is preserved under surjective algebra morphisms, for a given algebra $\Lambda=kQ/I$ if we find a rep-infinite quotient algebra $\Lambda'$ whose Auslander-Reiten quiver contains a preprojective or preinjective component, then $\Lambda'$, and thus $\Lambda$, are $\tau$-tilting infinite. To efficiently employ this simple reductive method and verify $\tau$-tilting infiniteness of a large family of algebras via their quotients, one needs to have an explicit characterization of a wide range of $\tau$-tilting infinite algebras which are minimal with respect to this property.

In \cite{HV}, Happel and Vossieck consider a family of algebras which closely relates to the scope of our work. In the aforementioned article, the authors study the algebras which they call minimal representation infinite, but which we call weakly minimal representation-infinite, in order to highlight the difference with the notion of minimal representation-infinite algebras we study here.
We say $\Lambda=kQ/I$ is \emph{weakly minimal representation-infinite} (or weak-min-rep-infinite, for short) if $\Lambda$ is rep-infinite, but for every $x \in Q_0$ the quotient algebra $\Lambda/\langle e_x \rangle$ is rep-finite. 
Note that every min-rep-infinite algebra is obviously weak-min-rep-infinite, whereas the converse does not hold (for example, a generalized Kronecker quiver with three arrows is weak-min-rep-infinite, but not min-rep-infinite).
In \cite{HV}, a full list of weak-min-rep-infinite algebras whose Auslander-Reiten quivers contain a preprojective component is given via a concrete description of bound quivers and certain admissible operations.
In light of such a concrete classification and some further classes we study in this paper, we can often easily show that a given algebra $\Lambda=kQ/I$ is $\tau$-tilting infinite, simply by our reductive method, which amounts to finding one of the aforementioned bound quivers inside the bound quiver $(Q,I)$. 
\end{remark}

From the above lemma and some well-known facts which we alluded to earlier in this section, it follows that we in fact want to study the $\tau$-tilting finiteness of those special biserial algebras $\Lambda$ such that $\rad^{\infty}(\modu \Lambda)\neq 0$ and $\Gamma(\modu \Lambda)$ has no preprojective and preinjective component.
That being the case, we aim to determine whether $\modu \Lambda$ contains infinitely many isomorphism classes of indecomposable modules $M$ with $\End_{\Lambda}(M)\simeq k$. In the next section, we show how to reduce the above problem to a certain family of string algebras. 

We also need the following lemma.
\begin{lemma} \cite[\textrm{X}.3.3 and X.4.5]{SS} \label{tube-brick mouth}
For an algebra $\Lambda$ and a stable tube $\mathcal{T}$ of $\Gamma(\modu \Lambda)$, the following are equivalent:
\begin{enumerate}
\item If $\{E_1,\cdots,E_r\}$ is the mouth of $\mathcal{T}$, each $E_i$ is a brick and $\Hom(E_i,E_j)=0=\Hom(E_j,E_i)$, for $1\leq i<j \leq r$.
\item $\rad^{\infty}_{\Lambda}(X,Y)=0$, for every pair $X$ and $Y$ in $\mathcal{T}$.
\item $\rad^{\infty}_{\Lambda}(X,Y)=0$, for every pair $X$ and $Y$ on the mouth of $\mathcal{T}$.
\end{enumerate}
\end{lemma}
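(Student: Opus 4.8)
This is a statement about the internal morphism theory of a single stable tube, so the plan is to argue entirely inside $\mathcal{T}$ using its standard ray--coray description. Write $r$ for the rank of $\mathcal{T}$ and $\{E_1,\dots,E_r\}$ for its mouth, with $\tau$ acting cyclically by $\tau E_i\cong E_{i-1}$ (indices mod $r$); then every indecomposable of $\mathcal{T}$ is an $E_a[\ell]$ with quasi-socle $E_a$ and quasi-length $\ell\geq 1$, and there are short exact sequences $0\to E_a\to E_a[\ell]\to E_{a+1}[\ell-1]\to 0$ and $0\to E_a[\ell-1]\to E_a[\ell]\to E_{a+\ell-1}\to 0$ whose structure morphisms realize, up to scalars, the irreducible maps of $\mathcal{T}$. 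The implication $(2)\Rightarrow(3)$ is trivial, and $(1)\Rightarrow(3)$ is immediate: for $i\neq j$ we have $\rad^{\infty}_{\Lambda}(E_i,E_j)\subseteq\Hom_{\Lambda}(E_i,E_j)=0$, while for $i=j$ the brick hypothesis says $\End_{\Lambda}(E_i)$ is a division ring, so even $\rad_{\Lambda}(E_i,E_i)=0$.

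The first real point is $(3)\Rightarrow(1)$, for which I would prove the structural identity $\rad_{\Lambda}(E_i,E_j)=\rad^{\infty}_{\Lambda}(E_i,E_j)$ for all mouth modules. Since there is no irreducible morphism between two mouth modules, any $f\in\rad_{\Lambda}(E_i,E_j)$ is, modulo higher radical, a sum of compositions of irreducible maps of $\mathcal{T}$; and any such composition that both starts and ends at the mouth is already $0$ --- the first factor must be a ray insertion $E_i\hookrightarrow E_i[2]$, whose image is the quasi-socle, and any chain of further irreducible maps returning the quasi-length to $1$ passes through a coray surjection whose kernel contains that quasi-socle. Iterating gives $\rad_{\Lambda}(E_i,E_j)=\rad^{2}_{\Lambda}(E_i,E_j)=\cdots$, hence the claimed identity. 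Combined with hypothesis $(3)$ this forces $\rad_{\Lambda}(E_i,E_j)=0$ for all $i,j$, so $\Hom_{\Lambda}(E_i,E_j)=0$ for $i\neq j$ and $\End_{\Lambda}(E_i)$ is local with vanishing radical, i.e.\ a division ring; thus $(1)$ holds.

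For $(3)\Rightarrow(2)$ (equivalently, given the above, $(1)\Rightarrow(2)$) I would propagate the vanishing of $\rad^{\infty}_{\Lambda}$ from the mouth to all of $\mathcal{T}$ by induction on the sum of the quasi-lengths of the source and target. Given $f\in\rad^{\infty}_{\Lambda}(E_a[\ell],E_b[m])$ with, say, $\ell\geq 2$, restricting $f$ along $E_a\hookrightarrow E_a[\ell]$ lands in $\rad^{\infty}_{\Lambda}(E_a,E_b[m])$, which vanishes by the inductive hypothesis; hence $f$ factors through the quotient $E_{a+1}[\ell-1]$, and one argues that the induced map again lies in $\rad^{\infty}_{\Lambda}$, reducing $\ell$. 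A dual argument using $0\to E_b[m-1]\to E_b[m]\to E_{b+m-1}\to 0$ reduces $m$, and the base case $\ell=m=1$ is $(1)\Rightarrow(3)$ above. Throughout one uses that $\rad^{\infty}_{\Lambda}$ is a two-sided ideal, together with the long exact $\Hom$-sequences attached to the two displayed short exact sequences.

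The main obstacle --- and the reason the precise statement is attributed to \cite[X.3.3 and X.4.5]{SS} rather than proved here --- is the fine combinatorics hidden in the last two paragraphs: one must justify that the iterated factorizations in $(3)\Rightarrow(1)$ genuinely exhaust every radical power (not collapsing to zero prematurely, nor, in a tube of small rank, ``wrapping around'' through a composite that leaves the radical), and that in $(3)\Rightarrow(2)$ dividing a morphism of $\rad^{\infty}_{\Lambda}$ by an irreducible sub/quotient map keeps it in $\rad^{\infty}_{\Lambda}$ --- a point that is delicate precisely when the tube is not standard and the modules $E_a[\ell]$ fail to be uniserial. These are exactly the technical facts established in \cite{SS}, which we take as given.
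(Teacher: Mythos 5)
The paper never proves this lemma: it is imported as a black box from \cite{SS} (X.3.3 and X.4.5), so there is no in-paper argument to compare yours against. On its own terms, your outline follows the expected ray/coray strategy, and the implications $(2)\Rightarrow(3)$ and $(1)\Rightarrow(3)$ are correct as you state them.

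The two nontrivial implications, however, contain genuine gaps. In $(3)\Rightarrow(1)$ everything rests on the claim that any composite of irreducible maps starting and ending on the mouth ``is already $0$''. That is not literally true: only the composite coming from a single almost split sequence vanishes (for $0\to E_i\xrightarrow{f_0} E_i[2]\xrightarrow{g_0} E_{i+1}\to 0$ one has $g_0f_0=0$), whereas an arbitrary irreducible map differs from the AR one by a term in $\rad^2$, so a general composite only lies deeper in the radical; and for longer paths in a possibly non-standard tube --- which is exactly the situation you cannot exclude, since standardness is what is being characterized --- tracking the quasi-socle through alternating ray monomorphisms and coray epimorphisms is not automatic. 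Even granting the vanishing of path composites, the passage to $\rad_{\Lambda}(E_i,E_j)=\rad^{\infty}_{\Lambda}(E_i,E_j)$ needs the approximation of $\rad^{n}/\rad^{n+1}$ by sums of paths of irreducible maps together with a limiting argument, which you use only implicitly. In $(3)\Rightarrow(2)$, the assertion that dividing a morphism of $\rad^{\infty}_{\Lambda}$ by an irreducible mono or epi keeps it in $\rad^{\infty}_{\Lambda}$ is precisely the delicate point (it is where degrees of irreducible morphisms, or the explicit computations of \cite{SS}, enter), and you take it ``as given'' from \cite{SS}. Since the lemma being proved \emph{is} the cited result of \cite{SS}, deferring these steps there makes your proposal a citation with heuristic commentary rather than an independent proof; that is legitimate (it is what the paper itself does), but then the intermediate arguments should either be completed or omitted.
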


A component $\mathcal{C}$ of $\Gamma(\modu \Lambda)$ is called \emph{generalized standard} if $\rad^{\infty}_{\Lambda}(X,Y)=0$, for every pair $X$ and $Y$ in $\mathcal{C}$. By the above lemma, for a stable tube $\mathcal{T}$, this is equivalent to the vanishing of $\rad^{\infty}_{\Lambda}(X,Y)$ for every pair of mouth modules.
Moreover, $M$ and $N$ in $\modu \Lambda$ are called \emph{$\Hom$-orthogonal} if $\Hom(M,N)=0=\Hom(N,M)$.
Hence, the previous lemma states that a tube $\mathcal{T}$ is generalized standard if and only if its mouth consists of pairwise $\Hom$-orthogonal bricks. 
Via bricks and their crucial role in the study of $\tau$-tilting theory, we also observe that not only $\tau$-tilting finiteness controls the preprojective and preinjective components of $\Gamma(\modu \Lambda)$, but also it regulates the stable tubes. Note that if $\Lambda$ is a string algebra with only finitely many isomorphism classes of bricks, almost all tubes are non-standard. For an explicit family of such algebras, see Example \ref{Major NON-Example}.

In the following proposition, we collect some interesting properties of $\tau$-tilting finite algebras in terms of the components of the Auslander-Reiten quiver and the Jacobson radical. Recall that $\Lambda$ is said to be \emph{concealed of type $Q$} if $\Lambda \simeq \End_{kQ}(T)$, where $Q$ is an acyclic quiver of non-Dynkin type and $T$ is a preprojective tilting module over $kQ$. For details on such algebras, see \cite[VIII.4]{ASS}.

\begin{proposition}\label{tau-finiteness on the components and radical}
Let $\Lambda$ be a rep-infinite algebra. If $\Lambda$ is $\tau$-tilting finite, then 
\begin{enumerate}
\item $\Gamma(\modu \Lambda)$ has no preprojective nor preinjective component. Furthermore, it has only finitely many generalized standard tubes.

\item $\rad^{\infty}_{\Lambda}(X,X) \neq 0$, for some $X \in \ind(\Lambda)$.

\item For almost all homogeneous tubes $\mathcal{T}_h$ with the mouth module $E_h$, there exists a positive integer $k_h$ with 
$\rad^{k_h}_{\Lambda}(E_h,E_h)=\rad^{k_h+1}_{\Lambda}(E_h,E_h)\not =0$. 

\end{enumerate}
\end{proposition}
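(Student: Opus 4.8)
The plan is to exploit the known structure theory of Auslander--Reiten quivers of representation-infinite string algebras, together with the brick criterion for $\tau$-tilting finiteness from Theorem~\ref{tau-finiteness}, and to reduce each of the three assertions to a statement about bricks. For part (1), suppose for contradiction that $\Gamma(\modu \Lambda)$ has a preprojective component $\mathcal{P}$. By Lemma~\ref{Preproj/Postinj brick}, every indecomposable module in $\mathcal{P}$ is a $\tau$-rigid brick; since $\Lambda$ is rep-infinite, every component of $\Gamma(\modu \Lambda)$ is infinite (by \cite[IV.5.4]{ASS}), so $\mathcal{P}$ contains infinitely many pairwise non-isomorphic bricks, contradicting $\tau$-tilting finiteness via Theorem~\ref{tau-finiteness}(5); the preinjective case is dual. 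For the claim about generalized standard tubes: by Lemma~\ref{tube-brick mouth}, a stable tube $\mathcal{T}$ of rank $r$ is generalized standard exactly when the $r$ mouth modules form a family of pairwise $\Hom$-orthogonal bricks. In particular each generalized standard tube contributes at least one brick on its mouth, and distinct tubes contribute non-isomorphic bricks (since a brick lies on the mouth of at most one tube). Hence if there were infinitely many generalized standard tubes, $\Brick(\Lambda)$ would be infinite, again contradicting Theorem~\ref{tau-finiteness}(5). This gives (1).

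For part (2), note that $\Lambda$ being rep-infinite is equivalent to $\rad^{\infty}(\modu\Lambda)\neq 0$, as recalled in the excerpt (citing \cite[IV.5.4]{ASS}). The point is to upgrade a nonzero morphism in $\rad^{\infty}$ to a nonzero \emph{endomorphism} in $\rad^{\infty}$. One route: if $\Gamma(\modu\Lambda)$ had a generalized standard component $\mathcal{C}$ such that $\rad^{\infty}_\Lambda(X,Y)=0$ for \emph{all} pairs $X,Y$ in $\mathcal{C}$ for every component $\mathcal{C}$, one would need to combine this across components; the cleaner argument uses the structural trichotomy for string algebras from \cite{BR} quoted above. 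By part (1) there are no preprojective or preinjective components, so every component is either a tube or of type $\mathbb{Z}\mathbb{A}^\infty_\infty$; by part (1) again only finitely many tubes are generalized standard. Hence there exist infinitely many tubes $\mathcal{T}$ which are \emph{not} generalized standard, i.e. $\rad^{\infty}_\Lambda(X,Y)\neq 0$ for some $X,Y\in\mathcal{T}$ — and since a homogeneous tube has $\tau X\simeq X$, periodicity lets one compose such a nonzero infinite-radical map with the translation isomorphisms to land in $\rad^{\infty}_\Lambda(X,X)\neq 0$ for some $X$ on (or near) the mouth. I would make this precise using that in a tube $\mathcal{T}_h$ of rank $k$ one has $\tau^{k}X\simeq X$ for all $X\in\mathcal{T}_h$, so $\rad^{\infty}_\Lambda(X,Y)\neq 0$ together with a suitable irreducible path yields a nonzero element of $\rad^{\infty}_\Lambda(X,X)$.

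For part (3), the target is the sharper, eventually-stabilizing statement for almost all homogeneous tubes. Here the idea is: for a homogeneous (rank $1$) tube $\mathcal{T}_h$ with mouth module $E_h$, one has $\tau E_h\simeq E_h$, so $\Hom_\Lambda(E_h,E_h)=\Hom_\Lambda(E_h,\tau E_h)\neq 0$ forces $E_h$ to fail to be $\tau$-rigid, hence (by Theorem~\ref{tau-finiteness}(5) and the fact, recalled in the excerpt, that a string module is a brick iff its endomorphism algebra is one-dimensional) all but finitely many of these $E_h$ fail to be bricks, i.e. $\rad_\Lambda(E_h,E_h)\neq 0$ for almost all $h$. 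Since $\End_\Lambda(E_h)$ is a finite-dimensional local algebra, the chain $\rad_\Lambda(E_h,E_h)\supseteq \rad^2_\Lambda(E_h,E_h)\supseteq\cdots$ stabilizes, and it cannot stabilize at $0$ because $E_h$ lies in a stable tube and $\rad^{\infty}$ on a non-generalized-standard homogeneous tube is nonzero precisely at the mouth by Lemma~\ref{tube-brick mouth}; so there is $k_h$ with $\rad^{k_h}_\Lambda(E_h,E_h)=\rad^{k_h+1}_\Lambda(E_h,E_h)\neq 0$. The main obstacle I expect is \textbf{part (2)}: carefully transferring a nonzero map in $\rad^{\infty}_\Lambda(X,Y)$ between distinct modules in a tube to a nonzero \emph{self}-map in $\rad^{\infty}$, which requires a clean use of the periodicity $\tau^k\simeq\mathrm{id}$ on tubes and of the fact that infinite-radical maps survive composition with the appropriate finite chains of irreducible maps along the tube; one must also make sure the infinitely many non-generalized-standard tubes guaranteed by (1) are genuinely there, which again rests on the string-algebra trichotomy of \cite{BR} and the absence of (pre)projective and (pre)injective components established in (1).
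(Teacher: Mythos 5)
Your arguments for parts (1) and (3) are essentially the paper's: (1) combines Lemma~\ref{Preproj/Postinj brick}, Lemma~\ref{tube-brick mouth} and the brick criterion of Theorem~\ref{tau-finiteness}, and (3) uses finite-dimensionality of $\Hom$-spaces together with the fact that, by (1), almost all homogeneous tubes fail to be generalized standard, so Lemma~\ref{tube-brick mouth} (applied to the one-element mouth) forces the radical chain to stabilize at the nonzero value $\rad^{\infty}_{\Lambda}(E_h,E_h)$; your detour through ``$E_h$ is not a brick'' is unnecessary but harmless.

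Part (2), however, has a genuine gap. The proposition is stated for an \emph{arbitrary} representation-infinite algebra $\Lambda$, while your argument rests on the trichotomy of Auslander--Reiten components from \cite{BR}, which is a structure theorem for string algebras only; for a general (e.g.\ wild) $\tau$-tilting finite algebra you have no guarantee that every non-preprojective, non-preinjective component is a tube or of type $\mathbb{Z}\mathbb{A}^{\infty}_{\infty}$, nor that any stable tubes exist at all, so the ``infinitely many non-generalized-standard tubes'' you want to exploit need not be there. (A secondary issue: composing a nonzero map in $\rad^{\infty}_{\Lambda}(X,Y)$ with ``translation isomorphisms'' is not a morphism-level operation, and compositions with chains of irreducible maps around a tube may vanish; in the homogeneous case this can be sidestepped, since Lemma~\ref{tube-brick mouth} applied to the singleton mouth already yields $\rad^{\infty}_{\Lambda}(E,E)\neq 0$ directly, but this does not rescue the general case.) The paper's proof of (2) avoids AR-component geometry altogether: if $\rad^{\infty}_{\Lambda}(X,X)=0$ for every indecomposable $X$, then by Skowro\'nski's theorem \cite[Theorem 4.5]{Sk} some quotient $\Lambda/I$ is tame concealed; such a quotient is representation-infinite with preprojective and preinjective components, hence $\tau$-tilting infinite by part (1) (or Lemma~\ref{Preproj/Postinj brick}), and this contradicts Theorem~\ref{quotient-lattice}, which says $\tau$-tilting finiteness passes to quotients. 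To repair your proof you would either need to restrict the statement to special biserial/string algebras or import an ingredient of this kind that works in full generality.
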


\begin{proof}
\begin{enumerate}
\item This part follows from Theorem \ref{tau-finiteness}, Lemma \ref{Preproj/Postinj brick} and Lemma \ref{tube-brick mouth}.

\item If $\rad^{\infty}_{\Lambda}(X,X)=0$ for every $X \in \ind(\Lambda)$, by \cite[Theorem 4.5]{Sk}, there exists an ideal $I$ in $\Lambda$ such that $\Lambda/I$ is tame concealed. Hence, $\Lambda/I$ has preprojective and preinjective components. By the first part and Theorem \ref{quotient-lattice}, this is impossible.

\item Each $\Hom_\Lambda(X,Y)$ is finite dimensional for each pair $X$ and $Y$ in $\modu \Lambda$. Thus, a component $\mathcal{C}$ is generalized standard if and only if for every $X$ and $Y$ in $\mathcal{C}$ there exists an integer $k:=k(X,Y) \geq 0$ such that $\rad^{k}_{\Lambda}(X,Y)=0$.
Since $\Lambda$ is $\tau$-tilting finite, by Theorem \ref{tau-finiteness}, $\modu \Lambda$ contains only finitely many isomorphism classes of bricks. Hence, the result follows from the first part and Lemma \ref{tube-brick mouth}

\end{enumerate}
\end{proof}

The simplest min-rep-infinite algebras are isomorphic to $k\widetilde{\mathbb{A}}_n$ (for some $n \in \mathbb{Z}_{>0}$ and an acyclic orientation of $\widetilde{\mathbb{A}}_n$). As we will see in the following sections, these hereditary algebras are crucial in the study of min-rep-infinite special biserial algebras. 
For the sake of the intuition that it provides, the Auslander-Reiten quiver of such $k\widetilde{\mathbb{A}}_n$ over algebraically closed fields is illustrated in Figure \ref{fig:AR component of A-tilde}, where $\mathcal{R}_{\lambda}$ denote infinitely many band tubes, whereas the string tubes are shown by $\mathcal{R}_0$ and $\mathcal{R}_{\infty}$. The preprojective ($\mathcal{P}$) (respectively preinjective ($\mathcal{I}$)) component has all of the indecomposable projective (respectively injective) modules on its leftmost (respectively rightmost) boundary.

\begin{figure}
    \centering
    \begin{tikzpicture}
\node at (1,0.5) {$\mathcal{P}$};
\node at (1,-0.25) {$\text{preprojective}$};
    \draw decorate [decoration={snake}] {(0,1) -- (0,0)};
    \draw[-] (0,1) -- (1.5,1); \draw[dashed] (1.5,1) -- (2,1);
    \draw[-] (0,0) -- (1.5,0); \draw[dashed] (1.5,0) -- (2,0);

\node at (3,0.5) {$\mathcal{R}_{0}$};
    \draw[-] (2.5,0.5) -- (2.5,1.5) -- (3.5,1.5) -- (3.5,0.5) ;  \draw[dashed] (2.5,0.5) -- (2.5,-0.5); \draw[dashed] (3.5,0.5) -- (3.5,-0.5);

    \draw[-] (4,0.5) -- (4,1.5) -- (4.5,1.5) -- (4.5,0.5) ;  \draw[dashed] (4,0.5) -- (4,-0.5); \draw[dashed] (4.5,0.5) -- (4.5,-0.5);

\node at (5.25,0.75) {$\cdots$};
\node at (5.25,0.25) {$\mathcal{R}_{\lambda}$};
\node at (5.25,-0.2) {$({\lambda \in k^*})$};
    
    \draw[-] (6,0.5) -- (6,1.5) -- (6.5,1.5) -- (6.5,0.5) ;  \draw[dashed] (6,0.5) -- (6,-0.5); \draw[dashed] (6.5,0.5) -- (6.5,-0.5);

\node at (5.25,-0.8) {$\underbrace{\qquad \qquad \qquad \qquad }_{\text{band tubes}}$};
   
\node at (7.5,0.5) {$\mathcal{R}_{\infty}$};   
    \draw[-] (7,0.5) -- (7,1.5) -- (8,1.5) -- (8,0.5) ;  \draw[dashed] (7,0.5) -- (7,-0.5); \draw[dashed] (8,0.5) -- (8,-0.5); 

\node at (9.5,0.5) {$\mathcal{I}$};
\node at (9.5,-0.25) {$\text{preinjective}$};
\draw decorate [decoration={snake}] {(10.5,1) -- (10.5,0)};
\draw[-] (10.5,1)--(9,1); \draw[dashed] (9,1)--(8.5,1);
\draw[-] (10.5,0)--(9,0); \draw[dashed] (9,0)--(8.5,0);
    \end{tikzpicture}
    \caption{The Auslander-Reiten quiver of $\widetilde{\mathbb{A}}_n$.}
    \label{fig:AR component of A-tilde}
\end{figure}

\section{Reduction to Mild Special Biserial Algebras}\label{Section:Reduction to mild special biserial algebras}

To explain our reductive method, analogous to the notation we introduced for a family of algebras, for every algebra $\Lambda$ we define
$$\Mri(\Lambda):= \{\Lambda/J \, | \, \text{$J$ is an ideal of $\Lambda$ and } \Lambda/J \text{ is min-rep-infinite} \}/ \sim,$$
where $\Lambda/J \sim \Lambda/J'$ if and only if they are isomorphic as $k$-algebras. Obviously, $\Mri(\Lambda)=\emptyset$ if and only if $\Lambda$ is rep-finite.
Moreover, as mentioned before, every surjective algebra map $\phi:\Lambda_1 \rightarrow \Lambda_2$ induces an embedding $\modu \Lambda_2 \hookrightarrow \modu \Lambda_1$ of the categories. Therefore, if $\Lambda_1$ is rep-finite (similarly brick-finite) so is $\Lambda_2$.
By Theorem \ref{tau-finiteness}, this implies that surjective algebra morphisms preserve $\tau$-tilting finiteness and representation-finiteness. This, in particular, shows that if there exists $\Lambda' \in \Mri(\Lambda)$ which is $\tau$-tilting infinite, then so is $\Lambda$. However, one should note that there exist $\tau$-tilting infinite algebras $\Lambda$ such that every $\Lambda' \in \Mri(\Lambda)$ is $\tau$-tilting finite. Certain algebras of this type are systematically studied in Section \ref{Section:tau-Tilting Finite Node-free Special Biserial Algebras}, as well as Sections \ref{Section:tau-tilting finite gentle algebras are representation-finite} and \ref{section:Minimal representation-infinite algebras and more}.

As before, we call a family $\mathfrak{F}$ of algebras \emph{quotient-closed} provided that for any surjective algebra map $\phi:\Lambda_1 \rightarrow \Lambda_2$, if $\Lambda_1\in \mathfrak{F}$, then $\Lambda_2 \in \mathfrak{F}$.
By $\mathfrak{F}_{\repf}$ and $\mathfrak{F}_{\tau \mathrm{f}}$ we respectively denote the quotient-closed family of rep-finite algebras and that of $\tau$-tilting finite algebras, where $\mathfrak{F}_{\repf} \subsetneq \mathfrak{F}_{\tau \mathrm{f}}$. This is because, for example, $\mathfrak{F}_{\tau \mathrm{f}}$ also contains local algebras. 
As remarked earlier, among the well-known families of the gentle, the string and the special biserial algebras (respectively denoted by $\mathfrak{F}_{\G}$, $\mathfrak{F}_{\St}$ and $\mathfrak{F}_{\sB}$), only the last of these is quotient-closed, while we obviously have $\mathfrak{F}_{\G} \subsetneq \mathfrak{F}_{\St} \subsetneq \mathfrak{F}_{\sB}$.

As explained in Section \ref{Introduction}, the problems related to the $\tau$-tilting infiniteness of the algebras from a family $\mathfrak{F}$ can be reduced to the subfamily $\Mri(\mathfrak{F})$, which consists of rep-infinite algebras in $\mathfrak{F}$ that are minimal with respect to this property. Namely, every $\Lambda$ in $\Mri(\mathfrak{F})$ is rep-infinite and for each proper quotient $\Lambda'$ of $\Lambda$, either $\Lambda'$ is rep-finite, or $\Lambda' \notin \mathfrak{F}$. 
Note that, if $\mathfrak{F}$ is quotient-closed, we have
$$\Mri(\mathfrak{F}):= \{\Lambda \in \mathfrak{F} \, | \, \text{$\Lambda$ is min-rep-infinite} \},$$
meaning that a rep-infinite algebra $\Lambda$ which is minimal in $\mathfrak{F}$ is in fact a minimal representation-infinite algebra. This is because, unlike an arbitrary family of algebras, if $\Lambda$ is in a quotient-closed family $\mathfrak{F}$, we always have $\Mri(\Lambda) \subseteq \mathfrak{F}$. 
In such a case, a complete description of the $\tau$-tilting theory of the minimal rep-infinite algebras in $\mathfrak{F}$ could provide a new insight into the $\tau$-tilting theory of arbitrary algebras in $\mathfrak{F}$.

In this paper, we are particularly concerned with the family of special biserial algebras $\mathfrak{F}_{\sB}$ and we aim to give an explicit classification of the algebra in $\Mri(\mathfrak{F}_{\sB})$ with respect to the $\tau$-tilting finiteness. This results in concrete sufficient conditions for $\tau$-tilting-infiniteness of any rep-infinite algebra which has a (special) biserial quotient algebra.

Motivated by the new developments in the study of minimal representation-infinite algebras, in \cite{R2}, Ringel has recently studied the algebras in $\Mri(\mathfrak{F}_{\sB})$ from a point of view different from what we adopt in this paper. We recall some of his results that we use in our work.

The next lemma is crucial in the study of min-rep-infinite algebras. Although each part of the assertion is known, for self-containment we present a proof.

\begin{lemma}\label{Monomial ideal}
Let $\Lambda$ be an algebra and $X$ be a projective-injective $\Lambda$-module. There exists a nonzero ideal $J$ such that $\Lambda$ and $\Lambda/J$ are of the same representation type. In particular, if $\Lambda$ is a min-rep-infinite algebra, $\modu \Lambda$ contains no projective-injective module. Thus, each min-rep-infinite special biserial algebra is a string algebra.
\end{lemma}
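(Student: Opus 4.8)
The plan is to prove the three assertions of Lemma \ref{Monomial ideal} in sequence, each feeding the next.

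\textbf{Step 1: Constructing the ideal $J$.} Suppose $X$ is a projective-injective $\Lambda$-module. The key observation is that if $P$ is an indecomposable projective-injective module, then the socle $\soc(P)$ is a simple module $S_x$ for some vertex $x$, and $S_x$ is \emph{not} a composition factor of the top of $P$ in a way that interferes — more precisely, I would look at the two-sided ideal $J$ generated by a suitably chosen socle element. The standard trick (going back to work on representation type and used by Ringel) is: since $P = P_x$ is projective-injective, its socle $\soc(P_x) \cong S_y$ gives a nonzero map $P_x \twoheadrightarrow \soc(P_x)$ is \emph{not} the right picture; rather one uses that $\operatorname{Hom}_\Lambda(P_x, I_y)$ detects a one-dimensional piece. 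Concretely, pick the two-sided ideal $J = \Lambda e_y \,\soc\, e_x \Lambda$ (the ideal generated by the socle copy of the vertex through which $P_x = I_y$ is "glued"); one checks $J \neq 0$ because it contains that socle element, and $J$ is annihilated on both sides appropriately so that $\Lambda/J$-modules are exactly the $\Lambda$-modules not involving that socle layer of $P_x$. The point is that every indecomposable $\Lambda$-module other than $P_x$ itself is already a $\Lambda/J$-module, so $\ind(\Lambda) = \ind(\Lambda/J) \cup \{P_x\}$, whence $\Lambda$ and $\Lambda/J$ have the same representation type (both rep-finite or both rep-infinite). The honest bookkeeping here — verifying $J$ is an ideal, that $\dim \Lambda/J = \dim \Lambda - 1$, and that passing to $\Lambda/J$ deletes exactly the one indecomposable $P_x$ — is the main technical content, but it is classical.

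\textbf{Step 2: Min-rep-infinite algebras have no projective-injective module.} This is now immediate: if $\Lambda$ is min-rep-infinite and $X$ were a projective-injective $\Lambda$-module, Step 1 produces a nonzero ideal $J$ with $\Lambda/J$ rep-infinite (same type as $\Lambda$). But $\Lambda/J$ is a proper quotient of $\Lambda$, contradicting minimality. Hence $\modu\Lambda$ has no projective-injective module.

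\textbf{Step 3: A min-rep-infinite special biserial algebra is a string algebra.} Recall $\Lambda = kQ/I$ is a string algebra iff it is special biserial and $I$ is generated by a set of paths (a monomial ideal). Given a special biserial $\Lambda = kQ/I$ with a non-monomial relation, the standard structure theory of special biserial algebras (as in \cite{WW}, \cite{BR}) says that such non-monomial (commutativity-type) relations force the existence of a non-uniserial projective-injective module — indeed a relation of the form $p - q \in I$ with $p,q$ parallel paths produces an indecomposable projective $P$ which is also injective with biserial radical. Combined with Step 2, which forbids projective-injectives, $I$ must be monomial, so $\Lambda$ is a string algebra. The one subtlety is phrasing this cleanly: one should invoke that over a special biserial algebra an indecomposable projective that is not uniserial and not projective-injective cannot occur once $I$ is required monomial — so the cleanest route is the contrapositive, "non-monomial $I$ $\Rightarrow$ exists projective-injective $\Rightarrow$ not min-rep-infinite."

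\textbf{Main obstacle.} The genuinely delicate part is Step 1 — pinning down the ideal $J$ and proving that quotienting by it removes precisely one indecomposable module without altering the representation type. I would handle this by choosing $J$ minimally (a simple two-sided ideal contained in the socle of the regular bimodule $\Lambda$, living at the vertex realizing the projective-injective $P_x = I_y$), so that $\Lambda/J$ has dimension one less and the only $\Lambda$-indecomposable with $J$ acting nontrivially is $P_x$; then $\ind(\Lambda/J) = \ind(\Lambda)\setminus\{P_x\}$ gives the equality of representation types. Steps 2 and 3 are then short deductions.
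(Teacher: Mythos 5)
Your proposal follows essentially the same route as the paper's proof: you generate the ideal $J$ by the socle element of an indecomposable projective-injective $P_x=I_y$ sitting in $e_y\Lambda e_x$, note that every indecomposable not isomorphic to $P_x$ is annihilated by $J$ and hence survives as a $\Lambda/J$-module (so the representation type is unchanged), deduce by minimality that a min-rep-infinite algebra has no projective-injective, and then invoke the Skowro\'nski--Waschb\"usch description of special biserial relations to conclude that a surviving binomial relation $p_1-\lambda p_2$ would force a projective-injective $P_x=I_y$, so the ideal must be monomial. The level of detail you leave implicit (in particular, that $JN=0$ for every indecomposable $N\not\cong P_x$, which follows since a nonzero action of the socle element would yield a split embedding $P_x\hookrightarrow N$) matches what the paper itself leaves implicit.
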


\begin{proof}
Let $\Lambda=kQ/I$ be an arbitrary algebra and $M$ be an indecomposable projective-injective $\Lambda$-module. Thus, $M=P_x=I_y$, for some vertices $x$ and $y$ in $Q$.
Therefore, $M= \Lambda e_x$ and $\soc(M)$ is a subspace of $e_y \Lambda e_x$. 
Note that $\soc(M)$ is a simple module and therefore it is a one-dimensional vector space over $k$ (because $\Lambda$ is basic).
By abuse of notation, if $\soc(M)$ also denotes the element of $e_y \Lambda e_x$ which associates to the module $\soc(M)$ and we put $J:=\langle \soc(M) \rangle$ for the ideal generated by that, then $J$ is also one-dimensional and we have $e_y J e_x= \soc(M)$.

Then, $JM \neq 0$, whereas for any indecomposable $\Lambda$-module $N$ (non-isomorphic to $M$), we have $JN=0$, so $N$ is a nonzero $\Lambda/J$-module. This implies that all indecomposable $\Lambda$-modules non-isomorphic to $M$ are nonzero indecomposable $\Lambda/J$-module, hence the first assertion follows. 
Now, observe that if $\Lambda$ is min-rep-infinite and $J \neq 0$, then $\Lambda$ and $\Lambda/J$ are of the same representation-type, which is impossible (because by minimality assumption of $\Lambda$, the proper quotient algebra $\Lambda/J$ is rep-finite). This shows that there cannot exist any projective-injective $\Lambda$-module.

To prove the last assertion, from \cite{SW} we know that if $\Lambda=kQ/I$ is a special biserial algebra, $I$ can be generated by a set of paths in $Q$ and relations of the form $p_1=\lambda p_2$, where $\lambda \in k$ is a nonzero scalar and $p_1$ and $p_2$ are two distinct paths in $Q$ such that $s(p_1)=s(p_2)$ and $e(p_1)=e(p_2)$, and furthermore $p_1$ and $p_2$ share no other vertex. According to the configuration of the special biserial bound quivers (see Section \ref{Introduction} for their properties), if $\Lambda=kQ/I$ is min-rep-infinite special biserial algebra and every minimal set of generators $R$ for $I$ contains a relation of the second type, then $P_x=I_y$, where $x=s(p_1)=s(p_2)$ and $y=e(p_1)=e(p_2)$. This gives the desired contradiction.
\end{proof}

From the previous lemma, henceforth we assume that any min-rep-infinite special biserial algebra $\Lambda=kQ/I$ comes with a minimal set of monomial generators for $I$. Therefore, the concrete description of the isomorphism classes of indecomposable $\Lambda$-modules, as well as components of $\Gamma (\modu \Lambda)$, will be at our disposal (see Section \ref{Priliminary}).

To state some important properties of the algebras in $\Mri(\mathfrak{F}_{\sB})$, we need to recall some standard terminology. Analogous to what has been already recalled for the modules, a component $\mathcal{C}$ of $\Gamma (\modu \Lambda)$ is called \emph{faithful} if $\ann(\mathcal{C})=0$, where $$\ann(\mathcal{C}):= \bigcap_{X\in \mathcal{C}} \ann(X).$$
Moreover, a $\Lambda$-module $M$ is \emph{sincere} if $e_xM \neq 0$, for every vertex $x$ of $\Lambda$, meaning that $M$ is supported at every vertex. In particular, a string or band module $M$ is sincere if and only if the associated string in $(Q,I)$ visits all the vertices in $Q_0$.
A family of modules $\mathcal{X}$ in $\modu \Lambda$ is called $\emph{hereditary}$ provided that $\pd_{\Lambda}X \leq 1$ and $\id_{\Lambda} X \leq 1$, for every $X\in \mathcal{X}$.
In particular, $\Lambda$ is hereditary if and only if $\modu \Lambda$ is so, which itself is equivalent to $\Lambda \simeq kQ$, for an acyclic quiver $Q$.
$\Cogen_{\Lambda}(M)$ denotes the set of $\Lambda$-modules $Y$ such that there exists a positive integer $d$ and a monomorphism of $\Lambda$-modules $\iota: Y \rightarrow M^d$.

The next lemma establishes a useful connection between these notions.

\begin{lemma}\label{faithful-cogen}
\cite[Lemma VI.2.2]{ASS} \label{Faitful-Cogen}
Let $\Lambda$ be an algebra and $M$ in $\modu \Lambda$. Then, $\ann_{\Lambda}(M)=0$ if and only if $\Lambda \in \Cogen(M)$.
\end{lemma}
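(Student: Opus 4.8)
The plan is to prove the two implications separately, both by very direct arguments; the standard cogenerator construction does all the work. Recall that $\Lambda \in \Cogen(M)$ asks for a monomorphism of left $\Lambda$-modules from the regular module ${}_\Lambda\Lambda$ into a finite power $M^d$, and that $\ann_\Lambda(M) = \{a \in \Lambda : aM = 0\}$.

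For the implication $\ann_\Lambda(M) = 0 \Rightarrow \Lambda \in \Cogen(M)$, I would build an explicit embedding of ${}_\Lambda\Lambda$ into a power of $M$. Since $M$ lies in $\modu \Lambda$ it is finite dimensional over $k$; choose a $k$-basis $m_1, \dots, m_d$ of $M$ and define $\phi\colon \Lambda \to M^d$ by $\phi(\lambda) = (\lambda m_1, \dots, \lambda m_d)$. This is manifestly a homomorphism of left $\Lambda$-modules. Its kernel consists of those $\lambda$ with $\lambda m_i = 0$ for all $i$; because the $m_i$ span $M$ over $k$ and the map $x \mapsto \lambda x$ is $k$-linear, this is exactly the set of $\lambda$ with $\lambda M = 0$, that is, $\ann_\Lambda(M)$. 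By hypothesis $\ker \phi = 0$, so $\phi$ is a monomorphism and $\Lambda \in \Cogen(M)$.

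For the converse, suppose $\iota\colon \Lambda \hookrightarrow M^d$ is a monomorphism of left $\Lambda$-modules and let $a \in \ann_\Lambda(M)$, so that $a M^d = 0$. Then for every $\lambda \in \Lambda$ we get $\iota(a\lambda) = a\,\iota(\lambda) = 0$, and injectivity of $\iota$ forces $a\lambda = 0$; taking $\lambda = 1$ yields $a = 0$. Hence $\ann_\Lambda(M) = 0$.

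There is essentially no serious obstacle here: the only points requiring any care are that the finite-dimensionality of $M$ guarantees the power $M^d$ occurring in the definition of $\Cogen$ is finite, as required, and that the step ``$\lambda$ annihilates a $k$-spanning set of $M$'' upgrades to ``$\lambda$ annihilates $M$'' precisely because the module action is $k$-bilinear. If one prefers a basis-free formulation of the forward direction, one can instead take $\phi$ to be the canonical evaluation map $\Lambda \to \Hom_k(\Hom_\Lambda(\Lambda, M), M)$ after identifying $\Hom_\Lambda(\Lambda, M) \simeq M$, but the basis version above is the shortest route.
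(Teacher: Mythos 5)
Your proof is correct, and since the paper gives no argument of its own here (it simply cites \cite[Lemma VI.2.2]{ASS}), your direct construction is exactly the standard one from that reference: the map $\lambda \mapsto (\lambda m_1,\dots,\lambda m_d)$ has kernel $\ann_\Lambda(M)$, giving both directions at once. Nothing is missing; the only cosmetic difference is that one may take the $m_i$ to be $\Lambda$-module generators of $M$ rather than a full $k$-basis, which shrinks $d$ but changes nothing essential.
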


Although the following lemma holds in the more general setting, for concreteness and since we will employ the assertions in the study of $\Mri(\mathfrak{F}_{\sB})$, we restrict to the family of special biserial algebras.

\begin{lemma}\label{Almost all}
Let $\Lambda$ be a mild special biserial algebra.
\begin{enumerate}
\item $\ann(M(w))=0$, for almost every $w \in \Str(\Lambda)$. Moreover, every component $\mathcal{C}$ in $\Gamma(\modu \Lambda)$ is faithful.
\item Every generalized standard tube $\mathcal{T}$ in $\Gamma(\modu \Lambda)$ is hereditary. If $\mathcal{T}$ is homogeneous, for every $X$ in $\mathcal{T}$ and $i \in Q_0$, $\Hom_\Lambda(I_i,X)=\Hom_\Lambda(X,P_i)=0$.
\end{enumerate}
\end{lemma}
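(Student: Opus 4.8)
The plan is to establish the two parts separately, in each case exploiting the combinatorial description of string modules over a mild special biserial algebra (which is a string algebra by Lemma \ref{Monomial ideal}), together with the fact that ``mild'' forces every proper quotient to be rep-finite. For part (1), the first step is to show that all but finitely many strings $w$ induce \emph{sincere} string modules. The key observation is that if $w$ fails to visit some vertex $x$, then $M(w)$ is a module over the proper quotient $\Lambda/\langle e_x\rangle$, which is rep-finite by mildness; hence there are only finitely many isomorphism classes of such $M(w)$, and since a string module is determined (up to $w\sim w^{-1}$) by its string, only finitely many strings $w$ can arise this way from a \emph{fixed} vertex $x$. Taking the union over the finitely many vertices of $Q$ gives that almost every $w\in\Str(\Lambda)$ is sincere, and a sincere module is in particular faithful in the sense that $\ann(M(w))=0$ — here I would invoke Lemma \ref{faithful-cogen}, checking that sincerity of $M(w)$ gives $\Lambda\in\Cogen(M(w))$ by reading off, for each vertex $x$, an embedding of $P_x$-related data, or more directly by noting that a sincere string module over a string algebra cogenerates $\Lambda$ because each simple $S_x$ embeds in $M(w)$ at a suitable deep position. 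For the ``moreover'' clause, since each component $\mathcal{C}$ of $\Gamma(\modu\Lambda)$ is infinite (as $\Lambda$ is rep-infinite, by the fact recalled from \cite[IV.5.4]{ASS}) and consists of string and band modules, and $\ann(\mathcal{C})=\bigcap_{X\in\mathcal{C}}\ann(X)$, it suffices to exhibit one faithful module in $\mathcal{C}$; by the first assertion almost all string modules are faithful, and each infinite component contains infinitely many string modules (the non-string components being the band tubes, which contain no string modules at all, while every other component has string-module boundary by \cite{BR}), so $\ann(\mathcal{C})=0$.

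For part (2), let $\mathcal{T}$ be a generalized standard tube. By Lemma \ref{tube-brick mouth} its mouth modules are pairwise $\Hom$-orthogonal bricks. The plan is to first treat the hereditary claim: I must show $\pd_\Lambda X\le 1$ and $\id_\Lambda X\le 1$ for every $X\in\mathcal{T}$. The natural route is to reduce to the mouth modules $E_1,\dots,E_r$ and then argue that a tube built over $\Hom$-orthogonal bricks in a mild special biserial algebra cannot have modules of projective dimension $\ge 2$: if some $E_i$ had $\pd_\Lambda E_i\ge 2$, one would locate a relation in $(Q,I)$ ``beyond'' the string of $E_i$ forcing a non-split extension pattern incompatible with the tube structure, or, cleaner, use that over a rep-infinite string algebra the only components with non-hereditary modules are those whose boundary meets projectives/injectives (type (I) in the trichotomy recalled in Section \ref{subsection:Module category of representation-infinite algebras}), whereas a generalized standard stable tube lies in type (II) and is therefore disjoint from all projectives and injectives. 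That disjointness is exactly what also yields the homogeneous case: if $\mathcal{T}$ is homogeneous with $X\in\mathcal{T}$, then for any $i\in Q_0$, a nonzero map $I_i\to X$ or $X\to P_i$ would, via the structure of graph maps between string modules from \cite{CB2}, force a path between $\mathcal{T}$ and an injective/projective module inside $\Gamma(\modu\Lambda)$; but generalized standardness makes $\rad^\infty_\Lambda(-,-)$ vanish on $\mathcal{T}$, and any such map would have to factor through infinitely many irreducible maps (since $P_i,I_i\notin\mathcal{T}$ and $\mathcal{T}$ is a full component with infinite mouth-to-deep distance), contradicting $\rad^\infty=0$ together with $\Hom$-finiteness.

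The main obstacle I anticipate is the hereditary assertion in part (2): controlling projective and injective dimensions of \emph{all} modules in the tube, not just the mouth, and ruling out the possibility that a generalized standard tube over a mild special biserial algebra contains a module $X$ with $\pd_\Lambda X\ge 2$. The cleanest way around this is probably the structural dichotomy for components of rep-infinite string algebras from \cite{BR}: a tube is a component of type (II), and such components are \emph{stable}, so they contain neither projectives nor injectives; combined with the standard fact that over a string algebra a string module $M(w)$ has $\pd_\Lambda M(w)\le 1$ precisely when no ``deep'' relation obstructs its projective cover — and such obstructions propagate to force the component to contain a projective on its boundary — one concludes $\pd_\Lambda X\le 1$ for all $X\in\mathcal{T}$, and dually $\id_\Lambda X\le 1$. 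Assembling these pieces gives that $\mathcal{T}$ is hereditary, completing the proof.
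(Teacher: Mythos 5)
In part (1) your step from sincerity to faithfulness is a genuine gap: visiting every vertex does not control the annihilator. A string module $M(w)$ kills every arrow (indeed every nonzero path) that $w$ does not support, so a sincere string module need not be faithful (over the radical-square-zero string algebra with two vertices and arrows $\alpha,\beta$ in opposite directions, $M(\alpha)$ is sincere but annihilated by $\beta$), and your fallback claim that a sincere string module cogenerates $\Lambda$ cannot be right either, since by Lemma \ref{Faitful-Cogen} the condition $\Lambda\in\Cogen(M)$ is \emph{equivalent} to faithfulness. The repair is your own quotient trick applied one level deeper, and this is what the paper does: for every arrow $\gamma$, and in fact every path $p\notin I$ (recall $I$ is monomial by Lemma \ref{Monomial ideal}), the proper quotient $\Lambda/\langle p\rangle$ is rep-finite by mildness, so only finitely many strings fail to support $p$; hence almost every $w\in\Str(\Lambda)$ supports all arrows and nonzero paths, which is what makes $M(w)$ faithful. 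Your argument for the component statement also has a hole exactly at the band tubes: as you note parenthetically, they contain no string modules, so ``exhibit a faithful string module in $\mathcal{C}$'' is impossible there, and you never establish faithfulness of band modules. The paper avoids the case distinction altogether: if $\ann(\mathcal{C})\neq 0$, then all of $\mathcal{C}$ consists of modules over the proper quotient $\Lambda/\ann(\mathcal{C})$, which is rep-finite by mildness, contradicting the fact that every component of the Auslander--Reiten quiver of a rep-infinite algebra is infinite.

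Part (2) is where the proposal really breaks down. The implication you lean on --- that a stable tube, being disjoint from the projectives and injectives, consists of modules of projective and injective dimension at most one --- is not a theorem and is false in general; your argument uses neither generalized standardness nor mildness at this step, so it would prove that \emph{every} stable tube over a string algebra is hereditary, which fails (band tubes over non-hereditary special biserial algebras routinely contain modules of projective dimension at least two). Faithfulness is the essential missing ingredient: the paper feeds part (1) into part (2) by choosing a faithful module $X$ in $\mathcal{T}$, deducing $\Lambda\in\Cogen(X)$ from Lemma \ref{Faitful-Cogen}, and then invoking the result on faithful generalized standard stable tubes \cite[X.4.4]{SS} to conclude that $\mathcal{T}$ is hereditary. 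Your $\rad^{\infty}$ argument for the homogeneous case does not work either: generalized standardness only forces $\rad^{\infty}_{\Lambda}(X,Y)=0$ for $X,Y$ \emph{inside} $\mathcal{T}$ and says nothing about morphisms $I_i\to X$ or $X\to P_i$ with $I_i,P_i\notin\mathcal{T}$ (over a tame hereditary algebra there are plenty of nonzero maps from projectives into generalized standard tubes). The correct deduction, as in the paper, is from hereditarity together with $\tau X=X$, via \cite[IV.2.7]{ASS}: $\pd_{\Lambda}X\le 1$ iff $\Hom_{\Lambda}(D\Lambda,\tau X)=0$, so $\Hom_{\Lambda}(I_i,X)=0$, and dually $\id_{\Lambda}X\le 1$ iff $\Hom_{\Lambda}(\tau^{-1}X,\Lambda)=0$, so $\Hom_{\Lambda}(X,P_i)=0$.
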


\begin{proof}
Assume $\Lambda$ is min-rep-infinite, because otherwise there is nothing to show.

By Lemma \ref{Monomial ideal}, $\Lambda$ is a string algebra, so all indecomposable $\Lambda$-modules are string or band modules. 
Since $\Lambda$ is $\mSB$, almost every indecomposable $\Lambda$-module $X$ is supported on all arrows (thus vertices) of $\Lambda$, therefore $X$ is sincere. By construction, it is straight forward to check that almost every sincere string $\Lambda$-module is faithful. To show the second part of $(1)$, note that for any component $\mathcal{C}$ of $\Gamma(\modu \Lambda)$, if $\Lambda':=\Lambda / \ann_{\Lambda}(\mathcal{C})$, then $\mathcal{C}$ is a faithful component of $\Gamma(\modu \Lambda')$. Since $\Lambda$ is min-rep-infinite, $\Lambda'$ is finite type if and only if $\ann_{\Lambda}(\mathcal{C})\neq 0$. Because every component $\mathcal{C}$ of $\Gamma(\modu \Lambda)$ is infinite, we get $\ann_{\Lambda}(\mathcal{C})= 0$.

To prove $(2)$, suppose $\mathcal{T}$ is a generalized standard tube in $\Gamma(\modu \Lambda)$. From the proof of part (1), $\mathcal{T}$ admits a faithful object $X$ (in fact, infinitely many). Hence, by Lemma \ref{Faitful-Cogen}, $\Lambda \in \Cogen(X)$. The first part of the statement follows from \cite[Theorems X.4.4]{SS}. For the second assertion, by \cite[IV.2.7]{ASS}, for each $X\in \modu \Lambda$, we know $\pd_{\Lambda}X \leq 1$ if and only if $\Hom_\Lambda(D\Lambda,\tau X)=0$ (dually, $\id_{\Lambda} X \leq 1$ if and only if $\Hom_\Lambda(\tau^{-1}X,\Lambda)=0$). Since for every $M$ in a homogeneous tube $\mathcal{T}$ we have $\tau_{\Lambda}(M)=M$, the result is immediate.

\end{proof}

By the characterization of generalized standard tubes in Lemma \ref{tube-brick mouth} and faithfulness of components of $\Gamma(\modu \Lambda)$ in Lemma \ref{Almost all}, we will give a new characterization of $\tau$-tilting finiteness of mild special biserial algebras. Before we state the next proposition, we remark that for each tilting module $T$ over $\Lambda$, the existence of the coresolution
$$0 \rightarrow \Lambda \rightarrow T_1 \rightarrow T_2 \rightarrow 0,$$ with $T_1, T_2 \in \add(T)$ implies that $\Lambda \in \Cogen(T)$, thus, by Lemma \ref{Faitful-Cogen}, every tilting module is faithful.

\begin{proposition} \label{faithful tau-rigid}
For an algebra $\Lambda$, the following hold:
\begin{enumerate}
    \item Every faithful $\tau_{\Lambda}$-rigid module is partial tilting. Therefore, each faithful $\tau$-tilting ${\Lambda}$-module is tilting. 
    
    \item A $\Lambda$-module is $\tau$-tilting if and only if it is sincere support $\tau$-tilting.
    
    \item A $\Lambda$-module is tilting if and only if it is faithful support $\tau$-tilting.
\end{enumerate}

\end{proposition}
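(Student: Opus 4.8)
The three assertions are really three manifestations of a single principle: faithfulness of a $\tau$-rigid (or $\tau$-tilting, or support $\tau$-tilting) module forces the Bongartz-type short exact sequence that characterizes (partial) tilting modules. I would organize the proof around part $(1)$, deriving $(2)$ and $(3)$ as corollaries of $(1)$ together with the standard facts recalled in the excerpt.

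\textbf{Part (1).} Let $M$ be a faithful $\tau_\Lambda$-rigid module. Since $M$ is $\tau$-rigid it is rigid (Auslander--Reiten duality, as noted after the definition of $\tau$-rigid in the excerpt), so $\Ext^1_\Lambda(M,M)=0$, and it remains only to show $\pd_\Lambda M\le 1$. The key input is the characterization of $\pd_\Lambda M\le 1$ via $\Hom_\Lambda(D\Lambda,\tau M)=0$, exactly the criterion \cite[IV.2.7]{ASS} already invoked in the proof of Lemma~\ref{Almost all}(2). So the real task is: \emph{a faithful $\tau$-rigid module $M$ satisfies $\Hom_\Lambda(D\Lambda,\tau M)=0$.} Here I would use faithfulness through Lemma~\ref{Faitful-Cogen}: $\ann_\Lambda(M)=0$ gives $\Lambda\in\Cogen(M)$, i.e. a monomorphism $\Lambda\hookrightarrow M^d$. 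Dualizing, $D\Lambda$ is a quotient of $(DM)^d$; but I actually want to relate $D\Lambda$ to $M$ itself, so instead I would argue that the injective cogenerator's interaction with $\tau M$ is controlled by $M$. Concretely: any nonzero $f\colon D\Lambda\to\tau M$ would, composed with a suitable piece of the structure of $\Lambda\in\Cogen(M)$, produce a nonzero element of $\Hom_\Lambda(M,\tau M)$, contradicting $\tau$-rigidity. Making this composition precise — lifting/pushing a map along the mono $\Lambda\hookrightarrow M^d$ and using that $D\Lambda=\bigoplus I_i$ dominates any module's injective envelope — is the step I expect to require the most care; this is essentially the Bongartz argument (compare the discussion of Bongartz's completion theorem in the excerpt) repackaged for faithful $\tau$-rigid modules. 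Once $\pd_\Lambda M\le1$ and $\Ext^1_\Lambda(M,M)=0$ are in hand, $M$ is partial tilting by definition. For the second sentence of $(1)$: if $M$ is moreover $\tau$-tilting then $|M|=|\Lambda|$, and a partial tilting module with $|M|=|\Lambda|$ is tilting by the well-known fact recalled in the $\tau$-tilting subsection; the complementary short exact sequence $0\to\Lambda\to T_1\to T_2\to0$ then exists automatically, so $M$ is a (complete) tilting module.

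\textbf{Part (2).} One direction is immediate: a $\tau$-tilting module is support $\tau$-tilting (take the idempotent $e=0$), and it is sincere because $|M|=|\Lambda|$ together with $\tau$-rigidity forces every simple to appear in $\topm(M)$ — more precisely, a non-sincere $\tau$-tilting module would be $\tau$-tilting over a proper quotient $\Lambda/\langle e\rangle$ with $|\Lambda/\langle e\rangle|<|\Lambda|=|M|$, impossible. Conversely, if $M$ is sincere and support $\tau$-tilting, then $M$ is $\tau$-tilting over some $\Lambda/\langle e\rangle$; sincerity of $M$ as a $\Lambda$-module forces $e_xM\ne0$ for all $x$, hence $e=0$ and $M$ is $\tau$-tilting over $\Lambda$ itself.

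\textbf{Part (3).} If $M$ is tilting, it is faithful (the coresolution $0\to\Lambda\to T_1\to T_2\to0$ gives $\Lambda\in\Cogen(M)$, hence $\ann_\Lambda(M)=0$ by Lemma~\ref{Faitful-Cogen}, exactly as remarked just before the proposition), and it is $\tau$-tilting hence support $\tau$-tilting. Conversely, suppose $M$ is faithful and support $\tau$-tilting. Faithfulness forces $M$ to be sincere (if $e_xM=0$ then $e_x\in\ann_\Lambda(M)=0$, a contradiction), so by part $(2)$ $M$ is $\tau$-tilting; being faithful $\tau$-tilting, part $(1)$ makes it tilting. This closes the loop. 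The only genuinely new content is the Bongartz-style argument inside part $(1)$; everything else is bookkeeping with definitions and the cited lemmas.
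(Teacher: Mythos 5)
The paper does not prove this proposition at all: it cites \cite[VIII.5.1]{ASS} for part (1) and \cite[Prop.\ 2.2]{AIR} for parts (2) and (3). So your proposal is a from-scratch reconstruction rather than a different decomposition; its skeleton (reduce (1) to $\Hom_\Lambda(D\Lambda,\tau M)=0$ via the criterion $\pd_\Lambda M\le 1\iff\Hom_\Lambda(D\Lambda,\tau M)=0$, then get (2) by comparing $|M|$ with the number of simples of the support algebra, and (3) by combining faithfulness, (2) and (1)) is exactly the standard ASS/AIR argument. Parts (2) and (3) are fine modulo two standard inputs you use silently: that a $\tau_\Lambda$-rigid module which is a $\Lambda/I$-module is $\tau_{\Lambda/I}$-rigid, and that a $\tau$-rigid module has at most $|\Lambda|$ nonisomorphic indecomposable summands (both in \cite{AIR}); and in (3) the step ``tilting hence $\tau$-tilting'' needs the one-line argument that $\pd_\Lambda M\le 1$ gives $\Hom_\Lambda(D\Lambda,\tau M)=0$, so rigidity plus the AR formula $\overline{\Hom}_\Lambda(M,\tau M)\simeq D\Ext^1_\Lambda(M,M)=0$ upgrades to $\Hom_\Lambda(M,\tau M)=0$.

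The only genuine soft spot is the step you yourself flag in (1). As written, ``lifting/pushing a map along the mono $\Lambda\hookrightarrow M^d$'' is not the right manipulation: from a nonzero $f\colon D\Lambda\to\tau M$ you cannot directly interact with a monomorphism into $M^d$. The correct (and short) completion dualizes the faithfulness statement instead: $\ann_\Lambda(M)=\ann_\Lambda(DM)$, so $DM$ is a faithful right module, hence $\Lambda_\Lambda\in\Cogen(DM)$, and applying $D$ gives an epimorphism $M^{r}\twoheadrightarrow D\Lambda$, i.e.\ $D\Lambda\in\Gen(M)$. Precomposing any nonzero $f\colon D\Lambda\to\tau M$ with this epimorphism yields a nonzero map $M\to\tau M$, contradicting $\tau$-rigidity; thus $\Hom_\Lambda(D\Lambda,\tau M)=0$, $\pd_\Lambda M\le 1$, and together with $\Ext^1_\Lambda(M,M)=0$ (AR duality) $M$ is partial tilting. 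With that one substitution your argument is complete and agrees with the classical proof the paper is citing; no Bongartz completion is actually needed for (1), only for the well-known fact that a partial tilting module with $|M|=|\Lambda|$ is tilting, which you correctly quote.
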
 
\begin{proof}
For the proof of the first statement, see {\cite[VIII.5.1]{ASS}}. The second and third assertions are shown in \cite[Prop. 2.2]{AIR}.
\end{proof}

The following statement establishes a further connection between the notion of rigidity and $\tau$-rigidity over the algebras in $\Mri(\mathfrak{F_{\sB}})$.

\begin{proposition}\label{tau-finite vs tilting finite}
If $\Lambda$ is a mild special biserial algebra, then $\mathtt{i} \tau \trig(\Lambda)
$ is finite if and only if $\irigid(\Lambda)$ is finite.

\end{proposition}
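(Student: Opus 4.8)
The plan is to prove the nontrivial implication, namely that $\irigid(\Lambda)$ finite forces $\mathtt{i}\tau\trig(\Lambda)$ finite; the reverse is automatic from the Auslander--Reiten duality recalled in the $\tau$-tilting preliminaries, which gives $\mathtt{i}\tau\trig(\Lambda)\subseteq\irigid(\Lambda)$ always. So assume $\irigid(\Lambda)$ is infinite and produce infinitely many indecomposable $\tau$-rigid modules. If $\Lambda$ is rep-finite there is nothing to prove, so by the discussion following Lemma \ref{Monomial ideal} we may assume $\Lambda$ is min-rep-infinite, hence a string algebra with a monomial ideal, and we have the full combinatorial description of its indecomposables and of $\Gamma(\modu\Lambda)$ from Section \ref{Priliminary}. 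Since band modules $X$ satisfy $\tau X=X$ they are never rigid either (as $\Ext^1_\Lambda(X,X)\neq 0$), so an infinite family of indecomposable rigid modules must consist of string modules, and in fact — again because $\Lambda$ is $\mSB$ — almost all of them are sincere, hence faithful, by Lemma \ref{Almost all}(1).

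The key step is then: for a faithful indecomposable string module $M(w)$, \emph{rigid $\Rightarrow$ $\tau$-rigid}. This is exactly where Proposition \ref{faithful tau-rigid}(1) enters, but that statement is about partial tilting, i.e. it converts faithful $\tau$-rigid into partial tilting, not rigid into $\tau$-rigid. So the real argument has to run the other way around: I would use the structure of $\Gamma(\modu\Lambda)$ to locate the faithful rigid string modules. By Lemma \ref{Almost all}(1) every component of $\Gamma(\modu\Lambda)$ is faithful, and by Proposition \ref{tau-finiteness on the components and radical} / the trichotomy of components for rep-infinite string algebras recalled in Section \ref{subsection:Module category of representation-infinite algebras}, the components are of type (I) (bounded by projectives/injectives), string tubes, or $\mathbb{ZA}_\infty^\infty$-components. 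A rigid indecomposable on a stable tube forces the tube to be generalized standard via Lemma \ref{tube-brick mouth} (a non-rigid self-extending mouth module spoils $\Hom$-orthogonality), and on a generalized standard tube Lemma \ref{Almost all}(2) says the tube is hereditary, so $\pd_\Lambda M\leq 1$ and then $\Hom_\Lambda(M,\tau M)=0$ follows from the Auslander--Reiten formula $\overline{\Hom}_\Lambda(M,\tau M)\cong D\Ext^1_\Lambda(M,M)=0$ together with $\pd\leq 1$ killing the "overline" correction term. For the $\mathbb{ZA}_\infty^\infty$ and type-(I) components one argues that a faithful indecomposable $M$ there with $\Ext^1_\Lambda(M,M)=0$ likewise has $\pd_\Lambda M\leq 1$: indeed $\Lambda\in\Cogen(M)$ by Lemma \ref{faithful-cogen}, so $M$ cogenerates $D\Lambda$ as well after the usual twist, and one invokes \cite[IV.2.7]{ASS} ($\pd_\Lambda M\leq 1 \iff \Hom_\Lambda(D\Lambda,\tau M)=0$) — here the self-rigidity plus faithfulness is leveraged to force the vanishing, after which $\overline{\Hom}=\Hom$ gives $\tau$-rigidity. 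Thus all but finitely many of the infinitely many indecomposable rigid modules are in fact $\tau$-rigid, so $\mathtt{i}\tau\trig(\Lambda)$ is infinite.

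The main obstacle I anticipate is the last point: converting "faithful $+$ $\Ext^1(M,M)=0$" into "$\pd_\Lambda M\leq 1$" uniformly across all three component types. On generalized standard tubes this is clean (Lemma \ref{Almost all}(2)), but on the non-tube components one genuinely needs the monomial/string structure — concretely, that over a string algebra a string module $M(w)$ has a two-term minimal projective presentation whose kernel is controlled by the "arrows leaving the top of $w$", and that faithfulness pins down enough of $(Q,I)$ to rule out longer projective resolutions for all but finitely many $w$. I would either extract this directly from the combinatorics of string modules over monomial special biserial algebras, or — cleaner — sidestep it by showing that the faithful rigid string modules all lie on generalized standard string tubes: a rep-infinite $\mSB$ algebra that is not of type $k\widetilde{\mathbb A}_n$ has no preprojective/preinjective component (this is one of the advertised theorems), so the non-tube components contribute no $\tau$-rigid bricks beyond finitely many, and the infinitude of $\irigid(\Lambda)$ must already be visible inside the string tubes, where the tube argument above applies verbatim. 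Either route reduces the proposition to the tube case plus a finiteness bookkeeping argument, and I expect the write-up to be short once that reduction is in place.
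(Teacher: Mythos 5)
Your reduction to the min-rep-infinite case, the removal of band modules, and the use of Lemma \ref{Almost all} to make almost all of the relevant rigid indecomposables faithful string modules are all consistent with the paper (note, as a small slip, that your opening sentence has the two implications switched: the containment $\mathtt{i}\tau\trig(\Lambda)\subseteq\irigid(\Lambda)$ makes ``$\irigid(\Lambda)$ finite $\Rightarrow \mathtt{i}\tau\trig(\Lambda)$ finite'' the trivial direction, and the direction you then actually attack is the correct nontrivial one). The genuine gap is in the key step, passing from ``faithful and rigid'' to ``$\tau$-rigid''. Your tube argument --- even granting the unproved claim that one rigid module in a stable tube forces the tube to be generalized standard (Lemma \ref{tube-brick mouth} characterizes standardness by the behaviour of the mouth, which a single rigid module of higher quasi-length does not control) --- can only ever account for finitely many elements of $\irigid(\Lambda)$: band tubes contain no rigid modules at all, there are only finitely many string tubes, and each stable tube of finite rank contains only finitely many rigid indecomposables (modules of quasi-length at least the rank have self-extensions). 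Hence an infinite family of rigid indecomposables necessarily lives in the non-periodic components, exactly where you concede you have no argument; and your proposed ``cleaner sidestep'' --- that the infinitude of $\irigid(\Lambda)$ must already be visible inside the string tubes --- asserts the opposite of this count and is false. So the crucial implication is left unproved precisely for the modules that produce the infinitude.

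For comparison, the paper's proof does not pass through the Auslander--Reiten quiver at all: after reducing to the min-rep-infinite case via Lemma \ref{Monomial ideal}, it combines Lemma \ref{Almost all} (almost all indecomposables are faithful) with Proposition \ref{faithful tau-rigid}, i.e.\ the interchangeability of partial tilting modules and faithful $\tau$-rigid modules, to conclude directly that $\irigid(\Lambda)\setminus\mathtt{i}\tau\trig(\Lambda)$ is finite. If you wish to complete your route instead, the missing ingredient is a uniform argument that a faithful rigid string module $M$ over such an algebra satisfies $\pd_{\Lambda}M\leq 1$ (equivalently $\Hom_{\Lambda}(D\Lambda,\tau M)=0$), after which $\Hom_{\Lambda}(M,\tau M)\simeq D\Ext^1_{\Lambda}(M,M)=0$ gives $\tau$-rigidity; this has to be established on the non-periodic components, not only on generalized standard tubes.
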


Before proving the above proposition, let us remark that the assertion does not necessarily hold for arbitrary algebras. Namely, for an algebra $\Lambda$, if $\mathtt{i} \tau \trig(\Lambda)$ is finite, we cannot conclude $\Lambda$ is rigid finite. The preprojective algebras of (simply laced) Dynkin quivers provide concrete non-examples. This is because for a simply laced Dynkin quiver $Q$, in \cite{GLS} the module category of the preprojective algebra $\Pi(Q)$ of $Q$ is used to give a categorification of the cluster algebra of the coordinate algebra $\mathbb{C}[\mathcal{N}]$ of the maximal unipotent subgroup $\mathcal{N}$ in the corresponding semisimple algebraic group. 
In the course of this categorification, the authors give a correspondence between the maximal rigid $\Pi(Q)$-modules and the clusters of the cluster algebra of $\mathbb{C}[\mathcal{N}]$.
It is known that for all but finitely many cases $\mathbb{C}[\mathcal{N}]$ is an infinite type cluster algebra, implying that $\Pi(Q)$ has infinitely many maximal rigid modules. However, as shown in \cite{Mi}, these preprojective algebras $\Pi(Q)$ are always $\tau$-rigid finite.

\begin{proof}
Since $\mathtt{i} \tau \trig(\Lambda) \subseteq  \irigid(\Lambda)$ always holds for any algebra, we only need to show $\irigid (\Lambda) \setminus \mathtt{i} \tau \trig(\Lambda)$ is a finite set.
Suppose $\Lambda$ is min-rep-infinite (otherwise the assertion is evident). 
Since the set of partial tilting modules is the same as faithful $\tau$-rigid modules and because, by Lemma \ref{Almost all}, almost all indecomposable modules are faithful, then almost every element of $\irigid (\Lambda)$ belongs to $\mathtt{i} \tau \trig (\Lambda)$. This finishes the proof.
\end{proof}

As a consequence of the proposition above, we obtain the following equivalent conditions on mild special biserial algebras, which are of the same nature as those given by Theorem \ref{tau-finiteness}.

\begin{theorem}\label{brick-rigid min-rep special biserial}
If $\Lambda$ is a mild special biserial algebra, the following are equivalent:
\begin{enumerate}
\item $\Brick(\Lambda)$ is finite.
\item $\Lambda$ is rigid finite.
\item $\Lambda$ is tilting finite.
\end{enumerate}
\end{theorem}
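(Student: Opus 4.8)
The plan is to derive this theorem as a short corollary of the results established immediately above, chaining together the brick-$\tau$-rigid correspondence of \cite{DIJ} with Proposition \ref{tau-finite vs tilting finite}. First I would recall that by Theorem \ref{tau-finiteness}, $\Brick(\Lambda)$ is finite if and only if $\mathtt{i}\tau\trig(\Lambda)$ is finite; this handles the equivalence of (1) with the finiteness of the indecomposable $\tau$-rigid modules. Then, since $\Lambda$ is a mild special biserial algebra, Proposition \ref{tau-finite vs tilting finite} gives that $\mathtt{i}\tau\trig(\Lambda)$ is finite if and only if $\irigid(\Lambda)$ is finite, i.e.\ if and only if $\Lambda$ is rigid finite. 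This establishes the equivalence of (1) and (2).

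For the equivalence with (3), I would argue in both directions. If $\Lambda$ is tilting finite, then in particular there are only finitely many indecomposable direct summands of tilting modules; but every partial tilting module is $\tau$-rigid (by Auslander--Reiten duality, as recalled before Theorem \ref{tau-finiteness}, or directly via Proposition \ref{faithful tau-rigid}), and by Lemma \ref{Almost all} almost every indecomposable $\Lambda$-module is faithful, hence by Proposition \ref{faithful tau-rigid}(1) almost every indecomposable $\tau$-rigid module is partial tilting and thus a summand of a tilting module (using Bongartz completion). So rigid finiteness (equivalently $\tau$-rigid finiteness) forces tilting finiteness, and conversely tilting finiteness forces $\tau$-rigid finiteness since the finitely many indecomposable $\tau$-rigid modules that fail to be partial tilting form a finite set, while the partial tilting ones are summands of finitely many tilting modules. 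Concretely: $\Lambda$ tilting finite $\Rightarrow$ only finitely many indecomposable summands of tilting modules $\Rightarrow$ (by the \textquotedblleft almost all are partial tilting\textquotedblright\ observation from the proof of Proposition \ref{tau-finite vs tilting finite}) only finitely many indecomposable $\tau$-rigid modules $\Rightarrow$ (1), and conversely (2) $\Rightarrow$ $\irigid(\Lambda)$ finite $\Rightarrow$ the partial tilting modules, being a subset, are finite in number, and each tilting module is a sum of $|\Lambda|$ of them, so there are finitely many tilting modules.

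The main obstacle — though it is really bookkeeping rather than a genuine difficulty — is making precise the passage \textquotedblleft finitely many indecomposable partial tilting modules implies finitely many tilting modules.\textquotedblright\ Here one uses that a tilting module has exactly $|\Lambda|$ non-isomorphic indecomposable summands (the well-known fact recalled in the $\tau$-tilting subsection) and that each summand is itself partial tilting, so the number of basic tilting modules is bounded by $\binom{|\irigid(\Lambda)|}{|\Lambda|}$, which is finite once $\irigid(\Lambda)$ is. The reverse implication within (3) is where one genuinely needs the mild special biserial hypothesis, via Lemma \ref{Almost all} and Proposition \ref{faithful tau-rigid}(1): without faithfulness of almost all indecomposables (which fails for general algebras, as the preprojective-algebra non-example before the proof of Proposition \ref{tau-finite vs tilting finite} shows), $\tau$-rigid finiteness need not imply rigid or tilting finiteness.

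In summary, the proof is: (1)$\Leftrightarrow$($\mathtt{i}\tau\trig(\Lambda)$ finite) by Theorem \ref{tau-finiteness}; ($\mathtt{i}\tau\trig(\Lambda)$ finite)$\Leftrightarrow$(2) by Proposition \ref{tau-finite vs tilting finite}; and (2)$\Leftrightarrow$(3) by the elementary counting argument above together with Proposition \ref{faithful tau-rigid}(1) and Lemma \ref{Almost all}. No new ingredients beyond the excerpt are required; the theorem is essentially a repackaging of Proposition \ref{tau-finite vs tilting finite} into the \textquotedblleft Theorem \ref{tau-finiteness}-style\textquotedblright\ list of equivalences advertised just before its statement.
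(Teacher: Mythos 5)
Your proposal is correct and takes essentially the same route as the paper: (1)$\Leftrightarrow$(2) via Theorem \ref{tau-finiteness} together with Proposition \ref{tau-finite vs tilting finite}, the easy implication (2)$\Rightarrow$(3), and then Lemma \ref{Almost all} plus Proposition \ref{faithful tau-rigid} to convert tilting finiteness back into $\tau$-rigid (hence brick) finiteness. The only cosmetic difference is in closing that last implication: the paper shows that almost every $\tau$-tilting module is already a tilting module, whereas you argue at the level of indecomposable $\tau$-rigid modules, using Bongartz completion and a counting bound—both arguments rest on exactly the same faithfulness input.
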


\begin{proof}
The equivalence of $(1)$ and $(2)$ is an immediate consequence of Theorem \ref{tau-finiteness} and Proposition \ref{tau-finite vs tilting finite}.
Moreover, $(2)$ always implies $(3)$. To show that every tilting finite mild algebra $\Lambda$ is brick finite, we in fact show a stronger result: that almost every $\tau$-tilting $\Lambda$-module is tilting.

To see this, consider an arbitrary $\tau$-tilting module $M=M_1 \oplus \cdots \oplus M_n$ over $\Lambda$, where each $M_i$ (for $1 \leq i\leq n$) is an indecomposable $\tau$-rigid module.
By Lemma \ref{Almost all}, almost all such $M_i$ are faithful, which implies $M$ is almost always faithful. Thus, by Proposition \ref{faithful tau-rigid}, $M$ is almost always a tilting module. 
This implies that $\Lambda$ is tilting finite if and only if it is $\tau$-tilting finite, which, by Theorem \ref{tau-finiteness}, results in the desired conclusion. 
\end{proof}

As mentioned above, $\tau \trig(\Lambda) \subseteq \rigid (\Lambda)$ and $\tilt(\Lambda) \subseteq \tau \ttilt(\Lambda)$ always hold, for any algebra $\Lambda$.
Furthermore, these two containment are related via the crucial equivalence on the level of $\tau$-tilting theory, asserting that $\tau\trig(\Lambda)$ is finite if and only of $\tau\ttilt(\Lambda)$ is finite.
On the other hand, on the level of tilting theory, although finiteness of $\rigid(\Lambda)$ implies that $\tilt(\Lambda)$ is finite, finiteness of $\tilt(\Lambda)$ does not in general guarantee finiteness of $\rigid(\Lambda)$. The preceding theorem shows that for the min-rep-infinite biserial algebras, in fact all of the four above-mentioned sets are finite if and only if one of them is finite.

\section{Bound Quivers of Mild Special Biserial Algebras}\label{Section:Bound Quivers of Mild Special Biserial Algebras}

As explained in the preceding section, to ascertain whether a special biserial algebra is $\tau$-tilting infinite, we employ our reduction to minimal representation-infinite special biserial algebras and use Theorem \ref{tau-finiteness} to translate our problem to brick infiniteness of certain string algebras. To determine which algebras admit infinitely many bricks, a good knowledge of their bound quivers is crucial. Such an elegant description, for the min-rep-infinite special biserial algebras without a node, has recently appeared in \cite{R2}. From the aforementioned work, we only recall the tools needed to prove our results and for details we refer to \cite{R2}. Unless specified otherwise, in this section $\Lambda=kQ/I$ denotes a special biserial algebra.
\vskip 0.15cm
\textbf{Barification.}
Let $v_1=\delta^{\epsilon_{m+1}} \alpha^{\epsilon_m}_m \cdots \alpha^{\epsilon_1}_1 \alpha^{\epsilon_0} $ and $v_2=\gamma^{\epsilon'_{m+1}} \beta^{\epsilon'_m}_m \cdots \beta^{\epsilon'_1}_1 \beta^{\epsilon'_0}$ be a pair of disjoint strings in $(Q,I)$ which do not revisit a vertex that they pass through (i.e, they share no substring and except possibly for the very start and end, every vertex they visit appears once). 
We further assume that none of the arrows in $v_1$ and $v_2$ are involved in a relation (except for possibly $\alpha$, $\beta$, $\gamma$ and $\delta$). 
Suppose $\{ x_0, x_1,\cdots, x_{m+1}, x_{m+2}\}$ and $\{ x'_0, x'_1, \cdots, x'_{m+1}, x'_{m+2} \}$ denote the ordered sets of vertices in $Q_0$, respectively visited by $v_1$ and $v_2$.
Let $\epsilon_i=\epsilon'_i$, for all $1\leq i\leq m$, and assume $m$ is maximal with respect to this property, meaning that it determines the longest substrings of $v_1$ and $v_2$ which are sign-compatible (i.e, chopping off the first and last arrows of $v_1$ and $v_2$ give two isomorphic copies of $\mathbb{A}_{m+1}$).
Then, by $Q(v_1,v_2)$ we denote the quiver obtained from $Q$ via the following steps:
\begin{enumerate}
    \item For every $1\leq i\leq m+1$, identify the vetices $x_i$ and $x'_i$. The new vertex is denoted by $z_i$.
    \item For every $1\leq i\leq m$, identify the arrows $\alpha_i$ and $\beta_i$. The new arrow is denoted by $\theta_i$.
\end{enumerate}

The \emph{bar} $\mathfrak{b}:=\theta_m \cdots \theta_1$ is a copy of $\mathbb{A}_{m+1}$ inside $Q(v_1,v_2)$ and $l(\mathfrak{b})=m+1 > 0$. 
Moreover, the maximality assumption on the length of $v_1$ and $v_2$ implies  $\epsilon_{m+1}= - \epsilon'_{m+1}$ and $\epsilon_{0}=-\epsilon'_{0}$. Without loss of generality, we can assume $\epsilon_{0}=1$. Then, $\beta \alpha$ is a path of length $2$ in $Q(v_1,v_2)$ which passes through $z_0$.
If we further assume $\epsilon_{m+1}=1$, then \emph{barification} of $(Q,I)$ along $v_1$ and $v_2$ is defined as the bound quiver obtained by imposing the relations $\beta \alpha$ and $\delta \gamma$, as well as the generators of $I$, on the quiver $Q(v_1,v_2)$.
Consequently, we have the algebra $kQ(v_1,v_2)/\langle \{\beta \alpha, \delta \gamma\} \cup I \rangle$, which is again a special biserial algebra. Note that if $\epsilon_{m+1}=-1$, in the barification we only replace $\delta \gamma$ by $\gamma \delta$ in the generating set of the new ideal.

\subsection{Barbell algebra}\label{subsection:Barbell Algebra}
An important class of special biserial algebras is obtained via a certain sort of barification that we review here. 
 
Let $Q=\widetilde{\mathbb{A}}_n$ (for some $n \in \mathbb{Z}_{\geq 0}$) and $u$, $u'$, $v$ and $w$ in $Q$ be substrings of positive length such that
\begin{enumerate}
    \item $v$ and $w$ are isomorphic copies of $\mathbb{A}_d$, for some $1 < d \leq n/2$.
    
    \item $\widetilde{\mathbb{A}}_n=u'vuw^{-1}$.
    
    \item $u=\alpha \beta^{\epsilon'_k}_k \cdots \beta^{\epsilon'_1}_1 \beta $ and 
    $u'= \gamma \delta^{\epsilon_m}_m \cdots \delta^{\epsilon_1}_1 \delta$, 
    where 
    $\alpha$, $\beta$, $\gamma$, $\delta$, $\delta_i$ and $\beta_j$  
    are in $Q_1$, 
    for $1\leq i \leq m$ and $1 \leq j \leq k$. 
    
\end{enumerate}

Provided that the above conditions hold, suppose $Q(\mathtt{v},\mathtt{w})$ is obtained from barification of $\mathtt{v}:=\delta v \alpha$ and $\mathtt{w}:= \beta w^{-1} \gamma $. Then, $kQ(\mathtt{v},\mathtt{w})/\langle \beta \alpha, \delta \gamma \rangle$ is called the \emph{barbell} algebra.

We draw the attention of the reader that we used the term ``generalized barbell" algebras for those introduced in the introduction whose bound quivers is slightly more general than the ``barbell" algebras. In fact, a generalized barbell quiver is barbell if and only if the bar is of positive length, while it may or may not be serial (i.e, linearly oriented).
Moreover, note that every generalized barbell algebra contains infinitely many non-isomorphic bands, and therefore it is rep-infinite.
The following lemma gives a handy criterion to verify if a barbell algebra is actually min-rep-infinite.

\begin{lemma}[\cite{R2} Proposition 5.1]\label{min-rep-infiniteinite barbell algebras}
A barbell algebra is minimal representation-infinite if and only if the bar is not serial.
\end{lemma}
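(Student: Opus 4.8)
## Proof Strategy for Lemma \ref{min-rep-infiniteinite barbell algebras}

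The plan is to establish both directions by analyzing the bands of the barbell algebra and invoking Lemma \ref{band, rep-infinite}, which tells us that a string algebra is representation-infinite precisely when it has a band. Since every barbell algebra is already known to be representation-infinite (it contains the band coming from the underlying $\widetilde{\mathbb{A}}_n$), minimality is the real content: we must check whether \emph{every} proper quotient is representation-finite, equivalently, whether every proper quotient is band-free.

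First I would set up notation carefully. A barbell quiver has the shape drawn in the introduction: a left cyclic string $C_L$, a right cyclic string $C_R$, and a bar $\mathfrak{b} = \theta_m \cdots \theta_1$ of positive length connecting them, with the relations $\beta\alpha$ and $\delta\gamma$ killing the compositions at the two ``handle'' vertices $x$ and $y$. The distinguished band of the algebra is (up to cyclic rotation and inversion) the full cycle $B = C_R \, \mathfrak{b}^{-1} C_L \, \mathfrak{b}$ or a similar word tracing once around each bell and twice along the bar — in any case it uses every arrow of $Q$. Proper quotients of $\Lambda = kQ/I$ correspond to quotients $\Lambda/J$ with $J \neq 0$; since $\Lambda$ is special biserial with monomial (path) ideal $I$ (using that min-rep-infinite special biserial algebras are string algebras, Lemma \ref{Monomial ideal}, though here we don't yet assume minimality), it suffices by the reduction in Remark \ref{vertex-quotient} to consider quotients killing a single arrow $\zeta$ or a single vertex $e_x$; killing a vertex kills all incident arrows, so killing an arrow is the critical case. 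So the plan reduces to: \textbf{$\Lambda$ is min-rep-infinite iff for every arrow $\zeta$ of $Q$, the algebra $\Lambda/\langle \zeta\rangle$ has no band.}

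For the direction ``bar not serial $\Rightarrow$ min-rep-infinite'': I would argue that if the bar is not serial, then it contains a deep (a vertex where two arrows of $\mathfrak{b}$ point outward) or a peak, so every band must traverse the bar, and in fact every band of $\Lambda$ must use every arrow of $Q$. The key combinatorial claim is that in a barbell with non-serial bar, any band is forced — by the string conditions (S1), (S2) and the two relations $\beta\alpha$, $\delta\gamma$ — to go once around $C_L$, cross the bar, go once around $C_R$, and cross back; there is no ``shortcut'' band supported on a proper subquiver. Hence deleting any arrow destroys all bands, and $\Lambda/\langle\zeta\rangle$ is band-free, so representation-finite by Lemma \ref{band, rep-infinite}. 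For the converse, ``bar serial $\Rightarrow$ not min-rep-infinite'': if $\mathfrak{b} = \theta_m\cdots\theta_1$ is a linearly oriented path, I would exhibit a proper quotient that is still representation-infinite. The idea is that when the bar is serial one of the two bells already carries enough structure to support a band on a proper subquiver — concretely, collapsing or deleting a carefully chosen arrow in the bar (or in one bell) leaves a smaller quiver which is still of the form $\widetilde{\mathbb{A}}_{n'}$ or still contains a band, so the quotient remains representation-infinite, contradicting minimality. I would make this explicit by picking the arrow $\theta_1$ adjacent to the handle and checking directly that the resulting bound quiver still admits a band (one should double-check the relations interact correctly here, but with a serial bar the relation at, say, $x$ only blocks $\beta\alpha$, which does not obstruct rerouting through the remaining bar arrows and the other bell).

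The main obstacle I anticipate is the forcing argument in the first direction: rigorously showing that in a non-serial-bar barbell \emph{every} band must be the ``full'' band and cannot be supported on a proper subset of arrows. This requires a careful case analysis of how a cyclic string can behave at the handle vertices $x, y$ (where the relations live) and at the branch vertex of the non-serial bar, using that at each such vertex the string-algebra axioms (B1), (B2) severely restrict which pairs of arrows can appear consecutively in a string. A clean way to organize this is to observe that a band, being a cyclic walk with no inverse-consecutive pair and avoiding the relations, must at every degree-$3$ or degree-$4$ vertex ``turn'' in a forced way; tracing these forced turns around the barbell shows the band is unique up to rotation/inversion and uses all arrows. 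I would also want to be slightly careful that the statement is about \emph{barbell} algebras in Ringel's original (bar of positive length) sense, not the more general notion, so that the bar genuinely separates the two bells and the handle relations are exactly $\beta\alpha$ and $\delta\gamma$; the generalized-barbell case is handled separately later in the paper.
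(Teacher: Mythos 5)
First, note that the paper does not prove this lemma at all: it is quoted directly from Ringel (\cite{R2}, Proposition 5.1), so there is no in-paper argument to compare against. Judged on its own merits, your proposal contains a genuine gap, and it sits exactly at the step you treat as a harmless reduction. You claim that, by Remark \ref{vertex-quotient}, minimality can be tested on quotients $\Lambda/\langle \zeta\rangle$ and $\Lambda/\langle e_x\rangle$ for single arrows and vertices. That remark only says such quotients stay special biserial; minimal representation-infiniteness requires \emph{every} proper quotient, i.e.\ the quotient by every nonzero ideal, to be representation-finite, and ideals generated by longer paths are precisely the ones that matter here. Indeed, in a barbell (bar of positive length) every band must wind once around each cycle and cross the bar twice, so deleting \emph{any} arrow or vertex destroys all bands and the quotient is representation-finite by Lemma \ref{band, rep-infinite} --- \emph{regardless of whether the bar is serial}. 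Your criterion therefore cannot distinguish the two cases: as structured, your forward argument would "prove" minimality for serial bars as well, and your converse strategy (kill $\theta_1$ and look for a surviving band) fails outright, since that quotient is band-free and hence representation-finite.

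The actual mechanism behind the dichotomy is the following. When the bar $\mathfrak{b}$ is serial, the composition of $\mathfrak{b}$ with the unique non-vanishing arrow at each end is a nonzero path of length $l(\mathfrak{b})+2$; quotienting by this single path (the B-relation of Section \ref{subsection:construction of wind wheel algebras}) produces a one-bar wind wheel algebra, which still has a band and is representation-infinite --- this proper quotient is the witness that the serial-bar barbell is not minimal, and no arrow/vertex quotient can exhibit it. Conversely, when the bar is not serial, one must handle arbitrary nonzero ideals, not just those generated by arrows; the standard route (and the one consonant with this paper, cf.\ Lemma \ref{Almost all}) is to show that almost every indecomposable module is faithful --- every sufficiently long string or band module, being a long subword of powers of the unique band, is annihilated by no nonzero element --- so that any nonzero ideal kills all but finitely many indecomposables and every proper quotient is representation-finite. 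Your "forced turns" analysis of bands is correct and useful for establishing uniqueness of the band, but without the faithfulness (or an equivalent all-ideals) argument in one direction and the B-relation quotient in the other, the proof as proposed does not go through.
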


The previous lemma obviously implies that every generalized barbell algebra which is min-rep-infinite has a bar of length strictly greater than one.
Before we introduce the last family, let us illustrate some of the preceding definitions and notions in the following example.

\begin{example}\label{Example Barbell}

Let $Q:= \widetilde{\mathbb{A}}_9$ be the quiver given in Figure \ref{fig:barification example}.
If $v_1= \delta \alpha_2 \alpha^{-1}_1 \alpha$ and $v_2=\gamma^{-1} \beta_2 \beta^{-1}_1 \beta^{-1}$, then $\{0,1,2,3,4 \}$ and $\{ 9,8,7,6,5\}$ are respectively the ordered sets of vertices visited by these two disjoint strings in $Q=\widetilde{\mathbb{A}}_9$. 
In this case, $\alpha_2 \alpha^{-1}_1$ and $\beta_2 \beta^{-1}_1$ are respectively the longest substrings in $v_1$ and $v_2$ which are sign compatible (i.e, they are isomorphic copies of $\mathbb{A}_3$). Identifying $v_1$ and $v_2$, as well as the ordered sets of vertices $\{1,2,3\}$ with $\{8,7,6 \}$, results in the quiver $Q(v_1,v_2)$ with seven vertices, where the new vertices created via identification are depicted with circles and the new arrows are labelled by $\theta_1$ and $\theta_2$. In this case, the bar is $\mathfrak{b}= \theta^{-1}_1 \theta_2$, which is not serial, thus the barbell algebra $\Lambda_{\mathfrak{b}}:= kQ(v_1,v_2)/\langle \beta \alpha, \delta \gamma \rangle$ is min-rep-infinite, where the new relations are shown by the dotted lines.

In the bound quiver of $kQ(v_1,v_2)/\langle \beta \alpha, \delta \gamma \rangle$, we can again barify the strings $w_1=\alpha \epsilon^{-1}$ and $w_2=\mu^{-1} \delta$ along the vertices $0$ and $4$, which results in a quiver with six vertices, where the new bar $\mathfrak{b}'$ is the vertex $z$.
If $Q(w_1,w_2)$ denotes the new quiver, the associated special biserial algebra is given by
$$\Lambda_{\mathfrak{b,b'}}:= kQ(w_1,w_2)/\langle \{\beta \alpha, \delta \gamma\} \cup \{\epsilon \mu, \alpha \delta\} \rangle.$$

We can barify the bound quiver of $\Lambda_{\mathfrak{b,b'}}$ once again, by identifying the vertices $5$ and $9$, and further create a new special biserial algebra (illustrated in the rightmost image). However, since all but one vertex of the new quiver is involved in a relation, we cannot barify further.

Note that, in the above copy of $\widetilde{\mathbb{A}}_9$, it is possible to barify $v= \delta (\alpha_2 \alpha^{-1}_1 \alpha \epsilon^{-1}) \beta$ and $w= \beta(\beta_1 \beta^{-1}_2 \gamma \mu^{-1}) \delta$ to obtain a different barbell algebra, while the substrings of $v$ and $w$ which are identified by this choice of barification are the same copy of $\mathbb{A}_5$, specified by the parenthesis. This, in particular, shows that a copy of $\widetilde{\mathbb{A}}_n$ may create different barbell algebras and also indicates that the strings $u$ and $u'$ in a (generalized) barbell algebra can be of length one. In the new case, the bar will be again nonserial and thus the barbell algebra $kQ(v,v^{-1})/\langle \beta^{2}, \delta^{2}  \rangle$ is also min-rep-infinite, where, as before, we identify $w$ and $v$ in the notation $Q(v,w^{-1})$.

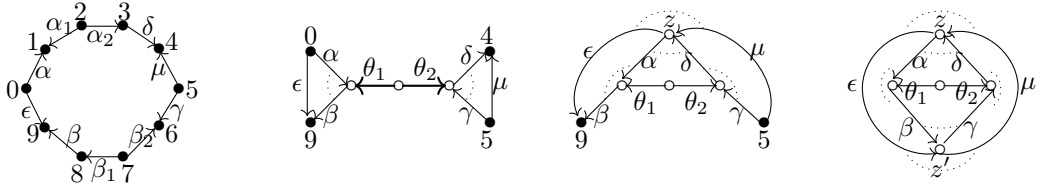
\begin{figure}
\begin{center}
    
\begin{tikzpicture} [thin, scale=0.9]

    
    \node at (0.5,2.45) {$\bullet$}; \node at (0.3,2.45) {$0$};
\draw [->] (0.5,2.5) --(0.75,3);
    \node at (0.78,3.02) {$\bullet$};\node at (0.6,3.15) {$1$};
        \node at (0.75,2.7) {$\alpha$};
\draw [<-] (0.83,3.09) --(1.3,3.37);
    \node at (1.32,3.38) {$\bullet$};\node at (1.3,3.6) {$2$};
        \node at (1,3.38) {$\alpha_1$};
\draw [->] (1.34,3.38) --(1.9,3.38);
    \node at (1.93,3.38) {$\bullet$};\node at (1.95,3.6) {$3$};
        \node at (1.6,3.2) {$\alpha_2$};
\draw [->] (1.99,3.38) --(2.42,3.09);
    \node at (2.46,3.04) {$\bullet$}; \node at (2.65,3.15) {$4$};
        \node at (2.3,3.38) {$\delta$};
\draw [<-] (2.47,3.01) --(2.73,2.53);
        \node at (2.45,2.7) {$\mu$};

\draw [->] (0.5,2.4) --(0.75,1.9);
    \node at (0.77,1.88) {$\bullet$}; \node at (0.6,1.8) {$9$}; 
        \node at (0.5,2.1) {$\epsilon$};
\draw [<-] (0.83,1.85) --(1.3,1.45);
    \node at (1.32,1.44) {$\bullet$};\node at (1.25,1.2) {$8$};
        \node at (1.2,1.75) {$\beta$};
\draw [<-] (1.37,1.45) --(1.9,1.45);
    \node at (1.93,1.45) {$\bullet$}; \node at (2,1.2) {$7$}; 
            \node at (1.65,1.25){$\beta_1$};
\draw [->] (1.99,1.45) --(2.4,1.87);
    \node at (2.46,1.91) {$\bullet$};\node at (2.65,1.8) {$6$};
        \node at (2.2,1.75) {$\beta_2$};
\draw [<-] (2.47,1.98) --(2.73,2.4);
         \node at (2.73,2.1) {$\gamma$};

\node at (2.75,2.45) {$\bullet$}; \node at (2.95,2.45) {$5$};


 \node at (4.7,3) {$\bullet$}; \node at (4.7,3.25) {$0$}; \node at (5,2.95) {$\alpha$};
 \draw [->] (4.75,3) --(5.25,2.5); \node at (5.3,2.5) {$\circ$};
 \draw [->] (5.25,2.5) --(4.75,2); \node at (4.7,1.95) {$\bullet$}; \node at (4.7,1.7) {$9$};\node at (5,2.05) {$\beta$};
 \draw [->] (4.65,3) --(4.65,2.05);\node at (4.5,2.5) {$\epsilon$};
           \draw [dotted] (5.05,2.3) to [bend left=50](5.05,2.75);

\draw [thick,<-] (5.35,2.5) --(5.95,2.5); \node at (6,2.5) {$\circ$};
\node at (5.65,2.75) {$\theta_1$};

\draw [thick,->] (6.05,2.5) --(6.7,2.5); \node at (6.75,2.5) {$\circ$};
\node at (6.4,2.75) {$\theta_2$};

\draw [->] (6.8,2.5) --(7.3,3); \node at (7.35,3) {$\bullet$}; \node at (7.3,3.25) {$4$};
\draw [->] (7.38,2) --(7.38,2.95); \node at (7.35,1.95) {$\bullet$}; \node at (7.3,1.7) {$5$};\node at (7.5,2.5) {$\mu$};\node at (7,2.95) {$\delta$};
\draw [->] (7.35,1.95) --(6.8,2.45); \node at (7,2.05) {$\gamma$};

             \draw [dotted] (7,2.3) to [bend right=50](7,2.75);


\draw [->] (9.9,3.28) to [bend right=70](8.65,2.05);
\node at (8.8,3) {$\epsilon$};

 \draw [->] (9.95,3.24) --(9.3,2.6); \node at (9.3,2.5) {$\circ$};
 \node at (9.7,2.85) {$\alpha$};
 
 \draw [->] (9.25,2.5) --(8.75,2); \node at (8.7,1.95) {$\bullet$}; \node at (8.7,1.7) {$9$};\node at (9,2.05) {$\beta$};

           \draw [dotted] (9.05,2.3) to [bend left=50](9.5,2.8);

\draw [<-] (9.35,2.5) --(9.95,2.5); \node at (10,2.5) {$\circ$};
\node at (9.65,2.25) {$\theta_1$};

\draw [->] (10.05,2.5) --(10.7,2.5); \node at (10.75,2.5) {$\circ$};
\node at (10.4,2.25) {$\theta_2$};

\draw [->] (10.7,2.55) --(10.05,3.25);
\node at (10.25,2.85) {$\delta$};

\draw [->] (11.45,2) to [bend right=70](10.1,3.28);
\node at (11.3,3) {$\mu$};
\node at (11.4,1.95) {$\bullet$}; \node at (11.4,1.7) {$5$};

\draw [->] (11.35,1.95) --(10.8,2.45); \node at (11,2.05) {$\gamma$};
         \draw [dotted] (11,2.3) to [bend right=50](10.5,2.75);

         \draw [dotted] (10.5,3.35) to [bend right=60](9.5,3.35);
         
         \draw [dotted] (10.25,3) to [bend left=10](9.75,3);

\node at (10,3.25) {$\circ$}; \node at (10,3.45) {$z$};


\draw [->] (13.9,3.28) to [bend right=50](12.85,2.5) to [bend right=50](13.95,1.5);
\node at (12.7,2.5) {$\epsilon$};

 \draw [->] (13.95,3.24) --(13.3,2.6); \node at (13.3,2.5) {$\circ$};
 \node at (13.7,2.85) {$\alpha$};
 
 \draw [->] (13.25,2.45) --(13.95,1.65); 
 \node at (13.5,1.85) {$\beta$};

           \draw [dotted] (13.25,2.75) to [bend right=80](13.3,2.25);

\draw [<-] (13.35,2.5) --(13.95,2.5); \node at (14,2.5) {$\circ$};
\node at (13.65,2.4) {$\theta_1$};

\draw [->] (14.05,2.5) --(14.7,2.5); \node at (14.75,2.5) {$\circ$};
\node at (14.4,2.4) {$\theta_2$};

\draw [->] (14.7,2.55) --(14.05,3.25);
\node at (14.25,2.85) {$\delta$};

\draw [->] (13.95,1.5) to [bend right=50] (15.15,2.5) to [bend right=50] (14.1,3.28);
\node at (15.3,2.5) {$\mu$};

\draw [->] (14.05,1.65) --(14.8,2.45); \node at (14.45,1.85) {$\gamma$};
         \draw [dotted] (14.75,2.7) to [bend left=80](14.75,2.2);

         \draw [dotted] (14.5,3.35) to [bend right=60](13.5,3.35);
         
         \draw [dotted] (14.25,3) to [bend left=10](13.75,3);
         
         \draw [dotted] (14.5,1.5) to [bend left=60](13.5,1.5);
         
         \draw [dotted] (14.25,1.9) to [bend left=10](13.75,1.9);

\node at (14,3.25) {$\circ$}; \node at (14,3.45) {$z$};

\node at (14,1.55) {$\circ$}; \node at (14,1.3) {$z'$};

\end{tikzpicture}
\end{center}    
   \caption{Barification of $\widetilde{\mathbb{A}}_9$ via bars $\mathfrak{b}= \theta^{-1}_1 \theta_2$, $\mathfrak{b}'=z$ and $\mathfrak{b}''=z'$.}
    \label{fig:barification example}
\end{figure}
\end{example}

\subsection{Wind wheel algebra}\label{subsection:construction of wind wheel algebras}
The other family of algebras that we need to recall are very similar to the barbell algebras. However, the difference is that we only barify along some linearly oriented copies of $\mathbb{A}_d$'s, and may also barify multiple pairs of such strings at the same time. We impose some additional relations to address the serial bars that are created in the course of barifications of $\widetilde{\mathbb{A}}_n$. Before we describe the construction via an algorithm, let us fix some terminology which comes handy in the remainder of the paper.

\begin{definition}
Let $kQ/I$ be a string algebra and $u$ and $w$ be strings in $(Q,I)$.
If $\gamma \in Q_1$ is an arrow, \emph{$w$ supports $\gamma$} provided $\gamma$ or $\gamma^{-1}$ occurs in $w$. Let $u=\alpha^{\epsilon_m}_m \cdots \alpha^{\epsilon_1}_2 \alpha^{\epsilon_1}_1$ and $v=\beta^{\eta_n}_n \cdots \beta^{\eta_2}_2 \beta^{\eta_1}_1$, for some $\alpha_i, \beta_j \in Q_1$ and $\epsilon_i, \eta_j \in \{\pm 1\}$, for every $1 \leq i \leq m$ and $1 \leq j \leq n$. 
    We say \emph{tail of $u$ collides with head of $w$} if $e(u)= s(v)$ and $\epsilon_m= -\eta_1$, but $\alpha^{\epsilon_m}_m \neq \beta^{-\eta_1}_1$. Then, the ordered pair $(u,v)$ is called \emph{tail-head collision}.

The configuration of a tail-head collision pair $(u,v)$ at the vertex $x=e(u)= s(v)$ could be illustrated as follows:
\begin{center}
    \begin{tikzpicture}
    

 \draw [->] (1.35,0.75) --(2,0.1);
    \node at (1.9,0.6) {$\alpha^{\epsilon_{m}}_{m}$};
 \draw [<-] (0.55,0.05) --(1.25,0.75);

     \node at (2.05,0.05) {$\bullet$};
    \node at (1.3,0.71) {$\circ$};

 \draw [dashed] (0.5,0) --(-1,0);
    \node at (0.5,0) {$\bullet $};
     \node at (-0.3,0) {$\bullet $};

 \draw [dashed] (2,0) --(4.5,0);
 \node at (2.75,0) {$\bullet$};
 \node at (3.2,0.3) {$\alpha^{\epsilon_{m-1}}_{m-1} \cdots \alpha^{\epsilon_1}_1$};
 
\node at (0.8,0.55) {$\beta^{\eta_1}_1$};
 \node at (-0.25,-0.25) {$\beta^{\eta_n}_n \cdots \beta^{\eta_2}_2$};

  \node at (1.3,0.9) {$x$};
  
  \node at (1.5,-0.75) {If $\epsilon_m=-1$ and $\eta_1=1$.};
  
 
 \draw [<-] (8.35,-0.2) --(9,0.5);
    \node at (8.9,0) {$\alpha^{\epsilon_{m}}_{m}$};
    \node at (10.2,0.25) {$\alpha^{\epsilon_{m-1}}_{m-1} \cdots \alpha^{\epsilon_1}_1$};
 \draw [->] (7.55,0.5) --(8.25,-0.2);

     \node at (9.05,0.55) {$\bullet$};
    \node at (8.3,-0.21) {$\circ$};

 \draw [dashed] (7.5,0.5) --(6,0.5);
    \node at (7.5,0.5) {$\bullet $};
     \node at (6.7,0.5) {$\bullet $};

 \draw [dashed] (9,0.5) --(11.5,0.5);
 \node at (9.75,0.5) {$\bullet$};
 
\node at (8,0.35) {$\beta^{\eta_1}_1$};
 \node at (6.75,0.75) {$\beta^{\eta_n}_n \cdots \beta^{\eta_2}_2$};

  \node at (8.3,0.05) {$x$};
  
  \node at (8.5,-0.75) {If $\epsilon_m=1$ and $\eta_1=-1$.};
 
\node at (5.25,-0.75) {or};
\end{tikzpicture}
\end{center}
where the length and orientation of the dashed segments could be anything and they may also share some arrows or vertices.
\end{definition}

Now we are ready to recall the construction of the wind wheel algebras.
As in the case of barbell algebras, start with $Q:= \widetilde{\mathbb{A}}_n$, for some $n \in \mathbb{Z}_{>0}$. Suppose for each $1 \leq i \leq 2k$, $u_i$ and $v_i$ are substrings of $Q$ of positive lengths with $\widetilde{\mathbb{A}}_n= v_{2k}u_{2k} \cdots v_1u_1$, while $\sigma$ is an involution on $\{1,2, \cdots,2k\}$ such that $\sigma(i) \neq i$ and $v^{-1}_i$ and $v_{\sigma(i)}$ are identical copies of some linearly oriented $\mathbb{A}_{d_i}$, for some $1\leq d_i \leq n$. Namely, $v^{-1}_i$ and $v_{\sigma(i)}$ are the same after relabelling the arrows of $v_{\sigma(i)}$ by those of $v_i$, as we assume henceforth.
Moreover, suppose that the following conditions hold:
\begin{enumerate}
    \item Every $v_i$ is serial with $v^{-1}_i=v_{\sigma(i)}$.
    \item Every arrow $\gamma \in Q_1$ supported by a $u_i$ is supported exactly once, whereas if $\theta \in Q_1$ is supported by some $v_i$, then $\theta^{\pm}$ is supported (once by $v_i$ and the second time by $v_{\sigma(i)}$).
    \item For all $1\leq i \leq 2k$, where $u_{2k+1}$ is interpreted as $u_1$, every pair $(v_i,u_{i})$ is a tail-head collision, whereas $(u_{i+1},v_i)$ is not.
\end{enumerate}

Provided the above properties hold, suppose
$\mathtt{v}_i$ (respectively $\mathtt{v}^{-1}_{\sigma(i)}$) denotes the substrings of $\widetilde{\mathbb{A}}_n$ which is obtained from $v_i$ (respectively $v^{-1}_{\sigma(i)}$) by extending in $\widetilde{\mathbb{A}}_n$ from each side by one arrow.
Then, $Q_{\mathfrak{w}}$ denotes the quiver obtained by barification of all pairs of $\mathtt{v}_i$ and $\mathtt{v}^{-1}_{\sigma(i)}$, for every $1 \leq i \leq 2k$.
In order to create a special biserial quotient of the path algebra $kQ_{\mathfrak{w}}$, assume $v_i$ is a direct string and $\sigma(i)=j$, then fix the expression $u_i=\alpha^{\epsilon_{m_i}}_{m_i}\cdots\alpha^{\epsilon_{1_i}}_{1_i}$ (for some $m_i \in \mathbb{Z}_{>0}$).
Then, we impose the following monomial relations:

\begin{itemize}
    \item C-Relation: The quadratic relations $\alpha_{1_{j+1}} \alpha_{m_i}$ and $ \alpha^{-1}_{m_j}\alpha^{-1}_{1_{i+1}}$.
    
    \item B-Relation: The path $\alpha^{-1}_{m_{j}} v_i \alpha_{m_i}$.
\end{itemize}
The \emph{wind wheel algebra} is defined as 
$$\Lambda_{\mathfrak{w}}:= kQ_{\mathfrak{w}}/ I_{\mathfrak{w}},$$
where $I_{\mathfrak{w}}:=\langle \{ \alpha_{1_{j+1}} \alpha_{m_i}, \alpha^{-1}_{m_j}\alpha^{-1}_{1_{i+1}}, \alpha^{-1}_{m_{j}} v_i \alpha_{m_i} \,|\, 1 \leq i \leq 2k \} \rangle$.

Let us mention a few remarks before we present an example.

\begin{remark}
Note that the C-relations are the same as those in the barbell algebras, added after barification of two identical copies of an $\mathbb{A}_d$ inside $\widetilde{\mathbb{A}}_n$. However, because in the wind wheel algebra we typically barify multiple bars, $2k$-many C-relations are added such that each one kills the pair of consecutive arrows that form paths of length two and pass through a $3$-vertex at one end of each bar $v_i$. We called these relations the C-relations, because any barification of $\widetilde{\mathbb{A}}_n$ gives rise to two cyclic strings (one on each side of the bar), and the corresponding quadratic relation kills a band of type $\widetilde{\mathbb{A}}_{t_i}$ formed by $u_i$'s.
In contrast, the B-relations are those arising from the paths of length $l(v_i)+2$ which begin at the start of the only non-vanishing arrow entering the serial bar $v_i$, go along it and continue until the end of the single non-vanishing arrow leaving $v_i$. Since each barification of $\widetilde{\mathbb{A}}_n$ gives rise to exactly one such path, we annihilate them by such B-relations.
From this, it is obvious that in the generating set of the \emph{wind wheel ideal} $I_{\mathfrak{w}}$, there are $2k$ C-relations and $k$-many B-relations.
\end{remark}

The following example is meant to illustrate the technical construction above.

\begin{example} \label{Example Wind wheel}
Let $Q=\widetilde{\mathbb{A}}_{12}$, with $Q=  \gamma^{-1}_{12}\gamma_{11}\gamma_{10}\gamma_9 \gamma^{-1}_8 \gamma^{-1}_7\gamma^{-1}_6\gamma_5\gamma_4\gamma^{-1}_3\gamma^{-1}_2 \gamma_1 \gamma^{-1}_0$.
In this case, we consider 
$u_1=\gamma^{-1}_3\gamma^{-1}_2 \gamma_1 \gamma^{-1}_0$, $v_1=\gamma_4$, $u_2=\gamma_5$, $v_2=\gamma^{-1}_7\gamma^{-1}_6$, $u_3=\gamma^{-1}_8$, $v_3=\gamma_{10}\gamma_9$, $u_4=\gamma_{11}$ and $v_4=\gamma^{-1}_{12}$. 
The following is the illustration of $\widetilde{\mathbb{A}}_{12}$ where the two vertices labelled by $0$ must be identified. Starting from the right end, every inverse arrow is oriented to the right, whereas direct arrows are towards left. Moreover, the subdivision of $Q$ into $u_i$'s and $v_i$'s is as follows:

\begin{center}
\begin{tikzpicture}
\node[scale=0.7] (a) at (0,0){
    \begin{tikzcd}
 0 \arrow{r} {\gamma_{12}}
  & 12
  & 11 \arrow{l}{\gamma_{11}}
  & 10 \arrow{l}{\gamma_{10}}
  & 9 \arrow{l}{\gamma_{9}} \arrow{r}{\gamma_{8}}
  & 8 \arrow{r}{\gamma_{7}}
   & 7 \arrow{r}{\gamma_{6}}
   & 6 
   & 5 \arrow{l}{\gamma_{5}}
   & 4 \arrow{l}{\gamma_{4}} \arrow{r}{\gamma_{3}}
   & 3 \arrow{r}{\gamma_{2}}
    & 2 
    & 1 \arrow{l}{\gamma_{1}} \arrow{r}{\gamma_{0}}
    & 0 
\end{tikzcd}};

\draw[thick,dashed] (6.2,-0.5) -- (6.2,0.5);
\draw[dashed] (2.5,-0.5) -- (2.5,0.5);
\draw[dashed] (1.57,-0.5) -- (1.57,0.5);
\draw[dashed] (0.65,-0.5) -- (0.65,0.5);
\draw[dashed] (-1.2,-0.5) -- (-1.2,0.5);
\draw[dashed] (-2.12,-0.5) -- (-2.12,0.5);
\draw[dashed] (-4.15,-0.5) -- (-4.15,0.5);
\draw[dashed] (-5.2,-0.5) -- (-5.2,0.5);
\draw[thick,dashed] (-6.2,-0.5) -- (-6.2,0.5);


\node at (4.4,-0.8) {$\underbrace{\qquad \qquad \qquad \qquad \qquad }_{u_1}$};

\node at (2.05,0.8) {$\overbrace{\qquad }^{v_1}$};
\node at (1.1,-0.8) {$\underbrace{\qquad }_{u_2}$};
\node at (-0.26,0.8) {$\overbrace{\qquad \qquad \quad }^{v_2}$};
\node at (-1.67,-0.8) {$\underbrace{\qquad}_{u_3}$};
\node at (-3.15,0.8) {$\overbrace{\qquad \qquad \quad }^{v_3}$};
\node at (-4.7,-0.8) {$\underbrace{\qquad }_{u_4}$};
\node at (-5.68,0.8) {$\overbrace{\qquad }^{v_4}$};
\end{tikzpicture}
\end{center}

It could be easily verified that for any $1 \leq i \leq 4$, the pair $(u_i,v_i)$ is a tail-head collision, whereas $(v_i,u_{i+1})$ is not (when $u_5$ is interpreted as $u_1$). 
Moreover, the involution $\sigma$ on $\{ 1, 2, 3, 4\}$ is given by $\sigma(1)=4$, $\sigma(2)=3$, $\sigma(3)=2$ and $\sigma(4)=1$, implying that $v_1$ and $v_4$, as well as $v_2$ and $v_3$, shall be identified. Hence, after relabelling the arrows of $v_4$ and $v_3$ respectively by those of $v_1$ and $v_2$, we observe that their arrows are supported in both directions, whereas those in $u_i$'s are supported exactly once.
Via barification of the substrings of $\widetilde{\mathbb{A}}_{12}$ determined by the above pairs we obtain the following quiver, where $\theta^{p}_{q}$ denotes the result of identification of $\alpha_p$ and $\alpha_q$, and the new vertices are depicted by circles, in contrast with the original vertices that are solid. In addition to the C-relations illustrated by the dotted lines, the B-relations are given by the paths $\gamma_5 \theta^{12}_{4} \gamma_0$ and $\gamma_{11} \theta^{10}_{6} \theta^{9}_{7} \gamma_8$, illustrated by the dashed segments.

\begin{center}
    \begin{tikzpicture}

\node at (6.75,2.5) {$\circ$};
\draw [->] (7.38,2) --(7.82,2.45); 
\node at (7.85,2.5) {$\bullet$};
\draw [->] (7.38,3) --(7.82,2.55);
\node at (7.35,1.95) {$\bullet$};
\node at (8,2.5) {$2$};
\node at (7.3,1.7) {$1$};
\node at (7.3,3.25) {$3$};
\node at (7,2.95) {$\gamma_3$};
\node at (7.7,2.95) {$\gamma_2$};
\draw [->] (7.35,1.95) --(6.8,2.45); 
\node at (7,2) {$\gamma_0$};
\node at (7.75,2) {$\gamma_1$};
 \draw [thick,dotted] (7,2.3) to [bend right=50](7,2.7);

\draw [<-] (5.8,2.5) --(6.7,2.5); 
\node at (6.3,2.75) {$\theta^{12}_{4}$};
 \node at (5.75,2.5) {$\circ$};
 

 \draw [->] (5.7,2.55) to [bend right=50](4.5,2.55);
         \node at (4.45,2.5) {$\circ$};
 \draw [<-] (5.7,2.45) to [bend left=50](4.5,2.45);
            \node at (5.2,3.05) {$\gamma_5$};
 
\draw [->] (6.8,2.5) --(7.3,3); \node at (7.35,3) {$\bullet$}; 
            \node at (5.2,2) {$\gamma_{11}$};

\draw [thick,dotted] (5.4,2.3) to [bend right=50](5.4,2.75);
 \draw [thick,dotted] (4.8,2.3) to [bend left=50](4.8,2.75);
     

   \draw [<-] (4.4,2.5) --(3.55,2.5);   
        \node at (3.5,2.5) {$\circ$};
   \draw [<-] (3.45,2.5) --(2.55,2.5); 

\node at (4,2.75) {$\theta^{10}_{6}$};
\node at (3,2.75) {$\theta^{9}_{7}$};


\node at (2.5, 2.5) {$\circ$};   
    \draw[->] (2.5,2.55) arc (0:340:0.45cm);
\node at (1.4,2.5) {$\gamma_8$};    
\draw [thick, dotted] (2.25,2.3) to [bend right=50](2.25,2.8);


\draw [thick,dashed] (2.25,2) to [bend left=20] (3.2,2.33)-- (3.2,2.33) -- (4.25,2.33) -- (4.25,2.33) to [bend left=20] (4.9,2); 

\draw [thick,dashed] (5.15,2.88) to [bend left=10] (6,2.65)-- (6.45,2.4) to [bend left=10] (7.15,2.07); 

    \end{tikzpicture}
\end{center}

\end{example}

\subsection{Resolution of nodes}
In an arbitrary algebra $\Lambda=kQ/I$, a vertex $x$ is a \emph{node} if it is neither a sink nor a source, and for any arrow $\alpha$ entering $x$ and any arrow $\beta$ leaving $x$, we have $\beta \alpha \in I$. We define
$\node(\Lambda):=\{x \in Q_0 \,|\, x \, \text{is a node} \}$.

Suppose $x \in \node(\Lambda)$ and $\{\alpha_1, \cdots, \alpha_m\}$ and $\{\beta_1, \cdots, \beta_n\}$ are respectively the set of arrows entering and leaving $x$. By \emph{resolving} a node $x$, we replace it by two new vertices $x_+$ and $x_-$ such that $x_+$ is a sink (which receives $\alpha_i$, for $1\leq i \leq m$) and $x_-$ is a source (from which every $\beta_j$ leaves, for $1\leq j \leq n$). Let $\nn(\Lambda)$ be the algebra obtained by resolving all nodes in $\Lambda$. 
The bound quiver $(Q',I')$ of $\nn(\Lambda)$ is such that $Q'$ is given by node-resolving process at every node of $Q$ and the set of generators for $I'$ consists of all those generators of $I$ whose terms do not include any path passing through a node in $(Q,I)$. In particular, if $\Lambda$ is min-rep-infinite special biserial, by Lemma \ref{Monomial ideal}, we can assume all generators of $I$ are monomial, from which only those paths that pass through no node in $\Lambda$ generate $I'$.

The following fundamental result by Ringel \cite{R2} demonstrates the close connection between the module categories of $\Lambda$ and $\nn(\Lambda)$. 
Recall that for an algebra $\Lambda$, the stable module category of $\Lambda$ is the additive category $\underline{\modu} \Lambda$ given as follows: The objects of $\underline{\modu}\Lambda$ and $\modu \Lambda$ are the same and for two $\Lambda$-modules $X$ and $Y$, the space of morphism $\Hom_{\underline{\modu} \Lambda}(X,Y)$ is a quotient space of $\Hom_{\Lambda}(X,Y)$ by the subspace generated by all those morphisms in $\Hom_{\Lambda}(X,Y)$ which factor through a projective. Namely, the morphisms of $\underline{\modu} \Lambda$ are the classes of the morphisms in $\modu \Lambda$ under the equivalence relation of differing by a map which factors through a projective $\Lambda$-module. If two algebras have equivalent stable module categories, they are called \emph{stably equivalent}.

\begin{theorem}[{\cite[Sections 7 and 8]{R1}\label{node-free-algebra}}]
For an algebra $\Lambda$, we have the following:
\begin{enumerate}
\item $\Lambda$ and $\nn(\Lambda)$ are stably equivalent. Moreover, $\Lambda$ is minimal representation-infinite if and only if $\nn (\Lambda)$ is so.
\item If $\Lambda$ is a special biserial and minimal representation-infinite, every $4$-vertex is a node.
\end{enumerate}
\end{theorem}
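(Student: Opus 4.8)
The plan is to treat the two parts separately. For part (1) the approach is the classical one for \emph{resolving a node}: produce a functor relating $\modu \Lambda$ and $\modu \nn(\Lambda)$ that induces an equivalence of stable module categories, deduce from it that representation type is preserved, and then upgrade this to the statement about \emph{minimal} representation-infiniteness by checking that node-resolution is compatible with passage to proper quotients. For part (2) I would then deduce the claim from part (1) together with the known shape of the bound quivers of node-free minimal representation-infinite special biserial algebras.

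For part (1), write $\Lambda = kQ/I$ and fix a node $x$ with incoming arrows $\alpha_1,\dots,\alpha_m$ and outgoing arrows $\beta_1,\dots,\beta_n$, so that $\beta_j\alpha_i \in I$ for all $i,j$; let $\Lambda' = \nn(\Lambda)$, where $x$ is split into a sink $x_+$ receiving the $\alpha_i$ and a source $x_-$ emitting the $\beta_j$. The key structural observation is that every length-two path through $x$ vanishes, so that for each $M \in \modu\Lambda$ one has $\sum_i \operatorname{Im} M(\alpha_i) \subseteq \bigcap_j \ker M(\beta_j)$; thus a $\Lambda$-module transmits nothing across $x$, and separating $x$ into $x_+$ and $x_-$ costs nothing modulo projectives. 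Making this precise --- exhibiting a functor between $\modu\Lambda$ and $\modu\Lambda'$, the identity away from $x$, that is invertible modulo morphisms factoring through projectives --- produces a stable equivalence $\underline{\modu}\,\Lambda \simeq \underline{\modu}\,\Lambda'$; I would cite \cite[Sections 7 and 8]{R1} for this verification rather than redo it. A stable equivalence restricts to a bijection on isomorphism classes of non-projective indecomposable modules, and an algebra has only finitely many indecomposable projectives, so $\Lambda$ is representation-finite if and only if $\Lambda'$ is. Finally, checking that proper quotients of $\Lambda$ correspond under $\nn(-)$ to proper quotients of $\Lambda'$ of the same representation type upgrades this to: $\Lambda$ is minimal representation-infinite if and only if $\nn(\Lambda)$ is.

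For part (2), let $x$ have degree $4$. By (B1) it has exactly two incoming arrows $\alpha_1,\alpha_2$ and two outgoing arrows $\beta_1,\beta_2$, and by (B2) each row and each column of the $2\times 2$ array $(\beta_j\alpha_i)_{i,j}$ contains at most one path lying outside $I$. If $x$ were not a node some $\beta_j\alpha_i \notin I$, so after relabelling $\beta_1\alpha_1 \notin I$, whence (B2) forces $\beta_2\alpha_1 \in I$ and $\beta_1\alpha_2 \in I$. Now invoke part (1): $\nn(\Lambda)$ is again minimal representation-infinite special biserial, and since $x$ is not a node it is untouched by node-resolution, so $\nn(\Lambda)$ still has a vertex of degree $4$. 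But the bound quivers of node-free minimal representation-infinite special biserial algebras are classified (recalled in Section~\ref{Section:Bound Quivers of Mild Special Biserial Algebras}: a cycle $\widetilde{\mathbb{A}}_n$, a barbell with bar of positive length, or a wind wheel; see \cite{R2}), and a direct inspection of each type shows every vertex has degree at most $3$ --- a contradiction. Hence $x$ is a node.

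I expect the main obstacle in part (1) to be the (classical) verification that the node-resolution functor really is a stable equivalence, together with the bookkeeping matching proper quotients on the two sides; I would lean on \cite[Sections 7 and 8]{R1} here. For part (2), the deduction above is short but relies on the later classification; a proof of part (2) that does not presuppose that classification --- as in \cite{R1} --- has as its technical core the construction, from a non-node $4$-vertex, of a proper representation-infinite quotient, and this step is the delicate one. Indeed, since a minimal representation-infinite special biserial algebra is a string algebra (Lemma~\ref{Monomial ideal}), every band in it must traverse every arrow, so none of the quotients obtained by deleting an arrow incident to $x$ stays representation-infinite; one is instead forced to impose a suitable commutativity-type relation at $x$, and verifying that the resulting proper quotient remains representation-infinite is where the real work lies.
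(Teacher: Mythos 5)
There is no internal proof to compare against: the paper imports this theorem wholesale from Ringel's work (the bracketed citation is the ``proof''), so your decision to lean on the same reference for the hard verification in part (1) -- that node resolution induces an equivalence $\underline{\modu}\,\Lambda \simeq \underline{\modu}\,\nn(\Lambda)$ -- matches what the paper actually does. Your derivation of part (2) is a genuinely different (and legitimate) route: instead of quoting the statement, you deduce it from the node-free classification (Theorem \ref{Ringel's Classification Thm}), which is quoted independently and whose three classes (cycle, barbell, wind wheel) indeed have all vertices of degree at most $3$; the auxiliary observations you need are correct -- resolving the other nodes does not change the degree of a non-node vertex $x$ (arrows at $x$ are preserved, only far endpoints get renamed $y_\pm$), it cannot create new nodes (relations are only removed), and by part (1) the resulting node-free algebra is again minimal representation-infinite special biserial -- so the contradiction with a degree-$4$ non-node vertex goes through and is not circular within the paper's logical structure.

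The one place where your sketch understates the work is the final clause of part (1). ``Checking that proper quotients of $\Lambda$ correspond under $\nn(-)$ to proper quotients of $\nn(\Lambda)$ of the same representation type'' is not routine bookkeeping: $\nn(\Lambda)$ has strictly more simple modules than $\Lambda$ and strictly fewer relations, so there is no naive ideal-by-ideal correspondence (for instance, the quotient of $\nn(\Lambda)$ killing only the new source $x_-$ has no literal preimage among quotients of $\Lambda$, and conversely a quotient of $\Lambda$ by a relation through the node does not survive to $\nn(\Lambda)$ at all). Stable equivalence alone gives the transfer of representation type, not of minimality; the minimality statement needs its own argument relating the two quotient posets (or, as the paper does, should simply be attributed to the cited source alongside the stable equivalence). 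Your closing paragraph about a hypothetical direct proof of part (2) is not needed for the argument you actually give and can be dropped.
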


The first part of the above theorem implies that, up to stable equivalence, the study of min-rep-infinite algebras could be done via the node-free bound quivers. In particular, for the family $\mathfrak{F}_{\sB}$, both parts could be employed together to investigate min-rep-infinite special biserial algebras via the bound quivers $(Q,I)$ such that $kQ/I$ is a finite dimensional algebra and each vertex is of degree $2$ or $3$, respectively with exactly zero and one relation. 
The full classification of the node-free min-rep-infinite special biserial algebras is as follows.

\begin{theorem}[Theorem 1.2 \cite{R2}]\label{Ringel's Classification Thm}
Let $\Lambda=kQ/I$ be a minimal representation-infinite special biserial algebra and $\node (\Lambda)=\emptyset$. Then, $\Lambda$ is either a cycle algebra, or a barbell algebra with no serial bar or a wind wheel algebra.
\end{theorem}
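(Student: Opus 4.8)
The plan is to follow Ringel's strategy: extract from rep-infiniteness the combinatorial skeleton of a band, and then use the min-rep-infinite hypothesis to rigidify that skeleton until only the three listed shapes remain.

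First I would record the structural constraints. By Lemma~\ref{Monomial ideal}, $\Lambda$ is a string algebra, so $I$ is monomial and every indecomposable is a string or a band module; by Lemma~\ref{band, rep-infinite}, rep-infiniteness forces a band $b$ in $\Str(\Lambda)$. The decisive observation is that minimality makes $b$ fully supported: if an arrow $\gamma$, or a vertex $x$, were not used by $b$, then $b$ would still be a string in the proper quotient $\Lambda/\langle\gamma\rangle$, respectively $\Lambda/\langle e_x\rangle$, which would then be rep-infinite, contradicting min-rep-infiniteness. Hence the closed walk underlying $b$ runs over every edge of the underlying graph of $Q$. Next, condition (B1) bounds all degrees by $4$, and since $\node(\Lambda)=\emptyset$, Theorem~\ref{node-free-algebra}(2) excludes $4$-vertices; a degree-$1$ vertex would force $b$ to backtrack, so every vertex of $Q$ has degree $2$ or $3$. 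Finally, a $2$-vertex is a source, a sink, or lies on a unique composition of one incoming and one outgoing arrow that the full-support walk must traverse, so it carries no relation, whereas minimality forces each $3$-vertex to carry exactly one quadratic relation.

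Next I would analyse how the closed walk moves through $Q$. At a $2$-valent source or sink it reverses direction, producing a \emph{peak} or a \emph{valley}; at a $3$-vertex --- say with incoming arrows $\alpha_1,\alpha_2$, outgoing arrow $\beta$, and relation $\beta\alpha_1\in I$ --- it either goes straight along the admissible composition $\beta\alpha_2$ (or its inverse) or turns on the side carrying two arrows via $\alpha_2^{-1}\alpha_1$, and it never uses the relation. Cutting $b$ at all of its turns decomposes $Q$ into maximal \emph{straight} stretches. If $Q$ has no $3$-vertex, then $2$-regularity together with the full-support walk forces the underlying graph of $Q$ to be a single cycle carrying an acyclic orientation and $I=0$, i.e.\ $\Lambda=k\widetilde{\mathbb A}_n$ is a cycle algebra. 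If $3$-vertices are present, the straight stretches glued along linearly oriented pieces are the bars, while the parts of $b$ on either side of a bar are cyclic strings; applying minimality once more --- now to rule out proper sub-bands, so that no straight stretch or side cycle can be shortened without destroying rep-infiniteness --- pins down how many bars occur, forces the C-relations at the two $3$-vertices capping each bar, and forces a B-relation precisely along each serial bar. Reading this off yields the two remaining cases: a single non-serial bar with two side cycles is a barbell algebra, which by Lemma~\ref{min-rep-infiniteinite barbell algebras} is min-rep-infinite exactly because its bar is non-serial; and several serial bars assembled according to the tail-head collision pattern, with the attendant C- and B-relations, is precisely a wind wheel algebra.

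The hard part will be the combinatorial bookkeeping of this last step. One must show that the full-support band walk --- which may revisit vertices and traverse some arrows twice --- decomposes uniquely into the wind wheel data $(u_i,v_i,\sigma)$ (or the simpler barbell data), that every degenerate gluing is excluded, and that $I$ is generated by exactly the forced C- and B-relations with none left over. This is where minimality does all of the work: each stray configuration must be eliminated by exhibiting from it a proper quotient that still contains a band, and arranging these eliminations to be exhaustive --- while tracking the self-overlaps of $b$ --- is the delicate content, which is carried out in detail by Ringel in \cite{R2} and which we invoke here.
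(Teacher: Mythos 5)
Your proposal is correct and follows essentially the same route as the paper: the text states this result as Theorem~1.2 of Ringel \cite{R2} and gives no independent proof, relying only on the surrounding facts you also use (Lemma~\ref{Monomial ideal} to pass to string algebras, Lemma~\ref{band, rep-infinite} for the existence of a band, and Theorem~\ref{node-free-algebra} for the degree constraints). Since you likewise invoke Ringel for the delicate combinatorial classification of the fully supported band walk, your argument matches the paper's treatment of this statement.
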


The Theorems \ref{node-free-algebra} and \ref{Ringel's Classification Thm} together result in the following corollary.

\begin{corollary}\label{Nod-free Sp.biserial}
If $\Lambda$ is a special biserial algebra, it is minimal representation-infinite if and only if $\nn(\Lambda)$ is a cycle, barbell or wind wheel algebra.
\end{corollary}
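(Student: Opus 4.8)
The plan is to obtain the statement by gluing together Theorem~\ref{node-free-algebra} and Ringel's classification in Theorem~\ref{Ringel's Classification Thm}; the genuine content is only the verification that the node-resolution $\nn(\Lambda)$ of a minimal representation-infinite special biserial algebra is again a \emph{node-free} special biserial algebra, so that Theorem~\ref{Ringel's Classification Thm} is applicable to it.

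For the forward implication, I would suppose $\Lambda=kQ/I$ is special biserial and minimal representation-infinite. By Lemma~\ref{Monomial ideal} we may take $I$ monomial, so $\nn(\Lambda)=kQ'/I'$ admits the explicit description recalled above, with $Q'$ obtained by splitting each node into a sink--source pair and $I'$ generated by those monomial generators of $I$ that avoid the nodes. I would then check three points: (i) $\nn(\Lambda)$ is special biserial, because conditions (B1) and (B2) pass to $Q'$ --- a new sink $x_+$ (resp.\ source $x_-$) receives (resp.\ emits) at most two arrows and admits no length-two path through it, while the local configuration at every other vertex is unchanged; (ii) $\nn(\Lambda)$ is node-free, because the vertices $x_\pm$ are a sink and a source, and no old non-node vertex $y$ can become a node since a composition that was nonzero at $y$ in $\Lambda$ stays nonzero at $y$ in $\nn(\Lambda)$ (no relation is added); (iii) $\nn(\Lambda)$ is minimal representation-infinite by Theorem~\ref{node-free-algebra}(1). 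Here $\nn(\Lambda)$ is automatically connected, as a disconnected representation-infinite algebra is never minimal representation-infinite. Theorem~\ref{Ringel's Classification Thm} then forces $\nn(\Lambda)$ to be a cycle algebra, a barbell algebra with non-serial bar, or a wind wheel algebra.

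Conversely, if $\nn(\Lambda)$ is one of these three algebras, then it is minimal representation-infinite: cycle algebras $k\widetilde{\mathbb{A}}_n$ are, as recalled in the introduction; a barbell algebra with non-serial bar is by Lemma~\ref{min-rep-infiniteinite barbell algebras}; and wind wheel algebras are by \cite{R2}. Then Theorem~\ref{node-free-algebra}(1) gives that $\Lambda$ itself is minimal representation-infinite, which closes the equivalence. I expect the only real obstacle to be the essentially bookkeeping verifications (i)--(ii) above --- that node resolution preserves the special biserial shape and creates no new nodes --- together with the minor point that ``barbell'' in the statement must be read as ``barbell with non-serial bar'', the serial case being excluded by minimality via Lemma~\ref{min-rep-infiniteinite barbell algebras}.
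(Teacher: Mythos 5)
Your proposal is correct and follows essentially the same route as the paper, which deduces the corollary directly by combining Theorem \ref{node-free-algebra} with Ringel's classification in Theorem \ref{Ringel's Classification Thm}; the extra verifications you supply (that $\nn(\Lambda)$ stays special biserial, node-free and connected) are exactly the bookkeeping the paper leaves implicit. Your remark that ``barbell'' must be read as ``barbell with non-serial bar'' (via Lemma \ref{min-rep-infiniteinite barbell algebras}) matches the intent of Theorem \ref{Ringel's Classification Thm} and is a fair precision, not a deviation.
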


Before we finish this section let us remark that although barification could be defined for other type of special biserial algebras, it does not necessarily preserve  representation-finiteness (or $\tau$-tilting finiteness). In particular, if $\Lambda'$ is obtained from $\Lambda$ via barification, it is important to realize that it is not a quotient algebra of $\Lambda$.
The following example illustrates some of these points. 

\begin{example}\label{barification does not preserve rep-finiteness}
Let $Q=\mathbb{A}_8$ with the following orientation:

\begin{center}
\begin{tikzpicture}
\node[scale=1] (a) at (0,0){
    \begin{tikzcd}
 1 
  & 2 \arrow{l} {\alpha} \arrow{r}{\beta}
  & 3 \arrow{r}{\gamma_{1}}
  & 4 
  & 5 \arrow{l}{\delta} \arrow{r}{\gamma_{2}}
  & 6 \arrow{r}{\epsilon}
   & 7 
   & 8 \arrow{l}{\mu}
\end{tikzcd}};
\end{tikzpicture}
\end{center}
Then, barification of $Q$ along the arrows $\gamma_1$ and $\gamma_2$ and further along vertices $2$ and $7$ result in the following bound quivers.

\begin{center}
    \begin{tikzpicture}
 \node at (5.75,2.5) {$\circ$};
\draw [->] (5.85,2.5) --(6.7,2.5); 
\node at (6.3,2.75) {$\epsilon$};
 \node at (6.8,2.5) {$7$};
 \draw [<-] (6.9,2.5) --(7.75,2.5); 
 \node at (7.3,2.25) {$\mu$};
  \node at (7.85,2.5) {$8$};
 \draw [<-] (5.7,2.55) to [bend right=50](4.5,2.55);
         \node at (4.45,2.5) {$\circ$};
 \draw [<-] (5.7,2.45) to [bend left=50](4.5,2.45);
     \node at (5.2,3) {$\delta$};

    \node at (5.2,2) {$\gamma_{1}=\gamma_{2}$};

\draw [thick,dotted] (5.4,2.95) to [bend right=20](6.3,2.6);
 \draw [thick,dotted] (4.9,2.95) to [bend left=20](4,2.6);

   \draw [<-] (4.4,2.5) --(3.55,2.5);   
        \node at (3.4,2.5) {$2$};
   \draw [->] (3.25,2.5) --(2.35,2.5); 
          \node at (2.15,2.5) {$1$};
\node at (4,2.75) {$\beta$};
\node at (2.9,2.25) {$\alpha$};


 \draw [<-] (11.7,2.55) to [bend right=50](10.5,2.55);
         \node at (10.45,2.5) {$\circ$};
 \draw [<-] (11.7,2.45) to [bend left=50](10.5,2.45);
        \node at (11.75,2.5) {$\circ$};
     \node at (11.2,3) {$\delta$};

    \node at (11.1,2.1) {$\gamma_{1}=\gamma_{2}$};

\draw [thick,dotted] (11.75,2.75) to [bend left=40](11.85,2.2);
 \draw [thick,dotted] (10.45,2.75) to [bend right=40](10.35,2.2);


\draw [->] (11.8,2.45) to [bend left=40](11.25,1.55);
    \node at (11.75,1.8) {$\epsilon$};

\draw [<-] (10.4,2.45) to [bend right=40](11.05,1.55);
     \node at (10.4,1.8) {$\beta$};
    
    \node at (11.15,1.5) {$\circ$};     
         \node at (11.15,1.32) {$z$};     
\draw [<-] (12.25,1.25) -- (11.25,1.45);
        \node at (11.65,1.2) {$\alpha$};
       \node at (12.4,1.25) {$1$};
\draw [->] (10.1,1.25) -- (11.1,1.45);
        \node at (10.6,1.2) {$\mu$};
         \node at (9.95,1.25) {$8$};

\draw [dotted,thick] (11.15,1.5) circle (.3cm);
         
    \end{tikzpicture}
\end{center}
where the vertex $z$ in the right bound quiver is a node.

Since there exists a copy of the Kronecker quiver inside each of the above bound quivers, it is immediate that neither of them is rep-finite, nor $\tau$-tilting finite. However, the algebra $kQ$ is obviously representation-finite.
\end{example}

\section{$\tau$-Tilting Finiteness of Node-free Special Biserial Algebras}\label{Section:tau-Tilting Finite Node-free Special Biserial Algebras}

In this section, building upon the explicit description of min-rep-infinite special biserial algebras without nodes recalled in the preceding section, we determine which ones are $\tau$-tilting finite and which ones are not. 
For an acyclic orientation of $\widetilde{\mathbb{A}}_n$ (for some $n\in \mathbb{Z}_{> 0}$), the Auslander-Reiten quiver of $k \widetilde{\mathbb{A}}_n$ is illustrated in Figure \ref{fig:AR component of A-tilde}, which has preprojective and preinjective components. Therefore, by Lemma \ref{Preproj/Postinj brick} the algebra $k\widetilde{\mathbb{A}}_n$ is always $\tau$-tilting infinite. 
Hence, in the rest of this section, unless specified otherwise, we always assume $\Lambda=kQ/I$ is a min-rep-infinite special biserial algebra and $I\neq 0$.

Since our method relies on giving an explicit description of an infinite family of bricks in certain string algebras, we briefly recall the notion of graph maps, which form a concrete basis for the space $\Hom(M(w),M(v))$, for every pair of strings $v$ and $w$ in a string algebra.
Thanks to the investigation of other algebras with similar bound quivers, as those studied by Crawley-Boevey in \cite{CB1} and \cite{CB3}, most of these results apply to a more general setting, but here we restrict to min-rep-infinite special biserial algebras and follow the notation used by Schröer \cite{S}.

\begin{definition}\label{factorization def}
If $\Lambda$ is a string algebra, for $u \in \Str(\Lambda)$, the set of \emph{factorizations of $u$} is defined as 
$$\mathtt{F}(u):=\{(u_3,u_2,u_1) \mid u_3,u_2,u_1 \in \Str(\Lambda) \text{ and } u=u_3u_2u_1\},$$
and for $(u_3,u_2,u_1) \in \mathtt{F}(u)$, we define  $(u_3,u_2,u_1)^{-1}:=(u_1^{-1},u_2^{-1},u_3^{-1})\in \mathtt{F}(u^{-1})$.
We say $(u_3,u_2,u_1) \in \mathtt{F}(u)$ is a \emph{quotient factorization} of $u$ if

\begin{enumerate}[(i)]			

\item $u_1=e_{s(u_2)}$ or $u_1=\gamma^{-1} u_1'$ with $\gamma$ in $Q$;	

\item $u_3=e_{e(u_2)}$ or $u_3=u_3'\theta$ with $\theta$ in $Q$.

\end{enumerate}

Every quotient factorization $(u_3,u_2,u_1)$ induces a quotient map from
$M(u)$ to $M(u_2)$.
Let $\mathtt{F_q}(u)$ denote the set of all quotient factorizations of $u$.
Each element of $\mathtt{F_q}(u)$ can be visualized as follows, where $u_1$ and $u_3$ can be of length zero.

\begin{center}
\begin{tikzpicture}[scale=0.6]
\draw  [-,decorate,decoration=snake] (0,0) --(2,0);
----
\draw [->] (0,0) -- (-0.75,-1);
\node at (-0.6,-0.25) {$\theta$};
\draw [-,decorate,decoration=snake] (-2.5, -1) -- (-0.75,-1);
----
\draw [->] (2,0) -- (2.75,-1);
\node at (2.6,-0.25) {$\gamma$};
\draw [-,decorate,decoration=snake] (2.75,-1) -- (4.5,-1);
----
\node at (1,0.5) {$u_2$};
\node at (2.25,0.5) {$_{s(u_2)}$};
\node at (-0.25,0.5) {$_{e(u_2)}$};
\node at (2,0.0) {$\bullet$};
\node at (0,0.0) {$\bullet$};

\node at (3.2,-1.5) {$\underbrace{\qquad \qquad }_{u_1}$};
\draw[dashed] (2,-1) -- (2,0.25);
\node at (-1.25,-1.5) {$\underbrace{\qquad \qquad }_{u_3}$};
\draw[dashed] (0,-1) -- (0,0.25);

\end{tikzpicture}
\end{center}

As the dual notion, $(u_3,u_2,u_1) \in \mathtt{F}(u)$ is called a \emph{submodule factorization} of $u$ if
\begin{enumerate}[(i)]
\item $u_1=e_{s(u_2)}$ or $u_1=\gamma u_1'$ with $\gamma$ in $Q$;
\item $u_3=e_{e(u_2)}$, or $u_3=u_3'\theta^{-1}$ with $\theta$ in $Q$.
\end{enumerate}
$\mathtt{F_s}(u)$ denotes the set of all submodule factorizations of $u$, and any $(u_3,u_2,u_1)$ in $\mathtt{F_s}(u)$ induces an inclusion of $M(u_2)$ into $M(u)$. This inclusion can be visualised by a pair of diagrams dual to the one shown above.

\end{definition}

\begin{definition}\label{admissible_pair}
Let $\Lambda$ be a string algebra and $u, v \in \Str(\Lambda)$. Then, a pair $((u_3,u_2,u_1),(v_3,v_2,v_1)) \in \mathtt{F_q}(u) \times \mathtt{F_s}(v)$ is called \emph{admissible} if $u_2 \sim v_2$ (i.e, $u_2=v_2$ or $u_2=v_2^{-1}$). For $u$ and $v$ in $\Str(\Lambda)$, the collection of all admissible  pairs in $\mathtt{F_q}(u) \times \mathtt{F_s}(v)$ is denoted by $\mathtt{A}(u,v)$.
For every $T=((u_3,u_2,u_1),(v_3,v_2,v_1))$ in $\mathtt{A}(u,v)$ suppose $f_T$ is defined as: composition of the projection
from $M(u)$ to $M(u_2)$, followed by the identification of $M(u_2)$ with $M(v_2)$, followed by the inclusion of 
$M(v_2)$ into $M(v)$. The result is a morphism $f_T: M(u) \rightarrow M(v)$ in $\modu \Lambda$, called the {\em graph map} induced by $T$. For $w \in \Str(\Lambda)$, an admissible pair $T$ in $\mathtt{A}(w,w)$ is \emph{trivial} if $f_T$ is the identity morphism $id: M(w) \rightarrow M(w)$.
\end{definition}

In the study of string algebras, the next theorem is crucial.

\begin{theorem}\cite{CB1}\label{Thm-graph-maps}
Let $\Lambda$ be a string algebra. If $u$ and $v$ are in $\Str(\Lambda)$, the set of graph maps $\{f_T\mid T\in \mathtt{A}(u,v) \}$ forms a basis for $\Hom_{\Lambda}(M(u), M(v))$.
\end{theorem}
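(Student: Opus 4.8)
The final statement is Theorem \ref{Thm-graph-maps} (the Crawley-Boevey basis theorem for $\Hom$-spaces between string modules), so I should sketch how I would prove that graph maps form a basis.

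\textbf{Proof proposal.}

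The plan is to establish that the graph maps $\{f_T \mid T \in \mathtt{A}(u,v)\}$ are linearly independent and that they span $\Hom_\Lambda(M(u),M(v))$. First I would recall the combinatorial model of a string module: $M(w)$ has a basis $\{b_i^w\}$ indexed by the vertices $x_1,\dots,x_{d+1}$ of the walk induced by $w$, and the action of each arrow $\gamma$ sends $b_i^w$ either to $b_{i\pm1}^w$ (when the corresponding edge of the walk is labelled $\gamma$) or to $0$. A homomorphism $\psi\colon M(u)\to M(v)$ is then a linear map compatible with all arrow actions. The key structural observation is that, because the algebra is special biserial (every vertex has at most two arrows in and out, and the commutativity/monomial relations are tightly controlled), the image of a basis vector $b_i^u$ under $\psi$ is forced to be supported on a very restricted set of basis vectors of $M(v)$; tracking how $\psi$ interacts with the "peaks'' (local maxima, i.e.\ deep points of the walk) and "deep points'' of the strings $u$ and $v$ shows that $\psi$ must be built, piece by piece, out of \emph{canonical} maps — namely, identify a common substring $u_2\sim v_2$ that occurs as a quotient of $M(u)$ and a submodule of $M(v)$, and send one to the other. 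This is exactly the data of an admissible pair, so every $\psi$ decomposes as a sum of graph maps. I would organize this as: (i) show any $\psi$ is determined by finitely many "overlap'' data between the walks of $u$ and $v$; (ii) show each such overlap, when nonzero, forces a quotient factorization of $u$ and a submodule factorization of $v$ with matching middle piece, hence an admissible pair $T$; (iii) subtract off $f_T$ and induct on the total length (or on the number of overlaps) to conclude spanning.

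For linear independence, I would argue by a support/triangularity argument. Order the admissible pairs $T\in\mathtt{A}(u,v)$ by, say, the position (leftmost vertex index) in $u$ at which the middle substring $u_2$ begins, refining ties by length of $u_2$. Given a nontrivial linear relation $\sum_T \lambda_T f_T = 0$, pick a minimal $T_0$ with $\lambda_{T_0}\neq 0$ in this order and evaluate both sides on the basis vector of $M(u)$ sitting at the start of the corresponding $u_2$-block; by minimality, all the other $f_T$ with $\lambda_T\neq 0$ either vanish on that vector or land in a strictly different part of $M(v)$, so the coefficient of the relevant basis vector of $M(v)$ in the image is exactly $\lambda_{T_0}$ (up to a nonzero scalar), forcing $\lambda_{T_0}=0$, a contradiction. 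The special biserial hypothesis is what guarantees that the two "branches'' a string module can have at a vertex do not let the images interfere, so the triangularity is genuine.

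The main obstacle, and the place I would spend the most care, is step (ii) of the spanning argument: proving that an arbitrary homomorphism really is a sum of graph maps, i.e.\ that there are no "exotic'' homomorphisms. One must rule out, using (S1), (S2), and the biserial conditions (B1), (B2), that a homomorphism could, for instance, fold a string back on itself in a way not captured by a single quotient-then-submodule factorization, or send a basis vector to a linear combination spread across incomparable substrings. The argument is delicate because it requires a careful case analysis at each vertex of the walk of $u$: whether the two incident edges of the walk point "up'' or "down,'' and how the relations constrain which arrows can act nontrivially. I would handle this by induction on $l(u)+l(v)$, peeling off the leftmost arrow of $u$ (or $v$) and using the biserial conditions to see that the restriction of $\psi$ to the corresponding substring is again of the expected form; the base case $l(u)=l(v)=0$ is immediate. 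Throughout, the cited treatment of Schröer \cite{S} and Crawley-Boevey \cite{CB1} handle exactly this bookkeeping, so in the paper one may reasonably cite \cite{CB1} for the full verification and only indicate the shape of the argument; but if a self-contained proof is wanted, the vertex-by-vertex case analysis sketched above is the route, with the triangularity/support argument delivering independence essentially for free once the combinatorial model is set up.
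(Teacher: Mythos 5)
The paper does not prove this statement at all: it is quoted verbatim from Crawley-Boevey's work and justified solely by the citation \cite{CB1}, so there is no in-text argument to measure your sketch against. What you have written is essentially a reconstruction of the standard proof that the citation stands in for — set up the basis of a string module indexed by the vertices of the walk, show every homomorphism is a sum of maps "project onto a common substring, then include," and get independence from a support/leading-term argument — and at that level of resolution your outline is the right shape. Two cautions, both of which you partly acknowledge. First, the spanning step (your step (ii)) is where essentially all of the content lives; saying that the biserial conditions "force" the image of each basis vector to be built from canonical overlaps is exactly the claim that needs the vertex-by-vertex case analysis, and as written it is an intention rather than a proof, which is precisely why the paper defers to \cite{CB1}. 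Second, your linear-independence argument is slightly too quick as stated: two distinct admissible pairs can be nonzero on the same basis vector of $M(u)$ and can even agree on it (e.g.\ when one middle substring properly extends another starting at the same position), so evaluating at a single vector of the minimal $T_0$ does not by itself isolate $\lambda_{T_0}$; one needs a finer ordering (or an argument comparing the full matrices of the graph maps) to get genuine triangularity. Neither point is a wrong turn — they are the delicate bookkeeping that the cited reference carries out — but a self-contained write-up would have to supply them.
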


From the previous theorem, the following corollary is immediate.

\begin{corollary}
If $\Lambda$ is a string algebra and $w\in \Str(\Lambda)$, the following are equivalent:

\begin{enumerate}
    \item $T=((u_3,u_2,u_1),(v_3,v_2,v_1))$ is a non-trivial admissible pair in $\mathtt{A}(w,w)$;
    \item There exist two copies of a substring of $w$, say $v_2\sim u_2$, such that $w=u_3u_2u_1$ and $w=v_3v_2v_1$ with $l(u_1)+l(u_3)>0$ and $l(v_1)+l(v_3)>0$, where $u_1$ and $u_3$ (respectively $v_1$ and $v_3$) leave $u_2$ (respectively enter $v_2$).
\end{enumerate}
\end{corollary}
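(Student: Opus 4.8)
The plan is to read off both implications from the concrete description of a graph map $f_T$ given just before the statement, together with an elementary dimension count; Theorem \ref{Thm-graph-maps} will be needed only to identify the unique graph map equal to the identity. Throughout, fix $w\in\Str(\Lambda)$ and an admissible pair $T=((u_3,u_2,u_1),(v_3,v_2,v_1))\in\mathtt{A}(w,w)$. First I would record a length identity: since $u_2\sim v_2$ we have $l(u_2)=l(v_2)$, and since $w=u_3u_2u_1=v_3v_2v_1$ this gives
\[
l(u_1)+l(u_3)=l(w)-l(u_2)=l(w)-l(v_2)=l(v_1)+l(v_3).
\]
Hence the two length conditions in (2) hold or fail together, and both fail exactly when $u_1=u_3=v_1=v_3=e$, i.e.\ when $u_2=v_2=w$. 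So the corollary reduces to: $T$ is non-trivial if and only if $u_2\neq w$. Note moreover that the remaining clauses of (2) — that $w=u_3u_2u_1$, $w=v_3v_2v_1$, $v_2\sim u_2$, and the one-sided attachment conditions on $u_1,u_3$ and on $v_1,v_3$ — merely restate that $(u_3,u_2,u_1)\in\mathtt{F_q}(w)$, that $(v_3,v_2,v_1)\in\mathtt{F_s}(w)$, and that $T$ is admissible, so they come for free.

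For $(2)\Rightarrow(1)$ I would argue by dimension. If $l(u_1)+l(u_3)>0$ then $l(u_2)<l(w)$, hence $\dim_k M(u_2)=l(u_2)+1<l(w)+1=\dim_k M(w)$. By its construction $f_T$ is the composite of the quotient map $M(w)\twoheadrightarrow M(u_2)$, the identification $M(u_2)\xrightarrow{\sim}M(v_2)$, and the inclusion $M(v_2)\hookrightarrow M(w)$; since it factors through a space of strictly smaller dimension, $\operatorname{rank}(f_T)<\dim_k M(w)$, so $f_T$ is not an isomorphism and in particular $f_T\neq\mathrm{id}_{M(w)}$. Thus $T$ is non-trivial.

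For $(1)\Rightarrow(2)$ I would prove the contrapositive. If $l(u_1)+l(u_3)=0$, the length identity forces $u_1=u_3=v_1=v_3=e$ and $u_2=v_2=w$, so $T=((e_{e(w)},w,e_{s(w)}),(e_{e(w)},w,e_{s(w)}))$. For this pair the quotient map attached to the first factorization is $\mathrm{id}_{M(w)}$, the identification $M(u_2)=M(v_2)$ is the identity, and the inclusion attached to the second factorization is again $\mathrm{id}_{M(w)}$, so $f_T=\mathrm{id}_{M(w)}$ and $T$ is trivial. Contrapositively, a non-trivial $T$ has $l(u_1)+l(u_3)>0$, whence $l(v_1)+l(v_3)>0$ by the length identity; together with the free clauses noted above this is exactly (2). (Alternatively, Theorem \ref{Thm-graph-maps} yields this at once: $\mathrm{id}_{M(w)}=f_{T_0}$ for $T_0=((e_{e(w)},w,e_{s(w)}),(e_{e(w)},w,e_{s(w)}))$, and linear independence of graph maps forces $T=T_0$ whenever $f_T=\mathrm{id}_{M(w)}$.)

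I do not expect a genuine obstacle; the only delicate point is bookkeeping with the factorization conventions — checking that $(e_{e(w)},w,e_{s(w)})$ indeed lies in both $\mathtt{F_q}(w)$ and $\mathtt{F_s}(w)$ and that the quotient and inclusion maps it induces are literally $\mathrm{id}_{M(w)}$ — which is immediate from the definitions stated just before the corollary. Everything else is the length identity and the rank estimate above, so the deduction is short, as claimed.
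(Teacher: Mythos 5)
Your proposal is correct, and it is essentially the verification the paper omits: the corollary is stated as immediate from Theorem \ref{Thm-graph-maps} and the definitions of admissible pairs, and your argument (the length identity reducing everything to $u_2\neq w$, the rank estimate showing a map factoring through $M(u_2)$ with $l(u_2)<l(w)$ cannot be the identity, and the observation that the full-length factorization yields $f_T=\mathrm{id}$) is exactly that routine check. The only cosmetic difference is that you replace the appeal to linear independence of graph maps by a dimension count, which is marginally more elementary but not a different method.
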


The above corollary states that for a non-trivial endomorphism of a string module $M(w)$, there must exist a proper (non-trivial) substring of $w$ which occurs in $w$ at least twice, once on the top and another time at the bottom of $w$. By the \emph{top} and \emph{bottom} of $w$ we respectively refer to the local configuration of $u_2$ and $v_2$ in an admissible pair $T=((u_3,u_2,u_1),(v_3,v_2,v_1))$ in $\mathtt{A}(w,w)$, illustrated as follows, which gives rise to a (non-identity) graph map in $\End_{\Lambda}(M(w))$:

\begin{center}
\begin{tikzpicture}[scale=0.50]
\draw  [-,decorate,decoration=snake][thick] (0.15,0.1) --(2.9,0.1);
\draw  [-,decorate,decoration=snake][thick] (0.15,-0.1) --(2.9,-0.1);
\node at (1.7,0.6) {$v_2$};
\node at (1.7,-0.6) {$u_2$};
------
\draw [<-] (0,0) -- (-0.75,1);
\draw [-,decorate,decoration=snake] (-0.8,1.1) -- (-2,1.1);
\node at (-0.3,0.8) {$\gamma$};
\node at (-1,1.6) {$\overbrace{\qquad \quad }^{v_3}$};
\node at (4.75,1.6) {$\overbrace{\qquad \qquad \quad}^{v_1}$};
-----
\draw [<-] (3.1,0) -- (3.85,1);
\draw [-,decorate,decoration=snake] (3.9,1.1) -- (6.5,1.1);
\node at (3.3,0.7) {$\theta$};

------
\draw [<-] (-0.75,-1) -- (0,-0.1);
\draw [-,decorate,decoration=snake] (-2.75,-1) -- (-0.75,-1);
\node at (-0.3,-0.8) {$\zeta$};
\node at (-1.35,-1.6) {$\underbrace{\qquad \qquad }_{u_3}$};

------
\draw [<-] (3.85,-1) -- (3.1,-0.1);
\draw [-,decorate,decoration=snake] (3.85,-1) -- (5,-1);
\node at (3.3,-0.7) {$\sigma$};
\node at (4,-1.6) {$\underbrace{\qquad \,\,\,\, }_{u_1}$};
\draw[dashed] (0.1,-1) -- (0.1,1);
\draw[dashed] (3,-1) -- (3,1);


\node at (10,0) {
\begin{tikzcd}

  & M(w)=M(v_3v_2v_1)
\\
    & M(u_2)\simeq M(v_2) \arrow[hookrightarrow]{u}{\iota}
\\
  & M(w)=M(u_3u_2u_1) \arrow[->>]{u}{\pi} 
\end{tikzcd}
};

\end{tikzpicture}
\end{center}
Here the substrings $u_i$'s and $v_i$'s, for every $1\leq i \leq 3$, could be of any length, provided that $l(u_1)+l(u_3)>0$ and $l(v_1)+l(v_3)>0$. These inequalities guarantee that at least one of the two arrows $\sigma$ or $\zeta$ (respectively $\theta$ or $\gamma$) that leave $u_2$ (respectively which enter $v_2$) is actually present in the configuration.
Moreover, the squiggly parts of the substrings (except for the specified arrows), can be of any orientation.

\begin{remark}\label{induced graph maps}
Let $\Lambda$ be a string algebra and $u$ and $v$ be in $\Str(\Lambda)$. For each admissible pair $T=((u_3,u_2,u_1),(v_3,v_2,v_1))$ in $\mathtt{A}(u,v)$, observe that if we remove all the arrows in $u_1$ and $u_3$ (respectively $v_1$ and $v_3$) which are not connected to $u_2$ (respectively to $v_2$), we obtain the admissible pair $((u'_3, u_2,u'_1), (v'_3,v_2, v'_1))$ in $\mathtt{A}(u'_3 u_2 u'_1 , v'_3 v_2 v'_1)$, where $1 \leq l(u'_1)+l(u'_3) \leq 2$ and similarly $1 \leq l(v'_1)+l(v'_3) \leq 2$.
Suppose $f_{T'}$ is the graph map given by the pair $u':=u'_3u_2u'_1$ and $v':=v'_3v_2v'_1$, which are respectively substrings of $u$ and $v$. Then, for $w \in \Str(\Lambda)$, the graph map $f_T$ in $\End_{\Lambda}(M(w))$
can be recovered from a graph map $f_{T'}$ in $\Hom_{\Lambda}(M(u'),M(v'))$, for some $T'$ in $\mathtt{A}(u',v')$, where $u'=u'_3u_2u'_1$ and $v'=v'_3v_2v'_1$ are two distinct substrings of $w$ with $l(u_2)+1 \leq l(u') \leq l(u_2)+2$ and $l(v_2)+1 \leq l(v') \leq l(v_2)+2$, such that $u_2 \sim v_2$, while $u'_3$ and $v'_1$ (respectively $u'_1$ and $v'_3$) are direct (respectively inverse) arrows, provided they are strings of positive length. 
For $w \in \Str(\Lambda)$, we often use this simple observation and find non-invertible graph maps in $\End_{\Lambda}(M(w))$ to verify $M(w)$ is not a brick.
\end{remark}

\begin{proposition}\label{Barbells tau-infinite}
If $\Lambda$ is a barbell algebra, $\modu \Lambda$ contains infinitely many isoclasses of bricks.
\end{proposition}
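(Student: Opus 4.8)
The plan is to produce, in an arbitrary barbell algebra $\Lambda=kQ(\mathtt{v},\mathtt{w})/\langle\beta\alpha,\delta\gamma\rangle$, an explicit infinite family of pairwise non-isomorphic string modules which are bricks, and to decide brickness by means of the graph-map criterion recalled above. The family will consist of the powers of the band of $\Lambda$. Indeed, by its very construction a barbell algebra carries a primitive band $b$ whose walk, based at $x$, runs once around the left cyclic string $C_L$, then along the bar $\mathfrak b$ from $x$ to $y$, then once around the right cyclic string $C_R$, then back along the bar from $y$ to $x$; the two quadratic relations $\beta\alpha$ and $\delta\gamma$ are exactly the ``shortcuts'' that this walk must avoid, and they also guarantee that $b^{m}$ is again a string for every $m\ge 1$, so that $b$ is genuinely a band and (by Lemma \ref{band, rep-infinite}) $\Lambda$ is rep-infinite. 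For $n\ge 1$ set $w_n:=b^{\,n}$; then $\dim_k M(w_n)=l(w_n)+1=n\cdot l(b)+1$, so the modules $M(w_n)$ have pairwise distinct dimensions and are therefore pairwise non-isomorphic. It remains to prove that each $M(w_n)$ is a brick.

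By Theorem \ref{Thm-graph-maps} the graph maps form a basis of $\End_\Lambda(M(w_n))$, and $M(w_n)$ is a string module, so $M(w_n)$ is a brick precisely when the only admissible pair in $\mathtt{A}(w_n,w_n)$ is the trivial one. Suppose, for contradiction, that a non-trivial one exists. By Remark \ref{induced graph maps} we may pass to a ``local'' offending pair: there is a proper substring $u_2\sim v_2$ of $w_n$, together with at most one further arrow attached at each of its ends, realising $u_2$ simultaneously as the image of a quotient factorization of $w_n$ and as the middle term of a submodule factorization of $w_n$ — that is, $u_2$ occurs once ``on top'' and once ``on the bottom'' of $w_n$ in the sense of the top/bottom configuration described after Theorem \ref{Thm-graph-maps}.

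The crucial observation is that in $w_n=b^{\,n}$ the vertices $x$ and $y$ act as choke-points. Because $\beta\alpha\in I$ (respectively $\delta\gamma\in I$), the walk of $b$ leaves $x$ only through $\beta$ and re-enters $x$ only after traversing $\alpha$ and then the bar (and symmetrically at $y$); consequently the signs $\epsilon_i\in\{\pm 1\}$ of the letters of $w_n$ at and around every occurrence of $x$ and of $y$ are completely determined, and in particular each such occurrence is pinned to a fixed shape, so that whether $x$ (resp. $y$) sits at the top or at the bottom there is forced. I would combine this with the hypotheses that $C_L$ and $C_R$ are primitive cyclic strings and that $\mathfrak b$ has positive length to conclude that (a) a substring meeting a copy of $x$ or $y$ cannot present that vertex both at the top and at the bottom, and (b) no substring straddling a junction of $b^{\,n}$ at $x$ or $y$ can do so either. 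Since every proper substring of $b^{\,n}$ of positive length meets some copy of $x$ or $y$, this forces the admissible pair to be trivial, the required contradiction; hence every $M(b^{\,n})$ is a brick and $\Brick(\Lambda)$ is infinite.

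The heart of the argument, and its main difficulty, is that last step: turning the informal choke-point principle into a uniform check that $b^{\,n}$ admits no non-trivial self-admissible pair, including the short ones straddling $x$ or $y$ — this is where the defining relations $\beta\alpha,\delta\gamma$ must be exploited in full. Two safety nets are available in case a handful of small $M(b^{\,n})$ turn out not to be bricks: one may replace $b^{\,n}$ by $t\,b^{\,n}$ for a fixed short non-periodic tail string $t$, which only sharpens the asymmetry; or one may sidestep string modules altogether and use the band modules of $b$, since over any field the band module $M_b(V,\varphi)$ with $V=k[t]/(p)$ and $\varphi$ multiplication by $t$, for $p$ a monic irreducible polynomial different from $t$, has endomorphism algebra $k[t]/(p)$ and is thus a brick, and there are infinitely many such $p$.
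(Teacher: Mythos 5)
Your strategy is the same as the paper's (powers of a band through $C_L$, the bar, and $C_R$, with brickness tested via the graph-map basis), but as written it has genuine gaps. First, ``the'' band $b$ you describe is not well defined: a barbell quiver carries several bands that run around $C_L$, along $\mathfrak b$, around $C_R$ and back, differing in the directions in which $C_L$ and $C_R$ are traversed, and the relations $\beta\alpha,\delta\gamma$ do not single one out (leaving $x$ via $\alpha^{-1}$ and re-entering via $\beta^{-1}$ is also allowed). Only one of these choices, determined by the orientation $\epsilon_1$ of the bar arrow at $x$, has brick powers; for the others the powers fail to be bricks, e.g.\ because $S_x$ lies in both the top and the socle --- this is exactly the point of Remark \ref{powers of bricks may not be brick}, and it is why the paper's proof splits into the cases $w=\mathfrak b^{-1}C_R\mathfrak bC_L$ (when $\epsilon_1=1$) and $w=\mathfrak b^{-1}C_R^{-1}\mathfrak bC_L^{-1}$ (when $\epsilon_1=-1$). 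Your ``choke-point'' paragraph, where this orientation analysis would have to happen, is explicitly deferred (``I would combine\dots'', ``the heart of the argument\dots is that last step''), so the decisive step is announced rather than proved. Second, your reduction to substrings meeting $x$ or $y$ rests on a false claim: a proper substring of positive length of $b^{\,n}$ lying strictly inside $C_L$, $C_R$ or the bar meets neither $x$ nor $y$, and the bar-interior ones are precisely the dangerous repeated substrings, since the bar is traversed in both directions in each period. The paper disposes of them by observing that the arrows adjacent to the two occurrences are each other's reverses, which is incompatible with the top/bottom configuration of an admissible pair; your outline never touches this case.

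The two ``safety nets'' do not repair this. If the orientation of $b$ is chosen badly the failure is systematic (all powers $n\ge 2$, not finitely many small ones), so appending a tail $t$ does not help. And the band-module fallback is unjustified: for a band with a self-overlap (the wrongly oriented barbell band, or the band of a wind wheel algebra, where the paper notes that $\End(M(v,\lambda))$ has a nonzero radical) the endomorphism ring of $M_b(V,\varphi)$ strictly contains $\End_{k[t,t^{-1}]}(V)$ because of additional graph-type endomorphisms, so the identity $\End(M_b(V,\varphi))\simeq k[t]/(p)$ requires exactly the same no-self-overlap verification you postponed. In short, the proposal identifies the right family of modules but omits both the orientation-dependent choice of the band and the combinatorial verification that its powers admit no nontrivial admissible pairs, which together constitute the actual proof.
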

Before we prove the proposition, let us recall that the general configuration of the bound quiver of any barbell algebra $kQ(v,v^{-1})/\langle \beta \alpha, \delta \gamma \rangle$ is as in Figure \ref{fig:bound quiver of Barbell algebra}, where $C_L$ (respectively $C_R$) denote the cyclic string which starts and ends at $x$ (respectively at $y$), and $\mathfrak{b}$, $C_L$ and $C_R$ could be of any positive length (in particular, we may have $\alpha=\beta$ or $\gamma=\delta$).
Although the arrows $\alpha$, $\beta$, $\gamma$ and $\delta$ must appear with the indicated orientation, the other arrows can have any orientation. This freedom is depicted by the dashed segments. From this description, it follows that every band supports all arrows of $Q(v,w^{-1})$ and one can also verify Lemma \ref{min-rep-infiniteinite barbell algebras}, asserting that the barbell algebra $kQ(v,v^{-1})/\langle \beta \alpha, \delta \gamma \rangle$ is min-rep-infinite if and only if $\mathfrak{b}$ is not serial. 
\begin{figure}
    \centering
\begin{tikzpicture}
 \draw [->] (1.25,0.75) --(2,0.1);
    \node at (1.7,0.55) {$\alpha$};
 \draw [<-] (1.25,-0.75) --(2,-0.05);
    \node at (1.7,-0.5) {$\beta$};
  \draw [dashed] (1.25,0.75) to [bend right=100] (1.25,-0.75);
   \node at (1.3,0) {$C_L$};
    \node at (2,0) {$\circ $};
    \node at (2.1,-0.2) {$x$};
    
    \draw [dotted,thick] (1.65,-0.25) to [bend right=50](1.65,0.35);
 \draw [dashed] (2.05,0) --(2.75,0);
 \node at (2.95,0) {$\bullet$};
 \draw [-] (2.95,0) --(3.8,0);
 \node at (3.8,0) {$\bullet$};
 \draw [dashed] (4,0) --(4.7,0);
 \node at (3.4,0.3) {$\overbrace{\qquad \qquad\qquad \qquad}^{\mathfrak{b}}$};
  \node at (3.5,-0.25) {$\theta^{\epsilon_i}_i$};
 
 \node at (4.75,0.0) {$\circ$};
 \draw [<-] (5.5,0.75) --(4.75,0.05);
 \node at (5,0.45) {$\delta$};
 \draw [->] (5.50,-0.8) --(4.8,-0.05);
    \node at (5,-0.55) {$\gamma$};
  \draw [dashed] (5.55,0.8) to [bend left=100] (5.55,-0.8);
   \node at (5.5,0) {$C_R$};
   \node at (4.65,-0.2) {$y$};
   
\draw [dotted,thick] (5.15,-0.3) to [bend left=50](5.15,0.3);
\end{tikzpicture}
    \caption{Bound quiver of a barbell algebra.}
    \label{fig:bound quiver of Barbell algebra}
\end{figure}
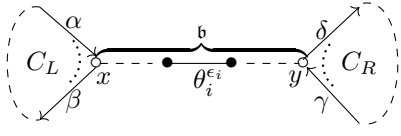
    
\begin{proof} [Proof of Proposition \ref{Barbells tau-infinite}]
Let $\mathfrak{b}=\theta^{\epsilon_d}_d \cdots \theta^{\epsilon_2}_2 \theta^{\epsilon_1}_1$ with $\theta_i \in Q_1$, for all $1 \leq i \leq d$, and $s(\mathfrak{b})=x$ and $e(\mathfrak{b})=y$. We know that $l(\mathfrak{b}) \geq 1$, thus depending on $\epsilon_1$, the arrow $\theta_1$ is either incoming to $x$ (if $\epsilon_1=-1$) or outgoing from $x$ (if $\epsilon_1=1$). We show that in either case, there are infinitely many non-isomorphic bricks in $\Str(\Lambda)$.

For the left and right cyclic strings $C_L$ and $C_R$, we respectively fix the walk $C_L=\alpha \cdots \beta$ and $C_R=\gamma \cdots \delta$. i.e, $C_L$ starts with $\beta$ (respectively $C_R$ starts with $\delta$) and ends with $\alpha$ (respectively ends with $\gamma$).

If $\epsilon_1=1$, then consider the string $w:= \mathfrak{b}^{-1} C_R \mathfrak{b} C_L$. We claim that $M(w^m)$ is a brick, for every $m \in \mathbb{Z}_{>0}$. For $m=1$, notice that the only substrings of $w$ that appear more than once in $w$ are the substrings of the bar $\mathfrak{b}$. In fact every such a substring appears exactly twice, where the second copy is the inverse of the first copy. Obviously, the two copies of $\mathfrak{b}$ (being $\mathfrak{b}$ and $\mathfrak{b}^{-1}$) do not induce a graph map, because by the remark preceding the proposition, the local configuration of $w$ around vertices $x$ and $y$ do not satisfy the required conditions for a graph map. 
Moreover, for any proper substring $v$ of $\mathfrak{b}$ (which also appears exactly twice in $w$, say $v'$ and $v''$), at least one arrow of $w$ entering (respectively leaving) $v'$ is repeated in the reverse direction, leaving (respectively entering) the second copy $v''$, hence the local configuration of a graph map never occurs at the ends of the copies of $v$. This shows that the only non-zero maps in $\End_{\Lambda}(M(w))$ are scalar multiplications of the identity morphism. 

Now we prove that $M(w^m)$ is a brick for every $m \in \mathbb{Z}_{>1}$. Assuming otherwise, there exists $t \in \mathbb{Z}_{>1}$ such that $M(w^{t-1})$ is a brick, but $M(w^{t})$ is not. For simplicity, we first derive the desired contradiction in the case $t=2$ and then generalize the argument to any $t \in \mathbb{Z}_{>0}$.

By Theorem \ref{Thm-graph-maps}, $M(w^{2})$ admits a non-invertible graph map if and only if there exists a non-trivial admissible pair $T=((u_3,u_2,u_1),(v_3,v_2,v_1))$ in $\mathfrak{A}(w^{2},w^{2})$. 
Note that if $u_2$ (and therefore $v_2$) is a proper substring of $w$, we can generate a non-invertible graph map of $M(w)$ by chopping off the extra arrows, as explained in the remark preceding the proposition. Moreover, if $u_2\sim w$, one copy of $w$ (being $u_2$) appears on the top and the second copy (being $v_2$) at the bottom of $w^{2}$. This, under our assumption $\epsilon_1=1$, can never occur for $w= \mathfrak{b}^{-1} C_R \mathfrak{b} C_L$ because $w$ ends with an inverse arrow and starts with a direct arrow, which rules out the possibility of an admissible pair with $u_2 \sim w$ and $v_2 \sim w$ in $w^2$. This implies that $u_2$ (and therefore $v_2$) is not a substring of $w$. Hence, as proper substring of $w^2$, $u_2$ and $v_2$ must contain the substring $\beta\theta^{-1}_1$ (where the first copy of $w$ terminates with inverse arrow $\theta^{-1}_1$ and the second copy starts with the direct arrow $\beta$). This is obviously a contradiction because $\beta\theta^{-1}_1$ in $w^2$ occurs exactly once, while $u_2$ and $v_2$ are two distinct substrings of $w^2$ with $u_2 \sim v_2$.

Now, assume $t > 2$ and let $T=((u_3,u_2,u_1),(v_3,v_2,v_1))$ be an admissible pair in $\mathfrak{A}(w^{t},w^{t})$
which induces a non-invertible graph map in $\End(M(w^t))$. For a similar reason, as in case $t=2$, we conclude that $u_2$ and therefore $v_2$ cannot be substrings of $w^{t-1}$. This, as in the previous case, implies that both $u_2$ and $v_2$ must contain exactly $(t-1)$-many copies of $\beta\theta^{-1}_1$ as a substring, which is impossible because $\beta\theta^{-1}_1$ appears as a substring of $w^t$ exactly $(t-1)$-many times (once for the connection of each copy of $w$ with the next copy of $w$ in $w^t$). From this, we get the desired contradiction and conclude that $M(w^t)$ is a brick for every $t \in \mathbb{Z}_{>0}$.

As the only other possibility, if $\epsilon_1=-1$, we consider the string $w= \mathfrak{b}^{-1} C_R^{-1} \mathfrak{b} C_L^{-1}$ and a similar argument shows that every $M(w^m)$ is a brick over the barbell algebra.
\end{proof}

\begin{remark}\label{powers of bricks may not be brick}
In order to highlight the significance of the orientations of the arrows in the construction of the band $w$ in the preceding theorem, recall that a barbell quiver admits infinitely many bands. However, among them, there is a unique band that gives rise to infinitely many non-isomorphic bricks. We use the following example to clarify this point.

Consider the barbell algebra given by the following bound quiver, which is obtained from an acyclic copy of $\widetilde{\mathbb{A}}_3$ which has a string of the form $\delta \gamma \beta^{-1} \alpha$. In particular, $\Lambda_{\mathfrak{b}}$ is the result of barification of $\gamma \beta^{-1} \alpha$ and $\alpha \delta \gamma$, where $\beta$ and $\delta$ are identified. 
Then we have
\begin{center}
    \begin{tikzpicture}

    \draw[->] (3.52,2.47) arc (190:530:0.45cm);
\node at (4.6,2.5) {$\gamma$};    
\draw [dotted] (3.75,2.3) to [bend left=50](3.75,2.8);
        \node at (3.5,2.5) {$\circ$};
        \node at (3.4,2.3) {$y$};
   \draw [->] (3.45,2.5) --(2.55,2.5); 
\node at (3,2.75) {$\theta$};
    \node at (2.5, 2.5) {$\circ$}; 
    \node at (2.6, 2.3) {$x$}; 

    \draw[->] (2.5,2.55) arc (0:340:0.45cm);
\node at (1.4,2.5) {$\alpha$};    
\draw [dotted] (2.25,2.3) to [bend right=50](2.25,2.8);

    \end{tikzpicture}
\end{center}

Because bands are invariant under cyclic permutations and their inverses, we can always assume that a band $v$ in $\Str(\Lambda_{\mathfrak{b}})$ starts at vertex $x$ and its first two arrows are either $\theta^{-1}\alpha$ or $\theta^{-1}\alpha^{-1}$.
If $v$ starts with $\theta^{-1}\alpha$, one can easily check that $S_x$ belongs to the top and socle of $M(v)$, implying that $M(v^m)$ is never a brick, for any $m \in \mathbb{Z}_{>0}$. 
Moreover, if $v$ starts with $\theta^{-1}\alpha^{-1}$ and has a substring of the form $\gamma \theta^{-1}\alpha^{-1}$, then $M(v^m)$ is never a brick, for any $m \in \mathbb{Z}_{>1}$. This is because if $\gamma \theta^{-1}\alpha^{-1}$ follows via $\alpha \theta$, then we have $\theta^{-1}\alpha^{-1}$ is both on the top and in the bottom of $v^2$, whereas if $\gamma \theta^{-1}\alpha^{-1}$ follows via $\alpha^{-1} \theta$, then $\theta$ is both on the top and bottom of $v^2$. 
This, in particular, shows that to create a band which is brick and its powers do not admit a substring appearing both in the top and bottom, we must start with $\alpha^{-1}$, go through $\theta^{-1}$, continue via $\gamma^{-1}$, which forces that the substring of $v$ which appears at the beginning is $\theta \gamma^{-1}\theta^{-1} \alpha^{-1}$. 

Due to an argument similar to the above, this band must continue via $\alpha^{-1}$. Otherwise, $\theta^{-1} \alpha \theta \gamma^{-1}\theta^{-1} \alpha^{-1}$ is the substring appearing at the beginning of $v$.
This implies that the initial substring of $v$ is either $\cdots \theta \gamma^{-1} \theta^{-1} \alpha \theta \gamma^{-1}\theta^{-1} \alpha^{-1}$ or $\cdots\theta^{-1} \alpha \theta \gamma \theta^{-1} \alpha \theta \gamma^{-1}\theta^{-1} \alpha^{-1}$ or $\cdots \alpha^{-1} \theta \gamma \theta^{-1} \alpha \theta \gamma^{-1}\theta^{-1} \alpha^{-1}$. It is easy to check that in all cases we have a substring which appear both on the top and in the bottom of $v$. In particular, for the three aforementioned cases, we respectively have the substrings $\theta \gamma^{-1} \theta^{-1}$, and $\theta \gamma^{-1} \theta^{-1} \alpha^{-1}$ and $\theta^{-1} \gamma^{-1} \theta$ appear on the top and in the bottom. This shows that the only band which generates infinitely many non-isomorphic bricks in given by $v=\theta \gamma^{-1} \theta^{-1} \alpha^{-1}$.
\end{remark}

The following corollary is a direct consequence of Theorem \ref{tau-finiteness}, Theorem \ref{quotient-lattice} and Proposition \ref{Barbells tau-infinite}.

\begin{corollary}
For an arbitrary algebra $\Lambda$, if $\Lambda$ has a quotient isomorphic to a barbell algebra, then $\Lambda$ is $\tau$-tilting infinite.
\end{corollary}

Before we state the next proposition, let us fix some notation to simplify our proof. Recall that if $\mathfrak{b}_1, \cdots, \mathfrak{b}_d$ are the bars of $(Q^{\mathfrak{w}}(v_i,v^{-1}_{\sigma(i)}), I_{\mathfrak{w}})$, then $l(\mathfrak{b}_t) >0$, for every $1 \leq t \leq d$, where $\mathfrak{b}_t= \theta^{\epsilon_{t}}_{t_m} \cdots \theta^{\epsilon_{t}}_{t_1}$, for some $m_t \in \mathbb{Z}_{>0}$ and $\theta_{t_s} \in Q_1$. Recall that each $\mathfrak{b}_t$ comes from barification of two identical copies of a linearly oriented $\mathbb{A}_{q_i}$, given by the strings $v_i$ and $v^{-1}_{\sigma(i)}$ in $\widetilde{\mathbb{A}}_n= v_{2k}u_{2k} \cdots v_1u_1$ (see Section \ref{Section:Bound Quivers of Mild Special Biserial Algebras}). Thus, every bar is serial and $\epsilon_t \in \{\pm 1\}$ is the same for all arrows in the bar $\mathfrak{b}_t$.
Although the bound quiver of a wind wheel algebra can be complicated, the local configuration of each bar and the incoming and outgoing arrows at its start and end, as shown in Figure \ref{fig: Bar in wind wheel alge}, is often enough to analyze the graph maps between those strings supported by $\mathfrak{b}_t$ and its neighboring arrows.

\begin{figure}

\begin{center}
\begin{tikzpicture}[scale=1]

\draw [dashed] (1.4,-0.8) to (1,-0.8);
\draw [->] (1,-0.8) --(0.3,-0.05);
    \node at (0.5,-0.65) {$\gamma_{t}$};
\draw [<-] (1,0.75) --(0.3,0.05);
     \node at (0.5,0.6) {$\delta_{t}$};
\draw [dashed] (1.4,0.75) to (1,0.75);
    \draw [dotted,thick] (0.7,-0.3) to [bend left=50](0.7,0.3);
   \node at (1.25,0) {$C_{R_{t}}$};

     \node at (0.25,0) {$\circ$};
       \node at (0.25,-0.3) {$y_{t}$};
\draw [dashed] (0.2,0) --(-0.35,0); 
    \node at (-0.4,0) {$\bullet$};
    \node at (-0.9,0) {$\bullet$};
\draw [-] (-0.9,0) --(-0.4,0); 
\draw [dashed] (-1.4,0) --(-0.85,0); 
    \node at (-1.45,0) {$\circ$};
    \node at (-1.4,-0.3) {$x_{t}$};
    \node at (-0.6,0.3) {$\overbrace{\qquad  \qquad \quad}^{\mathfrak{b}_{t}}$};

     \node at (-0.57,-0.3) {$\theta^{\epsilon_t}_j$};

\draw [->] (-2.1,0.8) --(-1.5,0.05);
 \node at (-1.7,0.6) {$\alpha_{t}$};

\draw [<-] (-2.1,-0.8) --(-1.5,0);
    \node at (-1.75,-0.65) {$\beta_{t}$};
    \draw [dotted,thick] (-1.85,-0.3) to [bend right=50](-1.85,0.3);

  \draw [dashed] (-2.1,0.8) to (-2.7,0.8);
  \draw [dashed] (-2.15,-0.8) to (-2.7,-0.8);
   \node at (-2.3,0) {$C_{L_{t}}$};
   
    \end{tikzpicture}
\end{center}

    \caption{Local configuration of each bar in a wind wheel algebra}
    \label{fig: Bar in wind wheel alge}
\end{figure}
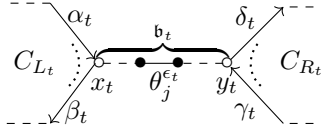

If $v$ is a band in the wind wheel algebra with a fixed presentation and starting point, each bar $\mathfrak{b}_t$ can be locally visualized as in Figure \ref{fig: Bar in wind wheel alge}, where we let $x_t$ denote the first vertex of $\mathfrak{b}_t$ that $v$ visits, and $y_t$ be the other end of $\mathfrak{b}_t$. Since every band in a wind wheel algebra supports all arrows and visits every vertex, once the presentation and starting point of $w$ is fixed, as we assume henceforth, $x_t$ and $y_t$ are well defined and because $l(\mathfrak{b}_t) >0$ for every $t$, all the $x_t$ and $y_t$ are distinct. 
Considering every bar $\mathfrak{b}_t$ with a local configuration as in Figure \ref{fig: Bar in wind wheel alge}, we further assume $\mathfrak{b}_t$ is oriented away from $e(\alpha_t)$ if $\epsilon_t=1$, and towards it if $\epsilon_t=-1$. Every internal vertex of a bar is a $2$-vertex with exactly one incoming and one outgoing arrow, whereas $x_t:=s(\mathfrak{b}_t)$ and $y_t:=e(\mathfrak{b}_t)$ are the $3$-vertices involved in the C-relations, as shown in Figure \ref{fig: Bar in wind wheel alge}. 
In addition to the C-relations, every B-relation along $\mathfrak{b}_t$ kills the unique path $p(\mathfrak{b}_t)$ of length $l(\mathfrak{b}_t)+2$ in $(Q^{\mathfrak{w}}(v_i,v^{-1}_{\sigma(i)}), I_{\mathfrak{w}})$ determined by $\epsilon_t$. In particular, $p(\mathfrak{b}_t)=\delta_{t+1} \mathfrak{b}_t \alpha_{t}$, if $\epsilon_t=1$, and $p(\mathfrak{b}_t)=\beta_{t} \mathfrak{b}_t \gamma_{t+1}$, if $\epsilon_t=-1$.

To state the next result, we need to recall that an infinite word $q$ is called \emph{biperiodic} if there exist finite words $p_1, p_2$ and $p_3$ such that $q={}^{\infty}(p_3)p_2(p_1)^{\infty}$. Morever, a string algebra $\Lambda=kQ/I$ is \emph{domestic} if for each arrow $\gamma$ in $Q$, there is at most one band (up to cyclic permutation and inverse) which starts with $\gamma$.

The following statement gives a description of strings in a wind wheel algebra.

\begin{def-prop}[\cite{R2} Prop. 6.2] \label{strings of wind wheel algebras}
Every wind wheel algebra $\Lambda_{\mathfrak{w}}$ is domestic and has a unique band. 
Let $v$ be the unique band in $\Lambda_{\mathfrak{w}}$ and for a bar $\mathfrak{b}_t$ in $\Lambda_{\mathfrak{w}}$, consider
$v=\mathfrak{b}^{-1}_t w_2 \mathfrak{b}_t w_1$. Then, put $v^{-1}_{\mathfrak{b}_t}:=\mathfrak{b}_t w_1^{-1} \mathfrak{b}_t^{-1} w_2^{-1}$, as the cyclic permutation of $v^{-1}$ which ends with the bar $\mathfrak{b}_t$. The \emph{$\mathfrak{b}_t$-infinite word} is the biperiodic infinite word defined as:
$$w(\mathfrak{b}_t):= {}^{\infty}(v) \mathfrak{b}_t^{-1} w_2 ({v^{-1}_{\mathfrak{b}_t}})^{\infty}.$$

Then, every string $w$ in $\Str(\Lambda_{\mathfrak{w}})$ with $l(w)>l(v)$ is either
isomorphic to a finite subword of $v^{\infty}$ or a finite subword of the $\mathfrak{b}_t$-infinite biperiodic word $w(\mathfrak{b}_t)$, for some bar $\mathfrak{b}_t$ in $\Lambda_{\mathfrak{w}}$.
\end{def-prop}

\begin{proposition}\label{Wind wheel tau-finite}
Every wind wheel algebra is brick finite.
\end{proposition}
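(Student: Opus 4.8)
The plan is to combine the structural description of strings in $\Lambda_{\mathfrak{w}}$ from Definition-Proposition \ref{strings of wind wheel algebras} with the graph-map criterion for bricks. Since for any $N$ there are only finitely many $w\in\Str(\Lambda_{\mathfrak{w}})$ with $l(w)\le N$, it suffices to produce a bound $N$ such that $M(w)$ is not a brick whenever $l(w)>N$, and separately to check that no band module is a brick. By Theorem \ref{Thm-graph-maps} and the discussion following it (see also Remark \ref{induced graph maps}), $M(w)$ is a brick precisely when no proper substring of $w$ occurs simultaneously ``on top'' and ``on the bottom'' of $w$; so the goal becomes to exhibit such a repeated substring in every sufficiently long string.

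First I would pin down the local geometry of a bar. Fix a bar $\mathfrak{b}_t=\theta^{\epsilon_t}_{t_{m}}\cdots\theta^{\epsilon_t}_{t_1}$ of $\Lambda_{\mathfrak{w}}$ together with the four arrows $\alpha_t,\beta_t,\gamma_t,\delta_t$ around its endpoints $x_t=s(\mathfrak{b}_t)$ and $y_t=e(\mathfrak{b}_t)$ as in Figure \ref{fig: Bar in wind wheel alge}, and record which one- and two-letter extensions of $\mathfrak{b}_t$ and of $\mathfrak{b}_t^{-1}$ are strings; here the $C$-relations at $x_t,y_t$ and the $B$-relation $p(\mathfrak{b}_t)$ of length $l(\mathfrak{b}_t)+2$ do the essential work, exactly as the single relation $\beta\alpha$ (resp. $\delta\gamma$) does for a barbell. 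The key point I expect to extract is that, in any string long enough to contain a full period of the band, a copy of some bar $\mathfrak{b}_t$ is forced to appear once flanked so that $\mathfrak{b}_t$ sits on top (the adjacent letters are $\beta_t^{-1}$ on the $x_t$-side and $\delta_t$ on the $y_t$-side, or their absence at an endpoint of $w$) and a copy of $\mathfrak{b}_t^{-1}$ appears flanked so that it sits on the bottom; since $\mathfrak{b}_t\sim\mathfrak{b}_t^{-1}$, this pair of occurrences gives a non-trivial admissible pair in $\mathtt{A}(w,w)$, hence a non-identity, non-invertible endomorphism of $M(w)$. This is where the $B$-relations are indispensable, and where the argument genuinely differs from the barbell case treated in Proposition \ref{Barbells tau-infinite} and Remark \ref{powers of bricks may not be brick}: there the non-serial bar is flanked so as to \emph{avoid} the graph-map configuration, whereas here the serial bars together with the $B$-relations are flanked so as to \emph{force} it.

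Next I would run this through the classification of long strings. By Definition-Proposition \ref{strings of wind wheel algebras}, a string $w$ with $l(w)>l(v)$ is a finite subword either of $v^{\infty}$ or of a $\mathfrak{b}_t$-infinite biperiodic word $w(\mathfrak{b}_t)={}^{\infty}(v)\,\mathfrak{b}_t^{-1}w_2\,(v^{-1}_{\mathfrak{b}_t})^{\infty}$. If $l(w)$ exceeds $N:=3\,l(v)+2\max_t l(\mathfrak{b}_t)$ (any sufficiently large explicit bound will do), then $w$ contains either two consecutive copies of $v$ or two consecutive copies of $v^{-1}_{\mathfrak{b}_t}$ (the middle block $\mathfrak{b}_t^{-1}w_2$ has bounded length), and in either case the previous step produces the required top-and-bottom occurrence of some $\mathfrak{b}_t$, using $M(w)\simeq M(w^{-1})$ to reduce the $v^{-1}_{\mathfrak{b}_t}$ case to the $v^{\infty}$ case. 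Hence $M(w)$ is not a brick once $l(w)>N$. Finally, the same local analysis applied to the cyclic word $v$ shows that $\mathfrak{b}_t$ occurs on top and $\mathfrak{b}_t^{-1}$ occurs on the bottom of $v$ read cyclically, so every band module $M_v(V,\varphi)$ carries a non-identity endomorphism built from this graph map (for $\dim V\ge 2$ one may instead invoke $k[\varphi]\subseteq\End(M_v(V,\varphi))$), so no band module is a brick. Combining, $\Brick(\Lambda_{\mathfrak{w}})$ is contained in the finite set of $M(w)$ with $l(w)\le N$.

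The main obstacle I anticipate is the bookkeeping in the bar-local step: one has to verify, from the barification recipe and the precise form of the $C$- and $B$-relations, that in \emph{every} traversal pattern realizing a long string the flanking letters around some bar come out as claimed --- in particular that the $B$-relation $\delta_{t+1}\mathfrak{b}_t\alpha_t$ (resp. $\beta_t\mathfrak{b}_t\gamma_{t+1}$) really does prevent the band from threading a bar ``straight through'' and thereby forces the on-top/on-bottom configuration. Everything else is routine once that case analysis is in place.
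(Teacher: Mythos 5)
Your proposal is correct and follows essentially the same route as the paper: invoke the classification of long strings via the unique band and the $\mathfrak{b}_t$-biperiodic words (Definition-Proposition \ref{strings of wind wheel algebras}), then use the C- and B-relations to force, in any sufficiently long string, an occurrence of a bar $\mathfrak{b}_t$ in the quotient position together with $\mathfrak{b}_t^{-1}$ in the submodule position, yielding a non-invertible graph endomorphism, and handle band modules by the same local analysis around a bar. The only differences are cosmetic (your length bound $N$ is larger than the paper's $2l(v)$, and the flanking letters you name are the paper's pair $\gamma_t^{-1}\mathfrak{b}_t\alpha_t$, $\beta_t\mathfrak{b}_t^{-1}\delta_t^{-1}$ up to orientation conventions), so no further comparison is needed.
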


\begin{proof}

Let $\Lambda_{\mathfrak{w}}$ be a wind wheel algebra and $v$ be the band in $\Str(\Lambda_{\mathfrak{w}})$, which is unique up to cyclic permutations and their inverse.
First we show that for every $w\in \Str(\Lambda_{\mathfrak{w}})$ whose length is large enough (say for example $l(w)>2l(v)$), the string module $M(w)$ admits a non-invertible graph map.

The lower bound $2l(v)$ on the length of the string $w$, 
along with a simple consequence of Proposition \ref{strings of wind wheel algebras}, guarantee that any such string in $\Str(\Lambda)$ contains $v$ as a substring. Therefore, $w$ supports a bar $\mathfrak{b_t}$ in both directions and, as we will show, it contains a pair of substrings around $\mathfrak{b}_t$ which satisfies the configuration of an admissible pair in $\mathtt{A}(w,w)$ and give rise to a nontrivial graph map. Therefore, we will obtain a nonzero non-invertible endomorphism of $M(w)$ and conclude that $M(w)$ is not a brick.

In particular, if we use the notation and convention of Figure \ref{fig: Bar in wind wheel alge} to explicitly determine this non-invertible graph maps in $\End(M(w))$, then depending on $\epsilon_t=1$ or $\epsilon_t=-1$, one of the following local configurations appears in $w$:

\begin{center}
\begin{tikzpicture}[scale=1]

\draw [->] (1,-0.8) --(0.3,-0.05);
    \node at (0.5,-0.65) {$\gamma_{t}$};
\draw [<-] (1,0.75) --(0.3,0.05);
     \node at (0.5,0.6) {$\delta_{t}$};
    \draw [dotted,thick] (0.7,-0.3) to [bend left=50](0.7,0.3);

     \node at (0.25,0) {$\circ$};
       \node at (0.,-0.25) {$\theta_{t_j}$};
\draw [<-] (0.2,0) --(-0.35,0); 
    \node at (-0.4,0) {$\bullet$};
    \node at (-0.9,0) {$\bullet$};
\draw [dashed] (-0.9,0) --(-0.4,0); 
\draw [->] (-1.4,0) --(-0.95,0); 
    \node at (-1.45,0) {$\circ$};
    \node at (-1.1,-0.2) {$\theta_1$};
    \node at (-0.6,0.4) {${\mathfrak{b}_{t}}$};

\draw [->] (-2.1,0.8) --(-1.5,0.05);
 \node at (-1.7,0.6) {$\alpha_{t}$};

\draw [<-] (-2.1,-0.8) --(-1.5,0);
    \node at (-1.75,-0.65) {$\beta_{t}$};
    \draw [dotted,thick] (-1.85,-0.3) to [bend right=50](-1.85,0.3);

    \draw [dashed] (-1.7,0.5) to [bend right=40](0.5,0.5);

 \node at (2,0) {$\text{or}$};

\draw [->] (6,-0.8) --(5.3,-0.05);
    \node at (5.5,-0.65) {$\gamma_{t}$};
\draw [<-] (6,0.75) --(5.3,0.05);
     \node at (5.5,0.6) {$\delta_{t}$};
    \draw [dotted,thick] (5.7,-0.3) to [bend left=50](5.7,0.3);

     \node at (5.25,0) {$\circ$};
       \node at (5,0.2) {$\theta_{t_j}$};
\draw [->] (5.2,0) --(4.65,0); 
    \node at (4.6,0) {$\bullet$};
    \node at (4.1,0) {$\bullet$};
\draw [dashed] (4.1,0) --(4.6,0); 
\draw [<-] (3.6,0) --(4.05,0); 
    \node at (3.55,0) {$\circ$};
    \node at (3.9,0.25) {$\theta_1$};
    \node at (4.4,0.4) {${\mathfrak{b}_{t}}$};

\draw [->] (2.9,0.8) --(3.5,0.05);
 \node at (3.3,0.6) {$\alpha_{t}$};

\draw [<-] (2.9,-0.8) --(3.5,0);
    \node at (3.25,-0.65) {$\beta_{t}$};
    \draw [dotted,thick] (3.15,-0.3) to [bend right=50](3.15,0.3);

    \draw [dashed] (3.3,-0.5) to [bend left=40](5.5,-0.5);
    \end{tikzpicture}
\end{center}

If we ignore the B-relations, up to symmetry we can assume that the local configuration around a bar $\mathfrak{b}_t$ that occurs in $w$ is as in the diagram on the left. 
Therefore, for a bar $\mathfrak{b}_t$, the string $w$ could contain either the pair $\gamma^{-1}_t\mathfrak b\alpha_t$ and $\beta_t\mathfrak b^{-1}\delta^{-1}_t$ or the pair $\delta_t\mathfrak b\alpha_t$ and $\beta_t\mathfrak b^{-1}\gamma_t$. However, depending on $\epsilon_t$, one of these is ruled out by the corresponding B-relation, while the other one gives rise to a non-invertible nonzero endomorphism. 
For example, in the case of the left diagram, where $\epsilon_t=1$, the B-relation rules out the pair $\delta_t \mathfrak{b}\alpha_t$ and 
$\beta_t \mathfrak{b}^{-1}\gamma_t^{-1}$ (because $\delta_t \mathfrak{b}\alpha_t$ belongs to $I$). 
Thus, $w$ contains the substrings $\gamma^{-1}_t\mathfrak b\alpha_t$ and $\beta_t\mathfrak b^{-1}\delta^{-1}_t$.
Now, it is easy to see that  this pair of substrings induces a nonzero non-invertible endomorphism of $M(w)$ by the associated graph map along $\mathfrak{b}_t$.

For every $\lambda \in k \setminus \{0\}$, in order to find a non-invertible nonzero endomorphism in $\End_{\Lambda}(M(v,\lambda))$, we can similarly use the interaction of two substrings of $v$ around a bar $\mathfrak{b}_t$. Namely, analogous to the graph map we found for any string modules $M(w)$ with $l(w)>2l(v)$, one can use the local interaction of two specific substrings of $v$ around a bar $\mathfrak{b}_t$ to find a nonzero endomorphism of the band module $M(v,\lambda)$. In particular, this endomorphism is not onto, and therefore non-invertible. Hence, $M(v,\lambda)$ is not a brick.
In fact, as shown in \cite{R2}, the radical of $\End(M(v,\lambda))$ is $d$-dimensional with a basis given by the above endomorphisms, where $d$ is the number of bars in the wind wheel algebra $\Lambda_{\mathfrak{w}}$. 

These cases together show that $\modu \Lambda_{\mathfrak{w}}$ has only finitely many isomorphism classes of bricks and we are done by Theorem \ref{tau-finiteness}.

\end{proof}

\begin{remark}\label{examples of min-tau-infinite string alg}
One should note that the $\tau$-tilting finiteness of wind wheel algebra heavily depends on the additional B-relations we added after barifications. Namely, from the same analysis made in the case of barbell algebras, it easily follows that a sequence of barifications of an acyclic $\widetilde{\mathbb{A}}_n$ always results in a brick infinite algebra. However, the result may not be necessarily min-rep-infinite.
\end{remark}

By the above results, we can now fully determine whether a given $\Lambda$ in $\Mri(\mathfrak{F}_{\sB})$ is $\tau$-tilting finite, provided the bound quiver of $\Lambda$ has no node. 
To complete this characterization of $\Mri(\mathfrak{F}_{\sB})$ with respect to $\tau$-tilting finiteness, in the next section we separately treat those min-rep-infinite special biserial algebras whose bound quivers have a node. 
\begin{theorem}\label{tau-finite node-free mSB Thm}
Let $\Lambda$ be a node-free minimal representation-infinite special biseiral algebra. $\Lambda$ is $\tau$-tilting infinite if and only if $\Lambda=k \widetilde{\mathbb{A}}_n$ or $\Lambda$ is a barbell algebra.
\end{theorem}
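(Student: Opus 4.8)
The plan is to combine Ringel's structural classification of node-free minimal representation-infinite special biserial algebras (Theorem \ref{Ringel's Classification Thm}) with the three computations already carried out in this section. Since $\Lambda$ is node-free, minimal representation-infinite and special biserial, Theorem \ref{Ringel's Classification Thm} tells us that $\Lambda$ is one of exactly three types: a cycle algebra $k\widetilde{\mathbb{A}}_n$, a barbell algebra with no serial bar, or a wind wheel algebra. These three classes are pairwise distinct: the cycle algebras are hereditary, i.e.\ have $I=0$, whereas the barbell and wind wheel algebras carry nonzero relations, and the barbell and wind wheel families are separated by the form of their bound quivers. Hence it suffices to settle $\tau$-tilting finiteness in each of the three cases, and the asserted equivalence will then follow because, within this trichotomy, ``$\Lambda=k\widetilde{\mathbb{A}}_n$ or $\Lambda$ is a barbell algebra'' is precisely the negation of ``$\Lambda$ is a wind wheel algebra''.

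First I would treat $\Lambda=k\widetilde{\mathbb{A}}_n$. Its Auslander--Reiten quiver has a preprojective (and a preinjective) component, as recalled in Figure \ref{fig:AR component of A-tilde}; by Lemma \ref{Preproj/Postinj brick} every indecomposable in such a component is a $\tau$-rigid brick, so $\Brick(\Lambda)$ is infinite and $\Lambda$ is $\tau$-tilting infinite by Theorem \ref{tau-finiteness}. Next, if $\Lambda$ is a barbell algebra (which, under our standing hypotheses, necessarily has a non-serial bar, though this is not even needed), Proposition \ref{Barbells tau-infinite} produces an explicit band $w$ — the one whose orientation around the two $3$-vertices $x$ and $y$ is chosen compatibly with the sign $\epsilon_1$ of the first arrow of the bar — such that $M(w^m)$ is a brick for every $m\in\mathbb{Z}_{>0}$; as these string modules are pairwise non-isomorphic, $\Brick(\Lambda)$ is infinite and Theorem \ref{tau-finiteness} again gives that $\Lambda$ is $\tau$-tilting infinite. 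Finally, if $\Lambda$ is a wind wheel algebra, Proposition \ref{Wind wheel tau-finite} shows $\Brick(\Lambda)$ is finite, hence $\Lambda$ is $\tau$-tilting finite by Theorem \ref{tau-finiteness}. Assembling the three cases yields the theorem.

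Since every nontrivial ingredient — the classification and the brick counts for each of the three families — has already been established earlier in the text, I do not expect a serious obstacle; the only point needing a little care is the bookkeeping that the three Ringel classes are genuinely disjoint, so that ``$\Lambda$ is not $\tau$-tilting infinite'' matches exactly ``$\Lambda$ is a wind wheel algebra''. This disjointness is immediate from the shape of the bound quivers (vanishing versus nonvanishing of $I$, and the underlying graph), so the argument reduces to quoting Theorem \ref{Ringel's Classification Thm}, Lemma \ref{Preproj/Postinj brick}, Propositions \ref{Barbells tau-infinite} and \ref{Wind wheel tau-finite}, and Theorem \ref{tau-finiteness}. One may also record the equivalent reformulation: among node-free minimal representation-infinite special biserial algebras, the $\tau$-tilting finite ones are precisely the wind wheel algebras.
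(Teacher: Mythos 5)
Your proposal is correct and follows essentially the same route as the paper: quote Theorem \ref{tau-finiteness} to translate $\tau$-tilting infiniteness into brick infiniteness, dispose of the hereditary case $k\widetilde{\mathbb{A}}_n$ via the preprojective component and Lemma \ref{Preproj/Postinj brick}, and handle the non-hereditary cases by Theorem \ref{Ringel's Classification Thm} together with Propositions \ref{Barbells tau-infinite} and \ref{Wind wheel tau-finite}. The extra bookkeeping about disjointness of the three Ringel classes is harmless but not actually needed, since a $\tau$-tilting infinite algebra simply cannot be a wind wheel algebra by Proposition \ref{Wind wheel tau-finite}, so the classification already forces it to be a cycle or barbell algebra.
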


\begin{proof}
As stated in Theorem \ref{tau-finiteness}, an algebra $\Lambda$ is $\tau$-tilting infinite if and only if $\modu \Lambda$ contains infinitely many non-isomorphic bricks. If $\Lambda=k \widetilde{\mathbb{A}}_n$, the Auslander-Reiten quiver of $\Lambda$ has preprojective and preinjective components, thus, by Lemma \ref{Preproj/Postinj brick}, we are done. If $\Lambda$ is non-hereditary, then by Theorem \ref{Ringel's Classification Thm}, it is a barbell or a wind wheel algebra, where Propositions \ref{Barbells tau-infinite} and \ref{Wind wheel tau-finite} imply the desired result.
\end{proof}

\section{$\tau$-Tilting Finiteness of Nody Special Biserial Algebras}{\label{section: Nody algebras}}

As shown by Ringel \cite{R2} and reviewed in Section \ref{Section:Bound Quivers of Mild Special Biserial Algebras}, the study of minimal representation-infinite special biserial algebras up to stable equivalence can be reduced to the study of node-free algebras $\Lambda=kQ/I$. As a result, any minimal representation-infinite algebra is stably equivalent to a cycle, barbell, wind wheel algebra (see Theorem \ref{Ringel's Classification Thm}).
However, unlike the representation type of algebras, $\tau$-tilting finiteness is not invariant under stable equivalence. Thus, for our primary objective in this paper we still need to consider those algebras $\Lambda$ in $\Mri(\mathfrak{F}_{\sB})$ for which $\nn(\Lambda)\neq \emptyset$.
To complete our classification of min-rep-infinite special biserial algebras from the $\tau$-tilting finiteness viewpoint, in this section we treat such algebras. In the following we call $\Lambda=kQ/I$ a \emph{nody} algebra if it is min-rep-infinite special biserial algebras and $\nn(\Lambda)\neq \emptyset$.

As the main goal of this section, we prove that every nody algebra is $\tau$-tilting finite. This, along with our results from the preceding section, fully determines which algebras in $\Mri(\mathfrak{F}_{\sB})$ are $\tau$-tilting finite and which ones are $\tau$-tilting infinite.

Before we study $\tau$-tilting finiteness of the nody algebras, let us emphasize a useful observation we made through the construction of barbell and wind wheel algebras: if $(Q,I)$ is a string bound quiver obtained solely via a sequence of barifications of an acyclic $\widetilde{\mathbb{A}}_n$, then $\Lambda=kQ/I$ is again $\tau$-tilting infinite. 
However, since $(Q,I)$ may not be necessarily min-rep-infinite, in order to result in a min-rep-infinite algebra, we must add new relations to the set of relations imposed by the barifications. 
These new relations, as we observed in the wind wheel algebras, can destroy $\tau$-tilting infiniteness, such that the quotient algebra is rep-infinite but $\tau$-tilting finite.

To illustrate some of the above points and general features of nody special biserial algebras, we first consider an interesting family of string algebras in the following example.
As before, $\overline{Q}$ denotes the double quiver of $Q$, obtained by adding the reverse arrow $\gamma^*$ for each $\gamma \in Q_1$. 

\begin{example}\label{Major NON-Example}
For every $n \in \mathbb{Z}_{\geq 0}$, suppose $Q=\widetilde{\mathbb{A}}_n$ and $\Lambda_n=k\overline{Q}/I$, where $I=\langle \beta\alpha \,|\, \forall \alpha,\beta \in \overline{Q} \rangle $. 
Thus, $\Lambda_n$ is a radical-square zero string algebra and
all vertices are $4$-vertex and $\node(\Lambda_n)=Q_0$. Since there exists a band which alternates between direct and reverse arrows at every vertex, $\Lambda_n$ is obviously rep-infinite.
Moreover, $\Lambda_n$ is min-rep-infinite if and only if $n$ is even. This is because for odd $n$, in the double quiver of $\widetilde{\mathbb{A}}_n$ there are two distinct copies of $\widetilde{\mathbb{A}}_n$ with alternating arrows. Therefore, if $n$ is odd, $\Lambda_n$ is a $\tau$-tilting infinite string algebra. We remark that, by definition, for any odd $n$ the algebra $\Lambda_n$ is not a nody algebra.

In contrast, if $n$ is even, $\Lambda_n$ has a unique band (of length $2n+2$) of the form
$$ w= \gamma_{2}^{-1} \gamma_{3}^{*}\gamma_{4}^{-1} \cdots \gamma_{n-1}^{*} \gamma_{n}^{-1}\gamma_{n+1}^{*} \gamma_{1}^{-1} \cdots \gamma_{n-1}^{-1} \gamma_{n}^{*} \gamma_{n+1}^{-1} \gamma_1^{*}$$ where $\gamma_i$ and $\gamma^{*}_i$ respectively denote the original and reverse arrows in $\overline{Q}$.
Note that this band supports every arrow once and visits each vertex twice, except for $s(w)=e(w)$, visited three times. Thus, minimality of $\Lambda_n$ is immediate. Moreover, in this case (if $n$ is even), $\modu \Lambda_n$ has only finitely many bricks, because any string of $\Lambda_n$ is a finite portion of the infinite word ${}^{\infty}w$, whose diagram is as follows:
\begin{center}
\begin{tikzpicture}[scale=0.5]
-------------------------
---
\node at (1.1,1.1) {$\bullet^1$};
\draw [->] (1,1) -- (0.15,0.15);
--
\node at (0.2,0) {$\bullet_2$};
\draw [->] (-1,1) -- (-0.1,0.1);
\node at (-0.9,1.1) {$\bullet^3$};
--
\draw [->] (-1.1,1) -- (-2,0.1);
\node at (-1.9,0) {$\bullet_4$};
--
\node at (-3.2,0.5) {$\cdots$};
--
\node at (-4.6,1.1) {$\bullet^{n}$};
\draw [->] (-4.9,1) -- (-5.8,0.1);
\node at (-5.3,-0.1) {$\bullet_{n+1}$};
--
\draw [->] (-6.8,1) -- (-6,0.1);
\node at (-6.7,1.1) {$\bullet^1$};
--
\draw [->] (-6.9,1) -- (-7.75,0.15);
\node at (-7.7,0) {$\bullet_2$};
\draw [->] (-8.8,1) -- (-7.9,0.1);
--
\node at (-8.7,1.1) {$\bullet^{3}$};
\draw [->] (-8.9,1) -- (-9.8,0.1);
\node at (-9.7,-0.1) {$\bullet_{4}$};
--
\node at (-11,0.5) {$\cdots$};
\draw [dashed,thick] (1,1.65) -- (1,-0.4);
\draw [dashed,thick] (-6.8,1.65) -- (-6.8,-0.4);

 \node at (-2.9,-0.8) {$\underbrace{\qquad \qquad \qquad \qquad \qquad \quad}_w$};

\end{tikzpicture}
\end{center}
Hence, if $l(v)>n$, there exists at least one vertex $x$ such that the simple module $S(x)$ is both a summand of $\topm(M(v))$ and $\soc(M(v))$. This admits a non-invertible graph map in $\End_{\Lambda_n}(M(v))$. 
A similar argument shows that for each band module $M(w,\lambda)$, where $\lambda \in k \setminus \{0\}$, the algebra $\End(M(w,\lambda))$ has non-invertible elements.
Thus, there are only finitely many $\Lambda_n$-modules which are brick and for every even $n$ the nody algebra $\Lambda_n$ is $\tau$-tilting finite.

Thus, in the above infinite family $\{\Lambda_n \,|\, n \in \mathbb{Z}_{>0} \}$ of string algebras, $\Lambda_n$ is $\tau$-tilting finite if and only if $\Lambda_n$ is a nody algebra (i.e, if and only if $n$ is even). Note that if $n$ is even, we have $\nn(\Lambda_n)=\widetilde{\mathbb{A}}_{2n+1}$, which is $\tau$-tilting infinite. 
This is an instance of the phenomenon we previously alluded to, where $\tau$-tilting finiteness is not preserved under resolving the nodes.
Finally note that if $n$ is odd, $\Mri(\mathcal{P}(\widetilde{\mathbb{A}}_n))$ contains $\tau$-tilting infinite algebra of type $\widetilde{\mathbb{A}}_n$.
\end{example}

\begin{remark} 
\begin{enumerate}

\item The family of string algebras in Example \ref{Major NON-Example} could be in fact generalized to a larger family with similar properties, where all vertices are not necessarily nodes.
In particular, for every $n \in \mathbb{Z}_{>0}$, let $\widetilde{\Lambda}_n$ be obtained from $\Lambda_n$ as follows: replace every arrow $\gamma$ in $\Lambda_n$ by a copy of $\mathbb{A}_{m(\gamma)}$, for some $m(\gamma) \in \mathbb{Z}_{>0}$, such that the first and last arrows of $\mathbb{A}_{m(\gamma)}$ have the same direction as $\gamma$. Obvisously, $\widetilde{\Lambda}_n$ is again a string algebra and it is nody if and only if $n$ is even. 
The general configuration of $\widetilde{\Lambda}_n$ is illustrated in Figure \ref{fig:nody algebra}, where $n=2$. 
We remark that every vertex in $\widetilde{\Lambda}_n$ is of degree $2$ or $4$. As we will see in the following, such algebras play an important role in the study of nody algebras.

\item    In the above example we showed that $\Lambda_n$ is $\tau$-tilting finite if and only if $n$ is even. This result could be alternatively shown by the classification of $\tau$-tilting finiteness of radical-square zero algebras \cite{Ad}.
Moreover, note that although every $\Lambda_n$ is a string algebra with quadratic relations, it is never a quotient of a gentle algebra. In the next section we show that this in fact stems from a significant difference between string and gentle algebras from the viewpoint of $\tau$-tilting theory. Example \ref{Major NON-Example} and Proposition \ref{Wind wheel tau-finite} give two concrete families of rep-infinite string algebras which are $\tau$-tilting finite.
However these two infinite families are significantly different in terms of their bound quivers. This is because every wind wheel algebra is node-free, whereas in every algebra from Example \ref{Major NON-Example} all vertices are nodes. 
In the following we further enlarge this family of string algebras where $\tau$-tilting finiteness and rep-infiniteness disagree. 

\end{enumerate}
\end{remark}

\begin{figure}
    \centering
\begin{tikzpicture}

    
 \draw [<-] (2.7,3) to [bend right=20](0.5,3);
         \node at (0.45,2.95) {$\circ$};
          \node at (0.25,2.95) {$1$};
 \draw [->] (2.7,2.9) to [bend left=20](0.5,2.9);
        \node at (2.75,3) {$\circ$};
        \node at (2.95,3) {$2$};
     \node at (1.7,3.35) {$\alpha$};
      \node at (1.75,2.87) {$\alpha^*$};

\draw [->] (2.83,2.9) to [bend left=20](1.75,1.2);
     \node at (2.5,1.75) {$\beta$};
        \node at (2.25,2.1) {$\beta^*$};
    \node at (1.7,0.95) {$3$};
\draw [<-] (2.77,2.85) to [bend right=20](1.7,1.2);

\draw [<-] (0.37,2.9) to [bend right=20](1.6,1.2);
    \node at (1.65,1.2) {$\circ$};     
 \node at (0.7,1.75) {$\gamma$};
  \node at (1.2,2.1) {$\gamma^*$};

\draw [->] (0.37,2.9) to [bend left=20](1.62,1.25);    


 \draw [<-] (8,3.05) to [bend right=40](7.4,3.65);
 \draw [dashed] (7.5,3.65) to [bend right=15](6,3.65);
 \draw [<-] (6,3.65) to [bend right=40](5.5,3);
         \node at (5.45,2.95) {$\circ$};
          \node at (5.25,3.05) {$1$};
 
        \node at (8.05,3) {$\circ$};
        \node at (8.25,3.1) {$2$};
     \node at (5.6,3.65) {$\alpha_1$};
     \node at (8.1,3.65) {$\alpha_{m(\alpha)}$};

 \draw [->] (8,2.95) to [bend left=0](7.4,2.95);
  \draw [dashed] (7.5,2.95) to [bend left=0](6,2.95); 
  \draw [->] (6.1,2.95) to [bend left=0](5.55,2.95);
 
      \node at (7.7,3.15) {$\alpha^*_1$}; 
      \node at (6.2,3.15) {$\alpha^*_{m(\alpha^*)}$};


\draw [->] (8.1,2.95) to [bend left=40](8.65,2.1);
\draw [dashed] (8.65,2.1) to [bend left=0](8.65,1.7);
\draw [->] (8.65,1.7) to [bend left=40](8.1,1.05);

     \node at (8.85,2.55) {$\beta_1$};
     \node at (9.05,1.45) {$\beta_{m(\beta)}$};

            \node at (8.05,1.05) {$\circ$};
       
\draw [<-] (8.05,1.7) to [bend right=0](8.05,1.1);       
\draw [dashed] (8.05,1.7) to [bend left=0](8.05,2.3);       
\draw [->] (8.05,2.35) to [bend left=0](8.05,2.9);       
       
        \node at (7.8,1.6) {$\beta^*_1$};
         \node at (8,2.5) {$\beta^*_{m(\beta^*)}$};
    \node at (8.05,0.75) {$3$};


\draw [<-] (5.45,2.9) to [bend right=40](5.7,1.9);
\draw [dashed] (7,1) to [bend left=0](5.7,1.9);
\draw [<-] (7,1) to [bend right=40](8,1);
     \node at (7.25,0.7) {$\gamma_1$};
          \node at (5.12,2) {$\gamma_{m(\gamma)}$};

\draw [->] (5.47,2.9) -- (6,2.5);
\draw[dashed] (6,2.5) -- (7.55,1.4);
\draw[->](7.55,1.4) -- (8,1.05);    

      \node at (5.7,2.45) {$\gamma^*_1$};
  \node at (7.45,1.35) {$\gamma^*_{m(\gamma^*)}$};

\end{tikzpicture}
    \caption{In a nody algebra $\widetilde{\Lambda}_2$, nodes are all of $4$-vertices. The internal part, indicated by the dashed lines, could be of any length and orientation.  In particular, $\Lambda_2$ is a special case of $\widetilde{\Lambda}_2$.}
    \label{fig:nody algebra}
\end{figure}
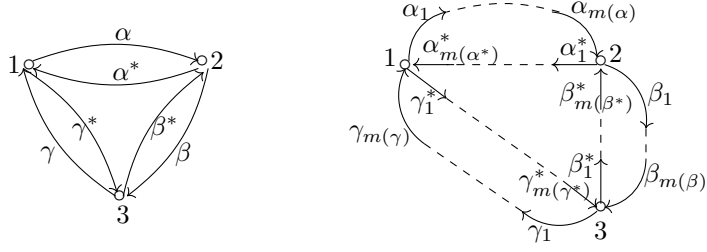

The following lemma provides a good intuition into the bound quiver of nody algebras.

\begin{lemma}\label{nodes of mSB}
If $\Lambda=kQ/I$ is a min-rep-infinite special biserial algebra, then $\node(\Lambda)= \{x \in Q_0 \, | \, \deg(x)=4 \}$ and the number of $3$-vertices in $\Lambda$ is even.
\end{lemma}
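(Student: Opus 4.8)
The statement has two parts: first, that $\node(\Lambda) = \{x \in Q_0 \mid \deg(x) = 4\}$, and second, that the number of $3$-vertices is even. The plan is to handle them in that order, since the first essentially pins down the local structure of the bound quiver and the second is a parity/counting argument on top of it.

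For the first part, recall from Theorem~\ref{node-free-algebra}(2) that every $4$-vertex of a min-rep-infinite special biserial algebra is a node, which gives the inclusion $\{x \mid \deg(x) = 4\} \subseteq \node(\Lambda)$. For the reverse inclusion, suppose $x$ is a node. By definition a node is neither a sink nor a source, so $x$ has at least one incoming and at least one outgoing arrow, hence $\deg(x) \ge 2$. The special biserial conditions (B1) bound $\deg(x) \le 4$. If $\deg(x) = 2$, then there is exactly one incoming arrow $\alpha$ and one outgoing arrow $\beta$, and the node condition forces $\beta\alpha \in I$; but then $\alpha$ lies in a relation at both ends only if... — more precisely, I would argue that a $2$-vertex with $\beta\alpha \in I$ makes $\beta$ (resp.\ $\alpha$) a ``dead end'' for strings, and combining with the node-resolving description one checks that resolving $x$ disconnects $Q$ or produces an algebra of strictly smaller representation type, contradicting minimality; alternatively, and more cleanly, one shows directly that such a $2$-vertex node would force $\Lambda$ to have a proper quotient (obtained by deleting an arrow or an idempotent) that is still rep-infinite, against min-rep-infiniteness. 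The case $\deg(x) = 3$: here $x$ has, say, two incoming arrows $\alpha_1, \alpha_2$ and one outgoing arrow $\beta$ (or dually); the node condition forces $\beta\alpha_1, \beta\alpha_2 \in I$. But then at the arrow $\beta$ there is no arrow $\gamma$ with $\beta\gamma \notin I$, so $\beta$ can only be preceded in a string by an inverse letter — and one checks via Lemma~\ref{Monomial ideal} (so $I$ is monomial) and the string combinatorics that this again yields a proper rep-infinite quotient, or that it contradicts the classification of bound quivers from Section~\ref{Section:Bound Quivers of Mild Special Biserial Algebras} applied to $\nn(\Lambda)$. So every node has degree exactly $4$. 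I expect the degree-$3$ exclusion to be the technical crux: one must rule it out without circularity, and the safest route is to invoke the structure of $\nn(\Lambda)$ (a cycle, barbell, or wind wheel, all of which are node-free with all vertices of degree $2$ or $3$ and exactly the prescribed relations) together with Theorem~\ref{node-free-algebra}(1), and trace how resolving the nodes of $\Lambda$ interacts with $3$-vertices.

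For the second part, once we know nodes are precisely the $4$-vertices, consider $\Lambda' = \nn(\Lambda)$, obtained by resolving all nodes. Every $4$-vertex $x$ of $\Lambda$ is split into a sink $x_+$ and a source $x_-$, each of degree $2$; all other vertices of $\Lambda$ (those of degree $2$ or $3$) are unchanged. By Theorem~\ref{Ringel's Classification Thm}, $\Lambda'$ is a cycle, barbell, or wind wheel algebra. In each of these the number of $3$-vertices is even: a cycle algebra $k\widetilde{\mathbb{A}}_n$ has no $3$-vertices (zero, which is even); a barbell algebra has exactly two $3$-vertices, namely the endpoints $x$ and $y$ of the bar (visible from Figure~\ref{fig:bound quiver of Barbell algebra}; note $C_L$ and $C_R$ contribute only $2$-vertices internally, and the two ends of the bar are where the degree jumps to $3$ — unless $\alpha = \beta$ or $\gamma = \delta$, in which case that end is a $2$-vertex, but then the number of $3$-vertices drops by $2$, preserving parity); a wind wheel algebra with $k$ bars (in the notation of Section~\ref{subsection:construction of wind wheel algebras}, $2k$ strings $v_i$ paired by $\sigma$, giving $d = k$ bars after the identifications... — I would recount carefully) has $2d$ many $3$-vertices, namely the two ends $x_t, y_t$ of each bar $\mathfrak{b}_t$, as stated in the paragraph preceding Figure~\ref{fig: Bar in wind wheel alge}. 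Since resolving nodes does not change the $3$-vertices of $\Lambda$ (it only affects the $4$-vertices, turning each into two $2$-vertices), the set of $3$-vertices of $\Lambda$ equals the set of $3$-vertices of $\Lambda'$, whose cardinality we have just seen is even in every case. Hence the number of $3$-vertices of $\Lambda$ is even, completing the proof. The main obstacle I anticipate is the bookkeeping in the first part (ruling out degree-$2$ and degree-$3$ nodes), whereas the parity statement is then a short case-check against Ringel's classification.
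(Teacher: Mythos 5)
Your easy inclusion ($4$-vertices are nodes, via Theorem \ref{node-free-algebra}(2)) and your parity argument (pass to $\nn(\Lambda)$, invoke the classification in Theorem \ref{Ringel's Classification Thm}, count $3$-vertices of cycle, barbell and wind wheel quivers, and note that resolving a $4$-vertex node only creates $2$-vertices) coincide with the paper's proof of those parts. The genuine gap is the converse inclusion, which you yourself call the crux but never close: for both the degree-$2$ and the degree-$3$ cases you only list unfinished options (``one checks\dots'', ``the safest route is to invoke\dots''), and the one concrete claim you make for degree $2$ --- that resolving such a node disconnects $Q$ or lowers the representation type --- is neither justified nor clearly true. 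The paper's mechanism, which you never state, is short: by Lemma \ref{Monomial ideal} the algebra is a string algebra; being rep-infinite it has a band $v$, and by minimality $v$ must support every vertex and every arrow (otherwise $v$ survives in the proper quotient $\Lambda/\langle e_x\rangle$ or $\Lambda/\langle \gamma\rangle$, which would then be rep-infinite). Now at a putative node $x$ of degree $2$ or $3$ with outgoing arrow $\beta$, no band can support $\beta$: rotate $v$ so that its presentation starts with $\beta$; the last letter of this cyclic presentation must end at $x$, but each incoming arrow $\alpha_i$ is excluded by the relation $\beta\alpha_i\in I$, and $\beta^{-1}$ is excluded because the word would not be reduced. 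This one-line contradiction is exactly what your plan is missing.

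Your fallback route via $\nn(\Lambda)$ could indeed be completed non-circularly --- resolving a degree-$2$ or degree-$3$ node produces a vertex of degree at most $1$ in $\nn(\Lambda)$, whereas cycle, barbell and wind wheel quivers only have vertices of degree $2$ or $3$ --- but you never write this step; ``trace how resolving the nodes interacts with $3$-vertices'' is left as a to-do. A small slip in the parity part: if $\alpha=\beta$ is a loop at a bar end, that end still has degree $3$ (a loop contributes $2$ to the degree), so a barbell always has exactly two $3$-vertices; your parenthetical claim that such an end becomes a $2$-vertex is wrong, though it does not affect the parity conclusion.
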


\begin{proof}
By Lemma \ref{Monomial ideal}, $\Lambda$ is a string algebra, so fix a minimal set of monomial relations that generate $I$. 

Since $\Lambda$ is rep-infinite, there exists a band $v$ in $(Q,I)$, and, because $\Lambda$ is minimal, $v$ must support all vertices and arrows. Hence, $(Q,I)$ obviously has no $1$-vertex or a node of degree two.

Suppose $\node(\Lambda) \neq \emptyset$ and, for the sake of contradiction and without loss of generality, assume that there exists a $3$-vertex $x$ in $\node(\Lambda)$ such that $\alpha_1$ and $\alpha_2$ are incoming to $x$ and $\beta$ is outgoing from it, such that $\beta \alpha_1 \in I$ and $\beta \alpha_2 \in I$.
Since $v$ is a band, it should have a presentation that starts (and ends) at $x$ and leaves it via $\beta$. This is, however, impossible and gives the desired contradiction (because such a presentation of $v$ must end with $\beta^{-1}$). Hence, every node in $\Lambda$ is a $4$-vertex.
For the converse, see the second assertion of Theorem \ref{node-free-algebra}, which shows that every $4$-vertex of $Q_0$ is a node.

For the second assertion, by the second part of Theorem \ref{node-free-algebra}, the algebra $\nn(\Lambda)$ is a min-rep-infinite special biserial algebra and by Corollary \ref{Nod-free Sp.biserial} it falls into one of the three types of algebras described in Section \ref{Section:Bound Quivers of Mild Special Biserial Algebras}. 
From the explicit configuration of the barbell and wind wheel algebras, where $\nn(\Lambda) \neq \emptyset$, it is easy to verify the claim.
Moreover, it is easy to check that resolving a node of $\Lambda$ does not interfere with the number of $3$-vertices appearing in $\nn(\Lambda)$, as it converts a $4$-vertex in $\Lambda$ into a pair of $2$-vertices in $\nn(\Lambda)$.
\end{proof}

Before showing the main result of this section, let us recall the dual to the notion of resolving a node. 
In a bound quiver $(Q,I)$, if $x$ is a sink (with the incoming arrows $\{\alpha_k, \cdots, \alpha_1\}$ and $y$ is a source (with outgoing arrows $\{\beta_m, \cdots ,\beta_1\}$), we say $(Q',I')$ is obtained from $(Q,I)$ via \emph{gluing $x$ and $y$} as follows: identify vertices $x$ and $y$ and call the new vertex $z$, then put all composition of arrows at $z$ equal to zero (i.e, $\beta_j \alpha_i=0$, for all $1 \leq i \leq k$ and all $1 \leq j \leq m$
).

\begin{proposition}{\label{gluing min-rep-inf sp.biserial}}
Let $\Lambda$ be a minimal representation-infinite special biserial algebra. If $\node(\Lambda) \neq \emptyset$, then $\Lambda$ is obtained via a sequence of gluing of barbell algebras, wind wheel algebras or a copy of $\widetilde{\mathbb{A}}_n$.
\end{proposition}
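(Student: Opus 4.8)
The plan is to show that a min-rep-infinite special biserial algebra with a node can be ``unglued'' at each of its nodes, and that the result of undoing all the gluings is exactly a disjoint union of the node-free min-rep-infinite special biserial algebras classified by Ringel, i.e.\ cycle, barbell and wind wheel algebras (together with copies of $k\widetilde{\mathbb{A}}_n$, which is the cycle case). Concretely, I would run the following steps. First, by Lemma \ref{Monomial ideal} we may fix a minimal monomial set of relations for $I$, and by Lemma \ref{nodes of mSB} the nodes are precisely the $4$-vertices of $Q$ and there is an even number of $3$-vertices. Pick a node $z$; since $\Lambda$ is special biserial, $z$ has exactly two incoming arrows $\alpha_1,\alpha_2$ and two outgoing arrows $\beta_1,\beta_2$, and since $z$ is a node all four composites $\beta_j\alpha_i$ lie in $I$. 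Undoing the gluing at $z$ means replacing $z$ by a sink $z_+$ (receiving $\alpha_1,\alpha_2$) and a source $z_-$ (emitting $\beta_1,\beta_2$), and dropping the four quadratic relations through $z$ from the generating set; this is precisely the local move inverse to the gluing operation defined just before the statement, and it is the same move as resolving the node $z$ in the sense of Section \ref{subsection:construction of wind wheel algebras} restricted to this vertex. Call the resulting bound quiver $(Q^{(1)},I^{(1)})$; it is again special biserial (the degree and non-deviation conditions (B1),(B2) are preserved vertex-by-vertex), and $\nn(\Lambda)$ is obtained by iterating this for all nodes, so $\Lambda$ and the algebra with bound quiver $(Q^{(1)},I^{(1)})$ have the same $\nn(-)$.

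Next, I would track the connected components. When we split $z$ into $z_+$ and $z_-$, the bound quiver either stays connected or splits into two connected pieces. Iterating over all nodes produces a finite disjoint union $\Lambda'_1,\dots,\Lambda'_r$ of connected special biserial algebras with no $4$-vertices; since resolving nodes does not change the number of $3$-vertices (Lemma \ref{nodes of mSB} and its proof) and the original algebra has a band supporting every vertex and arrow, a bookkeeping argument shows each $\Lambda'_t$ is again finite-dimensional, has no projective-injective (its vertices all have degree $\le 3$ and each $3$-vertex carries exactly one relation, the configuration of a string algebra), and is node-free. The key point is that each $\Lambda'_t$ is min-rep-infinite: by Theorem \ref{node-free-algebra}(1), $\Lambda$ is min-rep-infinite iff $\nn(\Lambda)=\bigsqcup_t \Lambda'_t$ is, and a disjoint union of algebras is min-rep-infinite exactly when one summand is min-rep-infinite and all the others are representation-finite; but the band $v$ of $\Lambda$ meets every arrow, so after resolving each node $v$ decomposes into bands distributed among \emph{all} the components, forcing every $\Lambda'_t$ to be representation-infinite by Lemma \ref{band, rep-infinite}, hence (being node-free and rep-infinite with all proper quotients forced rep-finite) min-rep-infinite. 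Then Theorem \ref{Ringel's Classification Thm} identifies each $\Lambda'_t$ as a cycle, barbell, or wind wheel algebra.

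Finally, I would reverse the process: starting from the disjoint union $\bigsqcup_t \Lambda'_t$ of cycles, barbells and wind wheels, re-gluing the sink--source pairs $z_+,z_-$ one node at a time (in the reverse order to the unglueing) reconstructs $(Q,I)$, because the gluing operation is by construction inverse to the local splitting move used above and the dropped relations $\beta_j\alpha_i$ are exactly the ones reinstated by gluing. This exhibits $\Lambda$ as obtained from barbell algebras, wind wheel algebras and copies of $k\widetilde{\mathbb{A}}_n$ by a sequence of gluings, which is the assertion. I expect the main obstacle to be the bookkeeping in the middle step: one must verify carefully that each time a node is resolved the sink $z_+$ and source $z_-$ that appear are legitimate candidates for a \emph{future} gluing (i.e.\ that they really are a sink and a source in the component they land in, with the right degrees), and that no component becomes representation-finite or acquires a projective-injective module along the way — in other words, that the class of ``min-rep-infinite special biserial, node count decreasing'' is genuinely stable under the move. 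The cleanest way to do this is probably to phrase everything directly in terms of $\nn(-)$ and Theorem \ref{node-free-algebra}, so that representation-type statements are inherited for free and only the purely combinatorial ``which sink glues to which source'' data has to be checked by hand on the explicit bound quivers of cycle, barbell and wind wheel algebras.
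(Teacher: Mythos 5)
Your overall route is the same as the paper's: resolve the nodes, use Lemma \ref{nodes of mSB} to see that the nodes are exactly the $4$-vertices, invoke Theorem \ref{node-free-algebra} and Ringel's classification (Theorem \ref{Ringel's Classification Thm}) to identify $\nn(\Lambda)$ as a cycle, barbell or wind wheel algebra, and then observe that gluing is the inverse of the local splitting move. The paper's proof is just this, in two lines (splitting into the cases ``no $3$-vertex'', where $\nn(\Lambda)\simeq k\widetilde{\mathbb{A}}_d$, and ``some $3$-vertex'', where $\nn(\Lambda)$ is a barbell or wind wheel).

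However, your middle ``bookkeeping'' step contains a genuine error. You assert that a disjoint union of algebras is minimal representation-infinite exactly when one summand is min-rep-infinite and the others are rep-finite. That is false: if $A=A_1\times A_2$ with both factors nonzero, each $A_i$ is a \emph{proper} quotient of $A$, so if $A$ is rep-infinite then some $A_i$ is a rep-infinite proper quotient and $A$ cannot be minimal; thus a min-rep-infinite algebra is automatically connected. Your subsequent argument is then internally inconsistent: you argue that the band of $\Lambda$ forces \emph{every} component $\Lambda'_t$ to be rep-infinite, yet if $r>1$ each $\Lambda'_t$ is a proper quotient of the min-rep-infinite algebra $\nn(\Lambda)$ and hence rep-finite. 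The correct conclusion, which you never draw, is simply that $r=1$: since $\nn(\Lambda)$ is min-rep-infinite by Theorem \ref{node-free-algebra}(1), it is connected (alternatively, the band of $\Lambda$ supporting all arrows survives node resolution — at a node a string changes direction, so its local letters $\alpha_2^{-1}\alpha_1$ and $\beta_2\beta_1^{-1}$ remain composable at $z_+$ and $z_-$ — and its walk already connects the whole quiver of $\nn(\Lambda)$). With this fix your ``each $\Lambda'_t$ is min-rep-infinite'' claim becomes vacuous bookkeeping and the rest of your argument, including the reconstruction of $(Q,I)$ by re-gluing the sink--source pairs, goes through and matches the paper's proof.
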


\begin{proof}
By Lemma \ref{nodes of mSB}, if $\Lambda=kQ/I$, then $\node(\Lambda)=\{x \in Q_0 \,|\, \deg(x)=4 \}$. 
If $\Lambda$ has no $3$-vertex, then $\nn(\Lambda) \simeq k \widetilde{\mathbb{A}}_d$ (for some $d \in \mathbb{Z}_{>0}$). 
In this case $\Lambda$ is the result of a sequence of gluing of a copy of $\widetilde{\mathbb{A}}_d$.

If $\Lambda$ has a $3$-vertex, then it cannot be a node and therefore, by Theorem \ref{node-free-algebra}, $\nn(\Lambda)$ is either a barbell or a wind wheel algebra.
\end{proof}

Now, we are ready to prove the following result.

\begin{proposition}\label{nody algebras are tau-finite}
Every nody algebra is $\tau$-tilting finite.
\end{proposition}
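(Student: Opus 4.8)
\textbf{Proof strategy for Proposition \ref{nody algebras are tau-finite}.}

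The plan is to leverage the structural description of nody algebras provided by Proposition \ref{gluing min-rep-inf sp.biserial} together with the brick-theoretic criterion of Theorem \ref{tau-finiteness}: a nody algebra $\Lambda=kQ/I$ is $\tau$-tilting finite precisely when $\Brick(\Lambda)$ is finite, so it suffices to bound the isoclasses of bricks. By Lemma \ref{nodes of mSB} every node of $\Lambda$ is a $4$-vertex, and by Proposition \ref{gluing min-rep-inf sp.biserial} the bound quiver $(Q,I)$ is obtained from a cycle, barbell, or wind wheel algebra $\Lambda_0=kQ_0/I_0$ by a finite sequence of gluings of a sink with a source (each gluing creating one node and killing all compositions through it). The key point is that $\Lambda$ and $\nn(\Lambda)=\Lambda_0$ are stably equivalent (Theorem \ref{node-free-algebra}(1)), and one expects this stable equivalence to transport the finiteness/infiniteness of the brick population up to the finitely many isoclasses (simples, projectives, and their immediate neighbours) that behave differently under node resolution.

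First I would reduce to a purely combinatorial statement about strings. Since $\Lambda$ is a string algebra (Lemma \ref{Monomial ideal}), every indecomposable is a string or band module, and band modules are never $\tau$-rigid and, as in Example \ref{Major NON-Example}, band modules over a nody algebra fail to be bricks because the gluing relations force a repeated simple composition factor on top and socle; so the essential task is to show that only finitely many string modules $M(w)$ are bricks. The crucial observation is that a string $w$ in $\Lambda$, when it passes through a node $z$, must enter $z$ along some arrow $\alpha^{\epsilon}$ and leave along some $\beta^{\eta}$ with $\beta\alpha\in I$ (and $\alpha\beta\notin Q$, since $z$ is a sink/source before gluing), which forces the walk to ``bounce'': the local picture at $z$ is a valley (both arrows incoming) or a peak (both outgoing) in the walk. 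Concretely, a string of $\Lambda$ is the same combinatorial datum as a walk in $Q_0$ which, read as a walk in the underlying quiver $Q_0$ of $\Lambda_0$, decomposes at the (finitely many) glued vertices into segments that are strings of $\Lambda_0$, concatenated with a forced change of direction. Using Proposition \ref{strings of wind wheel algebras} (resp. the explicit band structure of barbell/cycle algebras) one controls the strings of $\Lambda_0$, and I would then argue that if the length of $w$ exceeds a bound depending only on $(Q,I)$, then some segment of $w$ repeats a sub-configuration that, together with the forced bounce at an adjacent node, produces an admissible pair in $\mathtt{A}(w,w)$ in the sense of Definition \ref{admissible_pair}, hence a non-invertible graph map in $\End_\Lambda(M(w))$ by Theorem \ref{Thm-graph-maps} and the discussion in Remark \ref{induced graph maps}; thus $M(w)$ is not a brick.

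I expect the main obstacle to be the bookkeeping when several nodes are glued and the segments between consecutive nodes are short (e.g. of length comparable to the bars $\mathfrak{b}_t$ of a wind wheel algebra): one must verify that the pigeonhole argument producing a repeated substring still applies, i.e. that long strings genuinely traverse some node-free segment enough times, or else traverse the whole glued cycle enough times, to force the graph map. The cleanest route is probably to treat the two cases of Proposition \ref{gluing min-rep-inf sp.biserial} separately — (a) $\nn(\Lambda)=k\widetilde{\mathbb{A}}_d$, which generalizes Example \ref{Major NON-Example} essentially verbatim via the infinite word ${}^\infty w$ argument given there, and (b) $\nn(\Lambda)$ a barbell or wind wheel algebra, where one combines the ``bounce at a node'' mechanism with the brick-finiteness proofs of Propositions \ref{Barbells tau-infinite} and \ref{Wind wheel tau-finite} (noting that in the barbell case the gluing destroys the one brick-producing band, just as in Remark \ref{powers of bricks may not be brick}, because any band of $\Lambda$ must pass through a glued node and therefore carries a repeated simple on top and socle). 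In all cases the conclusion is that $\Brick(\Lambda)$ is finite, so $\Lambda$ is $\tau$-tilting finite by Theorem \ref{tau-finiteness}, and an appeal to Lemma \ref{band, rep-infinite} confirms $\Lambda$ is nonetheless rep-infinite, giving the asserted disagreement between the two notions.
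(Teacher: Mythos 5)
Your strategy is essentially the paper's own proof: the paper likewise reduces to brick-finiteness via Theorem \ref{tau-finiteness} and argues, exactly as in the wind wheel case, that the forced peak/valley behaviour of strings (and bands) at a node makes a vertex appear in both the top and the socle of almost every $M(w)$, producing a non-invertible graph map and hence only finitely many bricks. Your extra case split via Proposition \ref{gluing min-rep-inf sp.biserial} and the separate treatment of band modules is elaboration of details the paper leaves implicit, not a different route.
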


\begin{proof}
Since $\node({\Lambda}) \neq \emptyset$, the same argument as in the case of wind wheel algebras finishes the proof.
In particular, almost every string $w$ in $\Str(\Lambda)$ passes through a vertex which appears on the top and bottom of $w$. This gives rise to non-trivial graph maps in the string modules $M(w)$ and band modules $M(w,\lambda)$. Thus, there are only finitely many bricks in $\modu \Lambda$ and by Theorem \ref{tau-finiteness} we are done.
\end{proof}

The following theorem summarizes our results on min-rep-infinite special biserial algebras from the viewpoint of $\tau$-tilting finiteness.

\begin{theorem}\label{tau-finiteness of min-rep-inf special biseiral}
A minimal representation-infinite special biserial algebra $\Lambda$ is $\tau \text{-}$tilting finite if and only if $\nn(\Lambda)\neq \emptyset$ or $\Lambda$ is a wind wheel algebra. 
\end{theorem}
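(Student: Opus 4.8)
The plan is to combine the complete classification of $\Mri(\mathfrak{F}_{\sB})$ (Theorem~\ref{Ringel's Classification Thm} together with Proposition~\ref{gluing min-rep-inf sp.biserial}) with the four case-specific results established in the preceding sections, namely: $k\widetilde{\mathbb{A}}_n$ is $\tau$-tilting infinite (Lemma~\ref{Preproj/Postinj brick} applied to its preprojective/preinjective components, cf.\ Figure~\ref{fig:AR component of A-tilde}); barbell algebras are $\tau$-tilting infinite (Proposition~\ref{Barbells tau-infinite}); wind wheel algebras are brick finite, hence $\tau$-tilting finite (Proposition~\ref{Wind wheel tau-finite}); and nody algebras are $\tau$-tilting finite (Proposition~\ref{nody algebras are tau-finite}). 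The role of Theorem~\ref{tau-finiteness} is to translate everywhere between $\tau$-tilting finiteness and brick finiteness, so that the counting of bricks done in those propositions is exactly what is needed.

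First I would set up the dichotomy. Let $\Lambda=kQ/I$ be minimal representation-infinite special biserial. By Lemma~\ref{Monomial ideal}, $\Lambda$ is a string algebra with a monomial ideal, so the node-resolution machinery applies. There are two mutually exclusive cases: either $\nn(\Lambda)\neq\emptyset$ (i.e.\ $(Q,I)$ has a node, equivalently by Lemma~\ref{nodes of mSB} a $4$-vertex), in which case $\Lambda$ is by definition a nody algebra; or $\node(\Lambda)=\emptyset$, in which case by Theorem~\ref{Ringel's Classification Thm} $\Lambda$ is a cycle algebra ($\simeq k\widetilde{\mathbb{A}}_n$ for an acyclic orientation), a barbell algebra with non-serial bar, or a wind wheel algebra. (One should note here that in the excerpt's terminology ``$\nn(\Lambda)\neq\emptyset$'' is used as shorthand for ``$\Lambda$ has at least one node''; I would phrase the theorem's statement consistently with that usage.)

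Next I would assemble the conclusion case by case. In the node-free case: if $\Lambda=k\widetilde{\mathbb{A}}_n$ then $\Gamma(\modu\Lambda)$ has a preprojective component, so by Lemma~\ref{Preproj/Postinj brick} every module on it is a $\tau$-rigid brick and there are infinitely many, hence $\Lambda$ is $\tau$-tilting infinite (by Theorem~\ref{tau-finiteness}); if $\Lambda$ is a barbell algebra, Proposition~\ref{Barbells tau-infinite} exhibits infinitely many non-isomorphic bricks, so again $\tau$-tilting infinite; and if $\Lambda$ is a wind wheel algebra, Proposition~\ref{Wind wheel tau-finite} gives brick finiteness, hence $\tau$-tilting finiteness. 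In the nody case, Proposition~\ref{nody algebras are tau-finite} directly gives $\tau$-tilting finiteness. Putting these together: $\Lambda$ is $\tau$-tilting finite precisely when it is a wind wheel algebra or when $\nn(\Lambda)\neq\emptyset$ (nody), and $\tau$-tilting infinite precisely when it is a cycle or barbell algebra. This is exactly the asserted equivalence.

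The proof is essentially a bookkeeping argument once the four propositions are in hand, so there is no genuine ``hard part'' remaining at this point of the paper; the only thing requiring a word of care is the exhaustiveness and disjointness of the case split—specifically, that the node-free classification of Theorem~\ref{Ringel's Classification Thm} leaves no room for a fifth type and that ``nody'' and ``node-free barbell/wind wheel/cycle'' are genuinely complementary. I would handle this by explicitly invoking Corollary~\ref{Nod-free Sp.biserial} (which says $\Lambda$ is min-rep-infinite special biserial iff $\nn(\Lambda)$ is a cycle, barbell, or wind wheel algebra) to guarantee that resolving nodes always lands in a classified family, and by noting that whether or not $(Q,I)$ originally possessed a node is an honest binary invariant of $\Lambda$. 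With that, the statement follows. $\qed$
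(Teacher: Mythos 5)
Your proposal is correct and follows essentially the same route as the paper: the paper's proof simply combines the node-free classification result (Theorem \ref{tau-finite node-free mSB Thm}, itself resting on Theorem \ref{Ringel's Classification Thm} and Propositions \ref{Barbells tau-infinite} and \ref{Wind wheel tau-finite}) with Proposition \ref{nody algebras are tau-finite} for the nody case, which is exactly the bookkeeping you carry out, only unpacked one level further. Your observation that ``$\nn(\Lambda)\neq\emptyset$'' is shorthand for $\node(\Lambda)\neq\emptyset$ is also the correct reading of the paper's notation.
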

\begin{proof}
This follows from Proposition \ref{faithful tau-rigid}, Theorem \ref{tau-finite node-free mSB Thm} and Proposition \ref{nody algebras are tau-finite}.
\end{proof}

From the previous example one can also observe that not every min-rep-infinite algebra is minimal with respect to $\tau$-tilting infiniteness. Analogous to the classical notion of minimal representation-infinite, we call $\Lambda$ 
\emph{minimal $\tau$-tilting infinite} if $\Lambda$ is $\tau$-tilting infinite, but every proper quotient algebra of $\Lambda$ is $\tau$-tilting finite. 
In a sequel to this work, we extensively study the notion of minimal $\tau$-tilting infinite algebras for different families of algebras.
Recently, in \cite{W}, Wang also studies this notion for the two-point algebras. As a consequence of the preceding theorem, we have the following corollary.

\begin{corollary}\label{tau-infinite min-rep-sp.biserial}
If $\Lambda$ is a minimal representation-infinite special biserial algebra, it is minimal $\tau$-tilting infinite if and only if $\Lambda$ is a cycle or barbell algebra. 
\end{corollary}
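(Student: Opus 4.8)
The plan is to combine the classification of $\Mri(\mathfrak{F}_{\sB})$ already established in Theorem \ref{tau-finiteness of min-rep-inf special biseiral} with the observation that the proper quotients of cycle and barbell algebras behave well under the reduction to minimal representation-infinite quotients. Recall that $\Lambda$ minimal $\tau$-tilting infinite means $\Lambda$ is $\tau$-tilting infinite while every proper quotient is $\tau$-tilting finite. The forward direction is essentially the content of the preceding results: if $\Lambda \in \Mri(\mathfrak{F}_{\sB})$ is $\tau$-tilting infinite, then by Theorem \ref{tau-finite node-free mSB Thm} and Proposition \ref{nody algebras are tau-finite} (summarized in Theorem \ref{tau-finiteness of min-rep-inf special biseiral}) it must be a cycle or a barbell algebra — the wind wheel and nody cases are $\tau$-tilting finite. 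So the only work is to show that cycle and barbell algebras are in fact \emph{minimal} $\tau$-tilting infinite, i.e.\ that every proper quotient is $\tau$-tilting finite.

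First I would handle the cycle algebras $\Lambda = k\widetilde{\mathbb{A}}_n$. Since $\widetilde{\mathbb{A}}_n$ is acyclic, any proper quotient $\Lambda/J$ with $J \ne 0$ arises by imposing a nonzero ideal, and in particular (passing to a further quotient if necessary, which only helps by Theorem \ref{quotient-lattice}) we may assume $J = \langle \gamma \rangle$ for some arrow $\gamma$, or $J = \langle e_x\rangle$ for some vertex. Deleting an arrow or a vertex from $\widetilde{\mathbb{A}}_n$ yields a disjoint union of type-$\mathbb{A}$ quivers (with some relations, but the underlying bound quiver is representation-finite), hence a representation-finite, therefore $\tau$-tilting finite, algebra. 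So every proper quotient of a cycle algebra is $\tau$-tilting finite, and cycle algebras are minimal $\tau$-tilting infinite.

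Next I would treat the barbell algebras. Here the key point is that a barbell algebra $\Lambda$ is already minimal representation-infinite (by our running hypothesis, since a barbell with nonserial bar is min-rep-infinite by Lemma \ref{min-rep-infiniteinite barbell algebras}, and the theorem under proof concerns such $\Lambda$). Therefore every \emph{proper} quotient $\Lambda/J$ is representation-finite, hence $\tau$-tilting finite. This is immediate and requires no combinatorial analysis of strings — it is purely the definition of min-rep-infinite together with the fact that representation-finite implies $\tau$-tilting finite. Combined with Proposition \ref{Barbells tau-infinite} (which gives that $\Lambda$ itself is $\tau$-tilting infinite), we conclude barbell algebras are minimal $\tau$-tilting infinite. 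Note this is a special case of the more refined Theorem (stated in the introduction) that \emph{generalized} barbell algebras are minimal $\tau$-tilting infinite, but for the ordinary barbell algebras in $\Mri(\mathfrak{F}_{\sB})$ nothing beyond minimality of representation type is needed.

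For the converse direction, suppose $\Lambda \in \Mri(\mathfrak{F}_{\sB})$ is minimal $\tau$-tilting infinite. In particular $\Lambda$ is $\tau$-tilting infinite, so by Theorem \ref{tau-finiteness of min-rep-inf special biseiral} we must have $\nn(\Lambda) = \emptyset$ and $\Lambda$ not a wind wheel algebra; by Corollary \ref{Nod-free Sp.biserial} (equivalently Theorem \ref{Ringel's Classification Thm}) the only remaining options are that $\Lambda$ is a cycle or a barbell algebra. This closes the argument. The main obstacle I anticipate is essentially bookkeeping: making sure that ``every proper quotient'' in the definition of minimal $\tau$-tilting infinite is correctly reduced — via Theorem \ref{quotient-lattice}, it suffices to check the maximal proper quotients, i.e.\ those of the form $\Lambda/\langle e_x\rangle$ and $\Lambda/\langle \gamma\rangle$ — and then observing that for cycle algebras these are representation-finite. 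For barbell algebras there is genuinely no obstacle, as minimality of representation type does all the work.
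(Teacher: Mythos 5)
Your overall route is the same as the paper's: since $\Lambda$ is by hypothesis minimal representation-infinite, every proper quotient of $\Lambda$ is representation-finite and hence $\tau$-tilting finite, so $\Lambda$ is minimal $\tau$-tilting infinite exactly when it is $\tau$-tilting infinite; combining this with Theorem \ref{tau-finiteness of min-rep-inf special biseiral} (i.e.\ Theorem \ref{tau-finite node-free mSB Thm} together with Proposition \ref{nody algebras are tau-finite}) yields the corollary. You state this correctly in your barbell paragraph and in the converse direction, and that is all the paper does.

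There is, however, a genuine slip in your cycle-algebra paragraph and in the closing ``bookkeeping'' remark. Theorem \ref{quotient-lattice} only transports $\tau$-tilting finiteness \emph{from} an algebra \emph{to} its quotients, so knowing that some further quotient of $\Lambda/J$ is $\tau$-tilting finite says nothing about $\Lambda/J$ itself: ``passing to a further quotient'' does not help here, it goes the wrong way. Relatedly, proper quotients do not reduce to those of the form $\Lambda/\langle e_x\rangle$ or $\Lambda/\langle\gamma\rangle$: a nonzero ideal $J$ need not contain any arrow or vertex idempotent (a minimal nonzero ideal is typically spanned by a socle element, e.g.\ the class of a path of length two), and $\Lambda/\langle e_x\rangle$, $\Lambda/\langle\gamma\rangle$ are quotients by comparatively large ideals, not maximal proper quotients; if one wanted a distinguished test family, it would be the quotients by minimal nonzero ideals. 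Fortunately none of this machinery is needed: $k\widetilde{\mathbb{A}}_n$ is itself minimal representation-infinite, so the one-line argument you give for barbell algebras (proper quotient of a min-rep-infinite algebra $\Rightarrow$ representation-finite $\Rightarrow$ $\tau$-tilting finite) applies verbatim to cycle algebras, and with that substitution your proof is complete and coincides with the paper's.
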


Before we end this section, let us remark on the new insight that our results provide into min-rep-inf special biserial algebras and also present a useful consequence of our investigation from the viewpoint of $\tau$-tilting finiteness. 

By Corollary \ref{tau-infinite min-rep-sp.biserial}, we can observe that the intersection of the family of minimal rep-infinite special biserial algebras with the family of minimal $\tau$-tilting infinite special biserial algebras forms a relatively simple class of string algebras. In Remark \ref{examples of min-tau-infinite string alg}, we alluded to the fact that there could be an infinite family of minimal $\tau$-tilting infinite special biserial algebras which are not min-rep-infinite.
This gap between these two important families of special biserial algebras is treated in our future work.

With respect to the above results, roughly speaking, one could say that most min-rep-infinite special biserial algebras are $\tau$-tilting finite. 
In particular, for a fixed number of vertices $n$, the number of isomorphism classes of algebras in $\Mri(\mathfrak{F}_{\sB})$ with $n$ non-isomorphic simple modules which are $\tau$-tilting finite grows much faster that than the $\tau$-tilting infinite classes. 
This is because the latter classes of special biserial algebras, which consists of barbell and cycle algebras, have simple and restrictive configurations for their bound quivers in comparison with their $\tau$-tilting finite counterparts in $\Mri(\mathfrak{F}_{\sB})$, which consists of wind wheel and nody algebras.
In fact, Proposition \ref{gluing min-rep-inf sp.biserial} gives a simple algorithm to convert many algebras $\Lambda$ in $\Mri(\mathfrak{F}_{\sB})$ with $|\Lambda|>n$ to nody algebras with exactly $n$ vertices. 

For any $n$ in $\mathbb{Z}_{>0}$, finding the exact number of isomorphism classes of $\tau$-tilting finite and $\tau$-tilting infinite algebras $\Lambda$ in $\Mri(\mathfrak{F}_{\sB})$, with $|\Lambda|=n$, could be an interesting problem to investigate. We do not address this problem in this paper, however, the explicit description of the bound quivers of the algebras in $\Mri(\mathfrak{F}_{\sB})$ and their complete classification with respect to $\tau$-tilting finiteness should provide impetus to finding closed formulas for each $n$. 
One can also ask the same question for the number of isomorphism classes of minimal $\tau$-tilting infinite special biserial algebras. The latter problem, however, seems less tractable at this stage.
Any progress in this direction should shed a new light on the fundamental differences between the concepts of $\tau$-tilting finiteness and representation-finiteness, where, as explained in the introduction, the former one is a natural generalization of the latter concept. 

\section{$\tau$-Tilting Finite Gentle Algebras are Representation-finite}\label{Section:tau-tilting finite gentle algebras are representation-finite}

As shown in Section \ref{Section:Reduction to mild special biserial algebras}, a minimal representation-infinite algebra cannot have a projective-injective module. This, in particular, implies that every min-rep-infinite special biserial algebra is a string algebra (see Lemma \ref{Monomial ideal}).
From the explicit description of min-rep-infinite special biserial algebras in terms of their bound quivers (see Sections \ref{Section:Bound Quivers of Mild Special Biserial Algebras} and \ref{section: Nody algebras}), one can further observe that two of the four classes of min-rep-infinite special biserial algebras are in fact gentle algebras (being the cycle and barbell algebras).
Inspired by this observation, in this section we prove an important corollary of the methodology that we developed in the previous sections. In particular, we consider the problem of $\tau$-tilting finiteness of gentle algebras through a similar analysis.
However, to make a complete comparison between rep-finiteness and $\tau$-tilting finiteness of gentle algebras, we need to also consider some new bound quivers which do not appear on the list of those studied in the previous sections.

Let us first highlight an important difference between the family of gentle algebras and the family of special biserial algebras which we must consider in our treatment of the former class.
As before, by $\mathfrak{F}_{S}$ and $\mathfrak{F}_{G}$ we denote the family of string and gentle algebras. They are, by definition, proper subfamilies of $\mathfrak{F}_{\sB}$ which consists of special biserial algebras.
As shown in Sections \ref{Section:Reduction to mild special biserial algebras}, if $\mathfrak{F}$ is quotient-closed, to compare the concepts of rep-finiteness and $\tau$-tilting finiteness of the algebras in $\mathfrak{F}$, we can reduce the problem to that of the minimal rep-infinite algebras in $\mathfrak{F}$, denoted by $\Mri(\mathfrak{F})$.
This was the key point in our study of the $\tau$-tilting finiteness of special biserial algebras, because the family $\mathfrak{F}_{\sB}$ is quotient-closed. However, as remarked previously (and it is easy to check), neither the family of string algebras $\mathfrak{F}_{\St}$ nor that of gentle algebras $\mathfrak{F}_{\G}$ is quotient-closed. 
Hence, to compare the $\tau$-tilting finiteness and rep-finiteness of string and gentle algebras, one cannot directly employ the above-mentioned reductive method.

Regardless of the failure of the families $\mathfrak{F}_{\St}$ and $\mathfrak{F}_{\G}$ with respect to the quotient-closedness, one can still substantially benefit from our characterization of $\tau$-tilting finite and infinite algebras in $\Mri(\mathfrak{F}_{\sB})$ to treat the algebras from the aforementioned families.
To see this, as in Section \ref{Section:Reduction to mild special biserial algebras}, for a family of algebra $\mathfrak{F}$ we define
$$\Mri(\mathfrak{F}):= \{\Lambda \in \mathfrak{F} \, | \, \text{$\Lambda$ is min-rep-infinite} \}.$$

By Lemma \ref{Monomial ideal}, for the family of string algebras $\mathfrak{F}_{\St}$ we in fact have $\Mri(\mathfrak{F}_{\St})=\Mri(\mathfrak{F}_{\sB})$. 
Namely, two different notions of minimality coincide in the family of string algebras: if $\Lambda$ is a rep-infinite string algebra  and it is minimal in $\mathfrak{F}_{\St}$ with respect to this property (i.e, if $\Lambda'$ is any proper quotient of $\Lambda$, then either $\Lambda'$ is rep-finite or $\Lambda'$ does not belong to $\mathfrak{F}_{\St}$), then $\Lambda$ is minimal representation-infinite. 
Hence, $\Lambda$ falls into exactly one of the four different classes of min-rep-infinite special biserial algebras we studied in Section \ref{Section:Bound Quivers of Mild Special Biserial Algebras}, \ref{Section:tau-Tilting Finite Node-free Special Biserial Algebras} and \ref{section: Nody algebras}. We remark that for an arbitrary family $\mathfrak{F}$ of algebras, \emph{a priori}, neither of these two notions of minimality implies the other one.

In contrast to the family of string algebras, our results from the preceding sections do not fully settle the same problem for the family of gentle algebras.
This is because the above-mentioned coincidence of the notions of minimality does not occur in the family $\mathfrak{F}_{\G}$.
In particular, not all of the rep-infinte gentle algebras which are minimal in $\mathfrak{F}_{\G}$ appear among the list of algebras in $\Mri(\mathfrak{F}_{\sB})$ (Namely, there exist gentle algebras $A$ such that $A$ is rep-infinite and every proper quotient of $A$ is either rep-finite or not a gentle algebra, but $A$ is not min-rep-infinite.)

To achieve the primary goal of this section, which is to compare $\tau$-tilting finiteness and representation-finiteness of algebras in $\mathfrak{F}_{\G}$, we develop a simple reductive method for string algebras such that for any rep-infinite algebra in $\mathfrak{F}_{\G}$, through a sequence of reductions which preserves $\tau$-tilting finiteness, we end up in an explicit list of rep-infinite gentle algebras which we call fully reduced. One can abstractly define the fully reduced algebras as follows: a gentle algebra $A$ is \emph{fully reduced} if it is representation-infinite and for every proper quotient $B$ of $A$ we have either $B$ is rep-finite or $B$ is not a gentle algebra. 
Note that the fully reduced gentle algebras are in fact defined based upon the notion of minimality among the rep-infinite algebras in $\mathfrak{F}_{\G}$, rather than in the family of all algebras. 

This way, we only need to verify the $\tau$-tilting infiniteness of the fully reduced gentle algebras in order to prove the main result of this section. Based upon the concrete characterization of the fully reduced gentle algebras in terms of their bound quivers and the similar methods that we used in the previous sections, we show that every fully reduced gentle algebra admits infinitely many (non-isomorphic) bricks. This, by Theorem \ref{tau-finiteness}, implies the following theorem.
\begin{theorem}\label{tau-tilting finiteness of gentle algebras}
A gentle algebra is $\tau$-tilting finite if and only if it representation-finite.
\end{theorem}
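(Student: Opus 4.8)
The plan is to reduce, via $\tau$-tilting-finiteness-preserving operations, an arbitrary representation-infinite gentle algebra to a short explicit list of bound quivers (the \emph{fully reduced} gentle algebras), and then to exhibit, for each such algebra, an infinite family of pairwise non-isomorphic bricks; by Theorem \ref{tau-finiteness} this shows $\tau$-tilting infiniteness of all representation-infinite gentle algebras, and the converse direction is trivial since representation-finite algebras are always $\tau$-tilting finite. Concretely, first I would recall that by Theorem \ref{quotient-lattice} a quotient of a $\tau$-tilting finite algebra is $\tau$-tilting finite, so it suffices to prove that every \emph{minimal} representation-infinite gentle algebra — here minimality is taken \emph{inside} $\mathfrak{F}_{\G}$, i.e.\ every proper quotient is either representation-finite or not gentle — is $\tau$-tilting infinite. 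These are exactly the fully reduced gentle algebras. The subtlety, already flagged in the excerpt, is that $\mathfrak{F}_{\G}$ is not quotient-closed, so a gentle algebra minimal in $\mathfrak{F}_{\G}$ need not be min-rep-infinite; hence I cannot simply invoke the classification $\Mri(\mathfrak{F}_{\sB})$ and must instead design a reduction procedure that, starting from any representation-infinite gentle algebra, passes through gentle quotients (killing vertices via $\langle e_x\rangle$ and arrows via $\langle\gamma\rangle$, which by Remark \ref{vertex-quotient} stay gentle) until no further strictly-smaller gentle representation-infinite quotient exists, landing on the fully reduced list.

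The core combinatorial step is to obtain the explicit description of the bound quivers of fully reduced gentle algebras. I would argue as follows: a representation-infinite string algebra has a band (Lemma \ref{band, rep-infinite}), so a representation-infinite gentle algebra $A=kQ/I$ has a band $b$. Restricting to the support of $b$ already gives a gentle quotient, so WLOG $b$ is sincere. Since $A$ is gentle, $I$ is quadratic, and the local conditions (B1), (B2), (G) together with the fact that $A$ must be finite-dimensional (no unrelated oriented cycle) heavily constrain $(Q,I)$. Tracking how a sincere band traverses each vertex, and using that killing any arrow not "needed" by the band would still leave a band, one finds that every vertex has degree $2$ or $3$, every $3$-vertex carries exactly one relation, there are no projective-injective modules (Lemma \ref{Monomial ideal}) — in particular no nodes, because a node would make $A$ non-gentle or force a proper gentle quotient to be representation-infinite — and the quiver is built from one affine cycle $\widetilde{\mathbb A}_n$ by a sequence of barifications along copies of $\mathbb A_d$. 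Imposing only the C-relations (quadratic) that barification naturally produces, and \emph{no} longer B-relations (those are not quadratic, hence forbidden in the gentle setting, which is precisely why wind wheels are excluded), one recovers exactly the two possibilities asserted in the introduction: $A=k\widetilde{\mathbb A}_n$ (a cycle algebra), or $(Q,I)$ is a generalized barbell quiver, i.e.\ two cyclic strings $C_L$, $C_R$ joined by a bar $\mathfrak b$ (possibly of length zero, possibly non-serial) with $I=\langle\beta\alpha,\delta\gamma\rangle$. I would verify minimality-in-$\mathfrak{F}_{\G}$ of each of these and, conversely, that the reduction of any representation-infinite gentle algebra must terminate at one of them.

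It then remains to show each fully reduced gentle algebra is $\tau$-tilting infinite, i.e.\ brick infinite. For $k\widetilde{\mathbb A}_n$ this is immediate: its Auslander--Reiten quiver has preprojective (and preinjective) components, so by Lemma \ref{Preproj/Postinj brick} there are infinitely many bricks. For generalized barbell quivers I would reuse, essentially verbatim, the argument of Proposition \ref{Barbells tau-infinite}: choosing the correct orientations for the cyclic walks $C_L=\alpha\cdots\beta$, $C_R=\gamma\cdots\delta$ and setting $w:=\mathfrak b^{-1}C_R\mathfrak b C_L$ (or the sign-swapped version when the bar's first arrow points the other way, and when $l(\mathfrak b)=0$ taking $w:= C_R C_L$ suitably oriented through the glued vertex $x=y$), one checks via graph maps (Theorem \ref{Thm-graph-maps} and Remark \ref{induced graph maps}) that $M(w^m)$ is a brick for every $m\in\mathbb Z_{>0}$: the only repeated substrings of $w^m$ lie inside the bar and occur as a string together with its inverse, so the local configuration required for a non-identity endomorphism never arises — exactly the mechanism in Proposition \ref{Barbells tau-infinite}. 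The genuinely new cases relative to that proposition are the degenerate ones ($l(\mathfrak b)=0$, or $\alpha=\beta$, or $\gamma=\delta$), and these are handled by the same top/bottom-substring analysis; the finite-dimensionality constraint (which forbids $C_L$ and $C_R$ both serial when $l(\mathfrak b)=0$) guarantees the needed non-serial structure somewhere. Since $M(w^m)$ for distinct $m$ have distinct dimension vectors, these bricks are pairwise non-isomorphic, so $\Brick(A)$ is infinite and $A$ is $\tau$-tilting infinite by Theorem \ref{tau-finiteness}. The main obstacle is the second paragraph: proving rigorously that the reduction procedure really does land only on cycles and generalized barbells — i.e.\ that no other representation-infinite gentle algebra is minimal in $\mathfrak{F}_{\G}$ — which requires a careful, somewhat lengthy case analysis of how sincere bands interact with the gentle constraints and with barification, and in particular showing that any additional relation forced by finite-dimensionality can be absorbed into a quadratic C-relation rather than a B-relation.
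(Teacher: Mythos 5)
Your proposal follows essentially the same route as the paper: reduce inside the gentle family (by vertex and arrow quotients, which stay gentle) to the fully reduced gentle algebras, identify these as the acyclic $k\widetilde{\mathbb{A}}_n$ and the generalized barbell algebras (Proposition \ref{quiver of fully-reduced gentle algebras}), and then exhibit infinitely many bricks: preprojectives in the hereditary case, and the string modules $M(w^m)$ with $w=\mathfrak{b}^{-1}C_R\mathfrak{b}C_L$ when the bar has positive length, exactly as in Proposition \ref{Barbells tau-infinite}; brick-infiniteness then gives $\tau$-tilting infiniteness by Theorem \ref{tau-finiteness}.

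The one step that fails as literally written is your treatment of the degenerate case $l(\mathfrak{b})=0$, which is precisely the point the paper isolates in the remark following its proof. The string $C_RC_L$ is never a brick, no matter how you ``orient it suitably'': with the fixed walks $C_L=\alpha\cdots\beta$ and $C_R=\gamma\cdots\delta$, the word $C_RC_L$ begins with the direct arrow $\beta$ leaving the glued vertex $x=y$ and ends with the direct arrow $\gamma$ entering it, so the trivial substring $e_x$ admits a quotient factorization at one end and a submodule factorization at the other; the resulting graph map $M(C_RC_L)\to S_x\to M(C_RC_L)$ is a nonzero non-invertible endomorphism. Since the four arrows $\alpha,\beta,\gamma,\delta$ at $x$ have forced orientations, there is no freedom to avoid this by reorienting. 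The missing idea is to cut the band at a different point: writing $C_L=\alpha\,\nu_p^{\epsilon_p}\cdots\nu_2^{\epsilon_2}\beta$, choose the smallest $i$ such that $\nu_i^{\epsilon_i}\cdots\nu_2^{\epsilon_2}\beta$ is serial but $\nu_{i+1}^{\epsilon_{i+1}}\nu_i^{\epsilon_i}\cdots\beta$ is not (such an $i$ exists because $C_L$ and $C_R$ cannot both be serial when $l(\mathfrak{b})=0$, by finite-dimensionality), and take the cyclic permutation $w=(\nu_i^{\epsilon_i}\cdots\nu_2^{\epsilon_2}\beta)\,C_R\,(\alpha\,\nu_p^{\epsilon_p}\cdots\nu_{i+1}^{\epsilon_{i+1}})$. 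Only with this choice of cut point does the top/bottom substring analysis go through, yielding that $M(w^d)$ is a brick for all $d\geq 1$. With that replacement your argument coincides with the paper's proof.
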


As mentioned before, in a paper with the same title as the current section, and via a similar approach, Plamondon independently shows the above theorem in his recent work (see \cite{P}).

Before we restrict to the family of gentle algebras and specifically explore the $\tau$-tilting finiteness of this ubiquitous family of algebras, let us first look at some of the relevant consequences of our results from the preceding section.
The following theorem is in fact a rewording of Corollary \ref{tau-infinite min-rep-sp.biserial}, which highlights one of the fundamental differences between two subclasses of $\Mri(\mathfrak{F}_{\sB})$ in terms of gentle algebras.

\begin{theorem}
Let $\Lambda$ be a minimal representation-infinite special biserial algebra. Then, $\Lambda$ is $\tau$-tilting infinite if and only if it is a gentle algebra.
\end{theorem}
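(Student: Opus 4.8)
The statement to prove is that a minimal representation-infinite special biserial algebra $\Lambda$ is $\tau$-tilting infinite if and only if it is gentle. The plan is to combine the classification results already established in the preceding sections with a verification that, among the four types of bound quivers in $\Mri(\mathfrak{F}_{\sB})$ — cycle, barbell, wind wheel, and nody — precisely the cycle and barbell quivers are gentle. This reduces the whole statement to a purely combinatorial check on each family of bound quivers, together with the $\tau$-tilting classification from Theorem~\ref{tau-finiteness of min-rep-inf special biseiral} (equivalently Corollary~\ref{tau-infinite min-rep-sp.biserial}).

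First I would recall Corollary~\ref{tau-infinite min-rep-sp.biserial}: a min-rep-infinite special biserial $\Lambda$ is $\tau$-tilting infinite if and only if it is a cycle or a barbell algebra. So it suffices to prove: \emph{$\Lambda$ is gentle $\iff$ $\Lambda$ is a cycle or barbell algebra}. For the implication ($\Leftarrow$), a cycle algebra $k\widetilde{\mathbb{A}}_n$ is hereditary with $I = 0$, which is vacuously gentle; and a barbell algebra $kQ(\mathtt v,\mathtt w)/\langle\beta\alpha,\delta\gamma\rangle$ has an ideal generated by the two quadratic relations $\beta\alpha$ and $\delta\gamma$, so one only needs to check condition~(G) at each arrow. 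This is immediate from the explicit bound quiver in Figure~\ref{fig:bound quiver of Barbell algebra}: at every vertex the degree is at most $3$, and the only vertices involved in relations are $x$ and $y$, each carrying exactly one quadratic relation, so both conditions of gentleness hold. For the implication ($\Rightarrow$), I would argue contrapositively. Suppose $\Lambda$ is a min-rep-infinite special biserial algebra that is not a cycle or barbell; by Theorem~\ref{Ringel's Classification Thm} together with the nody case (Section~\ref{section: Nody algebras}), $\Lambda$ is a wind wheel or a nody algebra. In the wind wheel case, the defining ideal $I_{\mathfrak{w}}$ contains the B-relations $\alpha_{m_j}^{-1} v_i \alpha_{m_i}$, which are paths of length $l(v_i)+2 \geq 3$ (since $l(v_i) > 0$), hence $I_{\mathfrak{w}}$ cannot be generated by quadratic relations — so the algebra is not gentle. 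In the nody case, one uses Lemma~\ref{nodes of mSB}: a nody algebra has a vertex $x$ of degree $4$, which is a node, meaning $\beta\alpha \in I$ for all incoming $\alpha$ and outgoing $\beta$ at $x$. There are two incoming arrows $\alpha_1,\alpha_2$ and (at least one) outgoing arrow $\beta$ at $x$, so $\beta\alpha_1, \beta\alpha_2 \in I$; condition~(G) on the arrow $\beta$ (or its dual, condition~(B2)) is violated — a gentle algebra satisfies (B1) forbidding a $4$-vertex — so again $\Lambda$ is not gentle.

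Assembling these two directions gives the equivalence ``$\Lambda$ gentle $\iff$ cycle or barbell'', and composing with Corollary~\ref{tau-infinite min-rep-sp.biserial} yields the theorem. I would present it as: $\Lambda$ is $\tau$-tilting infinite $\iff$ ($\Lambda$ is a cycle or barbell quiver) $\iff$ $\Lambda$ is gentle, where the middle equivalence is the combinatorial check above and the left equivalence is Corollary~\ref{tau-infinite min-rep-sp.biserial}.

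The main obstacle is not any deep argument but rather bookkeeping: one must be careful to use the correct (slightly more general) definition of barbell used in this paper, to confirm that the ``generalized barbell'' subtlety mentioned in the introduction does not affect the min-rep-infinite case (by Lemma~\ref{min-rep-infiniteinite barbell algebras} a barbell is min-rep-infinite exactly when the bar is nonserial, and Theorem~\ref{node-free-algebra}(2) forbids $4$-vertices only in the node-free reduction, so one must track the distinction between $\Lambda$ and $\mathtt{nn}(\Lambda)$ carefully in the nody case), and to make sure the wind wheel B-relations genuinely have length $\geq 3$ — which they do since every bar $v_i$ has positive length by construction. Once these definitional points are pinned down, the proof is short.
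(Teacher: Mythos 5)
Your proposal is correct and follows essentially the same route as the paper: invoke the classification (Theorem \ref{tau-finiteness of min-rep-inf special biseiral}/Corollary \ref{tau-infinite min-rep-sp.biserial}) to reduce to ``$\tau$-tilting infinite $\iff$ cycle or barbell,'' then check gentleness case by case — cycle and barbell gentle, wind wheel ruled out by the non-quadratic B-relations, nody ruled out at the degree-$4$ node. One small correction: condition (B1) does \emph{not} forbid $4$-vertices (gentle algebras may well have a $4$-vertex carrying exactly two quadratic relations), so drop that parenthetical; your primary argument — that at a node both $\beta\alpha_1$ and $\beta\alpha_2$ lie in $I$, violating condition (G) — is the right one and matches the paper's observation that a nody algebra has a $4$-vertex with more than two quadratic relations.
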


\begin{proof}
For $\Lambda$ in $\Mri(\mathfrak{F}_{\sB})$, by Theorem \ref{tau-finiteness of min-rep-inf special biseiral}, $\Lambda$ is $\tau$-tilting infinite if and only if it is a barbell or cycle algebra. Since $\Lambda$ is min-rep-infinite and every hereditary algebra of type $\widetilde{\mathbb{A}}_n$ is obviously gentle, one only need to verify that among the other three classes of min-rep-infinite special biserial algebras, only barbell algebras are gentle.

From the configuration of barbell algebras (see Section \ref{subsection:Barbell Algebra}), it is immediate that they are gentle.
In contrast, nody algebras are never gentle, because their bound quivers contain a $4$-vertex with more than two quadratic relations. Furthermore, each wind wheel algebra has at least one non-quadratic monomial relation, therefore it cannot be gentle. Hence, we are done.
\end{proof}

Because a minimal representation-infinite algebra which is $\tau$-tilting infinite is necessarily minimal $\tau$-tilting infinite, we can rephrase our classification of minimal $\tau$-tilting algebras in $\Mri( \mathfrak{F}_{\sB})$.

\begin{corollary}
A minimal representation-infinite special biserial algebra is minimal $\tau$-tilting infinite if and only if it is a gentle algebra.
\end{corollary}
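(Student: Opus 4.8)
The plan is to deduce the corollary directly from the two results that immediately precede it in the text, so almost no new work is needed. First I would recall that by Corollary~\ref{tau-infinite min-rep-sp.biserial}, a minimal representation-infinite special biserial algebra $\Lambda$ is minimal $\tau$-tilting infinite if and only if $\Lambda$ is a cycle or a barbell algebra. Next I would invoke the theorem just proved above (the one asserting that a minimal representation-infinite special biserial algebra is $\tau$-tilting infinite if and only if it is gentle): its proof already establishes that among the four classes of min-rep-infinite special biserial algebras (cycle, barbell, wind wheel, nody), precisely the cycle and barbell algebras are gentle. Combining these two equivalences gives: $\Lambda$ is minimal $\tau$-tilting infinite $\iff$ $\Lambda$ is a cycle or barbell algebra $\iff$ $\Lambda$ is gentle.

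The key steps, in order, are therefore: (1) state the starting hypothesis that $\Lambda \in \Mri(\mathfrak{F}_{\sB})$; (2) apply Theorem~\ref{tau-finiteness of min-rep-inf special biseiral} (or directly Corollary~\ref{tau-infinite min-rep-sp.biserial}) to characterize which such $\Lambda$ are $\tau$-tilting infinite, noting that by the general remark recorded earlier (a min-rep-infinite algebra which is $\tau$-tilting infinite is automatically minimal $\tau$-tilting infinite, since every proper quotient is rep-finite hence $\tau$-tilting finite) these are exactly the minimal $\tau$-tilting infinite ones; (3) quote the classification of the bound quivers of cycle, barbell, wind wheel and nody algebras to see that cycle and barbell are gentle while wind wheel (non-quadratic monomial B-relations) and nody ($4$-vertex with more than two quadratic relations) are not; (4) conclude the biconditional.

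\begin{proof}
Let $\Lambda$ be a minimal representation-infinite special biserial algebra. By the remark preceding Theorem~\ref{tau-finite node-free mSB Thm}, if $\Lambda$ is $\tau$-tilting infinite then it is automatically minimal $\tau$-tilting infinite, since by minimality every proper quotient of $\Lambda$ is representation-finite and hence $\tau$-tilting finite. Conversely, a minimal $\tau$-tilting infinite algebra is in particular $\tau$-tilting infinite. Thus, within $\Mri(\mathfrak{F}_{\sB})$, being minimal $\tau$-tilting infinite is equivalent to being $\tau$-tilting infinite.

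By Theorem~\ref{tau-finiteness of min-rep-inf special biseiral}, $\Lambda$ is $\tau$-tilting infinite if and only if $\nn(\Lambda)=\emptyset$ and $\Lambda$ is not a wind wheel algebra; equivalently, by the classification in Theorems~\ref{Ringel's Classification Thm} and \ref{tau-finiteness of min-rep-inf special biseiral}, if and only if $\Lambda$ is a cycle or a barbell algebra. It therefore remains to identify which of the four classes of min-rep-infinite special biserial algebras are gentle. From the configuration of barbell algebras in Section~\ref{subsection:Barbell Algebra}, every barbell algebra has only quadratic relations and satisfies condition (G), so it is gentle; likewise every hereditary algebra of type $\widetilde{\mathbb{A}}_n$, i.e.\ every cycle algebra, is gentle. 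On the other hand, a nody algebra has a $4$-vertex carrying more than two quadratic relations (Lemma~\ref{nodes of mSB}), so it is not gentle, and each wind wheel algebra has at least one non-quadratic monomial relation (a B-relation, see Section~\ref{subsection:construction of wind wheel algebras}), so it is not gentle either. Hence $\Lambda$ is a cycle or barbell algebra if and only if $\Lambda$ is gentle.

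Combining the two equivalences, $\Lambda$ is minimal $\tau$-tilting infinite if and only if $\Lambda$ is a gentle algebra.
\end{proof}

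The main obstacle here is essentially bookkeeping rather than mathematics: one must be careful that the two notions ``$\tau$-tilting infinite'' and ``minimal $\tau$-tilting infinite'' really do coincide on $\Mri(\mathfrak{F}_{\sB})$ (which follows because proper quotients of min-rep-infinite algebras are rep-finite, hence $\tau$-tilting finite), and that the gentleness check covers all four classes of Theorem~\ref{Ringel's Classification Thm} together with the nody case of Section~\ref{section: Nody algebras}. No genuinely new computation is required beyond what the preceding theorem and Corollary~\ref{tau-infinite min-rep-sp.biserial} already supply.
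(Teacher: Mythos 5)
Your proposal is correct and follows essentially the same route as the paper: the paper deduces the corollary from the immediately preceding theorem (whose proof uses Theorem \ref{tau-finiteness of min-rep-inf special biseiral} plus the gentleness check that cycle and barbell algebras are gentle while wind wheel and nody algebras are not), together with the observation that a minimal representation-infinite algebra which is $\tau$-tilting infinite is automatically minimal $\tau$-tilting infinite. Your explicit justification of that last observation (proper quotients are rep-finite, hence $\tau$-tilting finite) is exactly the argument the paper intends.
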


The following is alternative articulation of the remark that followed Corollary \ref{tau-infinite min-rep-sp.biserial}.

\begin{remark}\label{size of barbell and affine vs nody and wind wheel}
In the previous section, we observed that due to the more flexible nature of the bound quivers of the nody and wind wheel algebras (in comparison to the barbell and cycle algebras), for almost all positive integers $d$, the number of isomorphism classes of wind wheel and nody algebras with $d$ simple modules is larger than those of barbell algebras and hereditary of type $\widetilde{\mathbb{A}}_d$. 
Starting from min-rep-infinite special biserial algebras $\Lambda=kQ/I$ with $d'=d+c$ simple modules, provided the bound quiver $(Q,I)$ has $c$ sinks and $c$ sources, one can apply Proposition \ref{gluing min-rep-inf sp.biserial} to create nody algebras with $d$ simple modules. 
Thus, it is clear that as $d$ grows, the difference between the number of nody and wind wheel algebras with the number of barbell algebras and those of type $\widetilde{\mathbb{A}}_n$ becomes more drastically different. This can be phrased in terms of gentle algebras: Although two of the four distinct classes of min-rep-infinite special biserial algebras consist of gentle algebras, roughly speaking, one can say that most min-rep-infinite string algebras are not minimal $\tau$-tilting infinite (equivalently, are not gentle).
 
This observation is in fact one of the major motivations in a sequel to this work, where we characterize minimal $\tau$-tilting infinite string algebras in terms of quivers and relations and further explore the similarities and differences between the new concept of minimal $\tau$-tilting infinite algebras that we defined above and their classical counterparts, minimal representation-infinite algebras.
Table \ref{Table} summarizes our results on the $\tau$-tilting finiteness of the minimal representation-infinite special biserial algebras and presents them in terms of the minimal $\tau$-tilting infinite algebas and gentle algebras. 
Note that the quiver of $\widetilde{\mathbb{A}}_n$ is acyclic. Moreover, the (left and right) cycles in the barbell and wind wheel algebras could be of any positive length and orientation.
The bars in the barbell and wind wheel algebras are always of positive length. However, in the former case, the bar is always unique and non-serial, whereas a wind wheel algebra may have several bars and all of them are serial. A nody algebra has at least one node. The dashed segment means the orientation and configuration of the bound quiver in that part is free, provided they satisfy the general conditions of a biserial algebra.

\begin{table}
\begin{center}
\caption{$\tau$-tilting finiteness of minimal rep-infinite special biserial algebras.}\label{Table}

\begin{tabular}{|c|c||c|c|}
 \hline
 \multicolumn{4}{|c|}{\qquad Minimal representation-infinite special biserial algebras \qquad\,} \\
 \hline
 Cycle & Barbell  & Wind wheel & Nody \\
 \hline
{\begin{tikzpicture}[scale=0.75]

\node at (0.08,0.75) {$\bullet$};
\draw [-] (0,0.75)-- (-0.75,0.05);
\node at (-0.5,0.6) {$\alpha_n$};
\node at (-0.75,-0.05) {$\bullet$};

\draw [dashed,-] (-0.75,0)-- (-0.75,-0.9);

\node at (-0.75,-1) {$\bullet$};
\draw [-] (-0.7,-1.05)-- (0,-1.8);
\node at (-0.5,-1.55) {$\alpha_{p+1}$};

\draw [-] (0.15,0.75)-- (0.8,0.05);
\node at (0.6,0.6) {$\alpha_1$};
    \node at (0.8,-0.05) {$\bullet$};  

\draw [dashed,-] (0.85,0)-- (0.85,-0.9);
\node at (0.8,-1) {$\bullet$};
\draw [-] (0.8,-1.05)--(0.1,-1.8);
\node at (0.7,-1.55) {$\alpha_p$};
\node at (0.05,-1.85) {$\bullet$};

\end{tikzpicture} }
 & 
 
\begin{tikzpicture}[scale=0.8]

\node at (8.7,-1.5) {$(r \geq 1)$};
\node at (8.75,-2) {non-serial bar};
\node at (7.75,-0.5) {$\circ$}; \node at (7.57,-0.5) {$x$};
\draw [->] (7.75,-0.55) to [bend left=15](7.35,-1.8);

\draw [dashed] (7.35,-1.8)to [bend left=50] (7.35,0.77);

\draw [->] (7.35,0.77) to [bend left=15](7.75,-0.43);

\draw [thick,dotted] (7.63,-0.05) to [bend right=70](7.63,-0.95);

\node at (7.45,-1.25) {$\beta$};
\node at (7.45,0.25) {$\alpha$};

\draw [-] (7.8,-0.5)-- (8.3,-0.5);
\node at (8.1,-0.75) {$\theta_1$};
\node at (8.35,-0.5) {$\bullet$};
\draw [dashed] (8.45,-0.5)-- (9,-0.5);
\node at (9.05,-0.5) {$\bullet$};
\draw [-] (9.1,-0.5)-- (9.6,-0.5);
\node at (9.3,-0.75) {$\theta_r$};
\node at (9.65,-0.5) {$\circ$};
\node at (9.85,-0.5) {$y$};

\draw [->]  (9.65,-0.4) to [bend left=15] (10.13,0.74);
\draw [dashed] (10.13,0.74)to [bend left=50]  (10.15,-1.87);
\draw [->]  (10.15,-1.85) to [bend left=15] (9.65,-0.55);

\draw [thick,dotted] (9.77,-0.05) to [bend left=70](9.77,-0.95);

\node at (10,0.2) {$\delta$};
\node at (10,-1.25) {$\gamma$};
\end{tikzpicture}
 
 & 

\begin{tikzpicture}[scale=0.8]

\node at (8.7,-1.5) {$(r \geq 1)$};
\node at (8.75,-2) {serial bar};
\node at (7.75,-0.5) {$\circ$}; \node at (7.57,-0.5) {$x$};
\draw [->] (7.75,-0.55) to [bend left=15](7.35,-1.8);

\draw [dashed] (7.35,-1.8)to [bend left=50] (7.35,0.77);

\draw [->] (7.35,0.77) to [bend left=15](7.75,-0.43);

\draw [thick,dotted] (7.63,-0.05) to [bend right=70](7.63,-0.95);

\node at (7.45,-1.25) {$\beta$};
\node at (7.45,0.25) {$\alpha$};

\draw [->] (7.8,-0.5)-- (8.3,-0.5);
\node at (8.1,-0.75) {$\theta_1$};
\node at (8.35,-0.5) {$\bullet$};
\draw [dashed,->] (8.45,-0.5)-- (9,-0.5);
\node at (9.05,-0.5) {$\bullet$};
\draw [->] (9.1,-0.5)-- (9.6,-0.5);
\node at (9.3,-0.75) {$\theta_r$};
\node at (9.65,-0.5) {$\circ$};
\node at (9.85,-0.5) {$y$};

\draw [->]  (9.65,-0.4) to [bend left=15] (10.13,0.74);
\draw [dashed] (10.13,0.74)to [bend left=50]  (10.15,-1.87);
\draw [->]  (10.15,-1.85) to [bend left=15] (9.65,-0.55);

\draw [thick,dotted] (9.77,-0.05) to [bend left=70](9.77,-0.95);

\node at (10,0.2) {$\delta$};
\node at (10,-1.25) {$\gamma$};

\draw [thick,dotted] (7.78,0.25) to [bend left=10] (7.95,-0.32)-- (9.45,-0.32) to [bend left=10] (9.68,0.25); 
\end{tikzpicture}

& 
\begin{tikzpicture}[scale=0.75]
\node at (3.55,0.25) {$\circ$};
\draw [dotted,thick] (3.55,0.25) circle (0.35cm);
\draw [->] (3.5,0.25)-- (2.76,1.22);
\node at (2.86,0.75) {$\alpha_1$};
\node at (2.7,1.25) {$\bullet$};

\draw [dashed] (2.65,1.25)to [bend right=50] (2.65,-0.9);

\node at (2.75,-1) {$\bullet$};
\draw [->] (2.8,-1)-- (3.5,0.2);
\node at (2.86,-0.4) {$\alpha_p$};
\draw [<-] (3.6,0.3)-- (4.35,1.25);
\node at (4.3,0.75) {$\beta_q$};
    \node at (4.35,1.25) {$\bullet$}; 
\draw [dashed] (4.35,1.25) to [bend left=50] (4.35,-0.9);

\node at (4.3,-1) {$\bullet$};
\draw [<-] (4.25,-0.98)--(3.55,0.2);
\node at (4.2,-0.5) {$\beta_1$};
\end{tikzpicture}
 
 \\
 \hline
\multicolumn{2}{|c||}
  {minimal $\tau$-tilting infinite} & \multicolumn{2}{|c|}{$\tau$-tilting finite}\\ \hline
\end{tabular}
\end{center}
\end{table}
\end{remark}

In what follows, we describe our reductive method for gentle algebras and explicitly describe the fully reduced gentle algebras in terms of their bound quivers.
In fact, we reduce any rep-infinite gentle algebra $A=kQ/I$ to another rep-infinite gentle algebra $A'=kQ'/I'$ such that $(Q',I')$ contains the minimum number of bands and every subquiver of it is supported by every band in $\Str(A')$. 
We remark that the new algebra $A'$ is not necessarily unique or min-rep-infinite.
Through this reduction, all such algebras $A'$ will be characterized by their bound quivers $(Q',I')$, as they belong to a finite list of explicit families analogous to those we analyzed in Sections \ref{Section:Bound Quivers of Mild Special Biserial Algebras}, \ref{Section:tau-Tilting Finite Node-free Special Biserial Algebras} and \ref{section: Nody algebras}. 
However, in the case of gentle algebras, since we aim to obtain another gentle algebra $A'$ with some minimality condition among rep-infinite algebras in $\mathfrak{F}_{\G}$, we should avoid non-quadratic monomial relations in $A'$. 
To describe this procedure precisely, we need to introduce some notations. 

To distinguish the gentle algebras from other types of algebras studied in this paper, in the remainder of the paper we often denote a gentle algebras by $A$, in contrast with $\Lambda$ which is used to denote an arbitrary algebra.
As before, the set of nodes in a fixed bound quiver $(Q,I)$ of an algebra $\Lambda=kQ/I$ is denoted by $\node(\Lambda)$. Moreover, a vertex $x$ of $\Lambda$ is \textit{secluded} if it is of degree one, and the set of all such vertices is denoted by $\secl(\Lambda)$. Let us write
$$e_{N(\Lambda)}:= \sum_{x\in \node(\Lambda)}e_x \qquad \text{and} \qquad e_{S(\Lambda)}:= \sum_{x\in \secl(\Lambda)}e_x$$
as the idempotents in $\Lambda$, respectively determined by the set of all nodes and the secluded vertices. 

From Remark \ref{vertex-quotient}, it is immediate that if $\Lambda$ is gentle, then so are the quotient algebras $\Lambda/e_{N(\Lambda)}$ and $\Lambda/e_{S(\Lambda)}$. However, we note that $\Lambda/e_{N(\Lambda)}$ may be direct product of more than one connected algebra. 

\begin{def-lem}\label{trimming}
Let $A$ be a gentle algebra and $e_{N(A)}$ and $e_{S(A)}$ be as above.
put $A_0=A$ and for every $i \in \mathbb{Z}_{\geq0}$, define 
$$A_{2i+1}:= A_{2i}/e_{N(A_{2i})} \qquad \text{and} \qquad A_{2i+2}:= A_{2i+1}/e_{S(A_{2i+1})}.$$
Then, there exists $m \in \mathbb{Z}_{\geq0}$, such that $A_{m}=A_{m+j}$, for every $j \in \mathbb{Z}_{\geq0}$, and $A$ and $A_m$ are of the same representation type. We call $A_m$ a \emph{trimmed} algebra.
\end{def-lem}

\begin{proof}
Since $A$ has finitely many vertices, there obviously exists $m$ in the first assertion. For the second part, assume $A=kQ/I$ is rep-infinite (otherwise the statement is trivial). A vertex $x$ in $Q$ is a node if and only $x$ is a $2$-vertex with a unique incoming arrow $\alpha$ and a unique outgoing arrow $\beta$ such that $\beta \alpha \in I$. Because we always quotient out by the vertices of degree $1$ or nodes, neither of which could be on a band in $(Q,I)$, these quotients do not destroy a band in $A$. Hence, $A$ and $A_m$ are of the same representation type.
\end{proof}

\begin{rem-convention}
In the previous lemma, if $A_m$ is disconnected, it is obviously a product of finitely many gentle algebras and we can treat each one separately.
Since we are interested in connected rep-infinite algebras, for simplicity, we henceforth assume the trimmed gentle algebra $A_m$ from the previous lemma is always connected and rep-infinite.
\end{rem-convention}

The following lemma collects some easy facts for future reference.
\begin{lemma}\label{trimmed properties}
Let $A=kQ/I$ be a trimmed gentle algebra. Then, we have
\begin{enumerate}
    \item $A$ is node-free (i.e, every $2$-vertex is free of relations).
    \item Every vertex of $Q$ is of degree strictly larger than one, and there are $2d$ vertices of degree $3$, for some $d \in \mathbb{Z}_{\geq 0}$.
\end{enumerate}
\end{lemma}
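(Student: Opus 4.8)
The plan is to unwind the definition of a trimmed algebra and observe that the two defining properties are exactly the stabilization conditions for the two alternating operations $-/e_{N(-)}$ and $-/e_{S(-)}$.

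First I would prove part (1). Since $A = A_m$ is trimmed, by Definition-Lemma \ref{trimming} we have $A_{m} = A_{m+1} = A_m/e_{N(A_m)}$, which forces $e_{N(A_m)} = 0$, i.e. $\node(A) = \emptyset$: the bound quiver $(Q,I)$ has no nodes. By Lemma \ref{nodes of mSB} (or directly from the definition of a node together with the gentle condition), in a gentle algebra a $2$-vertex $x$ is a node precisely when its unique incoming arrow $\alpha$ and unique outgoing arrow $\beta$ satisfy $\beta\alpha \in I$; a $2$-vertex is never a sink or source, so if it is not a node then $\beta\alpha \notin I$, meaning no relation is supported at $x$. (Here I am using that $A$ is gentle, so at a $2$-vertex there is at most one incoming and at most one outgoing arrow, and any relation through $x$ would be a quadratic relation $\beta\alpha$.) Hence every $2$-vertex is free of relations, which is the assertion.

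Next I would prove part (2). Again using that $A = A_m$ is trimmed, we also have $A_{m} = A_{m+1}/e_{S(A_{m+1})} = A_m/e_{S(A_m)}$ (since $A_{m+1}=A_m$), forcing $e_{S(A)} = 0$, i.e. $\secl(A) = \emptyset$. By definition a secluded vertex is a vertex of degree one, so $(Q,I)$ has no $1$-vertices: every vertex of $Q$ has degree at least two. Since $A$ is gentle, hence special biserial, condition (B1) bounds the in-degree and out-degree at every vertex by $2$, so every vertex has degree $2$, $3$, or $4$; but a $4$-vertex would be a node by Lemma \ref{nodes of mSB} (as $A$, being connected rep-infinite gentle, is in particular min-rep-infinite special biserial once we have reduced to it — or, more elementarily, a $4$-vertex in a gentle algebra carries two quadratic relations and is a node), contradicting $\node(A) = \emptyset$ from part (1). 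Therefore every vertex is of degree $2$ or $3$. Finally, to see that the number of $3$-vertices is even, I would count arrow-endpoints modulo $2$: the quantity $\sum_{x \in Q_0}\deg(x) = 2|Q_1|$ is even, and since every $2$-vertex contributes an even amount and every $3$-vertex an odd amount, the number of $3$-vertices must be even, say $2d$ for some $d \in \mathbb{Z}_{\geq 0}$. (Alternatively, one may invoke Lemma \ref{nodes of mSB} directly for the parity statement, since a trimmed connected rep-infinite gentle algebra is min-rep-infinite special biserial.)

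I do not expect a genuine obstacle here, since every step is a direct unwinding of the definitions together with already-established lemmas; the only point requiring a little care is justifying that a trimmed gentle algebra is actually min-rep-infinite (or at least has no $4$-vertices) in order to apply Lemma \ref{nodes of mSB} — but this follows because the trimming operations preserve representation type (Definition-Lemma \ref{trimming}) and remove all nodes and secluded vertices, and the surviving bound quiver, having only $2$- and $3$-vertices with relations only at $3$-vertices, is of the shape covered by Ringel's classification. If one wants to avoid even this mild detour, the parity claim can be obtained purely from the handshake-type count above, which uses nothing beyond part (1) and the fact that $Q$ is a finite quiver.
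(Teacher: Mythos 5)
Part (1) of your proposal is correct and essentially the paper's argument: trimming forces $\node(A)=\emptyset$, and in a gentle algebra a node is precisely a $2$-vertex whose two arrows compose into $I$, so node-freeness is exactly the statement that $2$-vertices carry no relations.

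Part (2), however, contains a genuine error. You claim that a $4$-vertex in a trimmed gentle algebra "would be a node", either via Lemma \ref{nodes of mSB} or "more elementarily" because it "carries two quadratic relations and is a node", and you conclude that every vertex has degree $2$ or $3$. This is false on both counts. In a gentle algebra a $4$-vertex carries exactly two quadratic relations (one for each outgoing arrow, by condition (G)), not all four compositions, so it is never a node; it is precisely in non-gentle special biserial algebras that $4$-vertices can be nodes. And Lemma \ref{nodes of mSB} is a statement about \emph{minimal representation-infinite} special biserial algebras, which a trimmed (or even fully reduced) gentle algebra need not be: the middle trimmed component $A_M$ of Example \ref{Second Example} is not min-rep-infinite, and the generalized barbell quivers with $l(\mathfrak{b})=0$ in Proposition \ref{quiver of fully-reduced gentle algebras} are fully reduced (hence trimmed) gentle algebras possessing a $4$-vertex. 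So the intermediate claim "every vertex is of degree $2$ or $3$" is false, and the paper's own proof explicitly allows vertices of degree $3$ or $4$. The good news is that the statement you actually need survives: degree $\geq 2$ follows from $\secl(A)=\emptyset$ as you say, degrees are bounded by $4$ by (B1), and in the handshake count the vertices of even degree ($2$ \emph{or} $4$) contribute evenly, so the number of odd-degree vertices, i.e. of $3$-vertices, is even --- which is exactly how the paper argues. Your proof becomes correct once you delete the no-$4$-vertex detour and run the parity count over degrees $\{2,3,4\}$ instead of $\{2,3\}$.
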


\begin{proof}
By construction in Lemma \ref{trimming}, a trimmed algebra has no secluded vertex or node. Thus, all vertices of $(Q,I)$ are of degree $3$ or $4$, respectively with exactly one and two quadratic relations, or they are $2$-vertices and free of relation. For the second part, note that the number of vertices of odd degree in a graph is always even, because each edge contributes $2$ to the sum of the degrees of all the vertices. Since $Q$ has no vertex of degree one, the assertion follows by parity.
\end{proof}

\begin{definition}\label{reduced gentle} Let $\Lambda= kQ/I$ be a string algebra and $w \in \Str(\Lambda)$.
\begin{enumerate}
    \item Put $w(Q_0):=\{x\in Q_0|\, w \text{ visits } x\}$. 
    Then, $e_w:=\sum_{x \in w(Q_0)} e_x$ is the \emph{idempotent associated to $w$}. Define $\Lambda(w(Q_0)):= \Lambda/ \langle 1- e_w \rangle$ as the \emph{weak reduction of $\Lambda$ via $w$}.  
    If $\Lambda$ is rep-infinite, we call it \emph{reduced} if $\Lambda(w(Q_0))=\Lambda$, for any band $w$ in $\Str(\Lambda)$.
    
    \item For $\gamma \in Q_1$, \emph{$w$ supports $\gamma$} provided that $\gamma$ (or $\gamma^{-1}$) occurs in $w$ and define $w(Q_1):= \{\gamma \in Q_1|\, w \text{ supports } \gamma\}$.
    Moreover, \emph{$w$ supports $\gamma^{\pm}$} if both $\gamma$ and $\gamma^{-1}$ occur in $w$, and put $w(Q_1^{\pm}):= \{\gamma \in Q_1|\, w \text{ supports } \gamma^{\pm} \}$.
    For each $\gamma \in Q_1 \setminus w(Q_1^{\pm})$, we say \emph{$w$ supports $\gamma$ in at most one direction}.
    A string is \emph{single-support} if it supports every arrow of $Q$ in at most one direction.
    
    \item If $w^c(Q_1):= Q_1 \setminus w(Q_1)$ is the set of arrows not supported by $w$, we call the quotient algebra $\Lambda(w(Q_1)):= \Lambda/\langle w^c(Q_1) \rangle$ the \emph{reduction via $w$}. 
    A reduced algebra $\Lambda$ is \emph{fully reduced} if $\Lambda=\Lambda(w(Q_1))$, for every band $w$ in $\Lambda$.
\end{enumerate}
\end{definition}

\textbf{Convention:}
Because in this section we are primarily interested in the study of $\tau$-tilting finiteness of connected rep-infinite gentle algebras, in order to avoid repetition of conditions, henceforth by a trimmed (respectively fully reduced) algebra we always mean a rep-infinite trimmed (respectively fully reduced) gentle algebra which is connected.

Some quick remarks will help to clarify the preceding definitions:
Every fully reduced algebra is reduced, and thus trimmed, but the converse is not true.
Every single-support band $w$ supports every arrow in at most one direction. However, in addition to $s(w)$, it can visit a vertex multiple times. Eventually, note that a (fully) reduced algebra can have multiple bands of different lengths and types.

The following example illustrates some of the foregoing definitions and remarks.

\begin{example}\label{Second Example}
Let $(Q,I)$ be the following bound quiver, where all the relations are quadratic and illustrated by the dotted lines.
\begin{center}
\begin{tikzpicture}
    \node at (0.5,2.5) {$\bullet$};  \node at (0.5,2.75) {$1$};
     \node at (-1,2) {$\circ$};   \node at (-1.25,2.2) {$3$};
     \node at (2,2) {$\circ$};  \node at (2.15,2.2) {$2$};
     \node at (-1,1) {$\circ$};   \node at (-1.25,1) {$5$};
     \node at (2,1) {$\circ$};   \node at (2.25,1){$4$};
     \node at (0.5,0.5) {$\bullet$};    \node at (0.5,0.25) {$6$};
     
     \node at (-0.45,1.6) {$\circ$}; \node[red] at (-0.63,1.65) {$a$};
     \node at (0.25,1.85) {$\bullet$};\node[red] at (0.42,1.9) {$b$};
     \node at (0.55,1) {$\bullet$};\node[red] at (0.75,1) {$c$};
 
 \draw [->] (0.5,2.5) --(2,2.1); \node at (1.25,2.5) {$\gamma$};
 \draw [->] (0.5,2.5) --(-1,2.1);  \node at (-0.25,2.5) {$\mu$};
    \draw [<-] (2,1.9) --(2,1.1);  \node at (2.15,1.5) {$\theta$};
    \draw [->] (-1,1.9) --(-1,1.1);  \node at (-1.15,1.5) {$\epsilon$};
         \draw [->] (1.95,0.95) --(0.55,0.45); \node at (1.25,0.5) {$\beta$};
         \draw [->] (-1,0.95) --(0.45,0.45); \node at (-0.25,0.5) {$\nu$};
         
                \draw [->] (2,1.95) --(-0.95,1.05); \node at (-0.28,1.1) {$\delta$};
                \draw [->] (-0.95,2) --(1.95,1.1);  \node at (1.35,1.15) {$\alpha$};
                
          \draw [->] (-0.9,1.14) --(-0.44,1.53);
          \draw [->] (-0.4,1.6) --(0.2,1.85);
          \draw [->] (0.3,1.95) --(0.5,2.45);
          \draw [->] (-0.4,1.55) --(0.48,1.05); 
          \draw [<-] (0.55,0.94) --(0.5,0.5); 
                
\draw [dotted] (1.55,2.2) to [bend right=20](1.65,1.8);
\draw [dotted] (-0.4,2.2) to [bend left=20](-0.45,1.8);
\draw [dotted] (1.6,1.2) to [bend right=20](1.7,0.8);
\draw [dotted] (-0.5,1.1) to [bend left=20](-0.45,0.8);
\draw [dotted] (0.1,0.6) to [bend left=40](0.5,0.7);

\draw [dotted] (0.42,2.3) to [bend right=40](0.9,2.42);
\draw [dotted] (0,1.75) to [bend left=70](0.3,2.1);
\draw [dotted] (-0.6,1.38) to [bend right=70](-0.2,1.5);
\draw [dotted] (-0.95,1.5) to [bend left=20](-0.75,1.25);
\draw [dotted] (2.25,1.95) to [bend left=20](2,1.65);


\draw [->] (2.05,2) --(3,2); \node at (3.07,2){$\bullet$};
\draw [->] (3.15,2) --(4,2); \node at (4.07,2){$\circ$}; \node at (4,2.2){$7$};
        \draw [dotted] (2.65,2) to [bend left=70](3.4,2);
        \draw [dotted] (3.7,2) to [bend right=40](4.25,1.85);

\node[red] at (3.07,1.8){$d$};
        

\draw [->] (4.1,2) --(4.6,1.52); \node at (4.67,1.5){$\circ$}; \node at (4.67,1.75){$8$};
\draw [->] (4.75,1.45) --(5.25,1); \node at (5.3,0.95){$\bullet$};  \node at (5.3,0.7){$9$};
\draw [->] (4.75,0.45) --(5.25,0.9); \node at (4.69,0.42){$\bullet$}; \node at (4.69,0.2){$10$};
\draw [<-] (4.65,0.45) --(4.1,0.9); \node at (4.05,0.92){$\bullet$}; \node at (4.05,0.7){$11$};
\draw [->] (4.1,0.95) --(4.65,1.45);
        \draw [dotted] (4.45,1.75) to [bend left=20](4.95,1.75);
        \draw [dotted] (4.45,1.2) to [bend right=20](4.95,1.22);

 \draw [->] (4.75,1.55) --(5.25,2); \node at (5.3,2.05){$\bullet$};   \node at (5.35,1.8){$12$};      
\draw [->] (5.35,2.05) --(5.9,2.05); \node at (5.95,2.05){$\bullet$};   \node at (5.9,1.8){$13$};      
\draw [->] (6,2.05) --(6.5,1.55); \node at (6.5,1.5){$\circ$};   \node at (6.5,1.8){$14$};   

\draw [->] (6.55,1.47) --(7.05,1); \node at (7.1,0.95){$\bullet$}; \node at (7.2,0.7){$15$};
\draw [->] (6.55,0.45) --(7.05,0.9); \node at (6.55,0.42){$\bullet$}; \node at (6.55,0.22){$16$}; 
\draw [->] (6.5,0.45) --(6,0.9); \node at (5.95,0.92){$\bullet$}; \node at (5.85,0.7){$17$};
\draw [->] (6,1) --(6.45,1.45);
         \draw [dotted] (6.3,1.8) to [bend left=20](6.8,1.8);
         \draw [dotted] (6.3,1.25) to [bend right=20](6.75,1.3);

\draw [->] (6.55,1.55) --(7.15,2.15); \node at (7.2,2.2){$\bullet$}; \node at (7.25,2.45){$18$};
\draw [->] (7.2,2.25) --(6.2,2.5); \node at (6.13,2.5){$\bullet$}; \node at (6.13,2.75){$19$};
\draw [->] (5.1,2.5) --(6.05,2.5); \node at (5.1,2.5){$\bullet$}; \node at (5.1,2.75){$20$};
\draw [->] (5.1,2.5) --(4.1,2.1);


\draw [->] (7.2,0.95) --(7.85,0.95); \node at (7.95,0.95){$\bullet$};
\draw [->] (8.05,0.95) --(8.65,0.95); 
             \draw [dotted] (7.55,1) to [bend left=50](8.25,1);
\node[red] at (7.95,0.7){$e$};

\node at (8.75,0.95){$\bullet$};
\draw [<-] (8.85,0.95) --(9.4,0.95); 
\node [red] at (8.75,0.7){$f$};

\node at (9.5,0.95){$\circ$}; \node at (9.5,0.7){$21$};

\draw [->] (9.5,1) --(9.5,2.25); \node at (9.5,2.30){$\circ$}; \node at (9.5,2.55){$22$};

\draw [dotted] (9.2,1) to [bend left=25](9.95,1.4);

\draw [dotted] (9.5,1.83) to [bend left=40](10,2.3);
\draw [->] (9.6,2.35) --(10.75,2.35); \node at (10.85,2.30){$\circ$}; \node at (10.85,2.55){$23$};
\draw [dotted] (10.83,1.83) to [bend right=40](10.33,2.3);
\draw [->] (10.85,2.25) --(10.85,1.05); \node at (10.85,0.95){$\circ$}; \node at (10.85,0.7){$24$};
\draw [dotted] (10.82,1.55) to [bend left=40](10.38,1.03);

\draw [->] (10.75,1) --(9.62,1);
\draw [dotted] (9.95,1.03) to [bend left=40](9.53,1.55);

\draw [->] (9.6,2.25) to (10.8,1.05);
\draw [->] (10.8,2.25) to (9.55,1.05);

\end{tikzpicture} \label{fully reduced example}
\end{center}

One can easily check that $A=kQ/I$ is a gentle algebra which is not trimmed (the vertices $b$, $d$ and $e$ are nodes). If we consider $A/ \langle e_b+e_d+e_e \rangle$, we end up with a new node $a$ and a secluded vertex $f$. After trimming those, vertices $c$ becomes secluded. Hence, if we put $\overline{ \mathrm{e}}:=e_a+e_b+e_c+e_c+e_e+e_f$, then $A/\langle \overline{\mathrm{e}} \rangle$ has no nodes nor secluded vertex. The bound quiver of $A/\langle \overline{\mathrm{e}} \rangle$ has three disconnected components, each of which gives rise to a trimmed algebra. We denote them by $A_L$, $A_M$ and $A_R$, respectively for the left (L), middle (M) and right (R) component of the bound quiver of $A/\langle \overline{\mathrm{e}} \rangle$.

The vertex set of $A_L$ consists of $\{1,2,\cdots,6 \}$. It is easy to verify that $A_L$ is a reduced gentle algebra and contains a single-support band $w= \mu^{-1} \epsilon^{-1} \nu^{-1} \beta \theta^{-1} \gamma$, which is a copy of $\widetilde{\mathbb{A}}_5$ that passes through each vertex in $\{1,2,\cdots,6 \}$, but does not support $\alpha$ and $\delta$. Note that $A_L$ also has two other bands, given by $w'=\theta^{-1} (\gamma \mu^{-1} \epsilon^{-1} \delta) \theta (\alpha \epsilon^{-1} \nu^{-1} \beta)$ and $w''= \theta^{-1} ( \delta^{-1} \epsilon \mu \gamma^{-1}) \theta (\alpha \epsilon^{-1} \nu^{-1} \beta)$, which are not single-support and the bracketing makes it clear that they are non-isomorphic. The full reductions of $A_L$ via $w'$ and $w''$ result in the same algebra, being $A_L$ itself. However, the reduction via $w$ is a proper quotient of $A_L$, which is the path algebra $k\widetilde{\mathbb{A}}_5$, and this reduction destroys both $w'$ and $w''$. Hence, by definition, the reduction of $A_L$ is isomorphic to $k \widetilde{\mathbb{A}}_5$.

Unlike $A_L$, the algebra $A_M$ is not reduced. This is because there exist bands that only pass through vertices $\{8, 9, \cdots, 17\}$. 
Reduction of $A_M$ via such bands result in the following bound quiver, which is a barbell algebra. However, since the bar is serial, by Lemma \ref{min-rep-infiniteinite barbell algebras}, it is not min-rep-infinite.

\begin{center}
\begin{tikzpicture}
 \node at (1.95,3) {$\bullet$}; \node at (1.95,3.25) {$11$}; 
 \draw [->] (2,3) --(2.5,2.5); \node at (2.55,2.5) {$\circ$}; \node at (2.6,2.75) {$8$};
 \draw [->] (2.5,2.5) --(2,2); \node at (1.95,1.95) {$\bullet$}; \node at (1.95,1.75) {$9$};
     \draw [<-] (1.9,2) --(1.4,2.5); \node at (1.4,2.5) {$\bullet$}; \node at (1.15,2.5) {$10$};
    \draw [<-] (1.45,2.55) --(1.95,3.05);
    
            \draw [dotted] (2.3,2.3) to [bend left=50](2.3,2.75);

\draw [->] (2.65,2.5) --(3.2,2.5); \node at (3.25,2.5) {$\bullet$}; \node at (3.25,2.75) {$12$};
\draw [->] (3.35,2.5) --(3.95,2.5); \node at (4,2.5) {$\bullet$}; \node at (4,2.75) {$13$};
\draw [->] (4.1,2.5) --(4.7,2.5); \node at (4.75,2.5) {$\circ$}; \node at (4.65,2.75) {$14$};

\draw [->] (4.8,2.5) --(5.3,3); \node at (5.35,3) {$\bullet$}; \node at (5.3,3.25) {$15$};
\draw [<-] (5.4,2.97) --(5.8,2.55); \node at (5.85,2.5) {$\bullet$}; \node at (6.1,2.5) {$16$};
\draw [<-] (5.4,2.0) --(5.8,2.5); \node at (5.35,1.95) {$\bullet$}; \node at (5.3,1.75) {$17$};
\draw [->] (5.35,1.95) --(4.8,2.45);

             \draw [dotted] (5,2.3) to [bend right=50](5,2.75);

\end{tikzpicture}
\end{center}

Finally, the trimmed gentle algebra $A_R$, associated to the right component of $A/\langle \overline{\mathrm{e}} \rangle$, is supported on the vertex set $\{21, 22, 23, 24 \}$ and contains a single-support band $w: 22 \rightarrow 24 \rightarrow 21 \leftarrow 23 \leftarrow 22$. There also exists another single-support band $w': 22 \rightarrow 24 \leftarrow 23 \rightarrow 21 \rightarrow 22$. Obviously, $w$ and $w'$ are non-isomorphic copies of $\widetilde{\mathbb{A}}_3$, as they are different acyclic orientation of $\widetilde{\mathbb{A}}_3$. The reduction of $A_R$ via either of these two bands produces non-isomorphic (finite dimensional) algebras of type $\widetilde{\mathbb{A}}_3$.
\end{example}

The next proposition shows that the bound quivers of the fully reduced algebras that were considered in Example \ref{Second Example} are instances of the general type of bound quivers that we need to consider in order to classify $\tau$-tilting finite gentle algebras. This plays a decisive role in the rest of the section.

\begin{proposition}\label{quiver of fully-reduced gentle algebras}
$A=kQ/I$ is a fully reduced gentle algebra if and only if one of the following holds:

\begin{enumerate}

\item $A=k\widetilde{\mathbb{A}}_n$, for an acyclic orientation of $\widetilde{ A}_m$ and some $m \in \mathbin{Z}_{\geq0}$.

\item $(Q,I)$ is a generalized barbell quiver. Namely, it is of the following form:
\begin{center}
\begin{tikzpicture}

 \draw [->] (1.25,0.75) --(2,0.1);
    \node at (1.7,0.55) {$\alpha$};
 \draw [<-] (1.25,-0.75) --(2,0);
    \node at (1.7,-0.5) {$\beta$};
  \draw [dashed] (1.25,0.75) to [bend right=100] (1.25,-0.75);
   \node at (1.3,0) {$C_L$};
    \node at (2,0) {$\bullet $};
    \node at (2.1,-0.2) {$x$};
 \draw [dashed] (2,0) --(2.75,0);
 \node at (2.75,0) {$\bullet$};
 \draw [dashed] (2.75,0) --(4,0);
 \node at (4,0) {$\bullet$};
 \draw [dashed] (4,0) --(4.75,0);
 \node at (3.5,0.3) {$\mathfrak{b}$};
 
 \node at (4.75,0) {$\bullet$};
 \draw [<-] (5.5,0.75) --(4.75,0);
 \node at (5,0.45) {$\delta$};
 \draw [->] (5.50,-0.8) --(4.8,-0.05);
    \node at (5,-0.55) {$\gamma$};
  \draw [dashed] (5.55,0.8) to [bend left=100] (5.55,-0.8);
   \node at (5.5,0) {$C_R$};
   \node at (4.65,-0.2) {$y$};
\end{tikzpicture}
\end{center}
where $ I= \langle \beta \alpha , \delta \gamma \rangle$, $C_L= \alpha \cdots \beta$ and $C_R=\gamma \cdots \delta$ are cyclic strings with no common arrow, and $\mathfrak{b}$ (respectively $C_L$ and $C_R$) can have any length (respectively any positive length) and orientation of their arrows, provided $C_RC_L$ is not a uniserial string in $(Q,I)$.
\end{enumerate}
\end{proposition}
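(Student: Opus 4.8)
I would prove the equivalence in two directions; the implication ``$A$ is one of the listed shapes $\Rightarrow$ $A$ is fully reduced'' is the routine one. Each of the two algebras is visibly gentle, has no node and no vertex of degree one, so the trimming procedure of Definition-Lemma~\ref{trimming} fixes it. For $A=k\widetilde{\mathbb A}_m$ with an acyclic orientation, the only band, up to cyclic permutation and inverse, is the full oriented cycle, which visits every vertex and supports every arrow, and $A$ is rep-infinite by Lemma~\ref{band, rep-infinite}. For a generalized barbell, the relations $\beta\alpha$ and $\delta\gamma$ keep either cyclic string from being iterated --- already $C_L^{2}$ contains the subpath $\beta\alpha\in I$ --- so no band can be confined to $C_L$ or to $C_R$; and since every interior vertex of $C_L$, $C_R$, $\mathfrak b$ has degree two, a band that meets any of the three parts must traverse it entirely, whence every band supports all arrows and visits all vertices. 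Rep-infiniteness of generalized barbells was recorded in Section~\ref{subsection:Barbell Algebra}. So both are fully reduced.

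For the converse, let $A=kQ/I$ be fully reduced. Since ``fully reduced'' implies ``reduced'' implies ``trimmed'', Lemma~\ref{trimmed properties} applies: $A$ is node-free, every vertex has degree $2$, $3$ or $4$, carrying respectively $0$, $1$ or $2$ quadratic relations, and $Q$ has $2d$ vertices of degree $3$. Fix a band $w$ in $\Str(A)$ (it exists by Lemma~\ref{band, rep-infinite}); by hypothesis $w$ visits every vertex and supports every arrow. Two local facts drive everything. First, a walk passes \emph{transparently} through a $2$-vertex: the two incident arrows are always used together and in the same direction, so contracting the maximal strings of $2$-vertices leaves a connected multigraph $\bar Q$ whose vertices are exactly the branch vertices of $Q$. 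Second, there is a clean criterion for when a closed walk of the underlying graph $G$ iterates to a band: at a $3$-vertex $x$ with in-arrow $\alpha$ and out-arrows $\beta$ ($\beta\alpha\notin I$), $\beta'$ ($\beta'\alpha\in I$), the only transition that is forbidden (i.e.\ puts a relation into the walk or its inverse) is the one using the pair $\{\alpha,\beta'\}$, and dually a $4$-vertex forbids exactly two ``diagonal'' pairs, while every other transition --- in particular every turn between two out-arrows, two in-arrows, or a cycle-arrow and a bridge-arrow --- is admissible.

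Putting these together pins down $\bar Q$. A simple cycle uses each edge at most once, so any simple cycle of $G$ that is a band must be all of $Q$; hence once $Q$ has a branch vertex, every simple cycle must be ``blocked'', i.e.\ use a forbidden pair at one of its branch vertices. If, moreover, $G$ contained two cycles joined by a path not exhausting $G$, then running around those two cycles and across the joining path would be a band confined to a proper subquiver (every transition involved is an admissible turn), contradicting reducedness; a parallel argument using turns excludes a theta configuration (at most one of its three simple cycles can be blocked at each of its two branch vertices) and, by a count of simple cycles against available forbidden pairs, the remaining configurations of higher genus. One is therefore left with: $Q$ a single cycle with $I=0$, whence finite dimensionality forces an acyclic orientation and case~(1); or $\bar Q$ a ``figure eight'' at one $4$-vertex; or $\bar Q$ two loops joined by a bridge path at two $3$-vertices. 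Expanding $\bar Q$ back to $Q$, the relation(s) at the branch vertex (vertices) are forced --- each cyclic string must use \emph{its} branch-vertex relation, to keep it from iterating to a confined band, while the remaining composition along the bar must be nonzero, or $Q$ would carry no band at all --- and this yields exactly the generalized-barbell bound quiver, with $l(\mathfrak b)=0$ in the figure-eight case and $l(\mathfrak b)>0$ in the other. Finally $C_RC_L$ cannot be a uniserial string: the oriented cycle it would then determine has nonzero powers of every length, contradicting $\dim_kA<\infty$. This is case~(2).

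The step I expect to be the genuine obstacle is exactly this last combinatorial reduction to the cycle / dumbbell / figure-eight shape, i.e.\ converting ``every band supports every arrow'' into a bound on the number of branch vertices and the exclusion of theta-type and, more generally, $2$-edge-connected higher-genus configurations: the transparency of $2$-vertices and the ``forbidden pair'' criterion are the right tools, but in each forbidden configuration one must actually exhibit a band that misses an arrow, and deal along the way with the degenerate cases $l(\mathfrak b)=0$, $\alpha=\beta$ or $\gamma=\delta$, and very short cycles. (One can also shortcut the two min-rep-infinite cases --- the cycle and the barbell with non-serial bar --- by appealing to the classification of $\Mri(\mathfrak F_{\sB})$, but the remaining fully reduced gentle algebras, which are not min-rep-infinite, still require the argument above.)
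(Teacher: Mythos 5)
Your forward implication and your local ``forbidden pair'' analysis at $3$- and $4$-vertices are fine, but the heart of the converse --- that the contracted branch graph of a fully reduced gentle algebra can only be a single cycle, a figure-eight, or a dumbbell --- is exactly where the proposal does not close, and the justifications you offer for the exclusion step fail as stated. First, the claim that ``every turn between a cycle-arrow and a bridge-arrow is admissible'' is not a consequence of gentleness: the unique relation at a $3$-vertex may perfectly well pair the incoming cycle arrow with the outgoing bridge arrow. That the relations end up sitting on the cyclic strings is part of the conclusion of the proposition, and in your exclusion of ``two cycles joined by a path not exhausting $G$'' you use it before it is available; hence the walk around $C_1$, across $P$, around $C_2$ and back need not be a string, and when $C_1$, $C_2$ or $P$ carry further branch vertices you cannot simply fall back on ``then $C_1$ itself closes up into a confined band''. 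Second, the exclusion of theta and higher-genus configurations ``by a count of simple cycles against available forbidden pairs'' breaks down when the junctions have degree $4$ (two forbidden pairs at each junction can block all three simple cycles of a theta), and, more fundamentally, because the confined band one must exhibit need not be a simple cycle at all --- as the barbell band shows, it may have to traverse connecting segments twice. You yourself flag this reduction as ``the genuine obstacle'', and it is precisely the missing proof.

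For comparison, the paper avoids any classification of the underlying graph: it fixes a single band $w$ (which by full reducedness supports every arrow and visits every vertex) and analyses its self-intersections directly. If $w$ revisits only its base point $x$, then $x$ is visited two or three times; the first case gives $k\widetilde{\mathbb{A}}_n$, and in the second $x$ is a $4$-vertex whose relations are forced by gentleness together with the non-existence of a shorter band, yielding the generalized barbell with $l(\mathfrak{b})=0$. If $w$ revisits a second vertex, one chooses $x$ so that its cyclic substring $u$ is minimal, locates a suitable second vertex $y$, and then proves that the two cyclic substrings at $x$ and $y$ are disjoint and that the connecting string is traversed exactly twice (i.e. $v_2=\mathfrak{b}^{-1}$), each alternative being excluded by explicitly exhibiting a band that misses some arrow. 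Either adopt this band-analysis or supply a complete argument for your graph-theoretic reduction (including degree-$4$ junctions, relations involving bridge arrows, and non-simple bands); as it stands the proposal is a plausible strategy with its central step unproved.
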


Before we prove the proposition, let us elaborate on the sufficient conditions of the second case above: 
Note that the cyclic strings $C_L$ and $C_R$ could share at most one vertex (which will be $x=y$, provided $l(\mathfrak{b})=0$). Although $C_L$ and $C_R$ can be of any positive length, if they are simultaneously uniserial, then $l(\mathfrak{b})>0$ (otherwise, $C_RC_L$ and $C_LC_R$ are oriented cycles in $(Q,I)$ and $A$ becomes infinite dimensional). This is particularly the case if $\beta=\alpha$ and $\delta=\gamma$. Other than the specified arrows in $C_L$ and $C_R$, they can have any configuration of arrows and be of any positive length. As for the bar $\mathfrak{b}$, it is a copy of $\mathbb{A}_m$ (for some $m \in \mathbb{Z}_{\geq 0}$) of any orientation  and length (including zero, provided that $C_RC_L$ is not a uniserial cyclic string).
The dashed segments in the second bound quiver indicate the freedom of length and orientation.

\begin{proof}[Proof of Proposition \ref{quiver of fully-reduced gentle algebras}]
Let $w=\mu^{\epsilon_m}_d \cdots \mu^{\epsilon_1}_1$ be a band in $\Str(A)$. Without loss of generality, we assume $\epsilon_1=1$. Thus, if $s(\mu_1)=x$, then $e(\mu^{\epsilon_d}_d)=x$. 

We consider two mutually exclusive cases:
\begin{itemize}
    \item Case(1): $x$ is the only vertex that $w$ revisits.
    \item Case(2): $w$ revisits a vertex $y$ different that $x$.
\end{itemize}

First we argue Case(1): Every vertex of $(Q,I)$ is of degree at most $4$ and no band supports an arrow more than once in the same direction. By the assumption, $w$ does not revisit any vertex except for $x$. Hence, it visits $x$ either two or three times. 
If $x$ is visited exactly twice, $w$ is of type $\widetilde{\mathbb{A}}_m$ (for some $m \in \mathbb{Z}_{>0}$), and since $A$ is fully reduced, we have $Q=\widetilde{\mathbb{A}}_m$ and $I=0$. This is the first type of bound quivers asserted by the proposition. 
If $Q$ is not of type $\widetilde{\mathbb{A}}_m$, there exists the smallest $1\leq i\leq d-1$ such that $e(\mu^{\epsilon_i}_i )=x$. Thus, $\epsilon_i=1$, and $\mu_1 \mu_i \in I$ (otherwise $w'=\mu_i^{-1} \mu^{\epsilon_{i-1}}_{i-1} \cdots \mu^{\epsilon_2}_2 \mu_1$ becomes a copy of $\widetilde{\mathbb{A}}_{i-1}$, which contradicts the assumption that $A$ is fully reduced).
Since $w$ visits $x$ exactly three times, $x$ is a $4$-vertex and of $\mu^{\epsilon_{i+1}}_{i+1}$ and $\mu^{\epsilon_{d}}_{d}$, one is an incoming arrow to $x$ while the other one is outgoing arrow from $x$. Moreover, since $A$ is gentle and $\mu_1 \mu_i \in I$, either $\mu_{i+1}\mu_d \in I$ or $\mu_d \mu_{i+1} \in I$. Note that by the uniqueness of $x$ as the only revisited vertex by $w$, we have $u=\mu^{\epsilon_{i}}_{i} \cdots \mu_1$ and $v=\mu_d \cdots \mu^{\epsilon_{i+1}}_{i+1}$ are cylic strings (they both start and end at $x$) and except for $x$, there is no common vertex between $u$ and $v$.
This shows that in this case we get the bound quiver of the second type with $l(\mathfrak{b})=0$, where, up to relabelling, we have $\beta=\mu_1$, $\alpha=\mu_{i}$, $\delta=\mu_{i+1}$ and $\gamma=\mu_d$. Note that in this case $u$ and $v$ cannot be uniserial simultaneously (otherwise, the algebra becomes infinite dimensional).

Now we consider Case(2): Since there are more than one vertex revisited by $w$, without loss of generality, we assume $x$ is the vertex revisited by $w$ such that the cyclic substring $u$ of $w$ which starts and ends at $x$ is minimal among all cylic substrings of $w$.
Let $i$ be as in the previous case (i.e, the smallest $1\leq i\leq d-1$ such that $e(\mu^{\epsilon_i}_i )=x$. Thus, $\epsilon_i=1$, and $\mu_1 \mu_i \in I$). 
If $u:=\mu_i \mu^{\epsilon_{i-1}}_{i-1} \cdots \mu^{\epsilon_2}_2 \mu_1$, then $w=v u $, where $v:=\mu^{\epsilon_{d}}_{d} \cdots \mu^{\epsilon_{i+1}}_{i+1}$, and by our assumption, there exists $i+1 \leq j \leq d-1$ such that $e(\mu^{\epsilon_{j}}_{j})$ is visited by $w$ more than once and $e(\mu^{\epsilon_{j}}_{j}) \neq x$.
We assume $j$ is chosen such that $w$ revisits $e(\mu^{\epsilon_{j}}_{j})$, say at $e(\mu^{\epsilon_{k}}_{k})$, and for every $i<j'<j$, if $w$ revisits $e(\mu^{\epsilon_{j'}}_{j'})$, say at $e(\mu^{\epsilon_{k'}}_{k'})$, then $k<k'$.
Then, we put $y=e(\mu^{\epsilon_{j}}_{j})$. 

Thus, by minimality assumptions on $i$ and $j$, there exists $j+1 \leq k \leq d-1$ such that $e(\mu^{\epsilon_{k}}_{k})=y$. Let $\mathfrak{b}:= \mu^{\epsilon_{j}}_{j} \cdots \mu^{\epsilon_{i+1}}_{i+1} $ and $v_1:=\mu^{\epsilon_{k}}_{k} \cdots \mu^{\epsilon_{j+1}}_{j+1}$, then $v=v_2 v_1 \mathfrak{b}$.

Note that in the string $v$, vertex $y$ is of degree strictly larger than $2$ and by the defining condition $y$ is actually the only vertex that $v_1$ revisits. Namely, $v_1$ is the cyclic substring of $w$ which starts and ends at $y$ and revisits no other vertex. It is obvious that $v_1$ cannot visit any vertex on $\mathfrak{b}$. To show that $v_1$ also does not visit any vertex on $u$, assume otherwise and, for the sake of contradiction, suppose $v_1$ visits a vertex $z$ on $u$. Then, one can find a band which starts at $x$, continues via $u$ and $\mathfrak{b}$, passes through $y$ and goes along $v_1$ until it reaches $z$ and then lies on $u$ to return to $x$. This band obviously does not support the substring of $v_1$ which starts at $z$ and continues to the second visit of $v_1$ at $y$. Hence, the existence of such a band gives the desired contradiction. 
Therefore, we have shown that $u$ and $v_1$ are disjoint cyclic strings, meaning that they start and end at the same vertex, respectively at $x$ and $y$, and they do not share any vertex.

Finally, we show that $v_2= \mathfrak{b}^{-1}$. To see this, note that by the defining conditions for $i$ and $j$, it is obvious that the only vertex that $\mathfrak{b}$ shares with $v_1$ (respectively with $u$) is $y=e(\mathfrak{b)}$ (respectively $x=s(\mathfrak{b})$).
This holds because if we assume otherwise, it is easy to construct a band which does not support arrows in $v_1$. 
{Each of the two ends of $v_2$ is a $3$-vertex which must be involved in exactly one relation. In particular, $\mu_1\mu_i$ is a path of length two that passes through $x$ and belongs to $I$, whereas at the other end of $v_2$, depending on the sign $\epsilon_{j+1}$, we have either $\mu_{j+1}\mu_{k}$ or $\mu_{k}\mu_{j+1}$ is a path of length two that passes through $y$ and belongs to $I$ (otherwise, $u$ or $v_1$ generates a band of a shorter length or an oriented cycle).} 
Finally, observe that if $v_2 \neq \mathfrak{b}^{-1}$, there exist two distinct bands in $(Q,I)$, being $w$ and $w''= \mathfrak{b}^{-1} v_1 \mathfrak{b} u$. It is evident that $w''$ does not support some of the arrows in $v_2$, therefore we get the desired contradiction and this finishes the proof.
\end{proof}

From the above reductive process, the following corollary is immediate.

\begin{corollary}
Let $A=kQ/I$ be a representation-infinite gentle algebra. Then $(Q,I)$ contains a subquiver $(Q',I')$ of one of the following types:
\begin{enumerate}
    \item $(Q',I')$ has a single-support band and is hereditary (i.e, $I'=0$ and $kQ'=k\widetilde{\mathbb{A}}_n$, for some $n \in \mathbb{Z}_{>0}$).
    \item $(Q',I')$ has infinitely many bands of arbitrary large length that support the entire arrow set $Q'_1$.
\end{enumerate}
\end{corollary}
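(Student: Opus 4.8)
The plan is to extract the subquiver $(Q',I')$ from the reductive machinery developed for Proposition~\ref{quiver of fully-reduced gentle algebras}. First I would observe that since $A=kQ/I$ is representation-infinite, by Lemma~\ref{band, rep-infinite} it has at least one band $w \in \Str(A)$. Applying the trimming and full-reduction procedure of Definition-Lemma~\ref{trimming} and Definition~\ref{reduced gentle}, we produce a fully reduced gentle algebra $A'=kQ'/I'$ which is a quotient of a subalgebra supported on a vertex subset $Q'_0 \subseteq Q_0$ (the vertices visited by some surviving band), hence $(Q',I')$ is a subquiver of $(Q,I)$ in the required sense. By Proposition~\ref{quiver of fully-reduced gentle algebras}, $A'$ is either $k\widetilde{\mathbb{A}}_n$ for some acyclic orientation, or a generalized barbell quiver. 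These two outcomes correspond exactly to the two cases in the statement, so the bulk of the argument is bookkeeping: showing that the first outcome gives case (1) and the second gives case (2).

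For case (1), if $A' = k\widetilde{\mathbb{A}}_n$ then $I'=0$ by definition, and the unique band (the cyclic walk around $\widetilde{\mathbb{A}}_n$) is single-support since every arrow of an affine $\mathbb{A}$-quiver is traversed exactly once by the fundamental band; this is immediate from the construction. For case (2), if $(Q',I')$ is a generalized barbell quiver with relations $I'=\langle \beta\alpha, \delta\gamma\rangle$, I would show it carries infinitely many bands of arbitrarily large length by taking powers: writing $w = \mathfrak{b}^{-1} C_R \mathfrak{b} C_L$ (or the appropriate sign-variant, exactly as in the proof of Proposition~\ref{Barbells tau-infinite}), each $w^m$ is again a band, and $l(w^m) = m\cdot l(w) \to \infty$. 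Moreover every such $w^m$ supports the full arrow set $Q'_1$, since the single pass $w$ already traverses every arrow of a generalized barbell quiver (the bar $\mathfrak b$ twice, each cycle $C_L, C_R$ once) — this is the same observation recorded before the proof of Proposition~\ref{Barbells tau-infinite}. That these $w^m$ are pairwise non-isomorphic as bands follows from distinct lengths.

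The main obstacle — really the only non-routine point — is making the passage "$A$ rep-infinite $\Rightarrow$ the full reduction lives on a \emph{subquiver} of $(Q,I)$" fully precise. Trimming (quotienting by nodes and secluded vertices) and reduction via a band (quotienting by $\langle 1-e_w\rangle$ and by unsupported arrows) are both quotient operations, so a priori one obtains a quotient of a subalgebra, not literally a subquiver with an induced relation ideal. I would handle this by noting that quotienting by an idempotent $e_x$ deletes vertex $x$ and all incident arrows — yielding an honest subquiver — while quotienting by a set of arrows $\langle w^c(Q_1)\rangle$ deletes those arrows, again yielding a subquiver, and the surviving relations are precisely $I$ restricted to paths in the surviving subquiver; since all these operations preserve representation-infiniteness (Definition-Lemma~\ref{trimming} and the remark that nodes/secluded vertices and unsupported arrows never lie on a band), the resulting $(Q',I')$ is a subquiver of $(Q,I)$ that is itself the bound quiver of a fully reduced gentle algebra, to which Proposition~\ref{quiver of fully-reduced gentle algebras} applies. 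Everything else is a direct read-off from the classification.
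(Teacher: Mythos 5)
Your overall route is the same as the paper's: reduce to a fully reduced gentle algebra and read the two cases off Proposition \ref{quiver of fully-reduced gentle algebras}, and your care about why the reduction lands on an honest subquiver is fine. The genuine gap is in case (2): you produce the ``infinitely many bands'' by taking powers $w^m$ of the single band $w=\mathfrak{b}^{-1}C_R\mathfrak{b}C_L$, but under the definition of band used in this paper a band must not be a power of a strictly shorter string. Hence $w^m$ for $m\geq 2$ is a string (so $M(w^m)$ makes sense, which is what the proof of Proposition \ref{Barbells tau-infinite} actually uses), but it is \emph{not} a band, and ``pairwise non-isomorphic as bands by distinct lengths'' does not apply; your construction yields exactly one band, not infinitely many, so case (2) is not established.

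The fix, which is what the paper does, is to use that a generalized barbell quiver carries at least two distinct bands, e.g.\ $u:=\mathfrak{b}^{-1}C_R\mathfrak{b}C_L$ and $v:=\mathfrak{b}^{-1}C_R^{-1}\mathfrak{b}C_L$, and to form mixed words $w_d(v,u):=v^{j_d}u^{i_d}\cdots v^{j_1}u^{i_1}$ with suitable exponents. Such words are primitive (not proper powers, inverses, or cyclic permutations of shorter ones), so they are genuinely distinct bands, they have arbitrarily large length, and they support every arrow of $Q'_1$ because $u$ already does. Replacing your power construction by this (or any other primitivity-preserving) construction closes the gap; the rest of your argument, including case (1) and the subquiver bookkeeping, is consistent with the paper.
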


\begin{proof}
First we observe that if $(Q,I)$ is the non-hereditary bound quiver given in Proposition \ref{quiver of fully-reduced gentle algebras}, it admits infinitely many (non-isomorphic) bands and for any arbitrary $l \in \mathbb{Z}_{>0}$, there exists a band $w \in \Str(Q,I)$ with $l(w)>l$. This is because if we cosider the cyclic strings $u:=\mathfrak{b}^{-1}C_R\mathfrak{b}C_L$ and $v:=\mathfrak{b}^{-1}C_R^{-1}\mathfrak{b}C_L$, it is immediate that they are bands in $\Str(A')$. 
In fact, one can easily verify that every string of the form $w_{d}(v,u):=v^{j_d} u^{i_d} \cdots v^{j_1} u^{i_1}$ is also a band in $(Q,I)$, provided that $i_r >0$ and $j_s>0$, for some $1\leq r, s \leq d$, and $w_d(v,u)$ is not a proper power, inverse or a cyclic permutation of $w_{d'}(v,u)$, for some $d'\leq d$.

Assume $A$ is not fully reduced, otherwise there previous argument and Proposition \ref{quiver of fully-reduced gentle algebras} finishes the proof. By the reductive process introduced in Definition \ref{reduced gentle} and its following remarks, $A$ has at least one proper algebra quotient $A'=kQ'/I'$ which is fully reduced. Now, the assertion follows from the explicit description of the bound quiver $(Q',I')$, as one of the two possible cases given in Proposition \ref{quiver of fully-reduced gentle algebras}. 
\end{proof}

Now have the sufficient tools to show the main result of this section.

\begin{proof}[Proof of Theorem \ref{tau-tilting finiteness of gentle algebras}]
By Theorem \ref{tau-finiteness} and because brick-finiteness is preserved under surjective algebras maps, we only need to show that every rep-infinite gentle algebra admits infinitely many (non-isomorphic) bricks. 
Moreover, by the reductive method that we developed via Lemmas \ref{trimming} and \ref{trimmed properties} and their following paragraphs, without loss of generality, we can assume $A$ is a fully reduced gentle algebra.
Thus, we only need to show that the bound quivers given in Proposition \ref{quiver of fully-reduced gentle algebras} admit infinitely many bricks.
Hence, we have exactly one of the following:

\begin{enumerate}
    \item $A=k\widetilde{\mathbb{A}}_n$, for some $n \in \mathbb{Z}_{>0}$.
    \item $l(\mathfrak{b})>0$ and $\mathfrak{b}$ is non-serial.
    \item $l(\mathfrak{b})>0$ and $\mathfrak{b}$ is serial. 
    \item $l(\mathfrak{b})=0$.

\end{enumerate}

Regarding the first case, it is known that a hereditary is rep-infinite if and only if it is $\tau$-tilting infinite. If $A$ is of the second or third case, then it is a barbell algebra. Hence, by Proposition \ref{Barbells tau-infinite}, $A$ admits infinitely many bricks.

As for the case (4), first note that $C_L$ and $C_R$ cannot be uniserial strings simultaneously (otherwise, $\Lambda$ becomes infinite dimensional).
This, in particular, shows that $l(C_L)+l(C_R) \geq 4$.
For the sake of clarity, we first prove the assertion for a specific choice of the bound quiver in Proposition \ref{quiver of fully-reduced gentle algebras}, and then show that the same argument applies to any bound quiver of case $(4)$. 

Using the same notation as in Proposition \ref{quiver of fully-reduced gentle algebras}, since we are in case (4), we have $x=y$. We further consider the following specific choice of a fully reduced bound quiver with $l(\mathfrak{b})=0$. Note in particular that $\gamma=\delta$.

\begin{center}
    \begin{tikzpicture}

    \draw[<-] (3.52,2.42) arc (190:530:0.5cm);
\draw [dotted] (3.75,2.3) to [bend left=50](3.75,2.8);

\node at (4.65,2.5) {$\gamma$};
 \node at (3.45,2.5) {$\bullet$};

 \node at (3.35,2.25) {$x$};

\node at (2.75,3.2) {$\alpha$};
\draw [->] (2,3) to [bend left=40](3.35,2.55);
 \node at (1.95,3) {$\circ$};
  \node at (1.75,3.05) {$2$};

\draw [<-] (2,2) to [bend right=40](3.35,2.45);
\node at (2.75,1.8) {$\beta$};
 \node at (1.95,2) {$\circ$};
  \node at (1.75,1.95) {$1$};

\draw [dotted] (3,2.8) to [bend left=80](3,2.2);

\draw[->] (1.97,2.9) to (1.97,2.1);
 \node at (1.8,2.5) {$\mu$};
    \end{tikzpicture}
\end{center}

To generate infinitely many strings with no nontrivial graph maps, consider the band $w= \beta \gamma \alpha \mu^{-1}$, whose diagram is as follows:

\begin{center}
\begin{tikzpicture}[scale=0.5]

\node at (0.2,0) {$\circ_1$};
\draw [->] (-0.95,1) -- (-0.1,0.1);
\node at (-0.4,0.75) {$\mu$};

\node at (-0.9,1.15) {$\circ^2$};
--
\draw [->] (-1.15,1) -- (-2,0.1);
\node at (-1.75,0.75) {$\alpha$};
\node at (-1.9,0) {$\bullet_x$};
--
\draw [->] (-2.1,0) -- (-3,-1);
\node at (-2.75,-0.25) {$\gamma$};
\node at (-2.95,-1.15) {$\bullet_x$};
--
\draw [<-] (-4.1,-2.15) -- (-3.2,-1.15);
\node at (-3.75,-1.35) {$\beta$};
\node at (-4,-2.3) {$\circ_1$};

\end{tikzpicture}
\end{center}

From the above diagram and Definition \ref{admissible_pair}, it is obvious that $w$ admits no nontrivial graph map, thus $M(w)$ is a brick.
Moreover, via a simple inductive argument one can verify that the string module $M(w^d)$ is also a brick, for every $d\in \mathbb{Z}_{>1}$. To show this, first observe that $w^2$, given by the concatenation of two copies of the diagram above, admits no nontrivial graph map. Thus, $M(w^2)$ is a brick. 
Then, for the sake of contradiction, assume $d$ is the smallest number such that $M(w^{d-1})$ is a brick but $\End(M(w^{d}))$ contains a nonzero endomorphism which is not invertible.
Since the graph maps form a basis for the space $\End(M(w^{d}))$, there must exist a proper substring $u$ of $w^{d}$ which is not a proper substring of $w^{d-1}$, such that $u$ appears both on the top (say as $u_1$) and at the bottom (say as $u_2$) of $w^{d}$ and $u_1$ and $u_2$ satisfy the configuration of the admissible pairs at their ends, as in Definitions \ref{factorization def} and \ref{admissible_pair}.
This is obviously a contradiction: because $d>2$ and by the properties of $u$, it contains a full copy of $w$ as a proper substring. Hence, by removing exactly one identical copy of $w$ from the middle of $u_1$ and $u_2$, we obtain the string $u'_1$ and $u'_2$ with $l(u_i)=l(u'_i)+4$ (for $i=1,2$) such that the configurations of $u_i$ and $u'_i$, as well as their ends, are the same as $u_i$ (for $i=1,2$). This gives rise to a substring $u'$ of $w^{d-1}$ which appear on the top and bottom of $w^{d-1}$ and admit a graph map. This obviously contradicts the assumption that $M(w^{d-1})$ is a brick. Hence, $M(w^{d})$ is a brick, for every $d \in \mathbb{Z}_{\geq 1}$.

From the above construction, it is immediate that the general case for $l(\mathfrak{b})=0$ could be shown in the same way. Namely, by the remark preceding the proof, one observes that if $l(\mathfrak{b})=0$, then $C_L$ and $C_R$ can have any internal configuration, provided they are not simultaneously uniserial. This implies that at least one of them is non-serial and $l(C_L)+l(C_R)\geq 4$. Hence, the same argument generates an infinite family of bricks.
In particular, let $C_L=\alpha \nu^{\epsilon_p}_{p} \cdots \nu^{\epsilon_2}_2 \beta$ and $C_R=\gamma \mu^{\epsilon'_q}_q \cdots \mu^{\epsilon'_2}_2 \beta$, with $\mu_j, \nu_i \in Q_1$ and $\epsilon_i, \epsilon'_j \in \{\pm 1\}$, for every $1 \leq i \leq p$ and $1 \leq j \leq q$.
Since $C_L$ and $C_R$ cannot be simultaneously cyclic strings, withouth loss of generality, we can assume there exists $1 \leq i \leq p$ such that $\nu^{\epsilon_i}_{i} \cdots \nu^{\epsilon_2}_2 \beta$ is serial, but $\nu^{\epsilon_{i+1}}_{i+1} \nu^{\epsilon_i}_{i} \cdots \nu^{\epsilon_2}_2 \beta$ is non-serial. Assume $i$ is chosen such that it is the smallest integer with this property.
Then, consider the string $w=(\nu^{\epsilon_i}_{i} \cdots\nu^{\epsilon_1}_{1} \beta) C_R (\alpha\nu^{\epsilon_p}_{p}\cdots\nu^{\epsilon_{i+1}}_{i+1})$. It is easy to check that no substring of $w$ appears both on the top and bottom of $w$ and therefore $M(w)$ is a brick. An induction, similar to the above case, shows that $M(w^d)$ is a brick for every $d \in \mathbb{Z}_{>0}$, which finishes the proof. 
\end{proof}

\begin{remark}
One should note that the construction of the infinitely many brick for the case of barbell algebras cannot be directly used for the last case of the previous proof (where the generalized barbell quiver has $l(\mathfrak{b})=0$). This is because if one simply forgets the bar in the string $w:= \mathfrak{b}^{-1} C_R \mathfrak{b} C_L$ given in the proof Proposition \ref{Barbells tau-infinite}, then the vertices $x$ and $y$, which are identified to one in the new case, appear both on the top and the bottom of the resulting string $w':= C_R C_L$, hence they give rise to a graph map.

With Theorem 8.1 in hand, we now know that the collection of minimal $\tau$-tilting infinite gentle algebras coincides with the collection of fully reduced gentle algebras classified in Proposition \ref{quiver of fully-reduced gentle algebras}. 
As remarked before, in a sequel to this paper, we extend this classification to all minimal $\tau$-tilting string algebras. In the next section, we show that although the fully reduced gentle algebras are not necessarily minimal representation-infinite, they share a lot of nice properties with the min-rep-infinite special biserial algebras.
\end{remark}

\section{Minimal Representation-infinite Algebras and more}\label{section:Minimal representation-infinite algebras and more}

In this section, by considering the notion of distributive algebras, we extend the main results of the preceding sections to those families of algebras whose representation theory does not admit as explicit a combinatorial description as that of special biserial algebras. 
In the second subsection, we revisit the fully reduced gentle algebras from the previous section and show some  interesting facts about them. 
We further use this family to elaborate on the direction of research we will pursue in our follow-up study and to pose some ideas that may provide new impetus to investigation of modern incarnations of some fundamental classical concepts in representation theory of algebras.

\subsection{Minimal representation-infinite biserial algebras}
As the main problem of this paper, we treated the minimal representation-infinite members of the family of special biserial algebras from a new perspective, where we fully determined which ones are $\tau$-tilting finite and which ones are not.
In this subsection we aim to extend the family to that of biserial algebras, which properly contains the family of special biserial algebras. It is known that the representation theory of a biserial algebra is less explicit in terms of the classification of all indecomposable modules. 
As we will see, as far the minimal representation-infinite algebras of these two families are concerned, they are the same, which allows us to restate our results in terms of the larger family.

Recall that $\Lambda$ is \emph{biserial} if for any left or right non-serial indecomposable projective module $P$, the radical of $P$ is a sum of two uniserial submodules $X$ and $Y$ such that $X \cap Y$ is either zero or a simple module. Since $\Lambda=kQ/I$ is assumed to be basic, this is equivalent to saying that for each $x \in Q_0$ and the associated indecomposable (left or right) projective module $P_x$, we must have $\rad(P_x)=M+N$ with $M$ and $N$ uniserial and $\dim_k(M\cap N)\leq 1$. As shown by Crawley-Boevey \cite{CB2}, every biserial algebra is tame. Moreover, it is known that every special biserial algebra is biserial (see \cite{SW}).

In fact, as mentioned in the proof of Lemma \ref{Monomial ideal}, if $\Lambda=kQ/I$ is special biserial, then the admissible ideal $I$ has a set of generators consisting of monomial and particular commutativity relations. Namely, there exists a set of relations of the form $u+\lambda v$ which generates $I$, where $\lambda \in k$, and $u$ and $v$ are two different paths in $(Q,I)$ such that $l(u), l(v) \geq 2$, with $s(u)=s(v)$ and $e(u)=e(v)$ such that $u$ and $v$ share no other vertex.

There are examples of biserial algebras which are not special biserial. For instance, suppose $Q'$ is the subquiver of the quiver $Q$ in Figure \ref{fig:example}, where $Q_0=Q'_0$ and $Q_1 \setminus Q'_1= \{\epsilon\}$. Then, consider $\Lambda'=kQ'/J$, where $J=\langle \beta \alpha-\gamma \delta \theta \alpha \rangle$. Then it is easy to verify that $\Lambda'e_1$, $\Lambda'e_3$, $\Lambda'e_4$, $\Lambda'e_5$, as well as, $e_1\Lambda'$, $e_2\Lambda'$, $e_3\Lambda'$, $e_4\Lambda'$ and $e_5\Lambda'$ are uniserial modules. Moreover, $\Lambda'e_2$ is a biserial indecomposable projective module where $\rad(\Lambda'e_2)$ is sum of the simple module $S_3$ and a uniserial module. Therefore, $\Lambda'$ is biserial, but it is not special biserial (because in $(Q',J)$ there exists a $3$-vertex with no relations on the two paths that pass through it).
This shows that the containment $\mathfrak{F}_{\sB} \subsetneq \mathfrak{F}_{\B}$ is sharp, where $\mathfrak{F}_{\sB}$ and $\mathfrak{F}_{\B}$ respectively denote the families of special biserial and biserial algebras.

In light of the reduction process that we have extensively used in the study of $\tau$-tilting infiniteness of different families of algebras, it is natural to ask whether the family of biserial algebras $\mathfrak{F}_{\B}$ is quotient-closed. The following proposition shows that this is the case.

\begin{proposition}\cite[Corollary 1.]{CB+}
Every quotient algebra of a biserial algebra is again biserial. 
\end{proposition}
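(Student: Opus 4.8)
The statement to prove is that if $\Lambda$ is biserial and $\phi : \Lambda \to \Lambda'$ is a surjective algebra homomorphism, then $\Lambda'$ is again biserial. The proposal is to work entirely on the level of projective modules and their radicals, exactly as in the definition: $\Lambda'$ is biserial precisely when, for every primitive idempotent $e$ of $\Lambda'$, the radical of the projective $\Lambda' e$ (and of $e \Lambda'$) is a sum of at most two uniserial submodules whose intersection is zero or simple. Since the paper cites \cite[Corollary 1.]{CB+} for this, the honest thing to do is to indicate the mechanism rather than reprove Crawley-Boevey's structural result; but here is how I would organize the reduction.

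First I would pass from $\Lambda'$ to $\Lambda$ via $\phi$. Choose a primitive idempotent $\bar e$ in $\Lambda'$; lift it (using that surjections of finite-dimensional algebras allow idempotent lifting, or simply that $\Lambda' \cong \Lambda / J$ for an ideal $J$ and one may choose $e \in \Lambda$ primitive with $\phi(e) = \bar e$) to a primitive idempotent $e \in \Lambda$. Then $\Lambda' \bar e \cong (\Lambda e)/(J e)$ as a quotient of the biserial projective $\Lambda e$, and likewise on the right. So the key step becomes: \emph{a quotient of a biserial projective module, by a submodule, again has biserial shape} — meaning its radical is a sum of at most two uniserial pieces meeting in zero or a simple. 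Here I would use that $\rad(\Lambda e) = M + N$ with $M, N$ uniserial and $M \cap N$ zero or simple; and that $\rad(\Lambda' \bar e)$ is the image of $\rad(\Lambda e)$ under the quotient map, hence equals $\bar M + \bar N$ where $\bar M, \bar N$ are the images of $M, N$. The point is that a quotient of a uniserial module is uniserial (its submodule lattice is a chain, and quotients of chains are chains), so $\bar M$ and $\bar N$ are uniserial; and one must check $\bar M \cap \bar N$ is zero or simple.

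The main obstacle is exactly that last check: it is \emph{not} true in general that $\overline{M \cap N}$ equals $\bar M \cap \bar N$ — the intersection can grow when one passes to a quotient, because elements of $M$ and $N$ that were distinct may become identified. This is the real content of Crawley-Boevey's corollary and the reason the naive argument does not immediately close. The way to handle it: because $M$ and $N$ are uniserial, $\bar M \cap \bar N$ is a submodule of the uniserial module $\bar M$, so it is itself uniserial, and one needs to bound its length by $1$. One shows that if $\bar M \cap \bar N$ had length $\geq 2$, then pulling back produces a configuration inside $\Lambda e$ contradicting $\dim_k(M \cap N) \le 1$ together with the uniseriality of $M, N$; the argument uses that in a biserial algebra the two "branches" $M$, $N$ of $\rad(\Lambda e)$ overlap in at most the socle, and that the kernel $Je$ of the quotient is a submodule, so it is itself controlled by the biserial structure (it lies "along" the branches). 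Concretely, I would invoke that submodules and quotients of $\Lambda e$ have submodule lattices built from the two chains corresponding to $M$ and $N$, and any submodule of $\Lambda' \bar e$ is the image of such; then a direct lattice-theoretic check rules out a length-$2$ overlap.

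Finally, since the argument for $e\Lambda'$ as a right module is completely symmetric (biseriality is a left-right symmetric condition and $\phi$ respects the opposite algebra), both halves of the definition are verified, and $\Lambda'$ is biserial. I would then remark that this, combined with the fact (used throughout the paper) that surjections preserve representation-finiteness and $\tau$-tilting finiteness, is what allows the reduction of $\tau$-tilting questions about biserial algebras to $\Mri(\mathfrak{F}_{\B})$, and hence — via Theorem \ref{Introduction theorem min-rep-inf biserial} — to $\Mri(\mathfrak{F}_{\sB})$. In practice, given that the paper only needs the statement and attributes it to \cite{CB+}, the written proof can be short: cite the corollary, or sketch the idempotent-lifting reduction above and defer the lattice check to the reference.
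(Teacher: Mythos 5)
The paper gives no proof of this statement at all—it is quoted directly from Crawley-Boevey–Vila-Freyer \cite{CB+}—so your fallback of simply citing that corollary (after the idempotent-lifting reduction to projectives) is exactly the paper's approach, and your sketch correctly identifies the one genuine difficulty, namely that $\bar M \cap \bar N$ can a priori be larger than the image of $M \cap N$. One caution: your parenthetical claim that submodules of $\Lambda e$ live in a lattice ``built from the two chains'' $M$ and $N$ is false in general (when $M$ and $N$ have isomorphic composition factors in the same layer there are diagonal submodules), which is precisely why the intersection bound is the real content of \cite{CB+} rather than a routine lattice check—but since you defer that step to the reference, this does not affect the correctness of what you propose to write.
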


In the previous sections, we considered the families of string and gentle algebras, respectively denoted by $\mathfrak{F}_{\St}$ and $\mathfrak{F}_{\G}$, which form two well-known subfamilies of $\mathfrak{F}_{\sB}$. In particular, we studied the $\tau$-tilting finiteness of those rep-infinite algebras in $\mathfrak{F}_{\St}$ and $\mathfrak{F}_{\G}$ which are minimal in their own family. We observed that the classification of min-rep-infinite special biserial algebras in Section \ref{Section:tau-Tilting Finite Node-free Special Biserial Algebras} and \ref{section: Nody algebras} plays an important role in the study of minimal rep-infinite algebras in either of these subfamilies. In particular, by Lemma \ref{Monomial ideal}, we had $\Mri(\mathfrak{F}_{\St})=\Mri(\mathfrak{F}_{\sB})$ and in Section \ref{Section:tau-tilting finite gentle algebras are representation-finite} we derived similar results for the algebras in $\Mri(\mathfrak{F}_{\G})$.
Because biserial algebras form an important over-family of $\mathfrak{F}_{\B}$, it is natural to ask the same question about those rep-infinite algebras in $\mathfrak{F}_{\B}$ which are minimal. In the rest of this subsection, we address this problem. As before, to determine whether or not an algebra $\Lambda$ in $\Mri(\mathfrak{F}_{\B})$ is $\tau$-tilting infinite, our strategy will be based on the notion of brick-finiteness (For the details, see Section \ref{Section:Reduction to mild special biserial algebras}). One should note that, due to the proper containment $\mathfrak{F}_{\sB} \subsetneq \mathfrak{F}_{\B}$, \emph{a priori}, there may exist more complicated algebras in $\Mri(\mathfrak{F}_{\B})$, in comparison with those from $\Mri(\mathfrak{F}_{\sB})$.

Motivated by the complete classification of indecomposable modules over special biserial algebras (as in \cite{WW}), there have been various attempts to fully classify the indecomposable modules over biserial algebras. However, as far as we are aware, there is no such a complete classification in the literature yet. 
Nevertheless, thanks to the vast study of minimal representation-infinite algebras, which has resulted in the description of most of them in terms of quivers and relations, one can fully determine the min-rep-infinite biserial algebras by their bound quivers. This, in particular, is the approach we employ in this section to ascertain whether or not a given min-rep-infinite biserial algebra is $\tau$-tilting finite. We hope this classification also sheds a new light on the study of biserial algebras from the viewpoint of the modern notion of $\tau$-tilting theory and the crucial role of bricks in the module category of such algebras.

Before we explore the $\tau$-tilting finiteness of the minimal representation-infinite biserial algebras, we need to recall some definitions and handy results. Although some of our statements here are independent of the ground field, to result in a neat classification, in the rest of this section, $k$ is assumed to be algebraically closed, unless specified otherwise. 

Following the literature (as in \cite{J} and \cite{Bo3}), we say $\Lambda$ is \emph{distributive} if the lattice of two-sided ideals of $\Lambda$ is distributive. In \cite{J}, Jans studies this notion to deal with some fundamental problems in representation theory which received a lot of attention in following decades. In particular, he showed that if $\Lambda$ is non-distributive, it must be strongly unbounded (i.e, there are infinitely many integers $d$ for each of which there are infinitely many isomorphism classes of $d$-dimensional indecomposable $\Lambda$-modules). Since then, different versions of this problem have been studied for various types of algebras and under several titles, the most well-known of which is the Second Brauer-Thrall Conjecture. As remarked previously, the minimal representation-infinite algebras played a crucial role in the full solution to the aforementioned conjecture, which was eventually settled by Bautista, in \cite{Ba}.

To analyze the minimal representation-infinite algebras in the family of biserial algebras, the following result of Skowro\'nski and Waschb\"usch \cite{SW} is decisive.

\begin{lemma}\cite[Lemma 2]{SW}\label{distributive biserial}
Any distributive biserial algebra is special biserial.
\end{lemma}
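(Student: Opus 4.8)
\textbf{Proof proposal for Lemma \ref{distributive biserial} (following Skowroński--Waschbüsch).}
The plan is to prove the contrapositive in two stages: first pin down exactly which local configurations in the bound quiver of a biserial algebra force it to be non-distributive, and then show that a biserial algebra which avoids all of these configurations automatically satisfies the special biserial conditions (B1) and (B2). Recall from the excerpt that $\Lambda=kQ/I$ is biserial means that for each vertex $x$, the radical of $P_x$ (on either side) is a sum of at most two uniserial submodules $M$ and $N$ with $\dim_k(M\cap N)\le 1$. So already each vertex has at most two incoming and at most two outgoing arrows; condition (B1) is essentially free from biseriality. What can fail is (B2): there might be an arrow $\alpha$ together with two distinct arrows $\beta_1,\beta_2$ such that both $\beta_1\alpha\notin I$ and $\beta_2\alpha\notin I$ (or the symmetric version on the other side).

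First I would set up the dictionary between the ideal lattice of $\Lambda$ and combinatorics of $(Q,I)$. The key elementary fact is that a modular lattice is non-distributive if and only if it contains a sublattice isomorphic to the diamond $M_3$, i.e.\ three elements $I_1,I_2,I_3$ pairwise incomparable with all pairwise meets equal and all pairwise joins equal. The strategy is to produce such three ideals from a ``bad'' vertex of the quiver. Concretely, suppose (B2) fails at an arrow $\alpha\colon x\to y$, witnessed by distinct arrows $\beta_1,\beta_2$ leaving $y$ with $\beta_i\alpha\notin I$. Using the biseriality hypothesis at $y$ — $\rad P_y$ is a sum of two uniserials meeting in at most a simple — I would analyze the paths through $y$ and construct three two-sided ideals generated by suitable elements built from $\alpha$, $\beta_1$, $\beta_2$ and the relevant paths; the point is that the two ``branches'' at $y$ plus their ``sum'' give the $M_3$-pattern inside the ideal lattice. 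The symmetric situation (two arrows into a vertex that both compose nontrivially with a common incoming arrow) is handled dually, and the remaining way (B2) can fail — an oriented ``almost-cycle'' giving an infinite-dimensional quotient — is excluded since $\Lambda$ is finite-dimensional, or else also yields a non-distributive configuration. I expect this ideal-construction step to be the main obstacle: one must be careful that the three ideals are genuinely incomparable and that their meets and joins coincide, which requires controlling exactly which compositions of arrows lie in $I$, and this is where the precise form of the biseriality condition (the ``$\dim_k(M\cap N)\le 1$'') does the real work.

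Conversely — and this is the easier direction — if $(Q,I)$ satisfies (B1) and (B2) then $\Lambda$ is special biserial by definition, so nothing is to prove there; the content is entirely that distributivity forces (B2). Thus the proof reduces to: \emph{if $(Q,I)$ violates (B2), then the two-sided ideal lattice of $kQ/I$ contains a copy of $M_3$, hence $\Lambda$ is non-distributive.} I would organize the writeup as: (i) recall $M_3$-criterion for (non-)distributivity of modular lattices, noting the ideal lattice of an Artin algebra is modular; (ii) the biseriality bound on arrows at each vertex, giving (B1); (iii) case analysis on how (B2) can fail (outgoing branching, incoming branching), in each case exhibiting the three ideals; (iv) conclude. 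Throughout I would lean on the description, quoted in the excerpt from \cite{SW}, that for a biserial $\Lambda$ the ideal $I$ is generated by monomial relations together with ``commutativity-type'' relations $u-\lambda v$ with $u,v$ internally disjoint paths — this normal form is what lets one decide membership in $I$ of the various arrow-compositions appearing in the three candidate ideals. The only genuinely delicate verification is that the pairwise joins of the three constructed ideals all coincide, which amounts to showing that each branch, together with the ``diagonal'' element, generates the same larger ideal; I would verify this by a direct computation with the generators of $I$ in the above normal form.
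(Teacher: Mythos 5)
The paper does not prove this statement at all: it is quoted verbatim from Skowro\'nski--Waschb\"usch \cite{SW}, so there is no internal argument to compare yours against; your proposal has to stand on its own as a proof of \cite[Lemma 2]{SW}. As an outline it starts correctly: biseriality does give (B1) almost for free (at most two uniserial summands of $\rad P_x$ bounds the arrows leaving $x$, and the right-module condition bounds the arrows entering $x$), the ideal lattice of any ring is modular, and the $M_3$-criterion is the right tool, so the whole content is indeed ``a violation of (B2) forces a diamond of two-sided ideals.''

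That central step, however, is exactly what you leave undone, and the naive construction you gesture at does not work as stated. If (B2) fails at $\alpha$ via distinct arrows $\beta_1,\beta_2$ with $\beta_1\alpha,\beta_2\alpha\notin I$, the three ideals generated by $\beta_1\alpha$, $\beta_2\alpha$ and $\beta_1\alpha+\beta_2\alpha$ need not be pairwise incomparable: in the biserial, non-special algebra $\Lambda'=kQ'/\langle \beta\alpha-\gamma\delta\theta\alpha\rangle$ discussed in Section \ref{section:Minimal representation-infinite algebras and more} of this very paper, one has $\langle\beta\alpha\rangle\subseteq\langle\theta\alpha\rangle$, so your candidate sublattice collapses; the diamond there lives instead in the space of ``parallel'' elements $e_3\Lambda' e_2$ spanned by $\beta$ and $\gamma\delta\theta$, i.e. one must first produce, from the failure of (B2) and the condition $\dim_k(M\cap N)\le 1$ on the two uniserial branches, a two-dimensional sub-bimodule $e_y\Lambda e_x$-piece whose one-dimensional subspaces generate incomparable ideals with common meet and join. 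Without this reduction your case analysis has no engine. A second, structural problem is your reliance on the normal form for generators of $I$ (monomial relations plus $u-\lambda v$ with internally disjoint parallel paths): in this paper that normal form is invoked, via \cite{SW}, for \emph{special} biserial algebras, and no such presentation is available a priori for an arbitrary biserial algebra — using it to decide membership in $I$ while trying to prove the algebra is special biserial is circular. So the proposal identifies the right statement to prove (distributivity forces (B2)) but is missing the key idea that makes it true, namely the passage from a (B2)-violation to a two-dimensional space of parallel elements inside some $e\Lambda f$, which is where both the biseriality bound $\dim_k(M\cap N)\le 1$ and the distributivity hypothesis actually interact.
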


Due to the above lemma and Theorems \ref{tau-finite node-free mSB Thm} and \ref{tau-finiteness of min-rep-inf special biseiral}, our primary task will be to answer the following question:

\textbf{Question:}
Let $\Lambda$ be a minimal representation-infinite biserial algebra which is non-distributive. Is $\Lambda$ $\tau$-tilting finite or infinite?

To settle the above question, we employ the explicit characterizations of the non-distributive minimal representation-infinite algebras, recently given by Bongartz \cite{Bo3}. In particular, via an easy verification of the properties of biserial algebras, the following proposition  immediately follows from his classification.

\begin{proposition}\cite[Theorem 1]{Bo3}
{\label{Bongartz classification of min-rep-inf non-distributive}}
Let $\Lambda=kQ/I$ be a minimal representation-infinite algebra. If $\Lambda$ is biserial and non-distributive, the bound quiver $(Q,I)$ is one of the following bound quivers:

\begin{center}
\begin{tikzpicture}


\node at (0.08,0.75) {$\circ$};
\node at (0.05,1) {$x$};
\draw [->] (0,0.75)-- (-0.75,0.05);
\node at (-0.4,0.6) {$\alpha_1$};
\node at (-0.75,-0.05) {$\bullet$};

\draw [dashed,->] (-0.75,0)-- (-0.75,-0.9);

\node at (-0.75,-1) {$\bullet$};
\draw [->] (-0.7,-1.05)-- (0,-1.8);
\node at (-0.5,-1.55) {$\alpha_p$};

\draw [->] (0.15,0.75)-- (0.8,0.05);
\node at (0.5,0.6) {$\beta_1$};
    \node at (0.8,-0.05) {$\bullet$};  

\draw [dashed,->] (0.85,0)-- (0.85,-0.9);

\node at (0.8,-1) {$\bullet$};
\draw [->] (0.8,-1.05)--(0.1,-1.8);
\node at (0.6,-1.55) {$\beta_q$};

\node at (0.05,-1.85) {$\circ$};
\node at (0.05,-2.1) {$y$};

\node at (-1.15,1) {$\widetilde{\mathbb{A}}(p,q)$};



\node at (3.55,-0.5) {$\circ$};
\draw [->] (3.5,-0.5)-- (2.85,0);
\node at (3.25,-0.1) {$\alpha_1$};
\node at (2.75,-0.05) {$\bullet$};

\draw [dashed,->] (2.75,0)-- (2.75,-0.9);

\node at (2.75,-1) {$\bullet$};
\draw [->] (2.8,-1)-- (3.50,-0.55);
\node at (3.25,-0.95) {$\alpha_p$};

\draw [<-] (3.6,-0.45)-- (4.3,0.0);
\node at (3.95,-0.1) {$\beta_q$};
    \node at (4.3,-0.05) {$\bullet$};  

\draw [dashed,<-] (4.35,-0.1)-- (4.35,-0.9);

\node at (4.3,-1) {$\bullet$};
\draw [<-] (4.25,-1)--(3.6,-0.54);
\node at (3.9,-0.95) {$\beta_1$};

\draw [dotted,thick] (3.55,-0.5) circle (0.25cm);

\node at (3.5,1) {$\widetilde{\mathbb{A}}_G(p,q)$};
\node at (2,-2.25) {$(p,q \geq 1)$};


\node at (8.75,1) {$E(p,q,r)$};

\node at (8.75,-2.25) {$(p,q,r \geq 1)$};

\node at (7.75,-0.5) {$\circ$}; \node at (7.57,-0.5) {$x$};
\draw [->] (7.75,-0.55) to [bend left=15](7.35,-1.8);
\node at (7.28,-1.8) {$\bullet$};
\draw [<-] (6.8,-1.05)-- (7.25,-1.8);
\node at (6.75,-1) {$\bullet$};
\draw [dashed,<-] (6.75,-0.1)-- (6.75,-0.9);
\node at (6.75,-0.05) {$\bullet$};
\draw [<-] (7.25,0.75)-- (6.75,0.05);
\node at (7.28,0.77) {$\bullet$};
\draw [->] (7.35,0.77) to [bend left=15](7.75,-0.43);

\draw [thick,dotted] (7.63,-0.15) to [bend right=70](7.63,-0.85);

\node at (7.8,-1.45) {$\alpha_1$};
\node at (7.8,0.4) {$\alpha_p$};

\draw [->] (7.8,-0.5)-- (8.3,-0.5);
\node at (8.1,-0.7) {$\theta_1$};
\node at (8.35,-0.5) {$\bullet$};
\draw [dashed,->] (8.45,-0.5)-- (9,-0.5);
\node at (9.05,-0.5) {$\bullet$};
\draw [->] (9.1,-0.5)-- (9.6,-0.5);
\node at (9.3,-0.7) {$\theta_r$};
\node at (9.65,-0.5) {$\circ$};
\node at (9.85,-0.5) {$y$};

\node at (10.15,0.77) {$\bullet$};
\draw [->]  (9.65,-0.4) to [bend left=15] (10.13,0.74);
\draw [->] (10.18,0.75)-- (10.65,0.05);
\node at (10.65,-0.05) {$\bullet$};  
\draw [dashed,->] (10.65,0)-- (10.65,-0.9);
\node at (10.65,-1) {$\bullet$};
\draw [->] (10.65,-1.05)--(10.18,-1.8);
\node at (10.15,-1.85) {$\bullet$};
\draw [->]  (10.15,-1.85) to [bend left=15] (9.65,-0.55);

\draw [thick,dotted] (9.77,-0.15) to [bend left=70](9.77,-0.85);

\node at (9.7,0.4) {$\gamma_1$};
\node at (9.7,-1.45) {$\gamma_q$};

\draw [thick,dotted] (7.7,0.25) to [bend left=10] (7.85,-0.35)-- (9.55,-0.35) to [bend left=10] (9.7,0.25); 
\end{tikzpicture}
\end{center}
Here, $p,q,r \geq 1$ and $\widetilde{\mathbb{A}}(p,q)$ has no relations. $\widetilde{\mathbb{A}}_G(p,q)$ is obtained via gluing the vertices $x$ and $y$ in $\widetilde{\mathbb{A}}(p,q)$, thus all composition of arrows at the resulting vertex vanish.
For $E(p,q,r)$ the relations are given by the set $\{ \alpha_1 \alpha_p, \gamma_1 \gamma_q, \gamma_1 \theta_r \cdots \theta_1 \alpha_p \}$, as shown by the dotted lines. 
\end{proposition}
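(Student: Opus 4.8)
The plan is to derive the statement by intersecting the already-available classification of non-distributive minimal representation-infinite algebras with the biserial condition. The distributive case need not be treated here: by Lemma \ref{distributive biserial} every distributive biserial algebra is special biserial, so a distributive $\Lambda$ in $\Mri(\mathfrak{F}_{\B})$ is already accounted for by the earlier classification of $\Mri(\mathfrak{F}_{\sB})$. Thus the content of the proposition is exactly the non-distributive case, which is governed by Bongartz's work. Since $k$ is assumed algebraically closed, I would invoke the complete classification of \cite[Theorem 1]{Bo3}, which produces a finite collection of families of bound quivers $(Q,I)$ exhausting, up to isomorphism, all minimal representation-infinite non-distributive algebras. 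The task then reduces to a careful but mechanical verification: pass through this list and retain precisely those entries whose algebra is biserial.

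First I would recall the defining condition of biseriality in the form stated just before the proposition: for every vertex $x$ and the associated indecomposable left (resp. right) projective $P_x$, one needs $\mathrm{rad}(P_x)=M+N$ with $M,N$ uniserial and $\dim_k(M\cap N)\le 1$. From this I would extract a quick combinatorial filter to apply to each candidate. Two necessary consequences are that every vertex of $Q$ has at most two incoming and at most two outgoing arrows — otherwise $\mathrm{rad}(P_x)$, or the radical of the dual projective, cannot be a sum of only two uniserials — and that the relations at each $3$- or $4$-vertex must force the two arms leaving (resp. entering) it to be uniserial and to overlap in at most a simple module. Applying this filter discards at once every entry of Bongartz's list that contains a vertex of degree exceeding $4$, a vertex with three arrows of the same orientation, or whose relations leave one radical summand non-uniserial.

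Then I would check directly that the three surviving families are exactly $\widetilde{\mathbb{A}}(p,q)$, $\widetilde{\mathbb{A}}_G(p,q)$ and $E(p,q,r)$, by reading off the radicals of the indecomposable projectives. For $\widetilde{\mathbb{A}}(p,q)$ the algebra is hereditary, $\mathrm{rad}(P_x)$ is the sum of the two uniserial arms from $x$ to $y$ meeting only in the common socle $S_y$, the dual statement holds at $y$, and all interior vertices are $2$-vertices. For $\widetilde{\mathbb{A}}_G(p,q)$ the glued vertex $z$ is a degree-$4$ node at which all length-two compositions vanish, so each arm is uniserial and the two radical summands meet in at most $S_z$. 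For $E(p,q,r)$ the relations $\alpha_1\alpha_p$, $\gamma_1\gamma_q$ and the monomial relation $\gamma_1\theta_r\cdots\theta_1\alpha_p$ make every projective radical split into at most two uniserials with simple intersection. I would also confirm that the parallel routes producing non-distributivity are genuinely present in each case — the two $x$–$y$ paths in $\widetilde{\mathbb{A}}(p,q)$ and $\widetilde{\mathbb{A}}_G(p,q)$, and the two cycles of $E(p,q,r)$ — so that these quivers really do occur on Bongartz's list rather than being spuriously introduced.

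The main obstacle I anticipate is bookkeeping rather than conceptual: one must have Bongartz's full list precisely in hand and verify biseriality family-by-family. The delicate cases are those where the underlying quiver alone is ambiguous, since biseriality is a condition on the radicals of the projectives and not merely on the in/out-degrees; for the borderline families one has to compute $\mathrm{rad}(P_x)$ explicitly from the relations and confirm both the uniseriality of each summand and the at-most-simple intersection. Additional care is needed to rule out that a generically non-biserial entry becomes biserial for special small values of its parameters, and conversely to confirm that the three retained families remain biserial for all admissible $p,q,r\ge 1$.
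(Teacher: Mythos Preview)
Your approach is essentially the same as the paper's: the proposition is stated as a direct consequence of Bongartz's classification \cite[Theorem 1]{Bo3}, and the paper remarks that the non-distributive min-rep-infinite algebras form five classes up to gluing, three of which are ``obviously not biserial,'' leaving exactly the three families listed. Your proposal simply fleshes out the ``easy verification of the properties of biserial algebras'' that the paper invokes without detail, so there is no substantive difference in strategy.
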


From Table 
\ref{Table}, it follows that $\widetilde{\mathbb{A}}(p,q)$, $\widetilde{\mathbb{A}}_G(p,q)$ and $E(p,q,r)$ in the preceding proposition are respectively hereditary, nody and wind wheel algebras. Hence, they are particular cases of those family of algebras that we studied in the course of the classification of $\tau$-tilting finite min-rep-infinite special biserial algebras in Sections \ref{Section:Bound Quivers of Mild Special Biserial Algebras} and \ref{section: Nody algebras}. In fact, in \cite{Bo3}, it is shown that the non-distributive min-rep-infinite algebras form five different classes up to gluing, three of which are obviously not biserial.
Thanks to this explicit characterization, we can now answer the above-mentioned question, which we articulate as the main theorem of this subsection. This could be viewed as the extension of our results from the family $\mathfrak{F}_{\sB}$ to its proper over-family $\mathfrak{F}_{\B}$.

\begin{theorem}\label{classification of min-rep-inf biserial algebras}
Let $\Lambda$ be a minimal representation-infinite biserial algebra. Then, $\Lambda$ is either a cycle, barbell, wind wheel or a nody algebra. In particular, $\Lambda$ is $\tau$-tilting finite if and only if it is a nody or a wind wheel algebra. 
\end{theorem}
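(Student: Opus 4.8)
The plan is to reduce the biserial classification to the special biserial case already handled in the preceding sections, by combining the quotient-closedness of $\mathfrak{F}_{\B}$ with the distributive/non-distributive dichotomy for biserial algebras. First I would invoke Lemma \ref{distributive biserial}: if $\Lambda$ is a minimal representation-infinite biserial algebra that happens to be distributive, then $\Lambda$ is in fact special biserial, and the full classification of $\Mri(\mathfrak{F}_{\sB})$ from Theorem \ref{Ringel's Classification Thm} together with its nody counterpart (Section \ref{section: Nody algebras}) applies verbatim: $\Lambda$ is a cycle, barbell, wind wheel, or nody algebra. So the entire content is confined to the non-distributive case, which is exactly where Bongartz's classification enters.

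Next I would take $\Lambda$ minimal representation-infinite, biserial, and non-distributive, and apply Proposition \ref{Bongartz classification of min-rep-inf non-distributive}: the bound quiver $(Q,I)$ must be one of $\widetilde{\mathbb{A}}(p,q)$, $\widetilde{\mathbb{A}}_G(p,q)$, or $E(p,q,r)$. Here the only genuine verification is that, among the five classes in Bongartz's theorem, precisely these three are biserial (the other two have a projective module whose radical is not a sum of at most two uniserials, so they fail the biserial condition); this is the ``easy verification of the properties of biserial algebras'' alluded to before the proposition, and it should be a short case check reading off the radical structure of the indecomposable projectives from each bound quiver. Then I would identify each surviving class with the families already studied: $\widetilde{\mathbb{A}}(p,q)$ is (an acyclic orientation of) $\widetilde{\mathbb{A}}_n$, hence a cycle algebra; $\widetilde{\mathbb{A}}_G(p,q)$, being a gluing of the vertices $x$ and $y$ in $\widetilde{\mathbb{A}}(p,q)$ with all compositions at the glued vertex vanishing, has a $4$-vertex which is a node, hence is a nody algebra; and $E(p,q,r)$, with its relation set $\{\alpha_1\alpha_p,\gamma_1\gamma_q,\gamma_1\theta_r\cdots\theta_1\alpha_p\}$ including a non-quadratic monomial B-relation along the serial bar $\theta_r\cdots\theta_1$, matches the definition of a wind wheel algebra from Section \ref{subsection:construction of wind wheel algebras}. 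This establishes the first sentence of the theorem: every minimal representation-infinite biserial algebra is a cycle, barbell, wind wheel, or nody algebra, which is the statement of Theorem \ref{Introduction theorem min-rep-inf biserial}, i.e. $\Mri(\mathfrak{F}_{\B})=\Mri(\mathfrak{F}_{\sB})$.

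For the second sentence — the $\tau$-tilting finiteness dichotomy — no new work is needed once the classification is in place: by Theorem \ref{tau-finiteness of min-rep-inf special biseiral} (equivalently Theorem \ref{tau-finite node-free mSB Thm} together with Proposition \ref{nody algebras are tau-finite}), a cycle or barbell algebra is $\tau$-tilting infinite (Proposition \ref{Barbells tau-infinite} and Lemma \ref{Preproj/Postinj brick}), while a wind wheel algebra (Proposition \ref{Wind wheel tau-finite}) or a nody algebra (Proposition \ref{nody algebras are tau-finite}) is $\tau$-tilting finite. Since these results are proved for arbitrary ground fields and in particular over algebraically closed ones, $\Lambda$ is $\tau$-tilting finite if and only if it is a nody or wind wheel algebra. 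I expect the main obstacle to be the biseriality check in Bongartz's list: one must be careful that ``biserial'' is about the radical of \emph{every} left \emph{and} right indecomposable projective being a sum of at most two uniserials with simple-or-zero intersection, and to confirm that the three surviving bound quivers genuinely satisfy this while the remaining two (the non-distributive classes coming from non-distributive ideal lattices not of biserial shape) do not — this requires drawing the projectives explicitly from each bound quiver, but it is a finite, mechanical check rather than a conceptual difficulty.
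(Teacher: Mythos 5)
Your proposal is correct and follows essentially the same route as the paper: split via the distributive/non-distributive dichotomy, use Lemma \ref{distributive biserial} to reduce the distributive case to the special biserial classification, invoke Bongartz's list (Proposition \ref{Bongartz classification of min-rep-inf non-distributive}) in the non-distributive case and identify $\widetilde{\mathbb{A}}(p,q)$, $\widetilde{\mathbb{A}}_G(p,q)$, $E(p,q,r)$ as cycle, nody and wind wheel algebras, and then conclude the $\tau$-tilting dichotomy from Theorem \ref{tau-finiteness of min-rep-inf special biseiral}. The paper's own proof is just a terser version of this (the identification of Bongartz's classes is done in the paragraph following Proposition \ref{Bongartz classification of min-rep-inf non-distributive}), so your more explicit write-up, including the biseriality check on Bongartz's five classes, fills in exactly the steps the paper leaves implicit.
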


\begin{proof}
By Proposition \ref{Bongartz classification of min-rep-inf non-distributive} and the classification of min-rep-infinite special biserial algebras in terms of their bound quivers (as in Sections \ref{Section:Bound Quivers of Mild Special Biserial Algebras} and \ref{section: Nody algebras}), every min-rep-infinite biserial algebra is in fact special biserial. Then, the result follows from Theorem \ref{tau-finiteness of min-rep-inf special biseiral}.
\end{proof}

Similarly, we can rephrase Corollary \ref{tau-infinite min-rep-sp.biserial} as follows.

\begin{corollary}\label{tau-infinite min-rep- biserial}
If $\Lambda$ is a mild biserial algebra, it is minimal $\tau$-tilting infinite if and only if $\Lambda$ is a cycle or a barbell algebra. 
\end{corollary}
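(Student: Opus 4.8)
The plan is to derive Corollary \ref{tau-infinite min-rep- biserial} purely as a formal consequence of the classification already established for the biserial case, together with the structural facts about gluing, nodes, and quotient-closedness assembled earlier in the paper. First I would recall that a \emph{mild} algebra is by definition one all of whose proper quotients are representation-finite, so a mild biserial algebra is either representation-finite or minimal representation-infinite. In the representation-finite case the algebra is $\tau$-tilting finite, hence cannot be minimal $\tau$-tilting infinite (its quotients are also $\tau$-tilting finite, so the minimality condition on $\tau$-tilting infiniteness is vacuous). Thus the whole statement reduces to the case where $\Lambda$ is a minimal representation-infinite biserial algebra, and by Theorem \ref{classification of min-rep-inf biserial algebras} such a $\Lambda$ is a cycle, barbell, wind wheel, or nody algebra, with the $\tau$-tilting infinite ones being exactly the cycle and barbell algebras.

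The heart of the argument is therefore to check the two directions for a min-rep-infinite biserial $\Lambda$. For the ``if'' direction: suppose $\Lambda$ is a cycle algebra $k\widetilde{\mathbb{A}}_n$ or a (generalized) barbell algebra. Every proper quotient of a min-rep-infinite algebra is representation-finite by definition, hence $\tau$-tilting finite; and $\Lambda$ itself is $\tau$-tilting infinite by Theorem \ref{tau-finiteness of min-rep-inf special biseiral} (equivalently, $k\widetilde{\mathbb{A}}_n$ has a preprojective component so Lemma \ref{Preproj/Postinj brick} applies, and Proposition \ref{Barbells tau-infinite} handles barbells). So $\Lambda$ is minimal $\tau$-tilting infinite. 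For the ``only if'' direction: if $\Lambda$ is a min-rep-infinite biserial algebra that is minimal $\tau$-tilting infinite, then in particular $\Lambda$ is $\tau$-tilting infinite, so by Theorem \ref{classification of min-rep-inf biserial algebras} it must be a cycle or a barbell algebra (the wind wheel and nody algebras being $\tau$-tilting finite by Propositions \ref{Wind wheel tau-finite} and \ref{nody algebras are tau-finite}). This closes the loop.

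I would then note explicitly that one must be slightly careful about what ``barbell'' means here versus ``generalized barbell'': Theorem \ref{classification of min-rep-inf biserial algebras} and Corollary \ref{tau-infinite min-rep-sp.biserial} are phrased in terms of the (not generalized) barbell algebras, since a min-rep-infinite barbell necessarily has a non-serial bar of length $> 1$ by Lemma \ref{min-rep-infiniteinite barbell algebras}, so the generalized barbells with bar of length zero are not min-rep-infinite and do not enter the statement. The cleanest way to write the proof is simply: ``This is a restatement of Corollary \ref{tau-infinite min-rep-sp.biserial} via Theorem \ref{classification of min-rep-inf biserial algebras}, since by that theorem every min-rep-infinite biserial algebra is special biserial, and a mild biserial algebra is either representation-finite (hence not minimal $\tau$-tilting infinite, all its quotients being $\tau$-tilting finite) or min-rep-infinite.''

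The main obstacle, such as it is, is not a mathematical difficulty but a bookkeeping one: making sure that the reduction ``mild biserial $\Rightarrow$ representation-finite or min-rep-infinite'' is stated, and that the representation-finite case is correctly excluded from being minimal $\tau$-tilting infinite — one should observe that a representation-finite (indeed any $\tau$-tilting finite) algebra is trivially \emph{not} minimal with respect to $\tau$-tilting infiniteness because it is not $\tau$-tilting infinite at all. Everything else is a direct appeal to Theorem \ref{classification of min-rep-inf biserial algebras}, Theorem \ref{tau-finiteness of min-rep-inf special biseiral}, and Theorem \ref{tau-finiteness} (the brick characterization), with no new computation required. I do not anticipate any genuinely hard step; the corollary is a packaging result.
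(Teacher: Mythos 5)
Your argument is correct and matches the paper's route exactly: the paper presents this corollary as a direct rephrasing of Corollary \ref{tau-infinite min-rep-sp.biserial} via Theorem \ref{classification of min-rep-inf biserial algebras} (min-rep-infinite biserial algebras are special biserial), which is precisely your packaging, with the mild-versus-min-rep-infinite bookkeeping handled the same way. No gaps; the proposal is sound.
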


In the following subsections, we further elaborate on how this observation may lead to a better understanding of the representation theory of biserial algebras.

\subsection{More on fully reduced gentle algebras}\label{subsection: More on fully reduced gentle algebras}

In this subsection, we show some interesting facts about fully reduced gentle algebras, which again highlight the interactions between two of the important notions of minimality that we alluded to in the preceding sections.
In particular, the following proposition shows that despite the fact that fully reduced gentle algebras are not necessarily min-rep-infinite, a lot of properties of their module categories and Auslander-Reiten quivers are common with those of the min-rep-infinite special biserial algebras. In light of our reductive method in Section \ref{Section:tau-tilting finite gentle algebras are representation-finite}, the next proposition could be viewed as a result on certain types of minimal $\tau$-tilting infinite algebras.

\begin{proposition}\label{Components of reduced gentle algebra}
Let $A$ be a fully reduced gentle algebra. Then,
\begin{enumerate}
    \item Almost every indecomposable module over $A$ is faithful. Moreover, all components of $\Gamma(\modu A)$ are faithful.
    
    \item Every generalized standard tube $\mathcal{T}$ in $\Gamma(\modu A)$ is hereditary.
    
    \item $\irigid(A) \setminus \mathtt{i}\tau \trig(A)$ is a finite set.
    
    \item $A$ has no projective-injective module.
    
    \item $A$ has no preprojective component, unless it is hereditary of type $\widetilde{\mathbb{A}}_n$.
\end{enumerate}
\end{proposition}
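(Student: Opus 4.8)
The plan is to treat the five assertions largely by transporting, via the full-reduction machinery, the corresponding statements already established for mild (special) biserial algebras. The key structural input is Proposition \ref{quiver of fully-reduced gentle algebras}: a fully reduced gentle algebra $A=kQ/I$ is either $k\widetilde{\mathbb{A}}_n$ or a generalized barbell algebra. In the first case everything is classical ($k\widetilde{\mathbb{A}}_n$ is hereditary tame concealed, with the Auslander--Reiten quiver of Figure \ref{fig:AR component of A-tilde}), so the real content is the generalized barbell case, which I will assume throughout the rest of the argument. Note in particular that a generalized barbell algebra is node-free (every $2$-vertex is relation-free, and the $3$-vertices $x,y$ carry exactly one quadratic relation), so it is a string algebra by construction and Lemma \ref{band, rep-infinite} applies.

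First I would handle (1). By the description of $(Q,I)$ every band in $\Str(A)$ must traverse the bar $\mathfrak{b}$ twice (once as $\mathfrak{b}$, once as $\mathfrak{b}^{-1}$) and run around both cycles $C_L$ and $C_R$, hence supports every arrow of $Q$; consequently almost every string $w\in\Str(A)$ (all those long enough to contain a band as a substring, using the band count produced in the Corollary after Proposition \ref{quiver of fully-reduced gentle algebras}) is sincere, and one checks directly from the string-module construction that a sincere string module over a node-free string algebra is faithful --- this is exactly the argument in the proof of Lemma \ref{Almost all}(1). For the second sentence: given a component $\mathcal{C}$ of $\Gamma(\modu A)$, set $A':=A/\ann_A(\mathcal{C})$; then $\mathcal{C}$ is a faithful component of $\Gamma(\modu A')$, and since $A$ is representation-infinite with every component infinite while a proper quotient that is not a gentle algebra still cannot acquire a faithful infinite component for purely cardinality reasons --- more simply, since some sincere (hence faithful) string module lies in $\mathcal{C}$ by the genericity just proved --- we get $\ann_A(\mathcal{C})=0$. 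For (2), a generalized standard tube $\mathcal{T}$ contains a faithful module $X$ by (1), so $A\in\Cogen(X)$ by Lemma \ref{Faitful-Cogen}, and then \cite[Theorem X.4.4]{SS} gives $\pd_A X\le 1$ and $\id_A X\le 1$ for every $X\in\mathcal{T}$; this is verbatim the proof of Lemma \ref{Almost all}(2). Assertion (3) follows from (1) together with Proposition \ref{faithful tau-rigid}(1): a faithful $\tau$-rigid module is partial tilting, hence $\tau$-rigid, so every indecomposable rigid module that fails to be $\tau$-rigid is among the finitely many non-faithful ones --- this is precisely the proof of Proposition \ref{tau-finite vs tilting finite}. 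Assertion (4) is immediate from Lemma \ref{Monomial ideal} in the $k\widetilde{\mathbb{A}}_n$ case; for the generalized barbell case one argues directly that $(Q,I)$ has no vertex $x$ with $P_x$ both projective and injective: such a configuration would force a commutativity-type relation or a source/sink feeding a uniserial projective-injective, neither of which occurs given that the only relations are the two quadratics $\beta\alpha,\delta\gamma$ and every vertex lies on a band.

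Finally (5), the assertion at hand. If $A=k\widetilde{\mathbb{A}}_n$ there is nothing to prove, so assume $A$ is a generalized barbell algebra; I must show $\Gamma(\modu A)$ has no preprojective component. The cleanest route is: by Theorem \ref{tau-finiteness of gentle algebras} (equivalently Proposition \ref{Barbells tau-infinite} and the $l(\mathfrak{b})=0$ case handled in the proof of Theorem \ref{tau-tilting finiteness of gentle algebras}) $A$ is $\tau$-tilting infinite, yet $A$ is representation-infinite; were $\Gamma(\modu A)$ to have a preprojective component, I claim $A$ would be $\tau$-tilting finite by invoking the structure of string algebras. Indeed, from \cite{BR} a representation-infinite string algebra has only finitely many components of type (I) (those with projective/injective boundary), and a preprojective component is of type (I); moreover by \cite{HV} a weak-min-rep-infinite algebra with a preprojective component has a very restrictive bound-quiver shape, incompatible with a bar of length $\ge 0$ joining two nontrivial oriented cycles carrying quadratic relations. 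A more self-contained alternative, which I would prefer to write out: if $\mathcal{P}$ is a preprojective component, every $M\in\mathcal{P}$ is $\tau^{-m}P_x$ for some $x,m$; since every indecomposable projective over a generalized barbell algebra is a uniserial string module (the radical of each $P_z$ is uniserial because $z$ is a $2$-vertex or carries a quadratic relation killing one of the two outgoing directions), one tracks $\tau^{-1}$ combinatorially on string modules and shows the $\tau^{-m}P_x$ stay bounded in length --- they cycle around $C_L$ or $C_R$ --- contradicting acyclicity of $\mathcal{P}$ together with representation-infiniteness. The main obstacle is precisely making this last combinatorial claim airtight: one needs that no indecomposable projective of a generalized barbell algebra can generate, under repeated $\tau^{-1}$, an unbounded acyclic family, and the slick way around it is to quote Proposition \ref{tau-finiteness on the components and radical}(1) in contrapositive form --- a $\tau$-tilting infinite representation-infinite algebra has no preprojective component --- combined with the already-proved $\tau$-tilting infiniteness of $A$. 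So the step I expect to be genuinely delicate is reconciling the two notions of minimality here (fully reduced versus min-rep-infinite), but since Proposition \ref{tau-finiteness on the components and radical} is stated for \emph{arbitrary} representation-infinite $\tau$-tilting infinite algebras, (5) drops out with essentially no extra work beyond citing Theorem \ref{tau-tilting finiteness of gentle algebras}.
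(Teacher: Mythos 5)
Your treatment of parts (1)--(4) follows essentially the same route as the paper: (1)--(3) are transported from the arguments for mild special biserial algebras in Section \ref{Section:Reduction to mild special biserial algebras} (Lemma \ref{Almost all}, Proposition \ref{faithful tau-rigid}, Proposition \ref{tau-finite vs tilting finite}), using that every band in a generalized barbell quiver supports all arrows, and (4) is a direct check on the bound quiver of Proposition \ref{quiver of fully-reduced gentle algebras} (the paper phrases it as: a projective-injective string module would force a uniserial string whose endpoints are $2$-vertices carrying quadratic relations, a configuration that never occurs). That much is fine, if somewhat compressed.

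Part (5) is where there is a genuine gap. Your ``cleanest'' and ``slick'' route rests on the statement ``a $\tau$-tilting infinite representation-infinite algebra has no preprojective component,'' which you present as the contrapositive of Proposition \ref{tau-finiteness on the components and radical}(1). It is not the contrapositive; it is the converse, and it is false. The proposition says $\tau$-tilting \emph{finite} implies no preprojective component, so its contrapositive is: a representation-infinite algebra with a preprojective component is $\tau$-tilting \emph{infinite} (this is exactly Lemma \ref{Preproj/Postinj brick} at work). The hereditary algebras $k\widetilde{\mathbb{A}}_n$ --- which appear in this very proposition --- are $\tau$-tilting infinite, representation-infinite, and do have preprojective and preinjective components, so knowing that a generalized barbell algebra is $\tau$-tilting infinite (Theorem \ref{tau-tilting finiteness of gentle algebras}) gives you no contradiction with the existence of a preprojective component. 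Since your self-contained alternative (tracking $\tau^{-m}P_x$ combinatorially and claiming bounded length) is explicitly left unfinished, part (5) is not established by your argument as written. The correct closing move is the one you mention only in passing and entangle with the false claim ``$A$ would be $\tau$-tilting finite'': observe that a generalized barbell algebra is weakly minimal representation-infinite (every band passes through every vertex, so killing any vertex idempotent destroys all bands and leaves a representation-finite quotient), and then compare its bound quiver with the Happel--Vossieck list \cite{HV} of weakly minimal representation-infinite algebras admitting a preprojective component; the generalized barbell quiver does not occur there. This is precisely the paper's proof of (5).
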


\begin{proof}
Proofs of $(1)$, $(2)$ and $(3)$ are similar to those given in Section \ref{Section:Reduction to mild special biserial algebras}, hence we omit them. To show $(4)$, one only needs to note that a string module $M(w)$ over a fully reduced gentle algebra is projective-injective if and only if $w$ is uniserial and $s(w)$ and $e(w)$ are $2$-vertices with quadratic relations. However, by Proposition \ref{quiver of fully-reduced gentle algebras}, the bound quiver of a fully reduced gentle algebra never satisfies this property. The last one follows from the classification of weakly minimal representation-infinite algebras with a preprojective components, given by Happel and Vossicek \cite{HV}.
\end{proof}

Thanks to the rich combinatorics of gentle algebras and their well-studied module categories, in the following theorem we capture some important properties of the fully reduced gentle algebras from the homological perspective.

Recall that $\Lambda$ is \emph{Gorenstein} if it has finite injective dimension as both a left and right $\Lambda$-module, in which case these dimensions are known to be equal and are called the \emph{Gorenstein dimension} of $\Lambda$.
Moreover, following \cite{AR2}, $\Lambda$ is called \emph{$d$-Gorenstein} if for a minimal injective resolution $\Lambda \rightarrow I^0(\Lambda) \rightarrow I^1(\Lambda)\rightarrow I^2(\Lambda) \rightarrow \cdots$ of $\Lambda$ as a left $\Lambda$-module, $\pd_{\Lambda} (I^i(\Lambda)) \leq i$, for every $0 \leq i \leq d-1$.
Self-injective algebras, as well as algebras of finite global dimension, are examples of Gorenstein algebras. Moreover, in \cite{GR}, Geiss and Reiten show that every gentle algebra is Gorenstein.

To prove the next result, we first need to recall some of the techniques developed by Geiss and Reiten \cite{GR} in the study of some important homological properties of gentle algebras.
In particular, we employ the elegant algorithm in the aforementioned work to compute the injective dimension of the fully reduced gentle algebra.

For a gentle algebra $A=kQ/I$, an arrow $\beta \in Q$ is called \emph{gentle} if there is no arrow $\alpha \in Q$ such that $\beta \alpha$ is a path of length two and $\beta \alpha \in I$. Moreover, a direct string $\alpha_m \cdots \alpha_1$ is said to be \emph{critical} if $\alpha_{i+1} \alpha_{i} \in I$, for every $1\leq i \leq {m-1}$. Finally, $n(A)$ denotes the maximal length of the critical strings which start with a gentle arrow. It is easy to show that $n(A) \leq|Q_1|$, and we put $n(A)=0$, if $A$ has no gentle arrow. 

\begin{theorem}(\cite[Theorem 3.4]{GR})\label{Theorem of Geiss-Reiten}
Let $A=kQ/I$ be a gentle algebra. Then, $\id_A(A)=n(A)=\pd_A \D(A^{op})$ if $n(A)>0$ and $\id_A(A)=\pd_A \D(A^{op}) \leq 1$ if $n(A)=0$. In particular, $A$ is Gorenstein.
\end{theorem}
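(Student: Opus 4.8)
The plan is to establish the two homological equalities by an explicit analysis of minimal resolutions over the string algebra $A$, and then to deduce Gorensteinness formally. Since a gentle algebra is special biserial, by the structure recalled in Section~\ref{Priliminary} every indecomposable $A$-module is a string module, a band module, or a non-uniserial projective-injective; in particular each indecomposable projective $P_x$ and injective $I_x$ is a string module with an explicit combinatorial description, $P_x = M(h_x^{-1}g_x)$ where $g_x,h_x$ are the (at most two) maximal direct paths of $(Q,I)$ starting at $x$, and dually $I_x$ is the string module with socle $S_x$. Because $\D(A^{op})$ is exactly the minimal injective cogenerator of $\modu A$, we have $\pd_A \D(A^{op}) = \sup_{x\in Q_0}\pd_A I_x$, while writing ${}_AA=\bigoplus_x P_x$ gives $\id_A(A)=\sup_{x\in Q_0}\id_A P_x$. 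So the theorem reduces to the single combinatorial computation $\sup_x \pd_A I_x = n(A)$, applied to $A$ and, symmetrically, to $A^{op}$.

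The combinatorial heart is the computation of syzygies of string modules. First I would record the standard rule that over a string algebra the projective cover of a string module $M(w)$ is $\bigoplus P_x$, the sum running over the \emph{peaks} of the walk $w$ (the vertices where $w$ attains a local top), and that the first syzygy $\Omega M(w)$ is again a direct sum of string modules, built from the ``overhanging'' arms of the covering projectives not consumed by $w$, each such arm being a maximal direct path of $(Q,I)$. The key point, specific to gentle algebras, is that resolving these uniserial overhanging strings advances along the relations one arrow at a time: an arm ending at an arrow $\alpha_i$ continues in the next syzygy through the \emph{unique} arrow $\alpha_{i+1}$ with $\alpha_{i+1}\alpha_i\in I$, and the process \emph{terminates} precisely when it reaches an arrow admitting no relational continuation, that is, a gentle arrow. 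Consequently the minimal projective resolution of $I_x$ has length equal to the longest critical string $\alpha_m\cdots\alpha_1$ traceable from the top of $I_x$, and taking the supremum over $x$ yields exactly $n(A)$, the maximal length of a critical string beginning with a gentle arrow. This proves $\pd_A \D(A^{op}) = \sup_x \pd_A I_x = n(A)$.

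Applying the identical argument to the gentle algebra $A^{op}$ computes the injective dimensions of the projectives: $\id_A(A)=\sup_x\id_A P_x = \sup_x \pd_{A^{op}}\D(P_x) = n(A^{op})$, and since reversing all arrows carries critical strings and gentle arrows of $A$ to those of $A^{op}$ we have $n(A)=n(A^{op})$, so $\id_A(A)=n(A)=\pd_A\D(A^{op})$ as claimed. When $n(A)=0$ there is no gentle arrow, i.e.\ every arrow is the right-hand end of some relation; here I would check directly that each overhanging arm is either empty or exhausted after a single covering step, so that every $I_x$ and every $P_x$ has projective, respectively injective, dimension at most one, giving $\id_A(A)=\pd_A\D(A^{op})\le 1$. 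Gorensteinness is then immediate: $\id_A({}_AA)=n(A)<\infty$ bounds the injective dimension of $A$ as a left module, the same computation over $A^{op}$ bounds it as a right module, and an Artin algebra of finite injective dimension on both sides is Gorenstein.

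The main obstacle I anticipate lies in the syzygy bookkeeping of the second paragraph: one must prove that the stated projective cover is genuinely \emph{minimal}, that $\Omega M(w)$ decomposes exactly as the asserted direct sum of string modules with no projective summands splitting off, and --- most delicately --- that the ``advance by one relation'' rule computes the resolution length \emph{on the nose} rather than merely bounding it, uniformly in the starting vertex. It is here that the gentle hypotheses must be used in full: condition (G) together with the special-biserial constraints (at most one $\beta$ with $\beta\alpha\in I$ and at most one with $\beta\alpha\notin I$ for each arrow $\alpha$) are precisely what make the relational continuation unique and the termination at gentle arrows exact, ruling out the branchings and cancellations that would occur for a general string or special biserial algebra.
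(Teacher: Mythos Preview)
This theorem is not proved in the present paper: it is quoted verbatim from Geiss--Reiten \cite[Theorem 3.4]{GR} and used as a black box in the subsequent proposition on fully reduced gentle algebras. So there is no ``paper's own proof'' to compare against.

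That said, your outline is essentially the argument of the original Geiss--Reiten paper: compute minimal projective resolutions of the indecomposable injectives by tracking how syzygies of string modules propagate one step at a time along the relations, and observe that the process halts exactly at a gentle arrow, so the length of the resolution is governed by the longest critical walk beginning at a gentle arrow. Your identification of the delicate point --- verifying minimality of the covers and that the ``advance by one relation'' rule gives the length \emph{exactly} --- is accurate; this is where conditions (B1), (B2) and (G) are all genuinely used.

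One place where your write-up is a bit loose is the claim that ``reversing all arrows carries critical strings and gentle arrows of $A$ to those of $A^{op}$'', from which you conclude $n(A)=n(A^{op})$. Reversal does carry critical strings to critical strings, but it does \emph{not} carry gentle arrows to gentle arrows: $\beta$ gentle in $A$ means no $\alpha$ with $\beta\alpha\in I$, whereas $\beta^{op}$ gentle in $A^{op}$ means no $\gamma$ with $\gamma\beta\in I$. What saves you is that a critical string of length $n(A)$ starting at a gentle arrow is necessarily maximal at its \emph{other} end (otherwise it could be extended), and it is precisely this maximality-at-the-far-end that, upon reversal, becomes the gentle-arrow condition in $A^{op}$. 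So the symmetry works, but the reason is that $n(A)$ is realised by critical strings which cannot be extended through the relations at \emph{either} end, and these are manifestly invariant under passage to the opposite algebra. You should make this explicit.
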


Now, we can show the following proposition.

\begin{proposition}
For a fully reduced gentle algebra $A$, using the same notation as in Proposition \ref{quiver of fully-reduced gentle algebras}, we have the following:
\begin{enumerate}
\item $A$ has finite global dimension if and only if $A= k \widetilde{\mathbb{A}}_n$, or $A$ is a generalized barbell algebra where both $l(C_L), l(C_R)>1$. Provided either of these conditions holds, we have $\gl.dim(A)\leq 2$. 
\item $\id_A(A)=1$ if and only if $A= k \widetilde{\mathbb{A}}_n$, or $l(C_L)=1=l(C_R)$. Otherwise, $\id_A(A)=2$.
\item $A$ is of Gorenstein dimension $1$ if and only $A= k \widetilde{\mathbb{A}}_n$, or $l(C_L)=1=l(C_r)$. Otherwise, $A$ is of Gorenstein dimension $2$.

\end{enumerate}
\end{proposition}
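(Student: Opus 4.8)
The plan is to derive all three statements from the Geiss--Reiten formula (Theorem \ref{Theorem of Geiss-Reiten}) together with the explicit shape of $(Q,I)$ furnished by Proposition \ref{quiver of fully-reduced gentle algebras}, treating $(2)$ first, then $(3)$, and finally $(1)$. Throughout, $A=kQ/I$ is either a copy of $\widetilde{\mathbb{A}}_m$ with $I=0$, or a generalized barbell with $I=\langle\beta\alpha,\delta\gamma\rangle$, $C_L=\alpha\cdots\beta$ and $C_R=\gamma\cdots\delta$; recall that when $l(\mathfrak{b})=0$ one has $x=y$, a $4$-vertex, and that $C_RC_L$ is required not to be a uniserial string.

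For $(2)$ I would compute the invariant $n(A)$ appearing in Theorem \ref{Theorem of Geiss-Reiten}. Since $I$ is generated by at most the two quadratic monomials $\beta\alpha$ and $\delta\gamma$, every critical string of length $\geq 2$ is a direct path whose consecutive pairs of arrows all equal $\beta\alpha$ or $\delta\gamma$; a short inspection --- using that $C_L$ and $C_R$ share no arrow and that a string cannot contain $\alpha\alpha$ when $\alpha\alpha\in I$ --- shows that the only critical strings of length $\geq 2$ are $\beta\alpha$ and $\delta\gamma$ themselves, so $n(A)\leq 2$ in all cases and $n(A)\leq 1$ when $I=0$. Next, $\alpha$ is a gentle arrow exactly when $l(C_L)>1$ (when $l(C_L)=1$ one has $\alpha=\beta$ with $\alpha\alpha\in I$, so $\alpha$ is not gentle), and symmetrically for $\gamma$; every bar arrow is gentle, and when $l(C_L)=1=l(C_R)$ the bar necessarily has positive length, since otherwise $C_RC_L$ would be uniserial. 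Hence there is always a gentle arrow, so $n(A)>0$; moreover $n(A)=1$ precisely when $A=k\widetilde{\mathbb{A}}_m$ or $l(C_L)=1=l(C_R)$, and $n(A)=2$ otherwise. Theorem \ref{Theorem of Geiss-Reiten} then gives $\id_A(A)=n(A)$, which is statement $(2)$.

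Statement $(3)$ follows at once: by \cite{GR} (see Theorem \ref{Theorem of Geiss-Reiten}) every gentle algebra is Gorenstein, and its Gorenstein dimension equals $\id_A(A)$ (the left and right injective dimensions of $A$ coincide); since this is $1$ or $2$ by $(2)$ and is never $0$, $(3)$ is a restatement of $(2)$. For the upper bound in $(1)$ this also yields the estimate for free: if $\gl.dim(A)<\infty$, then $A$ is Gorenstein of dimension $d=\id_A(A)\leq 2$ and has finite global dimension, and it is well known that a Gorenstein algebra of finite global dimension has global dimension equal to its Gorenstein dimension, so $\gl.dim(A)=d\leq 2$. It then remains to decide when $\gl.dim(A)$ is finite. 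If $A=k\widetilde{\mathbb{A}}_m$ it is hereditary, so $\gl.dim(A)=1$. If $A$ is a generalized barbell with $l(C_L),l(C_R)>1$, then $Q$ has no loop and, arguing as in $(2)$, $Q$ has no oriented cycle all of whose consecutive arrow-pairs (including the wrap-around) lie in $I$ --- each arrow of such a cycle would lie in $\{\alpha,\gamma\}\cap\{\beta,\delta\}$, which is empty here; by the standard description of the syzygies of simple modules over a gentle (indeed special biserial) algebra, the absence of such a ``relation cycle'' forces $\pd_A S$ to be finite for every simple $S$, so $\gl.dim(A)<\infty$. Conversely, if $l(C_L)=1$ (the case $l(C_R)=1$ being symmetric), then $\alpha=\beta$ is a loop at $x$ with $\alpha\alpha\in I$; writing $A\alpha$ for the submodule of $P_x$ generated by $\alpha$, one checks that $A\alpha$ is a direct summand of $\Omega S_x=\rad P_x$ and embeds into its own first syzygy (the kernel of $P_x\twoheadrightarrow A\alpha$ contains $A\alpha$, since every path beginning with $\alpha$ is killed), so $\pd_A A\alpha=\infty$ and hence $\pd_A S_x=\infty$; thus $\gl.dim(A)=\infty$. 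This proves $(1)$.

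The step I expect to be the main obstacle is the finiteness direction of $(1)$: it rests on the fact that a gentle algebra has finite global dimension precisely when its bound quiver contains no ``relation cycle'', which requires making the walk-through-the-quiver description of the minimal projective resolution of a simple module precise and checking that it terminates exactly in the absence of such a cycle. Everything else is careful bookkeeping with the two quiver types of Proposition \ref{quiver of fully-reduced gentle algebras}, together with the degenerate cases ($l(\mathfrak{b})=0$, where $x=y$ is a $4$-vertex, and $l(C_L)$ or $l(C_R)$ equal to $1$, where loops enter the picture).
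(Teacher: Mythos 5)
Your proof is correct and follows essentially the same route as the paper: parts (2) and (3) are done via the Geiss--Reiten invariant $n(A)$ of Theorem \ref{Theorem of Geiss-Reiten}, and part (1) via the overlapping-relations (``relation cycle'') criterion for gentle algebras, which is exactly what the paper cites \cite{GHZ} for. The only genuine divergence is in part (1): the paper obtains the bound $\operatorname{gl.dim}(A)\leq 2$ by writing down the projective resolutions of the simples $S_{s(\alpha)}$ and $S_{s(\gamma)}$ (which also pins the value down to exactly $2$ in the barbell case), whereas you deduce it from the standard fact that a Gorenstein algebra of finite global dimension has global dimension equal to its Gorenstein dimension; and for the infinite case the paper invokes the infinite sequence of overlapping relations at the loop, whereas you exhibit an $\Omega$-periodic module directly --- both routes are valid. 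Two small points should be patched. First, your intermediate claim that the only critical strings of length $\geq 2$ are $\beta\alpha$ and $\delta\gamma$ fails when $l(C_L)=1$ or $l(C_R)=1$: the powers $\alpha^m$ (resp.\ $\gamma^m$) of a loop are critical paths of arbitrary length. (Here ``critical string'' must be read as ``critical path'': a critical string of length $\geq 2$ contains a relation, so your appeal to the string condition to exclude $\alpha\alpha$ would equally exclude $\beta\alpha$.) This does not affect your computation, since those paths do not start with a gentle arrow; the correct statement, and the only one you actually use, is that every critical path starting with a gentle arrow has length at most $2$. Second, in the loop argument you should note that the kernel of $P_x\twoheadrightarrow A\alpha$ \emph{equals} $A\alpha$ (your parenthetical computation shows precisely this), so that $\Omega(A\alpha)\cong A\alpha$ is periodic and non-projective, whence $\pd_A(A\alpha)=\infty$; a mere embedding of $A\alpha$ into its first syzygy would not by itself force infinite projective dimension.
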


\begin{proof}
Let $A=kQ/I$. 
For $(1)$, if $Q=\widetilde{\mathbb{A}}_n$, there is nothing to show. Hence, we assume $I\neq 0$ and by Proposition \ref{quiver of fully-reduced gentle algebras}, there exist $C_L$ and $C_R$ in $(Q,I)$. Now, the assertion is a direct consequence of the method applied in \cite{GHZ}.

In particular, without loss of generality, assume $l(C_L)=1$ with $x=s(C_L)=e(C_L)$ (the case where $l(C_R)=1$ is similar). Then, we obtain an infinite sequence of overlapping relations at $x$, which implies that $\pd_{A}(S_x)=\infty$, and therefore $\gl.dim(A)=\infty$. 
The converse follows from the fact that (under the assumptions $l(C_L) >1$ and $l(C_R)>1$), the sequence of overlapping relations in $(Q,I)$ is finite and of length one, which gives the desired result.

For the last part, if $I=0$, we obviously have $\gl.dim(A)=1$. As the only other possible case, as in Proposition \ref{quiver of fully-reduced gentle algebras}, if $I=\langle \beta\alpha, \delta\gamma \rangle$, it is easy to see that $\pd_A(S_i)=1$, for each vertex $i \in Q_0\setminus \{s(\alpha), s(\gamma)\}$. Moreover, from
$$ 0\rightarrow P_{e(\beta)} \rightarrow P_{e(\alpha)} \rightarrow P_{s(\alpha)} \rightarrow S_{s(\alpha)} \rightarrow 0,$$
as the projective resolution of $S_{s(\alpha)}$, we get $\pd_A(S_{s(\alpha)})=2$. By a similar argument for $S_{s(\gamma)}$, the result is immediate. This finishes the proof of $(1)$.

For the hereditary case $\Lambda=k\widetilde{\mathbb{A}}_n$, it is well-known that $\id_A(A)=1$. Hence, for the statement $(2)$, we only consider the second case of fully reduced gentle algebras given in Proposition \ref{quiver of fully-reduced gentle algebras}.
In order to determine the injective dimension of $A$ as a left $A$-module, we apply the algorithm given by Geiss and Reiten in \cite{GR} and use their main theorem stated above.

First note that every fully reduced gentle algebra contains a gentle arrow (in fact $A$ always has at least two gentle arrows). Therefore, $n(A) \geq 1$.

By Proposition \ref{quiver of fully-reduced gentle algebras}, if $l(C_L)=l(C_R)=1$ (i.e, $\alpha=\beta$ and $\gamma=\delta$), we must have $l(b)>0$. 
Then, from the definition, it is obvious that the only critical walks which start with a gentle arrow are in fact the arrows lying in $\mathfrak{b}$. Hence, $n(A)=1$ and therefore, by Theorem \ref{Theorem of Geiss-Reiten}, we have $\id(A)=1$.

If $l(C_L)>1$ (respectively $l(C_R)>1$) the critical walk starting with a gentle arrow which is of the maxmimum length is given by $\beta \alpha$ (respectively $\delta \alpha$), which shows that $n(A)=2$, and therefore $\id_A(A)=2$. 
According to the configuration of fully reduced gentle algebras (see Proposition \ref{quiver of fully-reduced gentle algebras}), this obviously hold for any fully reduced gentle algebra with $l(\mathfrak{b})=0$.

Proof of $(3)$ is an immediate consequence of Theorem \ref{Theorem of Geiss-Reiten} and statement $(1)$.

\end{proof}

Note that although the above proposition shows that every fully reduced gentle algebra $A$ is of Gorenstein dimension $1$ or $2$, one should note that by Proposition \ref{Components of reduced gentle algebra}, $A$ is never $d$-Gorenstein, for any $d \in \mathbb{Z}_{\geq 0}$, because it has no projective-injective module (For the definition and new results on the $d$-Gorenstein algebras, see \cite{IZ}.). Moreover, we remark that the last assertion of the above proposition extends a similar result of Ringel \cite[Theorem 14.2]{R1} on the barbell algebras which are shown to be Gorenstein of Gorenstein dimension $1$.

The previous proposition shows that for a fully reduced gentle algebra $A$ we always get {$\gl.dim(A) \in \{1, 2, \infty\}$.} In fact, the following corollary is immediate from the proof, showing that a fully reduced gentle algebra is of global dimension at most $2$ if and only if every simple module is rigid.
\begin{corollary}
Let $A=kQ/I$ be a fully reduced gentle algebra. Then, $\gl.dim(A)\leq 2$ if and only if $Q$ has no loop (i.e, an oriented cycle of length $1$). 
\end{corollary}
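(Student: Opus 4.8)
The plan is to leverage the characterization of fully reduced gentle algebras from Proposition \ref{quiver of fully-reduced gentle algebras} together with the global-dimension analysis already carried out in the preceding proposition. We have exactly two possibilities for $A=kQ/I$: either $A=k\widetilde{\mathbb{A}}_n$ (so $I=0$) or $(Q,I)$ is a generalized barbell quiver with $I=\langle \beta\alpha,\delta\gamma\rangle$. First I would dispose of the hereditary case: if $Q=\widetilde{\mathbb{A}}_n$, it is an acyclic quiver, so it has no loop, and $\gl.dim(A)=1\leq 2$; both sides of the equivalence hold trivially.

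Next I would treat the generalized barbell case. Here the key structural observation is the content of part (1) of the previous proposition, rephrased through the combinatorics of loops. If $(Q,I)$ has a loop, that loop can only occur as $\alpha=\beta$ (when $l(C_L)=1$) or as $\gamma=\delta$ (when $l(C_R)=1$), since every other arrow in a generalized barbell quiver joins two distinct vertices. But in exactly those situations the proof of the previous proposition exhibits an infinite sequence of overlapping relations at the relevant vertex $x$ (coming from $\beta\alpha\in I$ with $\alpha=\beta$ a loop), whence $\pd_A(S_x)=\infty$ and $\gl.dim(A)=\infty>2$. Conversely, if $Q$ has no loop, then $l(C_L)>1$ and $l(C_R)>1$, so by part (1) of the previous proposition $\gl.dim(A)\leq 2$. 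This gives the equivalence in the barbell case, and combining with the hereditary case completes the argument.

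The step I expect to require the most care is verifying the dichotomy ``loop in $Q$ $\iff$ $l(C_L)=1$ or $l(C_R)=1$'' directly from the shape of the generalized barbell quiver in Proposition \ref{quiver of fully-reduced gentle algebras}: one must check that the bar $\mathfrak{b}$ and the non-trivial portions of $C_L,C_R$ never contribute a loop (a loop in $\mathfrak{b}$ would force $A$ to be infinite-dimensional, and a loop inside $C_L$ of length $>1$ would violate the single-support/band conditions built into ``fully reduced''), and conversely that a cyclic string $C_L$ of length $1$ based at a single vertex $x$ \emph{is} a loop at $x$. Once this combinatorial bookkeeping is in place, the homological consequences are immediate from the preceding proposition, so no genuinely new homological computation is needed.
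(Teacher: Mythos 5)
Your proof is correct and follows essentially the same route as the paper, which obtains the corollary directly from part (1) of the preceding proposition and its proof, after observing that a loop in a fully reduced gentle algebra can only occur as $\alpha=\beta$ (so $l(C_L)=1$) or $\gamma=\delta$ (so $l(C_R)=1$). For the bookkeeping step you flag, the cleanest justification is finite-dimensionality: since $I=\langle \beta\alpha,\delta\gamma\rangle$ is generated by just these two quadratic paths, any loop must have its square equal to one of them (otherwise its powers survive in $A$), which forces the loop to be $\alpha=\beta$ or $\gamma=\delta$ — this is more direct than invoking the single-support/band conditions.
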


\subsection{Sufficient conditions for $\tau$-tilting infiniteness}\label{subsection:Explicit criteria for tau-tilting infiniteness}

In this subsection, we aim to give some explicit sufficient conditions for the $\tau$-tilting infiniteness of an arbitrary algebra over an algebraically closed field. To do so, 
we use our main reductive method and the new results on $\tau$-tilting infinite minimal biserial algebras. Furthermore, we employ the results of Happel and Vossieck \cite{HV} on the study of a family of algebras similar to those treated here.

As remarked in the introduction and observed in Section \ref{subsection:Module category of representation-infinite algebras}, studying an algebra from the viewpoint of $\tau$-tilting finiteness can also provide a new insight into the components of their Auslander-Reiten quivers.
For the min-rep-infinite special biserial algebras, provided that they are node-free, Ringel \cite{R2} has thoroughly studied their module categories and Auslander-Reiten quivers. 
Thanks to our new approach to the study of min-rep-infinite algebras, where bricks play a decisive role, one can easily conclude some important facts about certain components of the Auslander-Reiten quiver of any min-rep-infinite biserial algebra.

From Proposition \ref{tau-finiteness on the components and radical} we know that every rep-infinite algebra with a preprojective/preinjective component is $\tau$-tilting infinite. Viewing such algebras as a special case of $\tau$-tilting infinite type is somehow dual to the perspective we proposed for seeing $\tau$-tilting finite algebras as a generalization of rep-finite type. Thus, it is natural to ask whether those min-rep-infinite biserial algebras which are $\tau$-tilting infinite always admit a preprojective or preinjective component. The following theorem addresses this question.

\begin{theorem}
The Asulander-Reiten quiver of a generalized barbell algebra admits no preprojective component.
In particular, if $\Lambda$ is a minimal representation-infinite special biserial algebra, then $\Gamma(\modu \Lambda)$ has a preprojective component if and only if $\Lambda=k\widetilde{\mathbb{A}}_n$ (for some $n \in \mathbb{Z}_{>0}$).
\end{theorem}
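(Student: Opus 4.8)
The plan is to prove the two assertions separately, the first being the substantive one and the second an easy corollary of it combined with the classification already established (Theorem~\ref{tau-finiteness of min-rep-inf special biseiral} and Corollary~\ref{tau-infinite min-rep-sp.biserial}). For the second statement: if $\Lambda$ is min-rep-infinite special biserial, then by Theorem~\ref{Ringel's Classification Thm} and the node analysis of Section~\ref{section: Nody algebras}, $\Lambda$ is a cycle, barbell, wind wheel or nody algebra. The algebra $k\widetilde{\mathbb{A}}_n$ has a preprojective component by inspection (see Figure~\ref{fig:AR component of A-tilde}). Conversely, nody and wind wheel algebras are $\tau$-tilting finite by Propositions~\ref{nody algebras are tau-finite} and~\ref{Wind wheel tau-finite}; since a preprojective component would force $\tau$-tilting infiniteness by Proposition~\ref{tau-finiteness on the components and radical}(1), these algebras cannot have one. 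Barbell algebras are handled precisely by the first assertion of the theorem (a barbell is a special case of a generalized barbell). So everything reduces to showing a generalized barbell algebra has no preprojective component.

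\textbf{Main argument for the generalized barbell case.} First I would note that a generalized barbell algebra $A$ is rep-infinite (it admits infinitely many non-isomorphic bands), so every component of $\Gamma(\modu A)$ is infinite, and by Proposition~\ref{Components of reduced gentle algebra}(1) every component is faithful. Suppose for contradiction that $\Gamma(\modu A)$ has a preprojective component $\mathcal{P}$. By Happel--Vossieck \cite{HV}, $A$ would then be weakly min-rep-infinite or would contain, via a quotient, one of the bound quivers on their explicit list having a preprojective component. The cleaner route, however, is the following: a preprojective component $\mathcal{P}$ contains all indecomposable projectives $P_x$ on its "leftmost boundary", and in a preprojective component every module is of the form $\tau^{-m}P_x$; such a component is directed and consists of bricks by Lemma~\ref{Preproj/Postinj brick}. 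I would then exploit the combinatorial structure of $\Str(A)$: by Proposition~\ref{quiver of fully-reduced gentle algebras} (or directly from the barbell configuration in Figure~\ref{fig:bound quiver of Barbell algebra}) the bound quiver has two cyclic substrings $C_L$, $C_R$, each giving rise to a string module that is $\tau$-periodic. Concretely, the simple modules (or short string modules) supported on the interior of $C_L$ or $C_R$ lie on tubes (the "string tubes"), so they are $\tau$-periodic and cannot lie in a preprojective component. The projective $P_x$ at the branch vertex $x$ maps into or receives maps from these periodic modules in a way that, tracing $\tau^{-1}$ orbits, forces $\tau^{-m}P_x$ to meet a periodic component — contradicting directedness of $\mathcal{P}$. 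Alternatively, and perhaps most efficiently, I would use the fact (from \cite{BR} recalled in Section~\ref{subsection:Module category of representation-infinite algebras}) that for a rep-infinite string algebra the components of type (I) are exactly the non-periodic ones with projectives/injectives on the boundary, and a preprojective component would be such a component; one then checks that in a generalized barbell quiver the "rays" emanating from each $P_x$ in a type-(I) component eventually re-enter the cyclic part, producing a module with an endomorphism that is not an isomorphism (a graph map along a repeated substring of $C_L$ or $C_R$), contradicting that preprojective modules are bricks.

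\textbf{Expected main obstacle.} The hard part will be making the "eventually re-enters the cyclic part" step precise: I need to show that for every vertex $x$ and every $m$, the string module $\tau^{-m}P_x$ either is $\tau$-periodic or supports a nontrivial graph endomorphism, i.e. is not a brick. This requires a careful bookkeeping of how $\tau^{-1}$ acts on strings in a generalized barbell algebra — adding/removing hooks and cohooks at the ends — and showing that after finitely many steps the string necessarily wraps around $C_L$ or $C_R$ enough times to contain a substring appearing both "on top" and "at the bottom", exactly as in the arguments of Propositions~\ref{Barbells tau-infinite} and~\ref{Wind wheel tau-finite}. A secondary subtlety is the degenerate subcase $l(\mathfrak{b})=0$ (so $x=y$), where $C_L$ and $C_R$ share the branch vertex; here the $\tau^{-1}$-orbit computation is slightly different and must be treated as in the last case of the proof of Theorem~\ref{tau-tilting finiteness of gentle algebras}. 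Once these orbit computations are in place, the contradiction with Lemma~\ref{Preproj/Postinj brick} is immediate, and the theorem follows.
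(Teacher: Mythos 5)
Your handling of the second assertion is fine and follows the paper: $k\widetilde{\mathbb{A}}_n$ has a preprojective component by inspection, nody and wind wheel algebras cannot have one because they are brick/$\tau$-tilting finite (Propositions \ref{Wind wheel tau-finite} and \ref{nody algebras are tau-finite} together with Lemma \ref{Preproj/Postinj brick}, equivalently Proposition \ref{tau-finiteness on the components and radical}(1)), and the barbell case is delegated to the first assertion. The problem is that the first assertion — the substantive claim that a generalized barbell algebra has no preprojective component — is never actually proved in your proposal. The brick-finiteness shortcut is unavailable here, since generalized barbell algebras have infinitely many bricks (Proposition \ref{Barbells tau-infinite} and Theorem \ref{tau-tilting finiteness of gentle algebras}); so your argument must show that the specific modules $\tau^{-m}P_x$ obtained by knitting from the projectives eventually fail to be bricks (or hit periodic modules). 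You explicitly flag this orbit bookkeeping as the "expected main obstacle" and do not carry it out; but that bookkeeping \emph{is} the content of the first assertion, so as written the proposal has a genuine gap, not merely a routine verification left to the reader. The auxiliary remarks about string tubes forcing $\tau^{-m}P_x$ to "meet a periodic component" and about rays in type-(I) components "re-entering the cyclic part" are likewise sketches of the same unproved step.

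The paper's actual route is the one you mention and then set aside: it observes that every generalized barbell algebra (including the case $l(\mathfrak{b})=0$, which need not be minimal representation-infinite) is \emph{weakly} minimal representation-infinite, and then invokes the Happel--Vossieck classification \cite{HV} of weakly min-rep-infinite algebras admitting a preprojective component, checking that the generalized barbell bound quivers do not occur on that list. Note also that your rendering of the HV route is off: what is needed is that the generalized barbell algebra itself is weakly min-rep-infinite and absent from the HV list; passing to quotients does not help, since having a representation-infinite quotient with a preprojective component only yields $\tau$-tilting infiniteness of $\Lambda$ (as in the remark after Lemma \ref{Preproj/Postinj brick}), not information about whether $\Gamma(\modu\Lambda)$ itself has a preprojective component. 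Either fill in the $\tau^{-1}$-orbit computation you outline, or replace it by the weak-minimality observation plus the comparison with the list in \cite{HV}.
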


Before we present the proof, we should draw the attention of the reader who wishes to consult the result of Happel and Vossieck \cite{HV} used in the following proof that the authors use the term ``minimal representation-infinite" for what we call ``weakly minimal representation-infinite". In particular, we say $\Lambda$ is \emph{weakly minimal representation-infinite} if $\Lambda=kQ/I$ is rep-infinite but every $\Lambda/\langle e_x \rangle$ is rep-finite, where $e_x$ is the idempotent associated to the vertex $x \in Q$.
Although every minimal representation-infinite algebra is obviously weakly minimal representation-infinite, the converse is not necessarily true (as a non-example, consider the a quiver with two vertices $x$ and $y$ and $3$ arrows from $x$ to $y$).

\begin{proof} 
Note that every min-rep-infinite algebra $\Lambda$ is obviously weakly minimal representation-infinite. 
The classification of weakly min-rep-infinite algebras which admit a preprojective component appears in \cite{HV}. 
From the configuration of a generalized barbell algebra (see Proposition \ref{quiver of fully-reduced gentle algebras}) and those listed in the aforementioned paper, it is easy to verify that a generalized barbell algebra also has no preprojective component. 
One should note that a generalized barbell algebra with a bar of length zero is not necessarily min-rep-infinite, but it is always weakly min-rep-infinite.

For the second part, because the assertion is well-known for $\Lambda=k\widetilde{\mathbb{A}}_n$, we only verify the statement for the barbell, wind wheel and nody algebras.
From Lemma \ref{Preproj/Postinj brick} and Propositions \ref{Wind wheel tau-finite} and \ref{nody algebras are tau-finite}, it follows that the Auslander-Reiten quiver of a nody or wind wheel algebra has no preprojective component and the other case follows from the firt part. 
\end{proof}

As a direct consequence of the previous theorem, the generalized barbell algebras form an concrete family of minimal $\tau$-tilting infinite algebras with no preprojective component. Obviously, a similar explicit description of all minimal $\tau$-tilting infinite algebras without preprojective components should be a big step towards a complete classification of all minimal $\tau$-tilting infinite algebra.

The above results, along with the classification of weakly minimal representation-infinite algebras studied in \cite{HV}, can also be employed as concrete criteria to verify $\tau$-tilting infiniteness of arbitrary algebras. For a given algebra $\Lambda$, recall that by $\Mri(\Lambda)$ we denote the set of isomorphism classes of all quotient algebras of $\Lambda$ which are min-rep-infinite. Similarly, let $\Mri_{\mathtt{W}}(\Lambda)$ denote the isomorphism classes of the weakly minimal-representation-infinite quotient algebras of $\Lambda$. Then, obviously, we have $\Mri(\Lambda)\subseteq \Mri_{\mathtt{W}}(\Lambda)$.

Now the following statement is immediate from Proposition \ref{tau-finiteness on the components and radical} and our classification of min-rep-infinite biserial algebras with respect to $\tau$-tilting finiteness.

\begin{corollary}
An algebra $\Lambda$ is $\tau$-tilting infinite if it has a representation-infinite quotient algebra $\Lambda'$ which is gentle or admits a preprojective/preinjective component. In particular, $\Lambda$ is $\tau$-tilting infinite if $\Mri_{\mathtt{W}}(\Lambda)$ contains a generalized barbell algebra or any of the algebras given in \cite{HV}.
\end{corollary}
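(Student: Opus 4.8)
The plan is to assemble this corollary directly from results already established in the paper, treating it as a straightforward consequence rather than a new theorem. First I would recall the two-pronged reductive principle from Section~\ref{subsection:Module category of representation-infinite algebras} and Section~\ref{Section:Reduction to mild special biserial algebras}: by Theorem~\ref{tau-finiteness} an algebra is $\tau$-tilting infinite precisely when it has infinitely many bricks, and by Theorem~\ref{quotient-lattice} (together with the fact that surjective algebra maps induce embeddings $\modu\Lambda'\hookrightarrow\modu\Lambda$) brick-infiniteness passes up from any quotient algebra $\Lambda'$ to $\Lambda$. Hence it suffices to show that each of the listed quotient algebras $\Lambda'$ is itself $\tau$-tilting infinite.

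Next I would dispatch the three cases for $\Lambda'$. If $\Lambda'$ is a rep-infinite gentle algebra, then Theorem~\ref{tau-tilting finiteness of gentle algebras} gives immediately that $\Lambda'$ is $\tau$-tilting infinite. If $\Lambda'$ is rep-infinite with a preprojective or preinjective component of $\Gamma(\modu\Lambda')$, then by Lemma~\ref{Preproj/Postinj brick} that component supplies infinitely many non-isomorphic $\tau$-rigid bricks (equivalently, Proposition~\ref{tau-finiteness on the components and radical}(1) says a $\tau$-tilting finite rep-infinite algebra can have no such component), so $\Lambda'$ is $\tau$-tilting infinite. In all cases, applying Theorem~\ref{quotient-lattice} to the surjection $\Lambda\twoheadrightarrow\Lambda'$ yields that $\Lambda$ is $\tau$-tilting infinite.

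For the ``In particular'' clause, I would note that a generalized barbell algebra is $\tau$-tilting infinite by Proposition~\ref{Barbells tau-infinite} (or by Theorem~\ref{tau-tilting finiteness of gentle algebras}, since it is gentle and rep-infinite), and that any algebra appearing in the Happel--Vossieck list \cite{HV} is weak-min-rep-infinite with a preprojective component, hence rep-infinite with a preprojective component, hence $\tau$-tilting infinite by the previous paragraph. Since membership of such an algebra in $\Mri_{\mathtt{W}}(\Lambda)$ exhibits it as a quotient of $\Lambda$, the conclusion follows by the same lattice-surjectivity argument.

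I do not expect a genuine obstacle here; the only point requiring a little care is that the relevant notion of minimality in the last clause is the \emph{weak} one, so one must observe that $\Mri_{\mathtt W}(\Lambda)$ indeed consists of quotient algebras of $\Lambda$ and that being weak-min-rep-infinite with a preprojective component is exactly the hypothesis covered by the Happel--Vossieck classification invoked above; the implication ``preprojective component $\Rightarrow$ $\tau$-tilting infinite'' itself needs no minimality assumption, so nothing is lost in passing between the two notions.
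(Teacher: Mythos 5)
Your proposal is correct and follows essentially the same route as the paper, which derives the corollary immediately from Proposition \ref{tau-finiteness on the components and radical}, the quotient argument (Theorem \ref{quotient-lattice}), and the classification results (Theorem \ref{tau-tilting finiteness of gentle algebras} for gentle, hence generalized barbell, algebras and the Happel--Vossieck list for preprojective components). Your parenthetical caution is well placed: Proposition \ref{Barbells tau-infinite} alone covers only barbells with a bar of positive length, so citing Theorem \ref{tau-tilting finiteness of gentle algebras} for generalized barbell algebras with $l(\mathfrak{b})=0$ is the right move.
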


\vskip 1 cm
\noindent
\textbf{Acknowledgements.} The author would like to thank his Ph.D. advisor, Hugh Thomas, for numerous stimulating discussions and useful comments in the course of preparing this paper. The author is also grateful to Charles Paquette for several helpful suggestions at various stages of this project.

\end{document}